\DeclareMathOperator{\Ker}{Ker}
\DeclareMathOperator{\Coker}{Coker}
\DeclareMathOperator{\length}{length}
\DeclareMathOperator{\depth}{depth}
\DeclareMathOperator{\diag}{diag}
\DeclareMathOperator{\Der}{Der}
\DeclareMathOperator{\rank}{rank}
\begin{document}
%
%   D e f i n i t i o n s
%
%
\theoremstyle{definition}
\newtheorem{Definition}{Definition}[section]
\newtheorem*{Definitionx}{Definition}
\newtheorem{Convention}[Definition]{Convention}
\newtheorem{Construction}{Construction}[section]
\newtheorem{Example}[Definition]{Example}
\newtheorem{Examples}[Definition]{Examples}
\newtheorem{Remark}[Definition]{Remark}
\newtheorem*{Remarkx}{Remark}
\newtheorem{Remarks}[Definition]{Remarks}
\newtheorem{Caution}[Definition]{Caution}
\newtheorem{Conjecture}[Definition]{Conjecture}
\newtheorem*{Conjecturex}{Conjecture}
\newtheorem{Question}[Definition]{Question}
\newtheorem{Questions}[Definition]{Questions}
\newtheorem*{Acknowledgements}{Acknowledgements}
\newtheorem*{Organization}{Organization}
\newtheorem*{Disclaimer}{Disclaimer}
\theoremstyle{plain}
\newtheorem{Theorem}[Definition]{Theorem}
\newtheorem*{Theoremx}{Theorem}
\newtheorem{Theoremy}{Theorem}
\newtheorem{Proposition}[Definition]{Proposition}
\newtheorem*{Propositionx}{Proposition}
\newtheorem{Lemma}[Definition]{Lemma}
\newtheorem{Corollary}[Definition]{Corollary}
\newtheorem*{Corollaryx}{Corollary}
\newtheorem{Fact}[Definition]{Fact}
\newtheorem{Facts}[Definition]{Facts}
\newtheoremstyle{voiditstyle}{3pt}{3pt}{\itshape}{\parindent}%
{\bfseries}{.}{ }{\thmnote{#3}}%
\theoremstyle{voiditstyle}
\newtheorem*{VoidItalic}{}
\newtheoremstyle{voidromstyle}{3pt}{3pt}{\rm}{\parindent}%
{\bfseries}{.}{ }{\thmnote{#3}}%
\theoremstyle{voidromstyle}
\newtheorem*{VoidRoman}{}

% abgeschrieben aus The LaTeX Companion, 2nd edition,
% von Mittelback & Goossens
%
\newcommand{\prf}{\par\noindent{\sc Proof.}\quad}
\newcommand{\blowup}{\rule[-3mm]{0mm}{0mm}}
\newcommand{\cal}{\mathcal}
\newcommand{\Aff}{{\mathds{A}}}
\newcommand{\BB}{{\mathds{B}}}
\newcommand{\CC}{{\mathds{C}}}
\newcommand{\EE}{{\mathds{E}}}
\newcommand{\FF}{{\mathds{F}}}
\newcommand{\GG}{{\mathds{G}}}
\newcommand{\HH}{{\mathds{H}}}
\newcommand{\NN}{{\mathds{N}}}
\newcommand{\ZZ}{{\mathds{Z}}}
\newcommand{\PP}{{\mathds{P}}}
\newcommand{\QQ}{{\mathds{Q}}}
\newcommand{\RR}{{\mathds{R}}}
\newcommand{\Sph}{{\mathds{S}}}
\newcommand{\Liea}{{\mathfrak a}}
\newcommand{\Lieb}{{\mathfrak b}}
\newcommand{\Lieg}{{\mathfrak g}}
\newcommand{\Liem}{{\mathfrak m}}
\newcommand{\ideala}{{\mathfrak a}}
\newcommand{\idealb}{{\mathfrak b}}
\newcommand{\idealg}{{\mathfrak g}}
\newcommand{\idealm}{{\mathfrak m}}
\newcommand{\idealn}{{\mathfrak n}}
\newcommand{\idealp}{{\mathfrak p}}
\newcommand{\idealq}{{\mathfrak q}}
\newcommand{\idealI}{{\cal I}}
\newcommand{\lin}{\sim}
\newcommand{\num}{\equiv}
\newcommand{\dual}{\ast}
\newcommand{\iso}{\cong}
\newcommand{\homeo}{\approx}
\newcommand{\mathds}[1]{{\mathbb #1}}
\newcommand{\mm}{{\mathfrak m}}
\newcommand{\pp}{{\mathfrak p}}
\newcommand{\qq}{{\mathfrak q}}
\newcommand{\rr}{{\mathfrak r}}
\newcommand{\pP}{{\mathfrak P}}
\newcommand{\qQ}{{\mathfrak Q}}
\newcommand{\rR}{{\mathfrak R}}
%
%  evtl. auch \"uber \mathbb oder \Bbb
%
\newcommand{\OO}{{\cal O}}
\newcommand{\calA}{{\cal A}}
\newcommand{\calO}{{\cal O}}
\newcommand{\calU}{{\cal U}}
\newcommand{\numero}{{n$^{\rm o}\:$}}
\newcommand{\mf}[1]{\mathfrak{#1}}
\newcommand{\mc}[1]{\mathcal{#1}}
\newcommand{\into}{{\hookrightarrow}}
\newcommand{\onto}{{\twoheadrightarrow}}
\newcommand{\Spec}{{\rm Spec}\:}
\newcommand{\BigSpec}{{\rm\bf Spec}\:}
\newcommand{\Spf}{{\rm Spf}\:}
\newcommand{\Proj}{{\rm Proj}\:}
\newcommand{\Pic}{{\rm Pic }}
\newcommand{\Picloc}{{\rm Picloc }}
\newcommand{\Br}{{\rm Br}}
\newcommand{\NS}{{\rm NS}}
\newcommand{\id}{{\rm id}}
\newcommand{\Sym}{{\mathfrak S}}
\newcommand{\Aut}{{\rm Aut}}
\newcommand{\Autp}{{\rm Aut}^p}
\newcommand{\End}{{\rm End}}
\newcommand{\Hom}{{\rm Hom}}
\newcommand{\Ext}{{\rm Ext}}
\newcommand{\ord}{{\rm ord}}
\newcommand{\coker}{{\rm coker}\,}
\newcommand{\divisor}{{\rm div}}
\newcommand{\Def}{{\rm Def}}
\newcommand{\et}{{\rm \acute{e}t}}
\newcommand{\loc}{{\rm loc}}
\newcommand{\ab}{{\rm ab}}
\newcommand{\piet}{{\pi_1^{\rm \acute{e}t}}}
\newcommand{\pietloc}{{\pi_{\rm loc}^{\rm \acute{e}t}}}
\newcommand{\piN}{{\pi^{\rm N}_1}}
\newcommand{\piNloc}{{\pi_{\rm loc}^{\rm N}}}
\newcommand{\Het}[1]{{H_{\rm \acute{e}t}^{{#1}}}}
\newcommand{\Hfl}[1]{{H_{\rm fl}^{{#1}}}}
\newcommand{\Hcris}[1]{{H_{\rm cris}^{{#1}}}}
\newcommand{\HdR}[1]{{H_{\rm dR}^{{#1}}}}
\newcommand{\hdR}[1]{{h_{\rm dR}^{{#1}}}}
\newcommand{\Torsloc}{{\rm Tors}_{\rm loc}}
\newcommand{\defin}[1]{{\bf #1}}
\newcommand{\oX}{\cal{X}}
\newcommand{\oA}{\cal{A}}
\newcommand{\oY}{\cal{Y}}
\newcommand{\calC}{{\cal{C}}}
\newcommand{\calL}{{\cal{L}}}
\newcommand{\bmu}{\boldsymbol{\mu}}
\newcommand{\balpha}{\boldsymbol{\alpha}}
\newcommand{\bL}{{\mathbf{L}}}
\newcommand{\bM}{{\mathbf{M}}}
\newcommand{\bW}{{\mathbf{W}}}
\newcommand{\bD}{{\mathbf{D}}}%dihedral group
\newcommand{\bT}{{\mathbf{T}}}%tetrahedral group
\newcommand{\bO}{{\mathbf{O}}}%octahedral group
\newcommand{\bI}{{\mathbf{I}}}%icosahedral group
\newcommand{\BD}{{\mathbf{BD}}}
\newcommand{\BT}{{\mathbf{BT}}}
\newcommand{\BI}{{\mathbf{BI}}}
\newcommand{\BO}{{\mathbf{BO}}}
\newcommand{\C}{{\mathbf{C}}}
\newcommand{\Dic}{{\mathbf{Dic}}}
\newcommand{\SL}{{\mathbf{SL}}}
\newcommand{\PSL}{{\mathbf{PSL}}}
\newcommand{\MC}{{\mathbf{MC}}}
\newcommand{\GL}{{\mathbf{GL}}}
\newcommand{\Tors}{{\mathbf{Tors}}}

\newcommand{\GM}[1]{\textcolor{blue}{(GM: #1)}}

\newcommand{\CL}[1]{\textcolor{red}{(CL: #1)}}

\newcommand{\YM}[1]{\textcolor{green}{(YM: #1)}}

\makeatletter
\@namedef{subjclassname@2020}{\textup{2020} Mathematics Subject Classification}
\makeatother

\title[LRQ singularities]{Linearly Reductive Quotient Singularities}

\author{Christian Liedtke}
\address{TU M\"unchen, Zentrum Mathematik - M11, Boltzmannstr. 3, 85748 Garching bei M\"unchen, Germany}
\email{liedtke@ma.tum.de}

\author{Gebhard Martin}
\address{Mathematisches Institut der Universit\"at Bonn, Endenicher Allee 60, 53115 Bonn, Germany}
\curraddr{}
\email{gmartin@math.uni-bonn.de}

\author{Yuya Matsumoto}
\address{Department of Mathematics, Faculty of Science and Technology, Tokyo University of Science, 2641 Yamazaki, Noda, Chiba, 278-8510, Japan}
\email{\url{matsumoto_yuya@rs.tus.ac.jp}}
\email{matsumoto.yuya.m@gmail.com}

\subjclass[2020]{14L15, 13A50, 14J17, 13A35, 20C20}
\keywords{linearly reductive group scheme, quotient singularity, F-singularity, deformations of singularities, canonical and klt singularities}

\begin{abstract}
 We study isolated quotient singularities by finite and linearly reductive group schemes
 (lrq singularities for short) and show that they satisfy many, but not all, of the known properties of finite quotient singularities in characteristic zero:
 \begin{enumerate}
    \item From the lrq singularity we can recover the group scheme and the quotient presentation.
\item We establish canonical lifts to characteristic zero,  
which leads to a bijection between lrq singularities
and certain characteristic zero counterparts.
 \item We classify subgroup schemes of $\GL_d$ and $\SL_d$ that correspond to lrq singularities.
 For $d=2$, this generalises results of Klein, Brieskorn, and Hashimoto 
 \cite{Klein, Brieskorn, Hashimoto}.
 Also, our classification is closely related to the spherical space form problem.
     \item F-regular (resp. F-regular and Gorenstein) surface singularities
 are precisely the lrq singularities by finite and linearly reductive subgroup schemes
 of $\GL_2$ (resp. $\SL_2$).
 This generalises results of Klein and Du~Val \cite{Klein, DuVal}.
  \item Lrq singularities in dimension $\geq 4$ are infinitesimally rigid. 
   We classify lrq singularities in dimension $3$ that
   are not infinitesimally rigid
   and compute their deformation spaces.
   This generalises Schlessinger's rigidity theorem \cite{Schlessinger} 
   to positive and mixed characteristic.
 \end{enumerate}
 Finally, we study Riemenschneider's conjecture \cite{Riemenschneider} 
 in this context, that is, 
 whether lrq singularities deform to lrq 
 singularities.
\end{abstract}

\setcounter{tocdepth}{1}
\maketitle

\tableofcontents

\section{Introduction}

\subsection{Linearly reductive group schemes}
Let $k$ be an algebraically closed field of characteristic $p>0$, let
$G$ be a finite group scheme over $k$, and let 
$$
 1\,\to\,G^\circ\,\to\,G\,\to\,G^\et\,\to\,1
$$
be the \emph{connected-\'etale sequence}.

A particularly nice class of finite group schemes are the
\emph{linearly reductive} ones, which have been classified by Nagata \cite{Nagata61}:
these are precisely those $G$, where $G^\et$ is of order prime to $p$ and
where $G^\circ$, if non-trivial, is a finite subgroup scheme of $\GG_m^N$ for some $N$.
For example, the representation theory of a finite and linearly
reductive group scheme is equivalent to that of a finite group
over the complex numbers, see Corollary \ref{cor: reptheoriesthesame}.

Moreover, we will see in Proposition \ref{prop: linearizable} that if $G$ is a 
finite group scheme that acts on $\Spec k[[u_1,...,u_d]]$ freely outside the closed 
point and with schematic fixed locus 
equal to the closed point, then this action can be \emph{linearised} if and only if 
$G$ is linearly reductive.
This generalises a well-known lemma of Cartan \cite{Cartan}
in the complex case. 

In this article, we study linearly reductive group schemes $G$, actions as just
described, and the associated quotient singularities.
Most of our results are known in the case where $G$ is reduced, that is,
if $G^\circ$ is trivial, where they
become statements about groups of order prime to $p$.

\subsection{Very small representations}
A linear representation 
$$
 \rho\,:\,G\,\to\, \GL_d
$$ 
of a finite and linearly reductive group scheme is
\emph{very small} if for every $\bmu_n\subset G$ with $n\geq2$, 
the $\rho(\bmu_n)$-invariant subspace is zero-dimensional.
This is equivalent to the induced $G$-action on $\Spec k[[u_1,....,u_d]]$
being free outside the closed point (Proposition \ref{prop: lambda}).

The first objective of this article is the classification of finite 
linearly reductive group schemes that admit very small representations.
Rather than repeating the somewhat involved and technical results from 
Section \ref{sec: classificationofverysmall},
let us sketch the main ideas and reduction steps in this introduction:
\begin{enumerate}
  \item Every finite linearly reductive group scheme $G$ 
  admits a distinguished (``canonical'') geometric lift to characteristic
  zero (Proposition \ref{prop: liftinggroupscheme}),
  whose geometric points are a finite group that we
  denote $G_{\mathrm{abs}}$, the \emph{abstract group
  associated to $G$}.
  \item  Linear representations of finite linearly reductive group schemes
   admit distinguished lifts
  (Proposition \ref{prop: liftingrepresentation}).
  Specialisation yields a bijection 
  $$
   {\rm sp } \,:\, {\rm Rep}_{\CC}(G_{\mathrm{abs}}) \,\to\, {\rm Rep}_k(G)  
  $$
  between (very small) representations
  of a linearly reductive group scheme $G$ and (very small)
  complex representations of the group $G_{\mathrm{abs}}$,
  see Corollary \ref{cor: reptheoriesthesame} and Remark \ref{rem: cde}.
  \item Admitting a very small representation puts
  strong restrictions on a finite linearly reductive group scheme $G$:
  namely, every abelian subgroup scheme of $G_{\mathrm{abs}}$
  must be cyclic (Theorem \ref{thm: repsofGlr}).
\end{enumerate}
Finite groups all of whose abelian subgroups are cyclic
have been studied in different contexts 
by Milnor, Suzuki, Thomas, Wall, Wolf, Zassenhaus, and others,
see, for example, \cite{Milnor, Wolf, Zassenhaus}.

Putting all these results and observations together, we obtain
a (rough) classification of linearly reductive
group schemes that admit very small representations.
Let us highlight some of the results:
\begin{enumerate}
    \item We recover the classification of finite, small, and linearly 
    reductive subgroup schemes of $\SL_2$, due to
    Klein \cite{Klein} (characteristic zero) and Hashimoto \cite{Hashimoto}
    (positive characteristic), see Theorem \ref{thm: hashimoto}.
    \item We extend Brieskorn's classification \cite{Brieskorn}
    of finite and small subgroups
    of $\GL_{2}(\CC)$ to finite and linearly reductive subgroup schemes
    of $\GL_2$ (any characteristic), see Theorem \ref{thm: smallingl2}.
    \item We show that all very small subgroup schemes
    of $\GL_d$ for $d$ odd are cyclic or split metacyclic, see
    Theorem \ref{thm: verysmallgroupsinsmalldimension}.
    \item We obtain a classification of very small
    subgroup schemes of $\SL_3$ (any characteristic),
    see Corollary \ref{cor: SL3}.
   \item We show that if $d$ is a power of ${\rm char}(k)$, 
    then every finite and very small subgroup scheme of $\GL_{d,k}$ 
    is cyclic.
\end{enumerate}
We refer to Section \ref{sec: classificationofverysmall} for precise 
definitions and statements
and more applications, corollaries, and examples.

\subsection{Linearly reductive quotient singularities}
Associated to a finite and very small subgroup scheme $G\subseteq \GL_{d,k}$
over some algebraically closed field $k$ of characteristic $p\geq0$,
we have a $G$-action on $\mathbb{A}^d_k$ that is is free outside 
the closed point.
By definition, the quotient $\mathbb{A}_k^d/G$ (and every
$k$-scheme that is formally isomorphic to it) is called a
\emph{linearly reductive quotient singularity} or 
\emph{lrq singularity} for short.

It is known that invariant rings by linearly reductive group schemes 
satisfy many nice properties. 
We will summarise some of them in the following proposition, where 
most of the statements are more or less 
well-known --- if $G^\circ$ is trivial, then they are definitely 
well-known.

\begin{Proposition}[Proposition \ref{prop: lrq}]
 \label{prop: introduction}
  Assume that $G$ is a finite, linearly reductive, and very small subgroup scheme of $\GL_d$ that
  acts linearly on $S:=k[[u_1,...,u_d]]$ and let $R:=S^G$ be the ring of invariants.
  Then
  \begin{enumerate}
   \item The ring $R$ is F-regular.
   \item The class group of $R$ is isomorphic to $\Hom(G,\GG_m)$.
   \item The F-signature of $R$ is equal to $\frac{1}{|G|}$.
   \item The Hilbert--Kunz multiplicity of $(R,\idealm_R)$ is equal to
   $\frac{1}{|G|}{\rm length}(S/\idealm_RS)$.
   \item The local fundamental group of $\Spec R$ is isomorphic to $G^\et$.
  \end{enumerate}
\end{Proposition}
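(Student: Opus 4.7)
The plan is to dispatch the five items by exploiting two structural consequences of linear reductivity: the Reynolds operator, which makes $R \hookrightarrow S$ split as a map of $R$-modules, and the resulting isotypical decomposition
\[
S \;=\; \bigoplus_{\chi \in \Hom(G, \GG_m)} S_\chi, \qquad S_{\mathrm{triv}} = R.
\]
Via the equivalence between representations of $G$ and of the abstract group $G_{\mathrm{abs}}$ (Corollary \ref{cor: reptheoriesthesame}), each item reduces to a formal translation of the classical characteristic-zero (or prime-to-$p$) case.

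For (1), the Reynolds operator realises $R$ as a direct summand of the regular, and hence strongly F-regular, ring $S$; since this property descends to direct summands of module-finite extensions, $R$ is F-regular. For (2), very smallness means that the $G$-action on $\Spec S$ is free in codimension one, so each $S_\chi$ is a rank-one reflexive $R$-module, and the assignment $\chi \mapsto [S_\chi]$ defines an isomorphism $\Hom(G, \GG_m) \xrightarrow{\sim} \mathrm{Cl}(R)$ by the standard argument for finite groups, which applies in our setting once the isotypical decomposition is in place.

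For (3) and (4), the key technical point is that the Frobenius of $S$ is $G$-equivariant (since $G$ acts by $k$-linear automorphisms), so the isotypical decomposition is preserved by all Frobenius powers. The F-signature computation then extracts the free $R$-summand of $F^e_*R$, yielding $s(R) = \rank_R(R)/\rank_R(S) = 1/|G|$. For the Hilbert-Kunz multiplicity, apply the associativity formula to the finite $R$-module $S$ of generic rank $|G|$:
\[
|G| \cdot e_{\mathrm{HK}}(\idealm_R, R) \;=\; e_{\mathrm{HK}}(\idealm_R S, S) \;=\; \length_S(S/\idealm_R S),
\]
where the last equality holds because $S$ is regular (Kunz's theorem: the Frobenius is flat, and consequently $\length_S(S/J^{[q]}) = q^d \length_S(S/J)$ exactly for any $\idealm_S$-primary $J$). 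The main obstacle in these two parts is the careful verification that the Watanabe--Yoshida formulas, classically stated for finite groups of order prime to $p$, transport to linearly reductive group schemes with non-trivial infinitesimal part; but this becomes formal once the Frobenius compatibility of the character decomposition is in place.

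For (5), the punctured spectra $V := \Spec S \setminus \{\idealm_S\}$ and $U := \Spec R \setminus \{\idealm_R\}$ fit into a $G$-torsor $V \to U$ by very smallness. Quotienting by the infinitesimal normal subgroup scheme $G^\circ$ yields a $G^\et$-torsor $V/G^\circ \to U$, which is an étale Galois cover. Since $V \to V/G^\circ$ is a $G^\circ$-torsor with $G^\circ$ infinitesimal, it is a universal homeomorphism, so the étale sites of $V$ and $V/G^\circ$ coincide; and $V$ itself has trivial étale fundamental group by Zariski--Nagata purity (applicable because $\dim S \geq 2$). Hence $V/G^\circ \to U$ is the universal étale cover of $U$, identifying $\pi_1^{\et,\loc}(\Spec R) \cong G^\et$.
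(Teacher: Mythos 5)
Your items (1), (2), (4) and (5) are fine and essentially follow the paper's route: (1) is verbatim the paper's argument (direct summand of a regular ring), (2) is the standard eigenspace argument that the paper delegates to Benson's Theorem 3.9.2 with the remark that it extends to group schemes, (5) is exactly the paper's proof (pass to $V/G^\circ$ via a universal homeomorphism, then use purity and the $G^{\et}$-torsor structure), and your self-contained associativity-plus-Kunz computation for (4) is a clean version of the argument the paper attributes to Huneke's notes and Watanabe--Yoshida.

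The genuine gap is item (3). The sentence ``the F-signature computation then extracts the free $R$-summand of $F^e_*R$, yielding $s(R)=\rank_R(R)/\rank_R(S)=1/|G|$'' is not an argument: the displayed ratio is numerology, and nothing you have said bounds, let alone computes, the asymptotic free rank $a_{p^e}$ of $F^e_*R$. Even in the classical case ($G$ \'etale of order prime to $p$) the identity $s(S^G)=1/|G|$ is \emph{not} a formal consequence of the character decomposition being Frobenius-stable; it requires an asymptotic count of multiplicities of the irreducible $G$-summands of $F^e_*S$ (via Brauer characters, or via the skew group algebra and the Auslander correspondence). When $G^\circ\cong\bmu_{p^n}$ is nontrivial the situation is worse, not better: the $\bmu_{p^n}$-grading on $F^e_*S$ is the original grading rescaled by $p^e$, which degenerates for $e\geq n$, so ``Frobenius compatibility of the character decomposition'' does not transport the Watanabe--Yoshida argument formally. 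This is precisely the point at which the paper invokes external, substantial results (Theorem B of Carvajal-Rojas--Schwede--Tucker and Theorem C of Carvajal-Rojas on F-signatures of quotients by finite torsors). To repair your proof you should either cite those results or give the genuinely new argument they contain; as written, (3) is asserted rather than proved.
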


In particular, the F-signature detects the length of $G$, whereas
the Hilbert--Kunz multiplicity depends on the embedding of $G$
into $\GL_d$, see Remark \ref{rem: HK}.
Let us also note that isolated toric singularities that are $\mathbb{Q}$-factorial are precisely the lrq singularities by 
linearly reductive group schemes of the form $\bmu_n$, which might be more or less 
well-known:

\begin{Proposition}[Proposition \ref{prop: toric}]
If $x\in X$ be an isolated singularity of dimension $d$ over an algebraically closed
field $k$.
Then, the following are equivalent:
\begin{enumerate}
    \item $x\in X$ is toric and $\mathbb{Q}$-factorial,
    \item $x\in X$ is an lrq singularity by an abelian group scheme,
    \item $x\in X$ is an lrq singularity by $\bmu_n$ for some $n > 0$,
    \item $x\in X$ is an lrq singularity by $\bmu_n = {\rm Cl}(X)^D$, 
    where $\bmu_n$ is embedded into $\GL_d$ as $\zeta \mapsto {\rm diag}(\zeta^{q_1},\hdots,\zeta^{q_d})$ 
    with $1 = q_1 \leq \hdots \leq q_d < n$.
\end{enumerate}
\end{Proposition}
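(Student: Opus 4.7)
Our plan is to close the chain $(4)\Rightarrow(3)\Rightarrow(2)\Rightarrow(1)\Rightarrow(4)$. The implications $(4)\Rightarrow(3)\Rightarrow(2)$ are tautological, since $\bmu_n$ is a finite abelian linearly reductive group scheme. For $(2)\Rightarrow(1)$, I would observe that any finite abelian linearly reductive group scheme $G$ over the algebraically closed field $k$ is of multiplicative type: by Nagata's classification both $G^\circ$ and $G^\et$ are, and an abelian extension of multiplicative-type group schemes over $k$ is again so. Thus $G$ is diagonalizable and every $d$-dimensional representation of $G$ decomposes into a sum of characters, so after an appropriate change of basis we may realize $G$ as a subgroup scheme of the diagonal torus $\GG_m^d\subset\GL_d$. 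The residual torus $T:=\GG_m^d/G$ is again a $d$-dimensional torus and acts on $\mathbb{A}^d/G$ with an open dense orbit, exhibiting $(X,x)\cong(\mathbb{A}^d/G,[0])$ as a toric singularity.

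For $(1)\Rightarrow(4)$ I would invoke the standard toric dictionary. Write (the formal completion of) $(X,x)$ as the torus-fixed point of an affine toric variety $U_\sigma=\Spec k[\sigma^\vee\cap M]$, for a strongly convex rational polyhedral cone $\sigma$ of full dimension in a lattice $N$ of rank $d$. Since the only singularity is at $x$, the cone $\sigma$ is simplicial and every facet is smooth: any $d-1$ of the primitive ray generators $v_1,\dots,v_d$ extend to a $\ZZ$-basis of $N$. Setting $L:=\sum_i\ZZ v_i\subset N$, the $v_i$ form a $\ZZ$-basis of $L$, so $U_{\sigma,L}\cong\mathbb{A}^d$ and $U_\sigma\cong\mathbb{A}^d/D(M_L/M_N)$, where $M_L\supset M_N$ denote the dual lattices. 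Extending $v_1,\dots,v_{d-1}$ to a $\ZZ$-basis $(v_1,\dots,v_{d-1},w)$ of $N$ and writing $v_d=a_1v_1+\cdots+a_{d-1}v_{d-1}+a_d w$ in this basis shows that $N/L$ is cyclic of order $|a_d|$, generated by the class of $w$. Setting $n:=|N/L|$, the group $M_L/M_N$ is also cyclic of order $n$, so $D(M_L/M_N)\cong\bmu_n$. Combining with Proposition \ref{prop: introduction}(2) then yields $\mathrm{Cl}(X)\cong\mathrm{Hom}(\bmu_n,\GG_m)\cong\ZZ/n$ and $\bmu_n\cong\mathrm{Cl}(X)^D$.

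Finally, in the coordinates $u_1,\dots,u_d$ dual to $v_1,\dots,v_d$ the action is diagonal with weights $q_i\in\ZZ/n$ given by the classes of the dual basis vectors $v_i^\vee$. By Proposition \ref{prop: lambda}, very smallness is equivalent to isolation of the singularity, and this forces $\gcd(q_i,n)=1$ for every $i$. Each $q_i$ is then a unit in $\ZZ/n$, so replacing the distinguished generator of $\bmu_n$ by its $q_1^{-1}$-th power normalizes $q_1=1$, and a suitable permutation of the coordinates yields $1=q_1\le q_2\le\cdots\le q_d<n$, as required. The principal difficulty lies in the combinatorial fact that isolation of the singularity forces $N/L$ to be cyclic; everything else is essentially formal, given Propositions \ref{prop: introduction} and \ref{prop: lambda}.
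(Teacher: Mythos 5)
Your chain $(4)\Rightarrow(3)\Rightarrow(2)\Rightarrow(1)\Rightarrow(4)$ differs from the paper's, which proves $(2)\Rightarrow(3)\Rightarrow(4)$ by representation theory (Theorem \ref{thm: repsofGlr}: every abelian subgroup of $G_{\mathrm{abs}}$ is cyclic, so $G\cong\bmu_n$ and the action diagonalizes), gets $(4)\Rightarrow(1)$ by descending the diagonal torus, and proves $(1)\Rightarrow(2)$ via the Cox construction of Geraschenko--Satriano together with the Chevalley--Shephard--Todd theorem. Your direct diagonalization argument for $(2)\Rightarrow(1)$ is fine, and your $(1)\Rightarrow(4)$ replaces the Cox construction by an explicit cone-and-lattice computation; where the paper extracts cyclicity of the group from representation theory over $\CC$, you extract it from the lattice quotient $N/L$. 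That trade is legitimate in principle.

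However, there is a genuine gap at the first step of your $(1)\Rightarrow(4)$: the claim that isolatedness of the singularity forces $\sigma$ to be simplicial. What isolatedness gives you is that every \emph{proper face} of $\sigma$ is a smooth cone, and smoothness of all facets does not imply that $\sigma$ is simplicial once $d\geq 3$. The standard counterexample is the cone over the unit square in $\ZZ^3$, with rays $(0,0,1),(1,0,1),(0,1,1),(1,1,1)$: every facet is generated by part of a $\ZZ$-basis, yet the cone has four rays. The corresponding singularity is the three-fold ordinary double point $xy=zw$, which is an isolated toric singularity with ${\rm Cl}=\ZZ$; since an lrq singularity has finite class group by Proposition \ref{prop: lrq}, this point is not a quotient of $\widehat{\Aff}^3$ by any finite group scheme, so your sentence ``the only singularity is at $x$, hence $\sigma$ is simplicial'' cannot be repaired without an extra hypothesis. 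Everything downstream (cyclicity of $N/L$, the weights $q_i$, the identification with ${\rm Cl}(X)^D$, the normalization $q_1=1$) is correct \emph{granted} simpliciality. To be fair, the paper's own proof of $(1)\Rightarrow(2)$ quietly makes the same assumption: invoking the Cox construction to present $\Spec k[M]$ as a quotient of $\mathbb{A}^d$ by the \emph{finite} group scheme ${\rm Cl}(X)^D$ already presupposes that the cone is simplicial, equivalently that ${\rm Cl}(X)$ is finite ($X$ is $\QQ$-factorial). So the defect is really in the hypotheses of the statement; but your write-up asserts a false implication as the justification, whereas the correct fix is to add $\QQ$-factoriality (or finiteness of the class group) to the hypotheses, after which both your argument and the paper's go through.
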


Isolated toric singularities are characterised among the isolated lrq singularities by the property that they have ``good reduction'' modulo every prime $p$ (in the sense of Definition \ref{def: goodreduction}).

\begin{Proposition}[Proposition \ref{prop: goodreductionimpliestoric}]
Let $X$ be an lrq singularity by a finite group $G$ over an algebraically closed 
field $K$ of characteristic $0$. 
Then, $X$ has good reduction modulo every prime $p \in \ZZ$ if and only if $X$ is 
toric and then, $X$ can be defined over $\ZZ$ (as an lrq singularity).
\end{Proposition}

\subsection{Uniqueness}

Over the complex numbers, the universal topological cover of 
a finite quotient and isolated singularity is smooth and the local fundamental group
acts on it by deck transformations. 
This shows uniqueness of the group of a finite quotient singularity,
as well as uniqueness of the action (up to conjugation).
For linearly reductive quotient singularities, we have a similar
uniqueness statement, but the proof is much more involved
(the reason is that if $G$ is not \'etale, then topological methods
do not give any information about $G^\circ$).

\begin{Theorem}[Theorem \ref{thm: lrqmain}]
 Let $R=S^G$ be as in Proposition \ref{prop: introduction}.
 If $H$ is another finite group scheme acting on $S$, 
 such that the $H$-action is free outside the closed point, $H$ fixes the 
 closed point, and such that the invariant ring $S^H$ is isomorphic to $R$, then 
 \begin{enumerate}
 \item $H$ is isomorphic to $G$,
 \item the $H$-action is linearisable, and then, 
 \item the two actions are conjugate in $\GL_d$.
 \end{enumerate}
\end{Theorem}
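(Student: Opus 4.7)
The plan is to prove (1), (2), and (3) in sequence: first upgrade the hypothesis on $H$ to linear reductivity, then apply the Cartan-type linearization of Proposition \ref{prop: linearizable} to obtain (2), and finally identify $H$ with $G$ canonically and read off the conjugating element of $\GL_d$.

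\emph{Step 1 (linear reductivity of $H$).} By Proposition \ref{prop: introduction}(1) applied to the $G$-presentation, the ring $R$ is F-regular. We are thus in the setting of a finite extension $R = S^H \subset S$ with $S$ regular, finite over $R$, and étale in codimension one (the latter by freeness of the $H$-action outside the closed point). The essential input is the converse to Proposition \ref{prop: introduction}(1): for an isolated quotient singularity by a finite group scheme acting freely in codimension one, F-regularity of the invariants forces the group scheme to be linearly reductive. This is what promotes $H$, a priori merely finite, to a finite linearly reductive group scheme.

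\emph{Step 2 (linearization).} Because $H$ is now known to be linearly reductive, fixes the closed point, and acts freely on the punctured spectrum, Proposition \ref{prop: linearizable} applies: the $H$-action on $S = k[[u_1,\ldots,u_d]]$ is linearizable. After a change of coordinates in $S$ we may assume $H \hookrightarrow \GL_d$ acts linearly, and freeness outside the closed point says this embedding is very small. This establishes (2), and also reduces the remaining content to a comparison of two linear very small presentations of the same $R$.

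\emph{Step 3 (identification of $H$ with $G$ and conjugacy of actions).} Set $U := \Spec S \setminus \{\idealm_S\}$ and $V := \Spec R \setminus \{\idealm_R\}$. Then $U \to V$ is simultaneously a $G$-torsor and an $H$-torsor with regular total space. The cover $\Spec S \to \Spec R$ is universal among covers of $\Spec R$ that are étale-in-codimension-one torsors under a finite and linearly reductive group scheme with regular total space; this characterization is intrinsic to $R$, so its Galois group is canonically attached to $R$. The resulting canonical isomorphism $H \cong G$ proves (1). For the conjugacy: under this identification, both linear embeddings of $G$ into $\GL_d$ are recovered from the induced $G$-representation on the cotangent space $\idealm_S/\idealm_S^2$, so they differ only by the choice of a basis of this $d$-dimensional $k$-vector space, that is, by conjugation in $\GL_d$. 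This is (3).

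\emph{Main obstacle.} The heart of the argument is Step 1: bootstrapping from F-regularity of the invariants to linear reductivity of the acting group scheme is the deepest input, and it is here that the F-singularity theory invoked in Proposition \ref{prop: introduction} plays a substantive role rather than merely producing numerical invariants. A secondary subtlety, addressed in Step 3, is the uniqueness of the universal linearly reductive cover of the punctured spectrum, which in positive and mixed characteristic plays the role that the complex-analytic universal cover of $(\CC^d/G)^{\rm reg}$ plays in Prill's classical uniqueness argument.
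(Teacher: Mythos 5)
Your Steps 1 and 2 match the paper exactly: the paper also bootstraps from F-regularity of $R$ (Proposition \ref{prop: lrq}) to linear reductivity of $H$ by the argument of Proposition \ref{prop: lrq=Fregular}, and then linearizes via Proposition \ref{prop: linearizable}. The gap is in Step 3, and it is substantive. Your appeal to a ``universal cover of $\Spec R$ among \'etale-in-codimension-one torsors under finite linearly reductive group schemes with regular total space, whose Galois group is canonically attached to $R$'' is not something you can take for granted: for infinitesimal group schemes there is no Galois formalism of this kind, and the closely related question of whether $G$ is recovered as the local Nori fundamental group scheme $\piNloc(X)$ is left \emph{open} in the paper. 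Worse, the paper explicitly warns (in the remark after Lemma \ref{lem: toricautomorphisms}) that two presentations of $R$ as a quotient of $S$ by $G^\circ$ need \emph{not} be isomorphic as torsors over $\Spec R$ --- they are only $G$-equivariantly isomorphic as $k$-schemes, covering a possibly nontrivial automorphism of $\Spec R$. So strict universality over $\Spec R$ is false as stated, and your deduction that the Galois group and the cotangent representation are ``canonically attached to $R$'' does not go through as written.

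What the paper does instead is: identify $H^{\et}\cong G^{\et}$ via $\pietloc(X)$ (where genuine Galois theory is available), identify $H^\circ\cong G^\circ$ via the class group $\Cl(\widehat{\mathbb{A}}^d/G^\circ)\cong\Hom(G^\circ,\GG_m)$, and recover the semidirect product structure from the $\pietloc(X)$-action on the class group; this yields (1). The conjugacy (3) is then \emph{not} a formality about choosing a basis of $\idealm_S/\idealm_S^2$: one must first construct a $G$-equivariant isomorphism $\psi$ of $\Spec S$ intertwining the two actions and covering an automorphism of $\Spec R$. This is Lemma \ref{lem: toricautomorphisms}, a genuinely technical argument using the toric structure of $S$ over $S^{G^\circ}$: one decomposes the pushforward of $\OO_{\widehat{\mathbb{A}}^d}$ into class-group eigenspaces $I_i$, compares the two multiplication maps via units $c_i\in\OO_X^\times$ with $c_{i+j}\equiv c_ic_j$ modulo $p^n$-th powers, and builds $\psi$ by extracting $p^n$-th roots monomial by monomial, checking compatibility with both the algebra structure and the $G^{\et}$-action. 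Your proposal assumes the existence of exactly this equivariant identification, which is the heart of the theorem.
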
 

Note that we do not assume $H$ to be linearly reductive  - this is a consequence
of the theorem.

\subsection{Rigidity of lrq singularities}
Over the complex numbers, a classical theorem of Schlessinger \cite{Schlessinger}
states that finite and isolated quotient singularities in dimension $\geq3$
are infinitesimally rigid.
Moreover, by loc.cit. this is still true in characteristic $p>0$
for quotient singularities by finite groups of order prime to $p$.

For lrq singularities, we will prove that Schlessinger's rigidity
theorem is still true in dimension $\geq4$,
which was already known to the experts as it
follows from a theorem of Satriano \cite{SatrianoDeRham}.
A little bit to our surprise, it turns out that
the situation is more complicated and subtle in dimension $3$.

\begin{Theorem}[Corollary \ref{cor: rigidity}]
Let $x \in X = 0 \in (\mathbb{A}^d/G)^\wedge$ be an lrq singularity. 
\begin{enumerate}
    \item If $d \geq 4$, then $X$ is infinitesimally rigid.
    \item If $d = 3$, then $X$ is rigid, but not necessarily infinitesimally rigid. 
\end{enumerate}
\end{Theorem}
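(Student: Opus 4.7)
The plan is to analyse the tangent-obstruction theory of $X$ via the $G$-equivariant geometry of the punctured formal neighbourhood $U := \Spec S \setminus \{\idealm_S\}$, on which $G$ acts freely since $\rho$ is very small. The quotient $\pi \colon U \to U_X := U/G$ is an fppf $G$-torsor over the smooth scheme $U_X$; linear reductivity of $G$ makes $(-)^G$ exact, and descent along the torsor gives $H^i(U_X, \mathcal{F}) = H^i(U, \pi^\ast \mathcal{F})^G$. Combined with the fundamental short exact sequence attached to the torsor,
$$
 0 \longrightarrow \mathrm{Lie}(G) \otimes_k \mathcal{O}_U \longrightarrow T_U \longrightarrow \pi^\ast T_{U_X} \longrightarrow 0,
$$
this produces a long exact sequence computing $T^1_X \cong H^1(U_X, T_{U_X})$ in terms of $H^i(U, T_U)^G$ and $\mathrm{Lie}(G) \otimes H^i(U, \mathcal{O}_U)^G$. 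Since $T_{\mathbb{A}^d}$ is free of rank $d$ and $\mathbb{A}^d$ has depth $d$ at the origin, $H^j(U, \mathcal{O}_U) = 0$ and $H^j(U, T_U) = 0$ for $1 \leq j \leq d-2$. For part~(1), when $d \geq 4$ both $H^1(U, T_U)^G$ and $\mathrm{Lie}(G) \otimes H^2(U, \mathcal{O}_U)^G$ vanish, so $T^1_X = 0$ and $X$ is infinitesimally rigid.

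For part~(2) with $d = 3$, $H^1(U, T_U)^G$ still vanishes by depth, but $H^2(U, \mathcal{O}_U) \cong H^3_{\{0\}}(\mathbb{A}^3, \mathcal{O})$ is non-zero (spanned by the strictly negative-degree monomials), and when $G^\circ \neq 1$ the long exact sequence identifies $T^1_X$ with the kernel of a map $\mathrm{Lie}(G) \otimes H^2(U, \mathcal{O}_U)^G \to H^2(U, T_U)^G$ which need not be injective. Drawing on the classification of very small subgroup schemes of $\GL_3$ from Theorem~\ref{thm: verysmallgroupsinsmalldimension} and Corollary~\ref{cor: SL3}, I would enumerate the short list of cases in which $T^1_X \neq 0$, exhibiting lrq singularities that are not infinitesimally rigid. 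To prove $X$ is nevertheless rigid in every such case, I would compute the obstruction map on each non-zero first-order deformation and verify its injectivity. A conceptual check is provided by the canonical characteristic-zero lift $\widetilde{X}/W(k)$ from Propositions~\ref{prop: liftinggroupscheme} and~\ref{prop: liftingrepresentation}: its generic fibre is an isolated $3$-dimensional quotient singularity in characteristic zero, hence infinitesimally rigid by Schlessinger's classical theorem, so no deformation of $X$ that lifts to $W(k)$ can be non-trivial.

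The main obstacle is the obstruction computation in dimension $3$. The list of candidate groups is short, so a case-by-case treatment is feasible, but verifying injectivity of the obstruction map in each case is where the real work lies; the very small hypothesis on $\rho$ is essential throughout, both to guarantee that $U_X$ is smooth and to ensure that infinitesimal deformations of $X$ respect the isolated-singularity structure.
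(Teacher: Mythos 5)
Your overall strategy for part (1) matches the paper's (reduce to cohomology vanishing on the punctured cone $V$ using exactness of $G$-invariants for linearly reductive $G$ plus depth of the free tangent sheaf), but the exact sequence you build it on is not correct when $G^\circ$ is non-trivial, which is the only case beyond Schlessinger. For an inseparable quotient $f\colon V\to U$ by an infinitesimal group scheme the map $T_V\to f^*T_U$ is \emph{not} surjective; e.g.\ for $\bmu_p$ acting on $\GG_m$ by translation, $\partial_t\mapsto (\partial t^p/\partial t)\,\partial_s=0$. The correct statement is Ekedahl's four-term exact sequence
$$
0 \,\to\, T_{V/U} \,\to\, T_V \,\to\, f^* T_U \,\to\, T_{V/U}^{p^n} \,\to\, 0,
$$
whose extra cokernel term is the Frobenius twist of the (rank-one, free) foliation sheaf; this is what the paper uses in Proposition \ref{prop: rigidity}. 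Your vanishing conclusion for $d\geq 4$ does survive the repair, because splitting the four-term sequence into two short ones only requires $H^1$ and $H^2$ of free sheaves on $V$ to vanish, which holds for $d\geq 4$; but as written your sequence is false and your identification of $T^1_X$ with $\Ker\bigl(\mathrm{Lie}(G)\otimes H^2(V,\OO_V)^G\to H^2(V,T_V)^G\bigr)$ in dimension $3$ needs the additional argument (surjectivity of $H^0(V,f^*T_U)\to H^0(V,T_{V/U}^{p^n})$) that the paper supplies to turn a surjection into an isomorphism.

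For part (2) there is a genuine gap: rigidity in dimension $3$ is exactly the assertion that the one-dimensional space of first-order deformations (present iff $\chi_{\rm ad}=\chi_{\det}$, equivalently after reduction to $G=G^\circ=\bmu_{p^n}$, iff $p^n\mid q_1+q_2+q_3$) is obstructed at second order over $k[\varepsilon]/(\varepsilon^3)$ and over equicharacteristic DVRs, and your proposal defers precisely this computation. The ``conceptual check'' via the canonical lift to $W(k)$ cannot substitute for it: a deformation over $k[[t]]$ need not lift to mixed characteristic, so Schlessinger's theorem applied to the generic fibre of the lift says nothing about equicharacteristic-$p$ deformations. The paper's Theorem \ref{thm: deformation space} does the work explicitly: it writes the first-order deformation as a \v{C}ech cocycle $f_{ij}$ on the standard cover of the punctured toric quotient, computes $\beta_{12}\beta_{23}\beta_{31}$, and shows the resulting obstruction class $\varepsilon p^n D_0/(u_1u_2u_3)+\varepsilon^2 D_1/(u_1u_2u_3)^2$ involves two $k$-linearly independent elements of $H^2(U,T_U)$, forcing the miniversal base to be $\Spec W(k)[\varepsilon]/(\varepsilon^2,p^n\varepsilon)$. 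Note also that no case-by-case enumeration over the classification of very small subgroup schemes of $\GL_3$ is needed: the paper first shows $\Def_X\cong\Def_{\mathbb{A}^3/G^\circ}$, reducing everything to the single cyclic case $\frac{1}{p^n}(q_1,q_2,q_3)$.
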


Here, (infinitesimal) rigidity is defined in terms of deformations
over equicharacteristic Artin rings (Definition \ref{def: rigid2}).
An lrq singularity over an algebraically closed field $k$ of
characteristic $p>0$ admits a \emph{canonical lift}, see
Corollary \ref{cor: canonicallift}.
In particular, lrq singularities in mixed characteristic
are not rigid.
Nevertheless, we refer to Section \ref{subsec: arithmetic rigidity}
for deformations in mixed characteristic
and a version of arithmetic (infinitesimal) rigidity.

\subsection{Deformation spaces in dimension 3}
The previous theorem begs for the computation of the miniversal
deformation spaces of 3-dimensional lrq singularities.
To state the results, we consider the following two $G$-representations,
which are at most one dimensional.
\begin{enumerate}
  \item the adjoint representation $\chi_{\rm ad} :G\stackrel{{\rm ad}}{\to} \Aut({\rm Lie}(G^\circ)) \in \{\{\id\}, \GG_m\}$, 
  which is at most one-dimensional and depends on $G$ only, and 
  \item the determinant $\chi_{\rm det}:G\stackrel{\rho}{\to}\GL_3\to\GG_m$,
  which depends on the $G$-action $\rho$.
\end{enumerate}
Then, we can determine which 3-dimensional lrq singularities are infinitesimally
rigid and compute the deformation spaces in the other cases:

\begin{Theorem}[Theorem \ref{thm: non-rigid 3foldclassification} and Theorem \ref{thm: deformation space}]
 Let $x\in X=(\mathbb{A}^3/G)^\wedge$ be a 3-dimensional lrq singularity
 over an algebraically closed field $k$ of characteristic $p>0$.
 Then,
 \begin{enumerate}
     \item $x\in X$ is infinitesimally rigid if and only if 
     $G$ is \'etale or if $G$ is not \'etale and $\chi_{\rm ad}\neq\chi_{\rm det}$.
     \item If $x\in X$ is not infinitesimally rigid, then
     $G^\circ=\bmu_{p^n}$ for some $n\geq1$ and the miniversal deformation space is isomorphic to
   $$
    \Def_X \,\cong\, \Spec W(k)[\varepsilon]/(\varepsilon^2, p^n \varepsilon),
   $$
 where $W(k)$ denotes the Witt ring.
 \end{enumerate}
\end{Theorem}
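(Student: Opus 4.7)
The plan is to compute $T^1_X$ explicitly and then analyze the obstruction theory to identify the miniversal ring. By the classification of very small three-dimensional representations of finite linearly reductive group schemes from the earlier sections, in the non-\'etale case one has $G^\circ \cong \bmu_{p^n}$ for some $n \geq 1$. If $G$ is \'etale, then $|G|$ is coprime to $p$ and the standard Schlessinger-type vanishing of $T^1$ for linearly reductive quotients in dimension $\geq 3$ yields infinitesimal rigidity, so we may focus on the case $G^\circ = \bmu_{p^n}$.

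For the tangent space, I would use that linear reductivity makes $R \hookrightarrow S$ a Reynolds direct summand, so that first-order deformations of $R = S^G$ correspond via descent to $G$-equivariant deformations of $(S, G\text{-action})$ modulo equivariant automorphisms of $S$. Since $S = k[[u_1,u_2,u_3]]$ is smooth and $\bmu_{p^n}$ is rigid as a group scheme, the essential content is to deform the coaction $S \to S \otimes k[G]$. In the weight basis $u_1, u_2, u_3$ for the representation $\rho$, a first-order deformation is represented by an equivariant cocycle whose class lies in a specific weight-space of $\Der_k(S)$; after dividing out the coboundaries coming from coordinate changes, this space is identified with the $\chi_{\rm ad}^{-1}\chi_{\rm det}$-weight part of $S$ modulo the irrelevant ideal. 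It therefore vanishes exactly when $\chi_{\rm ad} \neq \chi_{\rm det}$ and is one-dimensional otherwise, generated by an explicit class built from $u_1 u_2 u_3$ and an Euler-type derivation. This yields part (1) and gives $T^1_X \cong k$ in the non-rigid case.

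For the global structure of $\Def_X$, I combine the canonical lift established earlier---which supplies a $W(k)$-flat formal deformation and hence a morphism $\Spec W(k) \to \Def_X$---with the infinitesimal $\varepsilon$-direction just found. The relation $\varepsilon^2 = 0$ is automatic. To obtain $p^n\varepsilon = 0$, I would lift the $\varepsilon$-deformation step by step through $W_m(k)[\varepsilon]/\varepsilon^2$ by propagating the deformed coaction along the canonical Witt-vector lift of $\bmu_{p^n}$; the lift is controlled by an obstruction that vanishes as long as the coaction remains compatible with $\bmu_{p^n}$ modulo $p^m$, and an explicit computation shows that this works up to $m = n$ but is obstructed at $m = n+1$. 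To see that the miniversal ring is exactly $W(k)[\varepsilon]/(\varepsilon^2, p^n\varepsilon)$, one then exhibits a concrete family over this ring, for instance by deforming the cocharacter $\bmu_{p^n} \hookrightarrow \GG_m$ by an auxiliary $p^n$-torsion parameter.

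The hard part will be the obstruction computation in the last step: identifying the relevant class in second Hopf cohomology of the coaction and verifying that it is annihilated precisely by $p^n$ and by no smaller power. This is where the order of $G^\circ$ enters directly, and it requires tracking how equivariant gauge equivalences interact with the Witt-vector lift of the $\bmu_{p^n}$-structure. All remaining steps---the tangent-space identification, smoothness of the $W(k)$-direction, and universality---follow either from results established in earlier sections (the canonical lift, linear reductivity, the classification of very small 3-dimensional representations) or from standard miniversal-deformation formalism.
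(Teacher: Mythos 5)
Your overall skeleton (reduce to $G^\circ=\bmu_{p^n}$, use the canonical lift for the $W(k)$-direction, exhibit an explicit family over $W(k)[\varepsilon]/(\varepsilon^2,p^n\varepsilon)$, kill the rest by an obstruction computation) matches the paper, and your predicted formula for $T^1_X$ is the right one. But the mechanism you propose for computing $T^1_X$ does not work. It is not true that first-order deformations of $R=S^G$ correspond to $G$-equivariant deformations of $(S,\text{coaction})$ modulo equivariant automorphisms: since $S$ is formally smooth and $G$ is linearly reductive, every equivariant deformation of the pair is trivial (the action linearizes and linear representations of a linearly reductive group scheme are rigid), so your descent principle would yield $T^1_X=0$ in all cases, contradicting part (1). (Already in dimension $2$, the $A_{p^n-1}$-singularity $\mathbb{A}^2/\bmu_{p^n}$ has large $T^1$ while all equivariant deformations upstairs are trivial.) The point in positive characteristic is precisely that $f\colon V\to U$ is purely inseparable, so deformations of $U$ need not lift to $V$ at all; the nontrivial classes come from the foliation. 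The paper computes $T^1_X=H^1(U,T_U)$ via Ekedahl's exact sequence $0\to T_{V/U}\to T_V\to f^*T_U\to T_{V/U}^{p^n}\to 0$, identifies $H^1(V,f^*T_U)$ with $\Ker\bigl(H^2(V,T_{V/U})\to H^2(V,T_V)\bigr)=\langle D\otimes(u_1u_2u_3)^{-1}\rangle$, and observes that $G$ acts on this line by $\chi_{\rm ad}\cdot\chi_{\rm det}^{-1}$ (the factor $\chi_{\rm ad}$ from $T_{V/U}\cong{\rm Lie}(G^\circ)\otimes\OO_V$, the factor $\chi_{\rm det}^{-1}$ from $H^2(V,\OO_V)[\idealm]$). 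Your weight-space formula is exactly what falls out of this, but you need the foliation sequence to justify it.

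A second gap: in part (2) you assert that $\varepsilon^2=0$ is automatic. It is not; a one-dimensional tangent space only tells you the hull is a quotient of $W(k)[[\varepsilon]]$, and both relations $p^n\varepsilon=0$ and $\varepsilon^2=0$ are obstruction statements. The paper obtains them simultaneously from the single cocycle identity $\beta_{12}\beta_{23}\beta_{31}=\id$ for the gluing data $\beta_{ij}=\exp(\varepsilon f_{ij})+\theta_{ij}$ on the standard toric cover, which produces the obstruction class $\varepsilon p^n\,D_0/(u_1u_2u_3)+\varepsilon^2\,D_1/(u_1u_2u_3)^2$, and then proves that $D_0/(u_1u_2u_3)$ and $D_1/(u_1u_2u_3)^2$ are $k$-linearly independent in $H^2(U,T_U)$ (one detected in $H^2(V,T_{V/U}^{p^n})^G$, the other in the cokernel of $H^2(V,T_{V/U})^G\to H^2(V,T_V)^G$). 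Your proposed route through ``second Hopf cohomology of the coaction'' inherits the same defect as your $T^1$ computation. Finally, the reduction to the toric case is not just ``focus on $G^\circ=\bmu_{p^n}$'': one must prove $\Def_X\cong\Def_{\mathbb{A}^3/G^\circ}$, which the paper does by checking that this map is formally smooth and induces an isomorphism on tangent spaces after taking $G^\et$-invariants.
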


In particular, miniversal deformation spaces of 3-dimensional lrq singularities
can be arbitrarily non-reduced.
We refer to Section \ref{subsec: nonrigidity} for precise statements, applications,
corollaries, and explicit examples.

We will discuss deformations of 2-dimensional
lrq singularities in Section \ref{subsec:introdeflrqdim2}.

\subsection{Characterisation of F-regular singularities in dimension 2}
Over the complex numbers and in dimension two, it is well-known 
that the \emph{canonical singularities} (resp. \emph{klt singularities}) 
are precisely the quotient singularities
by finite subgroups of $\SL_2$ (resp. $\GL_2$).

Generalising results of Hashimoto \cite{Hashimoto},
Liedtke--Satriano \cite{LiedtkeSatriano},
and using the classification of finite linearly reductive
group schemes of $\SL_2$ and $\GL_2$,
we show the following.

\begin{Theorem}[Theorem \ref{thm: FregularRDP}] \label{thm: Characterization of F-regular}
  For a surface singularity over an algebraically closed field of characteristic $p>0$, 
  the following are equivalent
  \begin{enumerate}
   \item it is an lrq singularity with respect to a finite and very small subgroup
   scheme of $\GL_2$ (resp. $\SL_2$)
   \item it is F-regular (resp. F-regular and Gorenstein).
  \end{enumerate}
  Moreover, if $p\geq7$, these are equivalent to
  \begin{enumerate}
   \setcounter{enumi}{2}
   \item it is a normal klt singularity (resp. a rational double point).
  \end{enumerate}
\end{Theorem}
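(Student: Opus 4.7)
The plan is to prove the three equivalences in turn, reducing each to either machinery developed earlier in this paper or to established results from the literature. For (1) $\Rightarrow$ (2), F-regularity of $R = S^G$ is immediate from Proposition \ref{prop: introduction}(1). For the Gorenstein refinement I would apply a Watanabe-type criterion: since $G$ is linearly reductive, taking invariants commutes with duality, so $\omega_R = (\omega_S)^G$, and $\omega_S = S \cdot (du_1 \wedge du_2)$ carries the one-dimensional character $\det \rho : G \to \GG_m$. Hence $\omega_R$ is free of rank one over $R$ precisely when $\det \rho$ is trivial, i.e.\ when $G \subseteq \SL_2$.

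For (2) $\Rightarrow$ (1), the main content of the theorem, I would argue in two steps. First, the Gorenstein case is Hashimoto's theorem \cite{Hashimoto}: an F-regular Gorenstein surface singularity is an lrq singularity by a finite linearly reductive subgroup scheme of $\SL_2$, and together with our classification in Theorem \ref{thm: hashimoto} this subgroup scheme is automatically very small. For the general F-regular case, I would follow Liedtke--Satriano \cite{LiedtkeSatriano} and pass to the canonical (index-one) cover, which is again F-regular and is now Gorenstein; applying the Gorenstein step and composing the two quotient presentations exhibits the original singularity as a quotient of $\Spec k[[u,v]]$ by a finite linearly reductive subgroup scheme of $\GL_2$, which by Theorem \ref{thm: smallingl2} is automatically very small. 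The main obstacle is constructing the infinitesimal part $G^\circ$ from intrinsic F-singularity data: while the \'etale quotient is controlled by the local fundamental group, recovering $G^\circ$ requires producing the relevant $\bmu_{p^n}$-covers, which is the substantive technical content of the cited works.

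Finally, for the equivalence with (3) in characteristic $p \geq 7$: the implication F-regular $\Rightarrow$ klt holds in any characteristic, and the converse is Hara's theorem and its refinements in characteristic $\geq 5$, giving the $\GL_2$-direction. For the $\SL_2$-direction, an F-regular Gorenstein surface singularity is rational (by Smith's theorem) and Gorenstein, hence a rational double point; conversely, every RDP is canonical (in particular klt), so by Hara's theorem again it is F-regular in characteristic $\geq 7$.
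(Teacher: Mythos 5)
Your treatment of $(1)\Rightarrow(2)$, of the Gorenstein refinement via the character $\det\rho$, of the whole $\SL_2$/RDP branch, and of the equivalence with (3) for $p\geq 7$ via Hara's classification all match the paper (Theorem \ref{thm: FregularRDP} and Theorem \ref{thm: Fregularclassification}); the only quibbles there are cosmetic (the threshold is $p\geq 7$, not ``$\geq 5$'', and the realization of the F-regular RDPs as $\SL_2$-quotients is Theorem \ref{thm: LS}, i.e.\ \cite{Hashimoto} together with \cite{LiedtkeSatriano}, combined with Artin's and Hara's lists).

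The gap is in your main step, $(2)\Rightarrow(1)$ for non-Gorenstein F-regular singularities via the index-one cover. Granting that $\widetilde{X}$ is again F-regular and Gorenstein (this does hold, but it needs the torsor results of \cite{Carvajal-Rojas}, not \cite{LiedtkeSatriano}), you obtain $\widetilde{X}\cong(\mathbb{A}^2/H)^\wedge$ with $H\subseteq\SL_2$ very small and $X=\widetilde{X}/\bmu_n$. The assertion that ``composing the two quotient presentations'' exhibits $X$ as $(\mathbb{A}^2/G)^\wedge$ is exactly where the characteristic-$p$ difficulty sits and is not addressed: when $p\mid n$ or $p\mid |H|$ there is no universal cover argument, a composite of group-scheme torsors over the punctured spectrum need not be a torsor, and one must actually lift the $\bmu_n$-action on $k[[u,v]]^H$ to an action on $k[[u,v]]$ normalizing $H$ and producing an extension $1\to H\to G\to\bmu_n\to 1$. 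This lifting is of the same order of difficulty as the paper's uniqueness machinery (compare the proof of Lemma \ref{lem: toricautomorphisms} and Theorem \ref{thm: lrqmain}), and it is \emph{not} contained in the works you cite, which only treat the Gorenstein/RDP case by writing down explicit quotient presentations. The paper avoids this entirely: in the proof of Theorem \ref{thm: Fregularklt} it invokes tautness of F-regular surface singularities (Theorem \ref{thm: Fregulartaut}, due to Tanaka), so that it suffices to match the dual resolution graphs allowed by Hara's classification with the dual graphs of the quotients $(\mathbb{A}^2/G)^\wedge$ for $G$ in the list of Theorem \ref{thm: smallingl2}; the latter are computed by reducing, via an equivariant blow-up and the Chevalley--Shephard--Todd theorem, to Brieskorn's complex calculation \cite{Brieskorn}. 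If you want to salvage your route, you must either supply the torsor-composition/lifting argument or replace it by the tautness-plus-graph-matching argument of the paper.
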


We note that, if $p\geq7$, then the F-regular (resp. F-regular and Gorenstein)
surface singularities are precisely 
the klt singularities (resp. canonical singularities), 
as follows from Hara's classification \cite{Hara}. 
Hence, if $p\geq7$, then Theorem \ref{thm: Characterization of F-regular} 
establishes a similar picture as over the complex numbers.
We refer to Section \ref{sec: classificationofverysmall} for the subgroup schemes showing 
up and to Section \ref{sec: Fregular} for further details, especially in small characteristics.

As an application, we obtain the following
characteristic $p$ analog of a
theorem of Gurjar \cite{Gurjar} and Kumar \cite{Kumar},
a former conjecture of Wall \cite{Wall}.

\begin{Theorem}[Theorem \ref{thm: Wall}]
Let $x\in X$ be a two-dimensional singularity over an algebraically 
closed field $k$ of characteristic $p>0$.
Assume that $x\in X$ is isomorphic to $(\mathbb{A}^d/G)^\wedge$,
where $G$ is an affine group scheme acting linearly on $\mathbb{A}^d$
and where $G$ is an extension
$$
  1 \,\to\, G_1 \,\to\, G \,\to\, G_2 \,\to\, 1
$$
where $G_2$ is a linearly reductive group scheme 
(not necessarily finite) and where 
$G_2$ is connected reductive algebraic, such that
the action of $G_2$ on $\mathbb{A}^d$ is good in the sense of
\cite{HashimotoGoodfiltrations}.
Then, $x\in X$ is an lrq singularity.
\end{Theorem}

\subsection{Deformations of lrq singularities in dimension 2}
\label{subsec:introdeflrqdim2}
Riemenschneider \cite{Riemenschneider} conjectured
that finite quotient singularities over the complex numbers deform 
to finite quotient singularities.
In dimension $d\geq3$ this is true by Schlessinger's rigidity theorem
\cite{Schlessinger}.
In dimension 2, this conjecture was established by 
Esnault and Viehweg \cite{EsnaultViehweg2}.
Moreover, Koll\'ar and Shepherd-Barron \cite{KSB}
showed that cyclic quotient singularities deform 
to cyclic quotient singularities.
In the final section, we study the following analog for lrq singularities:

\begin{Conjecture}[Conjecture \ref{conj: Riemenschneider}]
  Let $B$ be the spectrum of a DVR with closed, generic, and geometric generic points
  $0$, $\eta$, and $\overline{\eta}$, respectively.
  Let $\mathcal{X}\to B$ be a flat family of $d$-dimensional
  singularities with special and geometric 
  generic fibre $\mathcal{X}_0$ and $\mathcal{X}_{\overline{\eta}}$, respectively. Assume that the non-smooth locus of $\mathcal{X} \to B$ is proper over $B$.
  \begin{enumerate}
    \item If $\mathcal{X}_0$ is an lrq singularity, 
      then $\mathcal{X}_{\overline{\eta}}$ contains at worst 
      lrq singularities.
    \item Let $G_0$ be the group scheme associated to $\mathcal{X}_0$
     and let $G_\eta$ be the group scheme associated to an lrq singularity
     on $\mathcal{X}_{\overline{\eta}}$. 
     Then, we have $|G_0|\geq|G_\eta|$, where $|-|$ denotes the
     length of a group scheme.
    \item If $\mathcal{X}_0$ is a cyclic lrq singularity, 
      then $\mathcal{X}_{\overline{\eta}}$
      contains at worst cyclic lrq singularities.
  \end{enumerate}
\end{Conjecture}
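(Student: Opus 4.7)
The plan is to treat the conjecture dimension by dimension, exploiting the deformation-theoretic results established earlier and reducing to known characteristic-zero results via the canonical lift.

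In dimension $d\geq 4$, parts (1)--(3) follow immediately from Corollary \ref{cor: rigidity}: every lrq singularity is infinitesimally rigid, so the formal family at each singular point of $\mathcal{X}_0$ is trivial after at most a finite \'etale base change of $B$. Artin approximation then propagates this triviality to an \'etale neighbourhood of $\mathcal{X}_0$ inside $\mathcal{X}$, and the geometric generic fibre carries the same lrq singularity with the same group scheme $G$.

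In dimension $d=3$ we invoke Theorems \ref{thm: non-rigid 3foldclassification} and \ref{thm: deformation space}. If $\chi_{\rm ad}\neq\chi_{\rm det}$ then $\mathcal{X}_0$ is infinitesimally rigid and we conclude as above. Otherwise $G^\circ=\bmu_{p^n}$ and $\Def_X\cong\Spec W(k)[\varepsilon]/(\varepsilon^2,p^n\varepsilon)$, whose reduction is $\Spec W(k)$, corresponding to the canonical mixed-characteristic lift of Corollary \ref{cor: canonicallift}. Since $B$ is a domain, the classifying ring homomorphism $W(k)[\varepsilon]/(\varepsilon^2,p^n\varepsilon)\to\widehat{B}$ must kill the nilpotent $\varepsilon$ and hence factors through $W(k)$, so the geometric generic fibre is isomorphic either to $\mathcal{X}_0$ itself or to its canonical lift; both are lrq singularities with the same $G$, proving (1)--(3).

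The substantive case is dimension $d=2$. By Theorem \ref{thm: Characterization of F-regular} the conjecture becomes a statement about deformations of F-regular surface singularities. For (1), lift $\mathcal{X}\to B$ via Corollary \ref{cor: canonicallift} to a deformation over a mixed-characteristic base whose geometric generic fibre lives in characteristic zero, apply the Esnault--Viehweg theorem \cite{EsnaultViehweg2} to conclude that the lifted generic fibre has only finite quotient singularities, and descend the quotient structure back to characteristic $p$ via the specialization bijection of Corollary \ref{cor: reptheoriesthesame}. Granted (1), part (2) follows from lower semicontinuity of the F-signature along the family, since Proposition \ref{prop: introduction} gives F-signature $1/|G|$ at each lrq singularity. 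Part (3) follows by combining (2) with the toric characterization of cyclic lrq singularities (Proposition \ref{prop: toric}) and the result of Koll\'ar--Shepherd-Barron \cite{KSB} applied to the canonical lift. The main obstacle is precisely the descent step: Esnault--Viehweg only produces a finite quotient singularity in characteristic zero, whereas we need an lrq presentation in characteristic $p$, and an \'etale part of $G$ could a priori specialize to an infinitesimal part or conversely. Controlling this interplay between $G^\circ$ and $G^\et$ along the generization $0\rightsquigarrow\overline{\eta}$ is where the conjecture is currently open.
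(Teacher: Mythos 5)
This statement is a \emph{conjecture}: the paper does not prove it, and neither do you. What the paper actually establishes is the case $d\geq 3$ (Proposition \ref{prop: conjecture dimension 3}), Part (1) under a $\QQ$-Gorenstein hypothesis on the total space (Proposition \ref{prop: riemenschneider}), Part (2) in the cyclic characteristic-zero, equicharacteristic-$p$ $\QQ$-Gorenstein, and Gorenstein cases (Proposition \ref{prop: riemenschneider length}), and Part (3) in the Gorenstein case (Proposition \ref{prop: riemenschneider cyclic}). Your treatment of $d\geq 4$ and $d=3$ is essentially the paper's argument: infinitesimal rigidity plus the structure of $\Def_X\cong\Spec W(k)[\varepsilon]/(\varepsilon^2,p^n\varepsilon)$ forces any family over a DVR to be constant or pulled back from the canonical lift, and the canonical lift has the same group scheme. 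That part is fine and you are right to flag that the remaining content is in $d=2$.

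Your $d=2$ sketch, however, contains concrete errors beyond the admitted open step. First, Corollary \ref{cor: canonicallift} is a statement about a \emph{single} lrq singularity of dimension $d\geq 3$; it does not apply to surface singularities, and it certainly does not lift the family $\mathcal{X}\to B$ to characteristic zero --- two-dimensional lrq singularities are not rigid, so no such lift of the family exists in general, and Esnault--Viehweg requires an equicharacteristic-zero base, so it cannot see a characteristic-$p$ special fiber. The paper's actual route for Part (1) avoids lifting altogether: in equicharacteristic $p$ it uses openness of the F-regular locus in $\QQ$-Gorenstein rings \cite{AKM}, and in mixed characteristic it uses BCM-regularity and inversion of adjunction \cite{MaSchwede,MaSchwede+} to deduce that $\mathcal{X}$, hence $\mathcal{X}_{\bar\eta}$, is klt. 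Second, your claim that Part (2) ``follows from lower semicontinuity of the F-signature'' is exactly the step that is only known under the $\QQ$-Gorenstein hypothesis (via \cite{Taylor}); without it the paper must resort to Hirzebruch--Jung continued fractions (cyclic case) and embeddings of $ADE$ root lattices (Gorenstein case), and Singh's example shows that F-regularity behaves badly in non-$\QQ$-Gorenstein families. So the genuinely open locus of the conjecture is not a ``descent from characteristic zero'' issue but the absence of a $\QQ$-Gorenstein (or equivalent) control on the total space; your proposal should not present the two-dimensional case as reducible to \cite{EsnaultViehweg2} and \cite{KSB}.
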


If $B$ is of equal characteristic zero, then Parts (1) and (3) are
true by the already mentioned results of \cite{EsnaultViehweg2} and \cite{KSB}.
As further evidence for this conjecture, we show the following.
\begin{enumerate}
  \item We establish it in dimension $d\geq3$ 
    (Proposition \ref{prop: conjecture dimension 3}).
  \item We establish it if ${\mathcal X}_0$ is Gorenstein
   (Corollary \ref{cor: riemenschneider gorenstein}).
  \item We establish Part (1) if $\mathcal{X}$ is $\QQ$-Gorenstein
   (Proposition \ref{prop: riemenschneider}).
   In recent work of Sato and Takagi \cite{SatoTakagi}, 
   they removed the $\QQ$-Gorenstein 
   condition, that is, Part (1) is now a theorem, see 
   Remark \ref{rem: SatoTakagi}.
   \item We establish Part (2) if 
    $B$ has equal characteristic zero and $\mathcal{X}_0$ is cyclic,
    as well as in some cases where $B$ has equal characteristic $p$
    (Proposition \ref{prop: riemenschneider length}).
  \item We give examples and counter-examples that illustrate that some
  more naive versions of this conjecture are not true.
\end{enumerate}

\subsection{Beyond lrq singularities}
The assumption on linear reductivity in the previous results is crucial -
rigidity in dimension $\geq3$ fails, deformations of quotient singularities
need no longer be quotient singularities, etc.
We will see this in examples in this article, as well as in the companion article
\cite{RDP}, where we study canonical surface singularities 
in positive characteristic.

\begin{VoidRoman}[Acknowledgements]
 We thank Fabio Bernasconi, Michael Collins, Brian Conrad, Leo Herr, Hiroyuki Ito, Gregor Kemper, Shravan Patankar, Quentin Posva, Kenta Sato,
 Matt Satriano, Karl Schwede, Nick Shepherd-Barron, and Claudia Stadlmayr
 for discussion. 
 We also thank the referees for their many comments.
 Part of the research on this article was done whilst the first named author was on sabbatical at the University of Oxford and he thanks the Mathematical Institute for kind hospitality during his stay.
 The first named author was supported by the ERC Consolidator Grant 681838 (K3CRYSTAL).
 The second named author was supported by the DFG Research Grant MA 8510/1-1.
 The third named author was supported by JSPS KAKENHI Grant Numbers JP16K17560
 and JP20K14296.
\end{VoidRoman}

\section{Linearly reductive group schemes}
\label{sec: linred}
In this section, we recall a couple results about finite group schemes over
algebraically closed fields $k$ of positive characteristic $p$.

\subsection{Generalities on (linearly reductive) group schemes}
\label{subsec: generalitieslinred}
Let $G$ be a finite group scheme over an algebraically closed 
field $k$ of characteristic $p\geq0$. 
Since $k$ is perfect, the canonical short exact sequence
of finite group schemes over $k$
\begin{equation}
    \label{eq:connected-etale}
   1\,\to\,G^\circ\,\to\,G\,\to\, G^{\et} \,\to\,1   
\end{equation}
splits,
where $G^\circ$ is the connected component of the identity, and $G^{\et}$
is an \'etale group scheme over $k$.
Since $k$ is algebraically closed, $G^{\et}$ is the constant group scheme associated
to a group and we will not distinguish between these two objects.
Thus, we have a canonical isomorphism $G\cong G^\circ\rtimes G^\et$.
Moreover, $G^\circ$ is an infinitesimal group scheme of length equal to 
some power of $p$.
In particular, if $p=0$ or if the length of $G$ is prime to $p$, 
then $G^\circ$ is trivial and $G$ is \'etale. 

If $M$ is a finitely generated abelian group, then the group algebra
$k[M]$ carries a Hopf algebra structure, and the associated commutative
group scheme is denoted $D(M):=\Spec k[M]$.
By definition, such group schemes are called \emph{diagonalisable}.
For example, we have $D(\ZZ)\cong\GG_m$ and
$D(\C_n)\cong\bmu_n$, where $\C_n$ denotes the cyclic group of order $n$.
Recall that every diagonalisable group scheme can be embedded
into $\GG_m^N$ for some $N\geq1$.
Moreover, $\bmu_n$ is \'etale over $k$ if and only if
$p\nmid n$.

A finite group scheme $G$ over $k$ is said to be \emph{linearly reductive} 
if every (finite-dimensional) representation of $G$ is semisimple.
If $p=0$, then all finite group schemes over $k$ are \'etale and
linearly reductive.
If $p>0$, then, by a theorem of  Nagata \cite[Theorem 2]{Nagata61}
(but see also \cite[Proposition 2.10]{AOV} and \cite[Section 2]{Hashimoto}), 
a finite group scheme over $k$ is
linearly reductive if and only if it is an extension of a finite and \'etale
group scheme, whose length is prime to $p$, 
by a diagonalisable group scheme. 

In particular, diagonalisable group schemes are examples of linearly reductive group schemes.
To obtain more examples of linearly reductive group schemes, we note the following. For a field $k$ and an abstract group $G$, we denote by $\underline{G}_k$ the constant group scheme associated to $G$ over $k$.

\begin{Lemma} \label{lem: lrequivalence}
For every algebraically closed field $k$ of characteristic $p\geq0$, the functor
$$
G^\circ \,\rtimes\, G^{\et} \,\mapsto\, 
\left(\underline{((G^\circ)^{D}(k))}_{\CC}\right)^{D}(\CC) \,\rtimes\, G^\et(k)
$$
is an equivalence of categories between the category of finite and linearly reductive group schemes over $k$ 
and the category of finite abstract groups with a unique abelian $p$-Sylow subgroup.
\end{Lemma}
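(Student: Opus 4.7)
The approach is to reduce both categories to the common form of triples $(A,H,\rho)$ with $A$ a finite abelian $p$-group, $H$ a finite group of order prime to $p$, and $\rho\colon H\to\Aut(A)$ a homomorphism (with the obvious compatible pairs as morphisms). The functor of the lemma will then correspond to Pontryagin duality $A\mapsto\Hom(A,\CC^{\times})$ on the abelian piece, which is an involutive auto-equivalence of the category of triples.

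On the source side, Nagata's theorem just recalled and the splitting of the connected-\'etale sequence over the perfect field $k$ give $G\cong G^\circ\rtimes G^{\et}$, with $G^\circ$ infinitesimal diagonalizable and $G^{\et}$ \'etale of order prime to $p$. The connected part is $G^\circ=D(M)$ for $M:=(G^\circ)^D(k)$, a finite abelian $p$-group, and the conjugation action of $G^{\et}$ on $G^\circ$ becomes an action $\rho$ of the abstract group $G^{\et}(k)$ on $M$. For morphisms, coprimality of lengths forces $\Hom(G^\circ,(G')^{\et})=0=\Hom(G^{\et},(G')^\circ)$, so a morphism of group schemes is exactly a compatible pair of morphisms on the triple data. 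Hence the source category is equivalent to the category of triples.

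On the target side, let $\Gamma$ be a finite group with unique abelian $p$-Sylow subgroup $P$. Uniqueness forces $P$ to be normal in $\Gamma$; the quotient $H:=\Gamma/P$ has order prime to $p$, and the conjugation action of $\Gamma$ on $P$ factors through $H$ to give $\rho\colon H\to\Aut(P)$. By Schur-Zassenhaus the extension $1\to P\to\Gamma\to H\to 1$ splits, presenting $\Gamma\cong P\rtimes_\rho H$. Any homomorphism of such groups sends $p$-Sylow to $p$-Sylow by inspection of $p$-parts of orders, and the resulting compatible pair on the triples reconstructs the original morphism of semidirect products. Hence the target category is likewise equivalent to the category of triples.

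Under these identifications the stated functor is exactly the assignment $(M,G^{\et}(k),\rho)\mapsto (\Hom(M,\CC^{\times}),G^{\et}(k),\rho^{\vee})$, with $\rho^{\vee}$ the contragredient action; this is Pontryagin duality on the abelian piece, which is an auto-equivalence of the category of triples via the natural double-duality isomorphism $A\cong A^{\vee\vee}$. The main substantive ingredient is Schur-Zassenhaus on the target side, which provides the abstract group-theoretic splitting that mirrors the splitting of the connected-\'etale sequence on the source side; the remaining verifications are formal consequences of Cartier duality together with the coprimality arguments that eliminate cross-homomorphisms between the connected and the \'etale parts.
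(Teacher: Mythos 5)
Your overall route is the same as the paper's: split the group schemes using Nagata's classification and the connected--\'etale sequence, split the abstract groups using normality of the unique $p$-Sylow subgroup together with Schur--Zassenhaus, and match the two sides by double Cartier/Pontryagin duality; the paper's own proof does exactly this and simply exhibits the essential inverse functor. The one place where you go beyond the paper --- the treatment of morphisms via a common category of triples --- contains a false step, and it is on the target side. On the group-scheme side your reduction is fine: over a perfect field every homomorphism $G \to G'$ carries $G^\circ$ into $(G')^\circ$ and $G_{\mathrm{red}}$ into $(G')_{\mathrm{red}}$ (a reduced scheme maps into the reduced part), so it genuinely is a compatible pair for the canonical splittings. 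On the abstract-group side the analogous statement fails: a homomorphism $f\colon \Gamma \to \Gamma'$ sends the $p$-Sylow $P$ into $P'$, but it need not send a chosen complement $H$ into the chosen complement $H'$; it differs from a splitting-preserving map by a crossed homomorphism $H \to P'$, and the vanishing of $\Hom(H,P')$, or even of $H^1(H,P')$, only makes $f$ \emph{conjugate} to a splitting-preserving map, not equal to one. Concretely, for $p=3$ there are four homomorphisms $\C_2 \to \Sym_3$ (the trivial one and the three transpositions), but only two compatible pairs for the fixed splitting $\Sym_3 = \C_3 \rtimes \C_2$. So ``the resulting compatible pair reconstructs the original morphism'' is wrong, and your identification of the target category with the category of triples does not hold when morphisms are taken to be all group homomorphisms.

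This is worth flagging because it is not merely a defect of your write-up: it shows that the functor of the lemma is faithful but not full if one takes all homomorphisms (respectively all automorphisms) as morphisms --- for instance, in characteristic $3$ one has four homomorphisms $\bmu_3 \times \C_2 \to \bmu_3 \rtimes \C_2$ of group schemes versus six homomorphisms $\C_6 \to \Sym_3$, and $\Aut_k(\bmu_3 \rtimes \C_2) \cong \ZZ/2\ZZ$ versus $\Aut(\Sym_3) \cong \Sym_3$. The paper's two-line proof glosses over this point just as much as yours does. What both arguments do correctly establish --- and what the paper actually uses afterwards, e.g.\ in Definition \ref{def: associatedlrgroupscheme} --- is the bijection on isomorphism classes of objects: your Pontryagin-duality description of the object map and the mutual inverseness up to isomorphism are correct, as is the use of Schur--Zassenhaus on the target side. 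To make the categorical statement literally true one would have to adjust the morphisms on the abstract-group side (e.g.\ take homomorphisms up to conjugation by elements of the $p$-Sylow subgroup), and you should either do that explicitly or weaken your claim to a bijection on isomorphism classes.
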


For $p=0$, this reduces to the equivalence of categories of finite groups and 
finite constant group schemes over $k$. 

\begin{proof}
First, we note that if $G$ is a group with a unique $p$-Sylow subgroup $G_p$,
then $G_p$ is a normal subgroup.
Moreover, the quotient $G':=G/G_p$ is of order prime to $p$ and hence, 
$G$ is isomorphic to the semidirect product $G_p\rtimes G'$ 
by the Schur--Zassenhaus theorem.

An essential inverse to the functor of the lemma is given by the functor that 
maps a group $G = G_p \rtimes G'$, where $G_p$ is the unique $p$-Sylow subgroup 
and $p \nmid |G'|$, to the linearly reductive group scheme  $(\underline{((\underline{G_p}_{\CC})^D(\CC))}_k)^D \rtimes \underline{G'}_k$.
\end{proof}

In this lemma and the proof, the two functors can also be written as follows
$$
\begin{array}{lcl}
 G^\circ \rtimes G^{\et} &\mapsto& 
   \Hom((G^\circ)^{D}(k), \CC^{\times})\rtimes G^\et(k) \\
 G_p \rtimes G' &\mapsto&
  (\underline{(\Hom(G_p, \CC^{\times}))}_k)^D \rtimes \underline{G'}_k
\end{array}
$$
but this is a matter of taste.

\begin{Definition}\label{def: associatedlrgroupscheme}
Let $k$ be an algebraically closed field.
\begin{enumerate}
    \item If $G$ is a linearly reductive group scheme over $k$, we let $G_{\mathrm{abs}}$ be the image of $G$ under the equivalence of Lemma \ref{lem: lrequivalence} and call it the \emph{abstract group associated to $G$}.
    \item If $G$ is a finite and abstract group with a unique abelian $p$-Sylow subgroup, we let $G_{\mathrm{lr}}$ be the preimage of $G$ under the equivalence of Lemma \ref{lem: lrequivalence} and call it the \emph{linearly reductive group scheme associated to $G$} (over $k$).
    \end{enumerate}
\end{Definition}

Given a finite and infinitesimal group scheme $G$ over $k$,
the $k$-linear Frobenius morphism $F$ yields a canonical decomposition 
of $G$ 
$$
   1\,\unlhd\,G[F]\,\unlhd\,G[F^2]\,\unlhd\,\ldots\,\unlhd\,G[F^n]\,=\,G
$$
for some sufficiently large $n$.
By definition, the minimal $n$ for which we have $G=G[F^n]$ is called
the \emph{height} of $G$.
Each subquotient in this decomposition series is of height one.

\begin{Lemma}\label{lem: lrcriterion}
Let $G$ be a finite $k$-group scheme. 
Then, $G$ is linearly reductive if and only if it does not contain $\balpha_p$ or $\C_p$.
\end{Lemma}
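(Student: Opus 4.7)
The plan is to deduce both implications from the Nagata classification recalled above, which characterises finite linear reductivity over $k$ as being an extension of an \'etale group scheme of order prime to $p$ by a diagonalizable group scheme, combined with the splitting $G \cong G^\circ \rtimes G^\et$ from \eqref{eq:connected-etale}.

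For the forward direction, suppose $G$ is linearly reductive, so that $G^\circ$ embeds into some $\GG_m^N$ and $|G^\et|$ is coprime to $p$. A subgroup scheme $H \subseteq G$ isomorphic to $\balpha_p$ is infinitesimal and hence contained in $G^\circ$; it would thus give a non-trivial homomorphism $\balpha_p \to \GG_m$, but there are none. A subgroup scheme $H \subseteq G$ isomorphic to $\C_p$ is \'etale, so $H \cap G^\circ$ is simultaneously \'etale and infinitesimal, hence trivial, so that the composition $H \hookrightarrow G \twoheadrightarrow G^\et$ is injective, contradicting $p \nmid |G^\et|$.

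For the converse, suppose $G$ contains neither $\balpha_p$ nor $\C_p$. The splitting realises $G^\et$ as a subgroup scheme of $G$, which then contains no $\C_p$, and Cauchy's theorem applied to the finite constant group $G^\et(k)$ gives $p \nmid |G^\et|$. It remains to show that $G^\circ$ is diagonalizable, after which Nagata's theorem yields the linear reductivity of $G$.

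The main obstacle is the structural claim that an infinitesimal $k$-group scheme $G^\circ$ with no subgroup scheme isomorphic to $\balpha_p$ must be diagonalizable. I would argue this by induction on the height via the Frobenius filtration $1 \unlhd G^\circ[F] \unlhd \cdots \unlhd G^\circ[F^n] = G^\circ$, reducing the question to the height-one case. A height-one infinitesimal group scheme corresponds to a restricted $p$-Lie algebra $(\mathfrak{g},\phi)$, and a subgroup scheme isomorphic to $\balpha_p$ corresponds precisely to a non-zero element $x\in\mathfrak{g}$ with $\phi(x)=0$; its absence therefore forces $\phi$ to be injective and, on the finite-dimensional $\mathfrak{g}$, bijective. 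Over the algebraically closed field $k$, the standard analysis of restricted $p$-Lie algebras with bijective $p$-operation forces $\mathfrak{g}$ to be abelian, so $G^\circ[F]$ is of multiplicative type and hence diagonalizable. An extension of diagonalizable infinitesimal group schemes over the perfect field $k$ is again diagonalizable, which closes the induction and completes the reverse implication.
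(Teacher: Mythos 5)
Your overall strategy coincides with the paper's: reduce to the connected part via $G\cong G^\circ\rtimes G^\et$, induct on the height along the Frobenius filtration, and settle the height-one case through the restricted Lie algebra. The forward direction is fine. In the height-one step, note that the $p$-operation on a non-abelian restricted Lie algebra is not additive, so ``no non-zero $x$ with $x^{[p]}=0$ implies $\phi$ injective'' is not a formal consequence; the correct input is precisely Chwe's theorem (which the paper cites) that a finite-dimensional restricted Lie algebra with no non-zero $p$-nilpotent element is commutative, after which the $p$-semilinear, injective $p$-operation on the abelian $\mathfrak g$ gives multiplicative type. Since this is the same ingredient the paper invokes, I regard it as a presentational quibble rather than an error.

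The genuine gap is in the induction step. Your filtration $1\unlhd G^\circ[F]\unlhd\cdots\unlhd G^\circ[F^n]=G^\circ$ reduces the problem to showing that the height-one subquotients $G^\circ[F^{i+1}]/G^\circ[F^i]$ are diagonalizable, and for that you need to apply the height-one analysis to them — i.e., you need to know that these \emph{quotients} contain no $\balpha_p$. The hypothesis is only given for subgroup schemes of $G$, and it is not formal that it passes to subquotients (a quotient of a group scheme can in principle acquire subgroup schemes that the total space does not have). The paper closes exactly this gap: if $\balpha_p\subseteq G[F^n]/G[F^{n-1}]$, pull back the extension to obtain an extension of $\balpha_p$ by the (inductively linearly reductive, hence diagonalizable) group $G[F^{n-1}]$; over the perfect field $k$ such an extension splits by \cite[Exp.\ XVII, Th\'eor\`eme 6.1.1.\ B)]{SGA32}, producing $\balpha_p\subseteq G$ and a contradiction. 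Without this splitting argument (or an equivalent), your reduction to height one is unjustified. Relatedly, your final assertion that an extension of diagonalizable infinitesimal group schemes over a perfect field is again diagonalizable is true but is itself of the same depth — one must first check such an extension is commutative (using that $\Aut$ of a diagonalizable group is \'etale, so the conjugation action of a connected group is trivial, and that the commutator pairing vanishes) and then invoke the multiplicative/unipotent decomposition over a perfect field — so it should not be waved through as routine.
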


\begin{proof}
By Nagata's classification, a linearly reductive group scheme does not contain 
$\balpha_p$ or $\C_p$, so we only have to prove the converse.

First, we recall from the exact sequence \eqref{eq:connected-etale} 
that we have a semidirect decomposition $G = G^\circ \rtimes G^{\et}$. 
The group $G^{\et}$ is linearly reductive if and only if it is of order
prime to $p$ if and only if $G$ does not contain a subgroup isomorphic to 
$\C_p$.
Thus, the claim is clear for $G^{\et}$ and we may assume $G = G^\circ$. 
We will prove the claim by induction on the height $n$ of $G$. 
Consider the short exact sequence
\begin{equation}\label{seslrqcriterion}
1 \,\to\, G[F^{n-1}] \,\to\, G[F^n] \,\to\, G[F^n]/G[F^{n-1}] \,\to\,1,
\end{equation}
where $G[F^{n-1}]$ is linearly reductive by the induction hypothesis. 
Since extensions of linearly reductive groups are linearly reductive, 
it suffices to show that $G[F^n]/G[F^{n-1}]$ is linearly reductive.

Assume that this is not the case. 
Then, either 
\begin{itemize}
    \item $G[F^n]/G[F^{n-1}]$ is not abelian.
    In this case its Lie algebra contains a non-zero vector $v$ such that $v^{[p]} = 0$ 
    (by Chwe's theorem \cite{Chwe}) and
    we can integrate $v$ to a subgroup scheme $\balpha_p \subseteq G[F^n]/G[F^{n-1}]$.
    \item $G[F^n]/G[F^{n-1}]$ is abelian, connected and not linearly reductive.
    In particular, the $p$-operation on its Lie algebra is a semi-linear endomorphism which is not semi-simple, hence not injective (see \cite[Exp. XXII, Section 1]{SGA72}. In particular, as in the previous case, we find a subgroup scheme $\balpha_p \subseteq G[F^n]/G[F^{n-1}]$.
\end{itemize} 
In any case, we find a subgroup scheme $\balpha_p \subseteq G[F^n]/G[F^{n-1}]$. 
Then, we can restrict the sequence \eqref{seslrqcriterion} to this $\balpha_p$ and obtain 
an extension of $\balpha_p$ by the linearly reductive group scheme $G[F^{n-1}]$. 
Since $k$ is perfect, all such extensions split by 
\cite[Exp. XVII, Th\'eor\`eme 6.1.1. B)]{SGA32}, so we find $\balpha_p \subseteq G$, 
contradicting our hypothesis.
Hence, $G[F^n]/G[F^{n-1}]$ is linearly reductive and thus so is 
$G[F^n]$.
\end{proof}

\subsection{Deformations of linearly reductive group schemes} \label{subsec: Deformations of linearly reductive group schemes}
The following result, which should be more or less well-known to the experts, 
states that finite linearly reductive group schemes lift to characteristic 0
and that any two lifts are geometrically isomorphic.

\begin{Proposition}\label{prop: liftinggroupscheme}
 Let $k$ be an algebraically closed field of characteristic $p>0$
 and let $R$ be a complete DVR with residue field $k$.
 Let $G = \prod \bmu_{p^{n_i}} \times G^\et$ be a finite and linearly reductive group scheme over $k$.
 Then, 
 \begin{enumerate}
    \item $\widetilde{G} := \prod \bmu_{p^{n_i},R} \rtimes \underline{(G^\et (k))}_R$ is a deformation of $G$ to $R$,
    \item its automorphism scheme satisfies $\Aut_{\widetilde{G}}^\circ \cong (\widetilde{G}^{\circ})/(\widetilde{G}^{\circ})^{\widetilde{G}^{\et}}$,
    \item every other deformation $\widetilde{G}'$ of $G$ to $R$ is a twisted form of $\widetilde{G}$ that splits over a totally ramified, finite, and flat extension $S$ of $R$ of degree $p^i \leq |G^\circ/(G^\circ)^{G^\et}|$ for some $i \geq 0$.
    Moreover, if ${\rm char}(R) = p$, then the extension $R \subseteq S$ can be chosen to be purely inseparable.   
  \end{enumerate}
\end{Proposition}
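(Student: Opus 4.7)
For Part (1), the plan is a direct verification. The action of $G^\et(k)$ on $G^\circ=\prod\bmu_{p^{n_i}}$ is Cartier-dual to an action on the character group $\prod\ZZ/p^{n_i}$, which is a purely combinatorial datum defined over $\Spec\ZZ$. Since both $\prod\bmu_{p^{n_i},R}$ and $\underline{G^\et(k)}_R$ are the natural constant/flat extensions of their special fibers and the action lifts canonically, the semidirect product $\widetilde{G}$ is a flat $R$-group scheme with special fiber isomorphic to $G$.

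For Part (2), I would compute $\Aut_{\widetilde{G}}$ using the canonical connected-\'etale sequence. Any automorphism preserves this sequence, yielding a natural homomorphism
$$\Aut_{\widetilde{G}} \,\to\, \Aut_{\widetilde{G}^\circ}\times \Aut_{\widetilde{G}^\et}.$$
Both targets are \'etale over $R$: $\widetilde{G}^\circ$ is diagonalizable, so its automorphism scheme is determined by automorphisms of the finite character group, and $\widetilde{G}^\et$ is finite \'etale. Hence $\Aut^\circ_{\widetilde{G}}$ lies in the kernel, which consists of automorphisms trivial on both the connected part and the \'etale quotient. A standard cocycle calculation identifies this kernel with the scheme $Z^1(\widetilde{G}^\et,\widetilde{G}^\circ)$, and inner conjugation by $\widetilde{G}^\circ$ surjects onto the coboundaries with kernel $(\widetilde{G}^\circ)^{\widetilde{G}^\et}$. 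Since $|G^\et|$ is prime to $p$ and thus acts invertibly on the $p$-power torsion group scheme $\widetilde{G}^\circ$, the usual averaging argument shows that every $1$-cocycle is a coboundary (at the scheme level, not just on geometric points), giving the claimed isomorphism.

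For Part (3), given a second deformation $\widetilde{G}'$, the strategy is to show $\widetilde{G}'$ is an fppf torsor under $\Aut_{\widetilde{G}}$ that can be split by the predicted type of extension. By Lemma \ref{lem: lrequivalence}, $\widetilde{G}$ and $\widetilde{G}'$ become isomorphic over any algebraically closed field containing $R$ (both correspond to $G_{\mathrm{abs}}$), and a spreading-out argument then exhibits $\widetilde{G}'$ as a torsor under $\Aut_{\widetilde{G}}$ over $R$. The exact sequence
$$1\,\to\,\Aut^\circ_{\widetilde{G}}\,\to\,\Aut_{\widetilde{G}}\,\to\,\pi_0(\Aut_{\widetilde{G}})\,\to\,1$$
reduces us to two cases. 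Torsors for the \'etale quotient $\pi_0(\Aut_{\widetilde{G}})$ are trivialized over the strict henselization of $R$; since $k$ is algebraically closed and $R$ is complete, $R$ itself is strictly henselian, so these torsors are already trivial. We are left with a torsor under $\Aut^\circ_{\widetilde{G}}=\widetilde{G}^\circ/(\widetilde{G}^\circ)^{\widetilde{G}^\et}$, a finite flat commutative $R$-group scheme of $p$-power length at most $|G^\circ/(G^\circ)^{G^\et}|$. Such a torsor is represented by a finite flat $R$-algebra $S$ of this length whose closed fiber is infinitesimal (the group scheme is connected), forcing $S/R$ to be totally ramified; in equal characteristic $p$ the algebra $S$ is itself infinitesimal over $R$, hence purely inseparable.

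The main obstacle is the rigorous scheme-theoretic treatment underpinning both parts: in Part (2), verifying that $Z^1=B^1$ as schemes (not merely at geometric points) requires a careful flatness argument using the prime-to-$p$ hypothesis, and in Part (3), turning geometric isomorphisms into an fppf torsor structure over $R$ and then representing that torsor by an $R$-algebra of the claimed form requires combining spreading-out with the explicit structure of torsors under finite flat commutative group schemes. Once these technical points are set up, the bound on the degree follows directly from the length bound on $\Aut^\circ_{\widetilde{G}}$ established in Part (2).
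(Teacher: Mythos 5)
Parts (1) and (2) of your proposal are sound. For (2) you take a more hands-on route than the paper: you identify the kernel of $\Aut_{\widetilde{G}}\to\Aut_{\widetilde{G}^\circ}\times\Aut_{\widetilde{G}^\et}$ with the scheme of $1$-cocycles $Z^1(\widetilde{G}^\et,\widetilde{G}^\circ)$ and kill $H^1$ by averaging, using that the order of $G^\et$ acts invertibly on the $p$-power torsion diagonalizable group $\widetilde{G}^\circ$; since $\widetilde{G}^\et$ is a constant group scheme and the $n$-th power map is an automorphism of $\widetilde{G}^\circ$, this averaging is an honest morphism of schemes and your worry about ``$Z^1=B^1$ only on geometric points'' evaporates. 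The paper instead quotes \cite[Lemma 2.24]{AOV} for the special fiber and then invokes flatness plus rigidity of diagonalizable group schemes to identify $E$ with the unique deformation of $G^\circ/(G^\circ)^{G^\et}$. Both work; yours is more self-contained, the paper's is shorter. (You should still note that the kernel is connected --- its special fiber is infinitesimal, so the finite flat kernel is local --- and open-closed, to upgrade ``$\Aut^\circ$ lies in the kernel'' to equality.)

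Part (3) has a genuine gap at its very first step. You propose to deduce that $\widetilde{G}'$ is a twisted form of $\widetilde{G}$ from the claim that the two become isomorphic over an algebraically closed field containing $R$, citing Lemma \ref{lem: lrequivalence}, followed by ``spreading out.'' This is problematic twice over. First, the assertion that the geometric generic fiber of an \emph{arbitrary} deformation $\widetilde{G}'$ is the group scheme corresponding to $G_{\mathrm{abs}}$ is essentially Proposition \ref{prop: geometricgenericfiberisassociated}, which the paper \emph{derives from} Proposition \ref{prop: liftinggroupscheme}; using it here is circular, and I see no direct argument that controls the generic fiber of $\widetilde{G}'$ from its special fiber alone. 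Second, even granting a geometric generic isomorphism, spreading out only trivializes ${\rm Isom}(\widetilde{G},\widetilde{G}')$ over the generic point (after a finite extension); having isomorphic fibers over the two points of $\Spec R$ separately does not make ${\rm Isom}(\widetilde{G},\widetilde{G}')$ a torsor, because the real content is the flatness/surjectivity of this Isom scheme over all of $\Spec R$. The paper obtains this in one stroke from \cite[Lemma 2.14]{AOV}, which produces a $(\widetilde{G},\widetilde{G}')$-bitorsor over $\Spec R$ via the (unobstructed) deformation theory of linearly reductive group schemes; some input of this kind is indispensable and is missing from your argument. Once the torsor structure is granted, your reduction along the connected-\'etale sequence of $\Aut_{\widetilde{G}}$ matches the paper's, except that you should, as the paper does, pass to the normalization of a connected component to obtain a local extension $S$ whose degree \emph{divides} (rather than equals) $|G^\circ/(G^\circ)^{G^\et}|$.
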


\begin{proof}
Claim (1) is clear, since $\widetilde{G}$ is flat over $R$ with special fibre isomorphic to $G$. 

Next, consider Claim (2).
Since $\widetilde{G}^\circ = \prod \bmu_{p^{n_i},R}$ is a characteristic subgroup scheme of $\widetilde{G}$, we have an exact sequence
$$
0 \,\to\, E \,\to\, \Aut_{\widetilde{G}} \,\overset{\varphi}{\to}\, 
\Aut_{\widetilde{G}^\circ} \times \Aut_{\widetilde{G}^\et},
$$
where $E := \Ker(\varphi)$ and ${\rm Im}(\varphi)$ is a subgroup scheme of the \'etale group scheme $\Aut_{\widetilde{G}^\circ} \times \Aut_{\widetilde{G}^\et}$. 
On the other hand, the conjugation action of $\widetilde{G}^\circ$ on $\widetilde{G}$ induces a homomorphism 
$\psi: \widetilde{G}^\circ \to E$,
which induces an isomorphism of the closed fibre of $\widetilde{G}^\circ/(\widetilde{G}^\circ)^{\widetilde{G}^{\et}}$ with the closed fibre of $E$ by \cite[Lemma 2.24]{AOV}. 
By the fibrewise criterion for flatness, we deduce that $E$ is flat over $R$. 
Since $G^\circ/(G^\circ)^{G^\et}$ is diagonalisable, hence rigid, both $E$ and $\widetilde{G}^\circ/(\widetilde{G}^\circ)^{\widetilde{G}^{\et}}$ 
coincide with the unique deformation of $G^\circ/(G^\circ)^{G^\et}$ to $R$. 
Note that $E \cong \Aut_{\widetilde{G}}^\circ$, since ${\rm Im}(\varphi)$ is \'etale and 
$E_k = \Aut_G^{\circ}$.

Finally, let us prove Claim (3). 
By \cite[Lemma 2.14]{AOV} there is a $(\widetilde{G},\widetilde{G}')$-bitorsor 
$I \to \Spec R$. 
In particular, $\widetilde{G}'$ is a twisted form of $\widetilde{G}$. 
Thus, $S' := {\rm Isom}(\widetilde{G},\widetilde{G}')$ is an $\Aut_{\widetilde{G}}$-torsor 
over $R$ and, since $R$ is complete, $S'$ is a disjoint union of $|\Aut_{\widetilde{G}}/\Aut_{\widetilde{G}}^\circ|$ connected components, 
each of which is an $\Aut_{\widetilde{G}}^\circ$-torsor over $R$. 
Taking the normalisation of one of these components, we obtain the desired $S$. 
Indeed, $R$ is complete, so $S$ is totally ramified over $R$, the fibres of $\Aut_{\widetilde{G}}^\circ \to \Spec R$ are local group schemes if ${\rm char}(R) = p$, 
so $S$ is purely inseparable over $R$ if ${\rm char}(R) = p$, and the degree of 
$S$ over $R$ divides the length of $\Aut_{\widetilde{G}}^\circ$ over $R$, 
which coincides with $|G^\circ/(G^\circ)^{G^\et}|$ by Claim (2). 
In particular, the degree of $S$ over $R$ is a power of $p$. 
\end{proof}

In particular, Proposition \ref{prop: liftinggroupscheme} provides us with a ``canonical" deformation of $G$ to $R$.

\begin{Definition} \label{def: canonical deformation}
Let $G \cong \prod \bmu_{p^{n_i}} \rtimes G^\et$ be a finite and linearly reductive group scheme over an algebraically closed field $k$ of characteristic $p > 0$. 
Let $R$ be a complete DVR with residue field $k$. 
The \emph{canonical deformation of $G$ to $R$} (or, \emph{canonical lift of $G$ to characteristic $0$}, if $R = W(k)$) 
is the $R$-group scheme $\prod \bmu_{p^{n_i},R} \rtimes \underline{(G^\et (k))}_R$.
\end{Definition}

The following example shows that there exist linearly reductive group schemes 
that admit ``non-canonical" deformations.

\begin{Example} \label{example: explicitnontrivialdeformation}
Assume $p\geq3$, let $R$ be as in Proposition \ref{prop: liftinggroupscheme}, 
and consider the non-trivial semidirect product 
$G := \bmu_{p} \rtimes \C_2$, where $\C_2$ acts as inversion on $\bmu_{p}$.
Let $\widetilde{G} := \bmu_{p,R} \rtimes \C_2$ be the canonical deformation of $G$
to $R$.
By Proposition \ref{prop: liftinggroupscheme}, we have ${\rm Aut}_{\widetilde{G}}^\circ \cong \bmu_{p}$. 
Thus, every non-trivial element of $\Hfl1(R,\bmu_{p}) = R^\times/(R^\times)^{p}$ 
yields a twisted form of $\widetilde{G}$ over $R$ and hence, a non-trivial deformation of 
$G$ to $R$. 
We note that $R^\times/(R^\times)^{p}$ is non-trivial, since it contains the class of 
$1 + \pi$, where $\pi$ is a uniformiser of $R$, and this element is not a $p$-th power.
\end{Example}

\subsection{Deformations of representations}

If $\widetilde{G}$ is a group scheme over a ring $R$, we denote by 
${\rm Rep}^d_R(\widetilde{G})$ the set of isomorphism classes of free $R$-modules 
of rank $d$ together with an action of $G$, 
that is, ${\rm Rep}^d_R(\widetilde{G})$ is the set of homomorphisms in 
${\rm Hom}(\widetilde{G},\GL_{d,R})$ up to conjugation in $\GL_{d,R}$.

\begin{Definition}
\label{def: lambda}
Let $V$ be a vector space over an algebraically closed field $k$ and let $\rho: G \to \GL(V)$ be a representation of a linearly reductive group scheme $G$. We define the \emph{$\lambda$-invariant of $\rho$} as
$$
\lambda(\rho) \,:=\, {\rm max}_{\{\id\} \neq \bmu_n \subseteq G}\:\dim V^{\bmu_n},
$$
where $V^{\bmu_n} \subseteq V$ denotes the subspace of $\bmu_n$-invariant vectors.
\end{Definition}

\begin{Remark}
\label{rem: lambda}
 Although the $\lambda$-invariant may not have been
 studied before, let us note that the representation
 \begin{enumerate}
     \item $\rho$ is faithful if and only if $\lambda(\rho) \neq \dim V$ and that
     \item $\rho$ contains no pseudo-reflections if and only if
     $\lambda(\rho)\leq\dim(V)-2$,
     see also Section \ref{subsec: more general}.
     Such representations are called \emph{small}.
     \item If $G$ is \'etale, then the $\lambda$-invariant can be defined as 
     $$
      \max_{{\id} \neq g \in G}\:{\rm dim}(V_{g,1}),
     $$
     where $V_{g,1}$ is the eigenspace of the $\rho(g)$-action for the eigenvalue $1$.
     
     In particular, we have
     $\lambda(\rho)=0$ (resp. $\lambda(\rho)\leq\dim V-2$)
     if and only if for every $g\in G\backslash\{e\}$, the linear map $\rho(g)$ does not fix a line
     (resp. a hyperplane).
 \end{enumerate}
 We will give a geometric interpretation of $\lambda(\rho)$
 in Proposition \ref{prop: lambda}.
\end{Remark}

\begin{Proposition} \label{prop: liftingrepresentation}
 Let $k$ be an algebraically closed field of characteristic $p>0$
 and let $R$ be a complete DVR with residue field $k$ and field of fractions $K$.
 Let $G$ be a finite and linearly reductive group scheme over $k$ and let $\widetilde{G}$ be a deformation of $G$ to $R$, with generic fibre $\widetilde{G}_\eta$. Then, for every $d \geq 0$, the following hold:
 
\begin{enumerate}
\item Restriction of $\widetilde{G}$-representations to $K$ induces a bijection
$$
 {\rm Rep}^d_{R}(\widetilde{G}) \,\overset{\sim}{\to}\, {\rm Rep}^d_{K}(\widetilde{G}_{\eta})
$$
\item Restriction of $\widetilde{G}$-representations to $k$ induces a bijection
$$
  {\rm Rep}^d_{R}(\widetilde{G}) \,\overset{\sim}{\to}\, {\rm Rep}^d_{k}(G)
$$
\item There is a bijective specialisation map
$$
{\rm sp}_d\,:\, {\rm Rep}^d_{\overline{K}}(\widetilde{G}_{\bar{\eta}}) \,\to\, {\rm Rep}^d_{k}(G),
$$
such that ${\rm sp} := \coprod_{d=0}^{\infty} {\rm sp}_d$ is compatible with direct sums, tensor products, duals, $\lambda$-invariants, and characters. In particular, ${\rm sp} $ maps simple representations to simple representations.
\end{enumerate}
\end{Proposition}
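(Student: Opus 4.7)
The overall strategy is to first upgrade the linear reductivity of $G$ to linear reductivity of $\widetilde{G}$ over $R$ in the sense of Alper--Olsson--Vistoli, so that the representation theories of $\widetilde{G}$, $\widetilde{G}_\eta$, and $\widetilde{G}_{\overline{\eta}}$ are all semisimple, and then invoke the resulting rigidity to lift and descend representations. For the first step, Proposition \ref{prop: liftinggroupscheme}(3) supplies a finite flat extension $R \subseteq S$ after which $\widetilde{G}_S$ becomes the canonical lift $\prod \bmu_{p^{n_i},S} \rtimes \underline{G^{\et}(k)}_S$, which is visibly linearly reductive as an extension of a finite \'etale group scheme of order prime to $p$ by a diagonalizable group. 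Since linear reductivity is fpqc-local on the base (see \cite{AOV}), $\widetilde{G}$ itself is linearly reductive over $R$, and hence so are its fibers over $K$ and $\overline{K}$. The consequence we use repeatedly is that for locally free $\widetilde{G}$-modules $V_1,V_2$ on $R$, the $R$-module $\Hom_{\widetilde{G}}(V_1,V_2)$ is locally free and its formation commutes with arbitrary base change.

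For (2), injectivity of restriction to $k$ follows immediately from Nakayama's lemma applied to $\Hom_{\widetilde{G}}(V_1, V_2)$: a $G$-isomorphism on the reductions lifts to a $\widetilde{G}$-morphism over $R$, and the lift is an isomorphism because its determinant specializes to a unit. For surjectivity, decompose a given $V_0 \in {\rm Rep}^d_k(G)$ into isotypic components and lift each irreducible summand inductively to $R/\pi^{n+1}$ by standard deformation theory; the obstructions and the ambiguity of lifts live in $H^2$ and $H^1$ of $G$ with values in $\End(V_0)$, both of which vanish by linear reductivity. The compatible system assembles into a free $R$-module of rank $d$ with $\widetilde{G}$-action, using that $\widetilde{G}$ is finite flat over $R$ and that $R$ is complete. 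Statement (1) is proved analogously, the substantive additional step being the construction of a $\widetilde{G}$-stable $R$-lattice inside a given $V_\eta \in {\rm Rep}^d_K(\widetilde{G}_\eta)$: starting from any $R$-lattice $L \subseteq V_\eta$ and letting $\mu$ denote the coaction, the submodule $L' := \mu^{-1}(L \otimes_R \mathcal{O}(\widetilde{G})) \subseteq V_\eta$ is an $R$-lattice (finitely generated because $\mathcal{O}(\widetilde{G})$ is finite free over $R$ and $\mu$ is injective with finite-dimensional target) which is $\widetilde{G}$-stable by construction.

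For (3), define ${\rm sp}_d$ on $V \in {\rm Rep}^d_{\overline{K}}(\widetilde{G}_{\overline{\eta}})$ by descending $V$ to a $\widetilde{G}_L$-representation over a finite extension $L/K$ (possible because $V$ is defined by finitely many matrix entries), taking the integral closure $R_L$ of $R$ in $L$, applying (1) over $R_L$ to extend to an $R_L$-representation, and reducing modulo its maximal ideal. Independence of all choices follows from iterating (1) and (2) over larger extensions, combined with the fact that both sides of the comparison are semisimple by Step 1. Bijectivity comes from running (1) and (2) in reverse: any $V_0 \in {\rm Rep}^d_k(G)$ lifts to $V_R$ over $R$ by (2), and then $V_R \otimes_R \overline{K}$ provides a preimage under ${\rm sp}_d$. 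Compatibility with direct sums, tensor products, and duals is immediate because these constructions commute with base change, and compatibility with characters follows because traces commute with base change. The hardest part of the statement will be the compatibility with $\lambda$-invariants: this reduces to showing that $\bmu_n$-subgroup schemes of $\widetilde{G}_{\overline{\eta}}$ correspond bijectively under specialization to $\bmu_n$-subgroup schemes of $G$, which follows from applying the equivalence of Lemma \ref{lem: lrequivalence} fiberwise to $\widetilde{G}$, and from noting that the $\bmu_n$-invariant subspace is the image of the projector onto the trivial isotypic component, whose formation commutes with base change precisely because $\widetilde{G}$ is linearly reductive over $R$.
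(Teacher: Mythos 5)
Your proposal is correct, but it takes a genuinely different route from the paper. The paper outsources the rigidity to Margaux's results: it works with the functor $\overline{\Hom}(\widetilde{G},\GL_{d,R})$, which is formally \'etale and locally of finite presentation (giving (2) directly since $R$ is complete), proves injectivity in (1) via smoothness of the transporter scheme plus Hensel's lemma, and quotes Serre's Lemme~2 for the existence of a stable lattice; part (3) is then a chain of bijections through finite extensions $L/K$, exactly as you set it up. You instead first promote linear reductivity from the special fiber to $\widetilde{G}$ over $R$ and exploit that $\Hom_{\widetilde{G}}(V_1,V_2)$ is a locally free direct summand whose formation commutes with base change; your Nakayama argument and your unobstructed deformation argument (vanishing of $H^1$ and $H^2$) together are precisely the content of the formal \'etaleness the paper imports from Margaux, and your explicit lattice $L'=\mu^{-1}(L\otimes_R\mathcal{O}(\widetilde{G}))$ is exactly the construction behind Serre's lemma. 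Your approach is more self-contained and has the added benefit of making the compatibility with $\lambda$-invariants transparent (the paper's proof is terse on this point). One place where you are slightly glib: injectivity in (1) is \emph{not} literally ``analogous'' to (2), since Nakayama sees only the closed fiber; but it follows within your framework, e.g.\ because $\dim\Hom_{\widetilde{G}}$ is constant on the two fibers, so by semisimplicity of both fibers the multiplicities of irreducible constituents of $V_{1,K}$ and $V_{2,K}$ agree if and only if those of $V_{1,k}$ and $V_{2,k}$ do, reducing injectivity of (1) to injectivity of (2). With that sentence added, the argument is complete.
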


\begin{proof}
We consider the functor $\overline{{\rm Hom}}(\widetilde{G},\GL_{d,R})$
of homomorphisms of group schemes over $R$ up to conjugation in $\GL_{d,R}$. 
This means that 
${\rm Rep}^d_S(\widetilde{G}_S) = \overline{{\rm Hom}}(\widetilde{G},\GL_{d,R})(S)$ 
for every $R$-algebra $S$.

Let us first prove Claim (1). 
By \cite[Lemme 2]{SerreLinRepGroupScheme}, every $G_{\eta}$-representation admits a
$\widetilde{G}$-invariant $R$-sublattice of full rank, that is, the restriction map 
${\rm Hom}(\widetilde{G},\GL_{d,R})(R) \to {\rm Hom}(\widetilde{G},\GL_{d,R})(K)$ 
is surjective. 
Thus, to show (1), we have to show that two $R$-representations $u,v$ of $\widetilde{G}$ 
are conjugate in $\GL_{d,R}$ if and only if their generic fibres $u_K$ and $v_K$ 
are conjugate in $\GL_{d,K}$. 
It is clear that $u_K$ and $v_K$ are conjugate if $u$ and $v$ are. 
For the converse, we use that the transporter functor ${\rm Transp}(u,v)$ of sections of
$\GL_{d,R}$ conjugating $u$ to $v$ is representable by a smooth closed subscheme of 
$\GL_{d,R}$ by \cite[Theorem 4.5, Lemma 4.7]{Margaux}. 
In particular, if the generic fibre of ${\rm Transp}(u,v)$ is non-empty 
(that is, if $u_K$ and $v_K$ are conjugate), then the special fibre of 
${\rm Transp}(u,v)$ is non-empty and Hensel's Lemma provides us with an element of 
$\GL_{d,R}(R)$ conjugating $u$ to $v$.

Next, consider Claim (2). 
By \cite[Lemma 4.4, Theorem 4.5]{Margaux}, the functor 
$\overline{{\rm Hom}}(\widetilde{G},\GL_{d,R})$ is formally \'etale and locally 
of finite presentation. 
Since $R$ is complete, we immediately obtain the claimed bijection.

For Claim (3), we note that Claims (1) and (2) apply to every finite field extension 
$L$ of $K$ with $R$ replaced by the integral closure $R_L$ of $R$ in $L$. 
We can thus define ${\rm sp}_d$ as the following chain of bijections:
\begin{eqnarray*}
{\rm Rep}^d_{\overline{K}}(G_{\bar{\eta}}) &=& 
\overline{{\rm Hom}}(\widetilde{G},\GL_{d,R})(\overline{K}) \\
&=& \varinjlim_{K \subseteq L \text{ finite}} \overline{{\rm Hom}}(\widetilde{G},\GL_{d,R})(L) \\
&=& \varinjlim_{K \subseteq L \text{ finite}} \overline{{\rm Hom}}(\widetilde{G},\GL_{d,R})(R_L) \\
&\to&\overline{{\rm Hom}}(\widetilde{G},\GL_{d,R})(k) 
\,=\, {\rm Rep}_k^d(G).
\end{eqnarray*}
Since ${\rm sp} := \coprod_{d=0}^{\infty} {\rm sp}_d$ is a coproduct of compositions of restriction maps and their inverses, both of which satisfy the stated compatibilities, ${\rm sp}$ satisfies the compatibilities as well. 
\end{proof}

\subsection{Representation theory of linearly reductive group schemes and 
their associated abstract groups}

The following proposition shows that the abstract group $G_{\mathrm{abs}}$ associated to a 
linearly reductive group scheme $G$ over an algebraically closed field $k$ of 
characteristic $p > 0$ appears naturally as the geometric generic fibre of any 
lift of $G$ to characteristic $0$.

\begin{Proposition} \label{prop: geometricgenericfiberisassociated}
Let $G$ be a finite abstract group. 
Let $k$ be an algebraically closed field of characteristic $p > 0$,
let $W(k)$ be the ring of Witt vectors, let $K:={\rm Frac}(W(k))$,
and let $\bar{K}$ be an algebraic closure. 
Then, the constant group scheme $\underline{G}_{\bar{K}}$ 
is the geometric generic 
fibre of a lift of a linearly reductive group scheme $H$ over $k$
to characteristic $0$ if and only if $G$ has a unique abelian $p$-Sylow subgroup and $H \cong G_{\mathrm{lr}}$.
\end{Proposition}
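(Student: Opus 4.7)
The plan is to prove the two implications by exploiting the equivalence of Lemma \ref{lem: lrequivalence} together with the deformation-theoretic control provided by Proposition \ref{prop: liftinggroupscheme}, reducing the problem to a computation with the canonical lift of $G_{\mathrm{lr}}$.

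For the ``if'' direction, I would assume $G$ has a unique abelian $p$-Sylow subgroup $G_p$, so by Schur--Zassenhaus $G \cong G_p \rtimes G'$ with $p \nmid |G'|$, and set $H := G_{\mathrm{lr}}$. By Definition \ref{def: associatedlrgroupscheme}, $H$ takes the form $H \cong \prod \bmu_{p^{n_i},k} \rtimes \underline{G'}_k$, where $\prod \mathbb{Z}/p^{n_i}\mathbb{Z}$ is the Pontryagin dual of $G_p$. I then take the canonical lift $\widetilde{H} := \prod \bmu_{p^{n_i},W(k)} \rtimes \underline{G'}_{W(k)}$ from Definition \ref{def: canonical deformation}. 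Since $\bar{K}$ has characteristic $0$ and contains all $p$-power roots of unity, each $\bmu_{p^{n_i},\bar{K}}$ is canonically (after choosing compatible primitive roots) isomorphic to $\underline{\mathbb{Z}/p^{n_i}\mathbb{Z}}_{\bar{K}}$, and the $G'$-action matches the action on $G_p$ by construction of $G_{\mathrm{lr}}$; this is precisely the unwinding of the equivalence in Lemma \ref{lem: lrequivalence}. Hence $\widetilde{H}_{\bar{K}} \cong \underline{G}_{\bar{K}}$.

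For the ``only if'' direction, I would suppose that $\widetilde{H}$ is a lift of a linearly reductive group scheme $H$ to $W(k)$ with $\widetilde{H}_{\bar{K}} \cong \underline{G}_{\bar{K}}$. By Proposition \ref{prop: liftinggroupscheme}(3), $\widetilde{H}$ is a twisted form of the canonical lift $\widetilde{H}^{\mathrm{can}}$ of $H$, becoming isomorphic to $\widetilde{H}^{\mathrm{can}}$ after a finite flat totally ramified base change. Thus the two geometric generic fibers $\widetilde{H}_{\bar{K}}$ and $(\widetilde{H}^{\mathrm{can}})_{\bar{K}}$ coincide, and the ``if'' direction applied to $H_{\mathrm{abs}}$ (with $(H_{\mathrm{abs}})_{\mathrm{lr}} \cong H$ by Lemma \ref{lem: lrequivalence}) identifies $(\widetilde{H}^{\mathrm{can}})_{\bar{K}} \cong \underline{H_{\mathrm{abs}}}_{\bar{K}}$. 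Therefore $\underline{G}_{\bar{K}} \cong \underline{H_{\mathrm{abs}}}_{\bar{K}}$, so $G \cong H_{\mathrm{abs}}$. Since $H$ is linearly reductive, $H_{\mathrm{abs}}$ has a unique abelian $p$-Sylow subgroup by Lemma \ref{lem: lrequivalence}, and then $H \cong (H_{\mathrm{abs}})_{\mathrm{lr}} \cong G_{\mathrm{lr}}$.

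The main obstacle I expect is verifying that the geometric generic fiber of the canonical lift, written as a constant group scheme, carries precisely the correct semidirect product structure recovering $G$: one must check that the composition of Cartier duality over $k$, specialization to the generic fiber, and Pontryagin duality under the identification $\bmu_{p^n,\bar{K}} \cong \underline{\mathbb{Z}/p^n\mathbb{Z}}_{\bar{K}}$ is $G'$-equivariantly the identity on $G_p$. This is essentially the content of Lemma \ref{lem: lrequivalence} and is naturality for an abelian group $A$ of the biduality isomorphism $A \cong (A^{D})^{\vee}$, so rather than verifying it by hand, I would invoke the equivalence of categories and the naturality of Cartier duality directly. Once this identification is in place, both implications reduce to bookkeeping with the already-established deformation theory.
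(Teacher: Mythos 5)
Your proposal is correct and follows essentially the same route as the paper: both directions rest on Proposition \ref{prop: liftinggroupscheme} (any lift is a twisted form of the canonical lift, hence has the same geometric generic fiber $\prod \C_{p^{n_i}} \rtimes H^{\et}(k)$) combined with the equivalence of Lemma \ref{lem: lrequivalence}. The paper's proof is terser — it does not spell out the $G'$-equivariant identification of $\bmu_{p^n,\bar K}$ with $\underline{\ZZ/p^n\ZZ}_{\bar K}$ that you flag as the main obstacle — but the substance is identical.
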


\begin{proof}
Using Nagata's classification, we can write $H = H^\circ \rtimes H^{\et}$, 
where $H^\circ \cong \prod_{i=1}^l \bmu_{p^{n_i}}$ for some $n_i \geq 0$ and 
where $H^{\et}$ is finite \'etale of length prime to $p$. 
Let $\widetilde{H}$ be any lift of $H$ to $W(k)$. By Proposition \ref{prop: liftinggroupscheme}, we have $G = \widetilde{H}(\bar{K}) \cong \prod_{i=1}^l \C_{p^{n_i}} \rtimes H^{\et}(k)$. In particular, $G$ has a unique abelian $p$-Sylow subgroup and we have $G_{\mathrm{lr}} \cong H$, 
which is what we had to prove.

For the converse, it suffices to note that $G_{\overline{K}}$ is the geometric 
generic fibre of the canonical lift of $G_{\mathrm{lr}}$ to characteristic $0$ 
as in Definition \ref{def: canonical deformation}. 
\end{proof}

Since we already know from Proposition \ref{prop: liftingrepresentation} that the representation theory of a linearly reductive group scheme $G_{\mathrm{lr}}$ and of the geometric generic fibre of a lift of $G_{\mathrm{lr}}$ to characteristic $0$ coincide, we obtain the following corollary.

\begin{Corollary}\label{cor: reptheoriesthesame}
Let $k$ be an algebraically closed field of characteristic $p \geq 0$ and let $G$ be a finite and linearly reductive group scheme over $k$. Then, for every $d \geq 0$, there is a bijection
$$
{\rm sp}_d \,:\, {\rm Rep}^d_{\CC}(G_{\mathrm{abs}}) \,\cong\, 
{\rm Rep}^d_{k}(G),
$$
such that ${\rm sp} := \coprod_{d=0}^{\infty} {\rm sp}_d$ is compatible with direct sums, tensor products, duals, $\lambda$-invariants, and characters. In particular, ${\rm sp}$ maps simple representations to simple representations.
\end{Corollary}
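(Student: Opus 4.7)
The plan is to factor the desired specialization map through the geometric generic fiber of the canonical lift of $G$, reducing the corollary to Proposition \ref{prop: liftingrepresentation}(3) combined with a standard splitting-field comparison between representations of $G_{\mathrm{abs}}$ over $\overline{K}$ and over $\CC$.

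Suppose first that $p > 0$. Let $\widetilde{G}$ be the canonical lift of $G$ to $W(k)$ from Definition \ref{def: canonical deformation}, set $K := {\rm Frac}(W(k))$, and fix an algebraic closure $\overline{K}$. By Proposition \ref{prop: geometricgenericfiberisassociated}, the geometric generic fiber $\widetilde{G}_{\overline{\eta}}$ is the constant group scheme $\underline{G_{\mathrm{abs}}}_{\overline{K}}$. Applied to $\widetilde{G}$, Proposition \ref{prop: liftingrepresentation}(3) produces for each $d \geq 0$ a bijection
$$
{\rm sp}_d' \,:\, {\rm Rep}^d_{\overline{K}}\bigl(\underline{G_{\mathrm{abs}}}_{\overline{K}}\bigr) \,\overset{\sim}{\to}\, {\rm Rep}^d_k(G)
$$
already compatible with all the operations and invariants listed in the corollary. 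If $p = 0$, then $G$ is itself $\underline{G_{\mathrm{abs}}}_k$ and no lifting step is needed.

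To finish, I compare ${\rm Rep}^d_{\overline{K}}(\underline{G_{\mathrm{abs}}}_{\overline{K}})$ with ${\rm Rep}^d_{\CC}(G_{\mathrm{abs}})$ via a common splitting field. Let $F := \QQ(\zeta_N)$, where $N$ is the exponent of $G_{\mathrm{abs}}$; by Brauer's splitting theorem, $F$ is a splitting field for $G_{\mathrm{abs}}$. Fixing embeddings $F \hookrightarrow \overline{K}$ and $F \hookrightarrow \CC$, base change from $F$ to each of these algebraically closed fields induces a bijection on isomorphism classes of $d$-dimensional representations, and each such bijection commutes with direct sums, tensor products, duals, characters, and $\bmu_n$-invariants. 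Composing the resulting identification with ${\rm sp}_d'$ yields the required ${\rm sp}_d$, which inherits all the compatibility properties from its two factors.

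The only point deserving a moment of care, and the main conceptual hurdle, is compatibility of the $\lambda$-invariant with the base change $\overline{K} \leftrightarrow \CC$. In characteristic $0$ the subgroup schemes $\bmu_n$ of $\underline{G_{\mathrm{abs}}}_L$ are \'etale for any algebraically closed $L$ and correspond bijectively to cyclic subgroups $\C_n \subseteq G_{\mathrm{abs}}$; moreover, the dimension of the invariant subspace $V^{\C_n}$ of a representation already defined over $F$ is the rank of an $F$-linear map and hence preserved by any field extension. Once this observation is in hand, all remaining verifications are routine bookkeeping with the bijections above.
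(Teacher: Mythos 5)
Your proof is correct and follows essentially the same route as the paper, which obtains the corollary by combining Proposition \ref{prop: geometricgenericfiberisassociated} with Proposition \ref{prop: liftingrepresentation}. The only difference is that you spell out the comparison between representations over $\overline{K}$ and over $\CC$ via a common splitting field $\QQ(\zeta_N)$ (including the preservation of the $\lambda$-invariant), a step the paper leaves implicit.
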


\begin{Remark}
 \label{rem: cde}
 This can also be phrased in the language of cde-triangles
 as in \cite[Part III]{SerreLinRep}.
 We denote by $R_F(H)$ (resp. $P_F(H)$) the Grothendieck ring
 of finite (resp. finite projective) $H$-modules over some field $F$.
 Keeping the notations and assumptions of the previous Corollary, we 
 have the following:
 \begin{enumerate}
   \item The Cartan morphism $c:P_k(G)\to R_k(G)$ is an isomorphism
   since $G$ is linearly reductive.
   \item We define the decomposition $d:R_\CC(G_{\mathrm{abs}})\to R_k(G)$
   via ${\rm sp}$.
   \item We define the extension $e:P_k(G)\to R_\CC(G_{\mathrm{abs}})$
   via the inverse of ${\rm sp}$ or using the lifting of representations.
 \end{enumerate}
 We leave the details including commutativity of the cde-triangle to the
 reader.
\end{Remark}

The results of this section lead to the following classification result, which is interesting in its own.

\begin{Theorem}
  \label{thm: linearlyreductivesubgroupschemeGL}
  Let $k$ be an algebraically closed field of
  characteristic $p>0$.
  Lifting induces a canonical  map
 \begin{eqnarray*}
 &&
  \left\{
  \mbox{finite linearly reductive subgroup schemes of }\GL_{d,k}
  \mbox{ up to conjugation} \right\} 
  \\
  &\to&
    \left\{
  \mbox{finite subgroups of }\GL_{d}(\CC)
  \mbox{ up to conjugation}
  \right\}  \,.
  \end{eqnarray*}
 It has the following properties:
 \begin{enumerate}
     \item It is injective.
     \item The image is the set of those 
     conjugacy classes of
  finite subgroups of $\GL_d(\CC)$ that 
  have a unique abelian $p$-Sylow subgroup (possibly trivial).
   \item If $p>2d+1$, then it is surjective.
   \item  It preserves $\lambda$-invariants.
 In particular, small (resp. very small) subgroup group schemes
 get mapped to small (resp. very small) subgroups.
  \item It maps subgroup schemes of $\SL_{d,k}$ to subgroups of $\SL_d(\CC)$.
 \end{enumerate}

\end{Theorem}

\begin{proof}
The existence of the map, injectivity, and compatibility 
with $\lambda$-invariants and with $\SL_2$ 
follow from Corollary \ref{cor: reptheoriesthesame}.
Assertion (2) follows from Proposition \ref{prop: geometricgenericfiberisassociated}.
If $p>2d+1$, then every finite subgroup of $\GL_d(\CC)$ is either of order prime to $p$ or has a unique abelian $p$-Sylow subgroup \cite{FeitThompson}.
This shows Assertion (3).
\end{proof}

\begin{Remark}
 If $p\equiv1\mod4$ (resp. $p\equiv3\mod4$),
 then $\SL_2(\FF_p)$ (resp. $\PSL_2(\FF_p)$)
 is a subgroup of $\GL_d(\CC)$ with $p=2d+1$.
 The $p$-Sylow subgroups of these groups
 are not normal, which 
 shows that the bound in (3) of the theorem 
 is sharp.
\end{Remark}

\section{Very small linearly reductive subgroup schemes of $\GL_d$}
\label{sec: classificationofverysmall}

In this section, we explain how to classify representations of finite and linearly 
reductive group schemes admitting a representation with $\lambda=0$ 
(see Definition \ref{def: lambda}) in arbitrary dimension $d$. 
We will make this classification explicit if $d \leq 3$, 
if $d$ is odd and if $d$ is a power of ${\rm char}(k)$. 
We will see in Section \ref{sec: quotient singularities} that these 
are precisely the representations leading to lrq singularities.
In particular, the results of this section yield a complete classification of lrq singularities.

\subsection{Very small representations}

\begin{Definition} \label{def: verysmallrepresentation}
Let $G$ be a linearly reductive group scheme over an algebraically closed field. 
A $d$-dimensional representation $\rho$ of $G$ is called \emph{very small} if $\lambda(\rho) = 0$. 
In this case, we say that $\rho(G)$ is a \emph{very small subgroup scheme of} $\GL_d$.
\end{Definition}

We note that a very small representation is faithful and that the
name \emph{very small} is motivated by the terminology of
$\rho$ being \emph{small} if $\lambda(\rho)\leq d-2$, 
see Remark \ref{rem: lambda}.
We refer to Proposition \ref{prop: lambda} 
for a geometric interpretation in terms of fixed loci.

Next, we have the following corollary of our analysis
of representations of linearly reductive group schemes 
from Section \ref{sec: linred}, which puts strong restrictions
on them and which is the key to our classification results.

\begin{Theorem}\label{thm: repsofGlr}
Let $G$ be a linearly reductive group scheme over an algebraically closed field of characteristic $p \geq 0$. Then, the following hold:
\begin{enumerate}
 \item The bijection ${\rm sp}_d: {\rm Rep}^d_{k}(G) \to {\rm Rep}^d_{\CC}(G_{\mathrm{abs}})$ identifies the subsets of very small representations.
 \item If $G$ admits a very small representation, then every abelian subgroup of $G_{\mathrm{abs}}$ is cyclic. 
\end{enumerate}
\end{Theorem}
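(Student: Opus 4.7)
The plan is to deduce part (1) from the compatibility properties of ${\rm sp}$ already established and then to use part (1) to reduce part (2) to a classical character-theoretic argument about elementary abelian $p$-groups.

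For part (1), being very small is the condition $\lambda(\rho)=0$, and Corollary \ref{cor: reptheoriesthesame} asserts that ${\rm sp}_d$ is compatible with $\lambda$-invariants. The only point to verify is that on the complex side the $\lambda$-invariant of a representation of $G_{\mathrm{abs}}$, computed by Definition \ref{def: lambda} through the $\bmu_n$-subgroup schemes of $\underline{G_{\mathrm{abs}}}_{\CC}$, coincides with the maximum of $\dim V^C$ over non-trivial cyclic subgroups $C \subseteq G_{\mathrm{abs}}$; this is immediate from Lemma \ref{lem: lrequivalence}, since over $\CC$ every $\bmu_n$ is just the constant cyclic group of order $n$. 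Hence ${\rm sp}_d$ restricts to a bijection on very small representations.

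For part (2), by part (1) we may replace $G$ by the abstract finite group $H := G_{\mathrm{abs}}$ and work with a very small complex representation $\rho : H \to \GL(V)$ with $V \neq 0$. Let $A \subseteq H$ be an abelian subgroup; since every non-trivial cyclic subgroup of $A$ is a non-trivial cyclic subgroup of $H$, the restriction $\rho|_A$ is again very small. Suppose for contradiction that $A$ is not cyclic. Then some $p$-primary component of $A$ fails to be cyclic, so $A$ contains a subgroup $B \cong (\ZZ/p)^2$ for some prime $p$. Since $B$ is abelian, $\rho|_B$ decomposes as a direct sum $V = \bigoplus V_{\chi_i}$ of character eigenspaces, and because $V \neq 0$ at least one character $\chi_i$ appears.

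The crucial observation is then entirely elementary: any character $\chi : B \to \CC^\times$ has cyclic image of order dividing $p$, so $|\ker \chi| \in \{p, p^2\}$, and in particular $\ker \chi$ contains a non-trivial cyclic subgroup $C \subseteq B$. This $C$ acts trivially on $V_{\chi_i} \neq 0$, contradicting $\lambda(\rho|_A)=0$. I expect the only genuine obstacle in writing this up to be the careful translation of the $\lambda$-invariant across the equivalence of Lemma \ref{lem: lrequivalence} needed for part (1); once this bookkeeping is in place, the $(\ZZ/p)^2$ argument closes part (2) in a single stroke.
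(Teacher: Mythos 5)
Your proposal is correct and follows essentially the same route as the paper: part (1) is read off from the compatibility of ${\rm sp}$ with $\lambda$-invariants (Corollary \ref{cor: reptheoriesthesame}), and part (2) reduces via part (1) to a diagonalized abelian subgroup of $G_{\mathrm{abs}}$ acting through a very small complex representation. The only difference is the endgame: the paper notes directly that the character cut out by a single coordinate axis must have trivial kernel (any nontrivial element of the kernel would generate a cyclic subgroup fixing that axis), so the abelian subgroup embeds into $\CC^\times$ and is therefore cyclic, whereas you reach the same conclusion by contradiction through a $(\ZZ/p)^2$ subgroup --- both are valid, the paper's being marginally more direct since it bypasses the structure theorem for finite abelian groups.
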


\begin{proof}
Claim (1) is a special case of Corollary \ref{cor: reptheoriesthesame}.

For Claim (2), let $H \subseteq G_{\mathrm{abs}}$ be an abelian subgroup and let 
$\rho: G_{\mathrm{abs}} \to \GL_{d,\CC}(\CC)$ be a very small representation of $G_{\mathrm{abs}}$, 
which exists by Claim (1). 
Since $H$ is abelian, we may conjugate $\rho$ to assume that $H$ acts diagonally. 
Consider the restriction $f: H \to \CC^{\times}$ of the $H$-representation to a 
coordinate axis. 
Then, $\Ker(f)$ is trivial since $\lambda(\rho) = 0$. 
Thus, $H$ is isomorphic to a finite, hence cyclic, subgroup of $\CC^\times$, 
which is what we wanted to show.
\end{proof}

Thus, for every dimension $d$ and every algebraically closed field $k$ 
of characteristic $p \geq 0$, the classification of finite, 
linearly reductive, and very small subgroup schemes of $\GL_{d,k}$ is the same 
(by passing to the associated abstract group) 
as the classification of finite and very small subgroups of $\GL_{d,\CC}(\CC)$ 
that admit a unique cyclic normal $p$-Sylow subgroup. 

Next, the study of finite groups that admit very small representations was initiated by 
Zassenhaus \cite{Zassenhaus} (motivated by questions in near-fields), followed by important
contributions by Suzuki and others, and a complete classification was achieved 
(motivated by questions in 
differential geometry, see Section \ref{subsec: differentialgeometry})
by Milnor, Thomas, Wall, Wolf, and others, see \cite{Wolf}.
Instead of repeating this classification here, we will focus on interesting special cases, 
such as $d = 2$ or $d$ odd. 

Let us also note that the class of finite groups, all of whose abelian subgroups are cyclic, has interesting reformulations and characterisations:
they are precisely the groups with periodic cohomology
(which makes them interesting for topologists)
and they can be characterised by the structure
of their Sylow subgroups.
We refer to \cite[Chapter XII, Theorem 11.6]{CE} 
for details.

\subsection{Very small linearly reductive subgroup schemes of $\GL_2$}

Let us first recall the classification of finite subgroup schemes 
of $\SL_2$ over arbitrary fields from \cite{Hashimoto}, which extends the classification
of finite subgroups of $\SL_{2,\CC}(\CC)$ from \cite{Klein}.
We adapt these results to our setting.

\begin{Theorem}[Hashimoto, Klein]
\label{thm: hashimoto}
 Let $G$ be a finite and linearly reductive subgroup scheme of $\SL_{2,k}$
 over an algebraically closed field $k$ of characteristic $p\geq0$.
 Then, $G$ is conjugate to one of the following, where $\zeta_r$ denotes a primitive 
 $r$th root of unity.
 \begin{enumerate}
 \item $(n\geq1)$ The group scheme $\bmu_{n}$ of length $n$ embedded in $\SL_{2,k}$ as
$$
\left\{
 \left(
 \begin{array}{cc}
   a& 0\\ 0&a^{-1} 
 \end{array}\right)\,, \,a\in\bmu_{n}
 \right\} \,.
$$
 \item $(n\geq2, \,p\geq3)$ The binary dihedral group scheme $\BD_{n}$ of length $4n$
 generated by $\bmu_{2n}\subset\SL_{2,k}$ as in (1)  and
 $$
 \left(
 \begin{array}{cc}
 0 & \zeta_4 \\
 \zeta_4 & 0
 \end{array}
 \right)\,.
 $$
 \item $(p\geq5$) The binary tetrahedral group scheme $\BT_{24}$ of length $24$ generated by 
 $\BD_2\subset\SL_{2,k}$ as in (2) and 
$$
 \frac{1}{\sqrt{2}} \left(
 \begin{array}{cc}
   \zeta_8^7 & \zeta_8^7 \\
   \zeta_8^5 & \zeta_8
 \end{array}
 \right)\,.
$$
\item $(p\geq5)$ The binary octahedral group scheme $\BO_{48}$ of length $48$ generated by 
$\BT_{24}\subset\SL_{2,k}$ as in (3) and $\bmu_8\subset\SL_{2,k}$ as in (1).
\item $(p\geq7$) The binary icosahedral group scheme $\BI_{120}$ of length $120$ generated by 
$\bmu_{10}\subset\SL_{2,k}$ as in (1),
$$
 \left(\begin{array}{cc}
  0 & 1 \\ -1 & 0
 \end{array}\right),
 \mbox{ \quad and \quad }
 \frac{1}{\zeta_5^2-\zeta_5^3}
 \left(\begin{array}{cc}
   \zeta_5+\zeta_5^{-1} & 1 \\ 1 & -(\zeta_5+\zeta_5^{-1})
 \end{array}\right)\,.
$$
 \end{enumerate}
 Conversely, any of the above is a linearly reductive group scheme of $\SL_{2,k}$
 in the indicated characteristics.
\end{Theorem}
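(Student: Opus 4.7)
I would reduce the classification to Klein's classical theorem over $\CC$ via the specialisation isomorphism of Corollary \ref{cor: reptheoriesthesame}. Let $\rho\colon G\hookrightarrow\SL_{2,k}$ be the given inclusion, viewed as a faithful $2$-dimensional $k$-representation of $G$. By Corollary \ref{cor: reptheoriesthesame} there is a unique complex representation $\widetilde{\rho}\colon G_{\mathrm{abs}}\to\GL_{2,\CC}(\CC)$, up to conjugation, with ${\rm sp}_2(\widetilde{\rho})=\rho$; since ${\rm sp}$ respects duals and hence the determinant character $\det=\Lambda^2$, the triviality of $\det\circ\rho$ forces $\widetilde{\rho}$ to land in $\SL_{2,\CC}(\CC)$. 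Klein's classical theorem then identifies $\widetilde{\rho}(G_{\mathrm{abs}})$, up to conjugation in $\SL_{2,\CC}(\CC)$, with one of the cyclic, binary dihedral, binary tetrahedral, binary octahedral, or binary icosahedral subgroups, realised by the matrices displayed in the theorem.

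Next, I would determine when each of these abstract groups admits a unique abelian $p$-Sylow subgroup, equivalently, by Lemma \ref{lem: lrequivalence}, when the associated subgroup scheme of $\SL_{2,k}$ is linearly reductive. This is automatic in the cyclic case. For $\BD_n$ of order $4n$ with $n\geq 2$, every element outside the cyclic normal subgroup $\bmu_{2n}$ squares to the central element of order $2$, so for odd $p$ the $p$-Sylow lies inside this cyclic normal subgroup and is thus cyclic and unique, whereas for $p=2$ the $2$-Sylow is either generalised quaternion (when $n$ is even) or one of $n$ conjugate cyclic subgroups of order $4$ (when $n$ is odd); in neither case is it both unique and abelian, so $\BD_n$ is linearly reductive precisely when $p\neq 2$. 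For the binary tetrahedral, octahedral, and icosahedral groups the $2$-Sylow is respectively $Q_8$, a non-abelian group of order $16$, and $Q_8$, and the remaining Sylow subgroups are cyclic but, a direct count shows, not unique whenever the prime in question divides the group order. This yields exactly the bounds $p\geq 5$, $p\geq 5$, $p\geq 7$ appearing in (3), (4), (5).

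For the converse, whenever the indicated bound on $p$ is satisfied, the roots of unity $\zeta_4$, $\zeta_8$, $\zeta_5$ needed to write down the listed matrices exist in $k$, so the matrices generate a closed subgroup scheme of $\SL_{2,k}$ of the indicated length; by Proposition \ref{prop: liftingrepresentation}(2) this is precisely the specialisation of the corresponding $\CC$-representation, and it is linearly reductive by the Sylow analysis above. Uniqueness up to conjugation in $\GL_{2,k}$ of these realisations is transported from conjugacy in $\GL_{2,\CC}(\CC)$ via Proposition \ref{prop: liftingrepresentation}(1), using the smoothness of the transporter scheme employed in its proof.

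I expect the main technical obstacle to be the Sylow bookkeeping for $\BD_n$, where the two parities of $n$ must be handled separately, and for the sporadic binary polyhedral groups, where one has to identify the $2$-Sylow (always non-abelian) and confirm that the cyclic odd Sylow subgroups fail to be unique whenever the prime divides the group order. The remainder is a direct transfer between characteristic zero and characteristic $p$ made possible by the machinery of Section \ref{sec: linred}.
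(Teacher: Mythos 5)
Your proof is correct and follows exactly the route the paper itself indicates: the paper's proof consists of a citation of Hashimoto's Theorem~3.8 together with the remark that the statement follows from Klein's classification over $\CC$ via Theorem~\ref{thm: repsofGlr}, by selecting those binary polyhedral groups with a unique abelian $p$-Sylow subgroup. Your write-up simply carries out in detail the Sylow bookkeeping and the transfer of the determinant condition and of conjugacy that the paper leaves implicit, and these details check out.
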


\begin{proof}
See \cite[Theorem 3.8]{Hashimoto}. Note also that, using Theorem \ref{thm: repsofGlr}, this follows immediately from the classification of finite subgroups of $\SL_{2,\CC}(\CC)$, which goes back to Klein \cite{Klein}, by passing to the linearly reductive group schemes associated to the finite subgroups of $\SL_{2,\CC}(\CC)$ with a unique abelian $p$-Sylow subgroup.
\end{proof}

To state the classification of very small linearly reductive subgroup schemes of $\GL_{2,k}$, we follow Brieskorn \cite[Section 2.4]{Brieskorn}. 
We let 
$$
  \psi \,:\, \GG_m \times \SL_{2,k} \,\to\, \GL_{2,k}
$$
be the multiplication map, where $\GG_m \subseteq \GL_{2,k}$ is 
the diagonal torus.
Let $H_1 \subseteq \GG_m$ and $H_2 \subseteq \SL_{2,k}$ be finite and linearly reductive subgroup schemes, 
let $N_i \subseteq H_i$ be normal subgroup schemes with projection maps $\pi_i: H_i \to H_i/N_i$, 
and assume that there exists an isomorphism $\varphi: H_2/N_2 \to H_1/N_1$. Then, we define
$$
(H_1,N_1;H_2,N_2)_{\varphi} \,:=\, \psi(H_1 \times_{\pi_1,H_1/N_1,\varphi \circ \pi_2} H_2),
$$
where we consider $H_1 \times_{\pi_1,H_1/N_1,\varphi \circ \pi_2} H_2$ as a subgroup scheme 
of $\GG_m \times \SL_{2,k}$ via its natural embedding into $H_1 \times H_2$. 
In particular, since linear reductivity is stable under taking quotients and subgroups, 
the group scheme $(H_1,N_1;H_2,N_2)_{\varphi}$ is linearly reductive. 
If the length of $H_i/N_i$ is at most $3$, then the conjugacy class of $(H_1,N_1;H_2,N_2)_{\varphi}$ 
does not depend on $\varphi$,
so we will drop $\varphi$ from our notation in these cases.

Moreover, for $1 \leq q < n$ with $(n,q) = 1$, we define $\bmu_{n,q}$ as the group scheme 
$\bmu_n$ of length $n$ embedded in $\GL_{2,k}$ as
$$
\left\{
 \left(
 \begin{array}{cc}
   a& 0\\ 0&a^{q} 
 \end{array}\right)\,, \,a\in\bmu_{n}
 \right\} \,.
$$

After this preparation, the classification of small linearly reductive subgroup schemes of $\GL_{2,k}$ follows from \cite[Satz 2.9]{Brieskorn} and is as follows.

\begin{Theorem}
\label{thm: smallingl2}
 Let $G$ be a finite, very small and linearly reductive subgroup scheme of $\GL_{2,k}$
 over an algebraically closed field $k$ of characteristic $p\geq0$.
 Then, the following hold:
 \begin{itemize}
 \item[(i)]
 $G$ is conjugate to one of the following.
 \begin{enumerate}
     \item $(n \ge 1, (n,q) = 1)$
     
     The cyclic group scheme $\bmu_{n,q}$
     \item[(2a)] $(n \geq 2, (m,2) = (m,n) = 1, p \geq 3)$ 
     
     The group scheme $(\bmu_{2m,1},\bmu_{2m,1};\BD_n,\BD_n)$.
     \item[(2b)] $(n \geq 2, (m,2) = 2, (m,n) = 1, p \geq 3)$ 
     
     The group scheme $(\bmu_{4m,1},\bmu_{2m,1};\BD_n,\bmu_{2n,2n-1})$.
     
     \item[(3a)] $((m,6) = 1, p\geq 5)$
     
     The group scheme $(\bmu_{2m,1},\bmu_{2m,1};\BT_{24},\BT_{24})$.
     
      \item[(3b)] $((m,6) = 3, p\geq 5)$
     
     The group scheme $(\bmu_{6m,1},\bmu_{2m,1};\BT_{24},\BD_2)$.
     
       \item[(4)] $((m,6) = 1, p\geq 5)$
     
     The group scheme $(\bmu_{2m,1},\bmu_{2m,1};\BO_{48},\BO_{48})$.
     
       \item[(5)] $((m,30) = 1, p \geq 7)$
       
      The group scheme $(\bmu_{2m,1},\bmu_{2m,1};\BI_{120},\BI_{120})$.
       
 \end{enumerate}
 \item [(ii)] Two of these group schemes $G_1$ and $G_2$ are conjugate if and only if they are equal 
 or $G_1 = \bmu_{n,q}$ and $G_2 = \bmu_{n,q'}$ with $qq' = 1$ modulo $n$.
 \end{itemize}
 Conversely, any of the above is a small linearly reductive subgroup scheme of $\GL_{2,k}$
 in the indicated characteristics. 
\end{Theorem}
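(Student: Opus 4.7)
The plan is to deduce this classification from Brieskorn's classification of finite small subgroups of $\GL_{2,\CC}(\CC)$ \cite[Satz 2.9]{Brieskorn} via the specialization machinery developed in Section \ref{sec: linred}.

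First, by Theorem \ref{thm: repsofGlr}(1), the bijection ${\rm sp}_2$ identifies very small representations of a linearly reductive group scheme $G$ over $k$ with very small complex representations of the associated abstract group $G_{\mathrm{abs}}$, and by Proposition \ref{prop: liftingrepresentation}(1)--(2), two representations are conjugate in $\GL_{2,k}$ if and only if the corresponding complex representations are conjugate in $\GL_{2,\CC}(\CC)$. Moreover, in dimension $d = 2$, a faithful representation is very small if and only if it is small in Brieskorn's sense: the only alternatives to a $0$-dimensional fixed subspace of a non-trivial $\bmu_n \subseteq G$ are a $1$-dimensional fixed line (a pseudo-reflection) or all of $\CC^2$, the latter being ruled out by faithfulness. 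Thus classifying finite, very small, linearly reductive subgroup schemes of $\GL_{2,k}$ up to conjugacy reduces to classifying finite small subgroups of $\GL_{2,\CC}(\CC)$ whose $p$-Sylow subgroups are cyclic (as required by Theorem \ref{thm: repsofGlr}(2)), up to conjugation.

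Next, I would invoke \cite[Satz 2.9]{Brieskorn}, which produces exactly the families (1) and (2a)--(5) in the notation $(H_1,N_1;H_2,N_2)_{\varphi}$, where $H_2$ is a binary polyhedral subgroup of $\SL_{2,\CC}(\CC)$ from Klein's list and $H_1 \subseteq \GG_{m,\CC}(\CC)$ is cyclic. For each family, the $p$-Sylow subgroup of the abstract group is controlled by the $p$-Sylow of $H_2$ (since $H_1$ is already cyclic), so requiring it to be cyclic yields precisely the constraints $p \geq 3$ for $\BD_n$, $p \geq 5$ for $\BT_{24}$ and $\BO_{48}$, and $p \geq 7$ for $\BI_{120}$, matching those of Theorem \ref{thm: hashimoto}. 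The explicit matrix realizations are then obtained by combining the diagonal embeddings of $H_1$ with the embeddings of $H_2$ provided by Theorem \ref{thm: hashimoto}, lifting Brieskorn's construction into $\psi(\GG_m \times \SL_{2,k}) \subseteq \GL_{2,k}$. Conversely, any such subgroup scheme is linearly reductive as a subquotient of a product of linearly reductive group schemes, so the list is exhaustive.

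For (ii), the equivalence $\bmu_{n,q} \sim \bmu_{n,q'}$ with $qq' \equiv 1 \pmod{n}$ is realized by conjugation with the coordinate-swap permutation matrix, and no other identifications arise among cyclic types. For the non-abelian families, the conjugacy class of $G$ in $\GL_{2,k}$ is determined by the pair $(H_1,H_2)$ together with the normal subgroups $N_i$, which can be recovered intrinsically from $G$, so uniqueness descends from \cite[Satz 2.9]{Brieskorn} via ${\rm sp}_2$. The main obstacle is the bookkeeping behind the splittings of type (2) into (2a)/(2b) and of type (3) into (3a)/(3b): these correspond to divisibility conditions on $m$ by $2$ and by $3$, respectively, and arise from the existence of the identifying isomorphism $\varphi: H_2/N_2 \to H_1/N_1$ and the cyclicity of the fibered product inside $\GG_m \times \SL_{2,k}$. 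These are combinatorial verifications that are identical in characteristic zero and in positive characteristic, since ${\rm sp}$ respects all relevant structure by Proposition \ref{prop: liftingrepresentation}.
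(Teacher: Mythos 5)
Your overall strategy is exactly the paper's: reduce to Brieskorn's \cite[Satz 2.9]{Brieskorn} via the specialization bijection of Theorem \ref{thm: repsofGlr} and Corollary \ref{cor: reptheoriesthesame}, check that small $=$ very small in dimension $2$, and match the explicit embeddings. The paper's proof is essentially the two-sentence version of what you wrote, so on the level of architecture there is nothing to object to, and your extra details (the pseudo-reflection dichotomy in dimension $2$, the coordinate swap realizing $\bmu_{n,q}\sim\bmu_{n,q^{-1}}$) are correct.

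There is, however, one concrete slip in the step where you derive the characteristic constraints. You state that the condition on $G_{\mathrm{abs}}$ is that its $p$-Sylow subgroups be \emph{cyclic}, citing Theorem \ref{thm: repsofGlr}(2). That is only a necessary condition. The actual condition for $G_{\mathrm{abs}}$ to arise from a linearly reductive group scheme in characteristic $p$ is that it have a \emph{unique} (equivalently, normal) abelian $p$-Sylow subgroup --- this comes from Lemma \ref{lem: lrequivalence} and Proposition \ref{prop: geometricgenericfiberisassociated}, and is what the paper invokes. The distinction matters for the final answer: $\BT_{24}$ has a cyclic $3$-Sylow subgroup of order $3$, but it is not normal, so your criterion would wrongly admit $\BT_{24}$ in characteristic $3$, contradicting the constraint $p\geq 5$; similarly $\BD_n$ for $n$ odd has a cyclic but non-normal $2$-Sylow subgroup $\C_4$, so your criterion would wrongly admit it in characteristic $2$, and $\BI_{120}$ has cyclic but non-normal Sylow subgroups at $p=3,5$. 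Replacing ``cyclic $p$-Sylow'' by ``unique cyclic $p$-Sylow'' throughout repairs the argument and recovers exactly the stated bounds on $p$.
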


\begin{proof}
As already mentioned before, any of these subgroup schemes is linearly reductive 
and one can check that they are very small.

Conversely, assume that $G$ is a finite, very small and linearly reductive subgroup 
scheme of $\GL_{2,k}$. 
By Theorem \ref{thm: repsofGlr}, the associated abstract group $G_{\mathrm{abs}}$ is a finite and 
very small subgroup of $\GL_{2,\CC}(\CC)$. 
By \cite[Satz 2.9]{Brieskorn}, $G_{\mathrm{abs}}$ is the abstract group associated to one of the 
linearly reductive group schemes listed in Theorem \ref{thm: smallingl2}, 
where the condition on $p$ comes from the condition that $G_{\mathrm{abs}}$ admits a 
unique cyclic $p$-Sylow subgroup. 
One can check that the the embeddings of $G$ into $\GL_{2,k}$ 
given in Cases (1),...,(5) correspond to the embeddings of 
$G_{\mathrm{abs}}$ into $\GL_{2,\CC}(\CC)$ given by Brieskorn in \cite[Satz 2.9]{Brieskorn}. 
Thus, Claim (i) and (ii) follow from \cite[Satz 2.9]{Brieskorn}.
\end{proof}

\subsection{Very small linearly reductive subgroup schemes of $\GL_d$ in higher dimensions}
The simplest examples of finite and linearly reductive group schemes that 
admit very small representations are the following metacylic group schemes.

\begin{Definition}\label{def: metacyclic}
Let $k$ be an algebraically closed field $k$ of characteristic $p \geq 0$.
Let $m,n \geq 1$ and $0 < r \leq m$ be integers with $(n(r-1),m) = 1$ and $r^n = 1$ mod $m$. Let $e := \ord(r)$ be the multiplicative order of $r$ mod $m$ and assume $p \nmid e$. 
\begin{enumerate}
  \item The \emph{split metacyclic group scheme} with parameters $m,n,$ and $r$ is defined as $\bmu(m,n,r) := \bmu_m \rtimes \bmu_n$ such that $\bmu_n$ acts on $\bmu_m$ through an epimorphism $\bmu_n \to \langle r \rangle \subseteq \Aut(\bmu_m)$.
  \item We say that $\bmu(m,n,r)$ is \emph{very small} if and only if every prime divisor of $e$ divides $n/e$.
\end{enumerate}
\end{Definition}

\begin{Remark}
Note that the ``cyclic" linearly reductive group schemes $\bmu_n$ are examples of split metacyclic group schemes, since $\bmu_n \cong \bmu(1,n,1)$.
\end{Remark}

\begin{Remark} \label{rem: metacyclic}
The very small split metacyclic group schemes defined as above are precisely those linearly reductive group schemes whose associated abstract group is split metacyclic in the sense of \cite[Type I in Theorem 6.1.11]{Wolf} and 
that admits very small representations over $\CC$.
\end{Remark}

\begin{Theorem} \label{thm: verysmallgroupsinsmalldimension}
Let $G$ be a very small, finite, and linearly reductive subgroup scheme of $\GL_{d,k}$ over an algebraically closed field $k$ of characteristic $p \geq 0$.
 \begin{enumerate}
      \item If $d$ is odd or $p = 2$, then $G \cong \bmu(m,n,r)$ such that $G$ is very small and $\ord(r) \mid d$.
 
    \item \label{item: very small 3-dim} If $d = 3$, then $G$ is conjugate to one of the following subgroups:
    \begin{enumerate}
        \item \label{item: cyclic 3-dim} $G \cong \bmu_m$ embedded as   
    $$
\left\{
 \left(
 \begin{array}{ccc}
   a& 0 & 0\\ 0&a^{q_1} & 0 \\
   0 & 0 & a^{q_2}
 \end{array}\right)\,, \,a\in\bmu_{m}
 \right\} \,
$$
    for some $q_1,q_2 \geq 1$ with $(m,q_1) = (m,q_2) = 1$.        
    \item \label{item: metacyclic 3-dim}
    $p \neq 3$, $G \cong \bmu(m, 3^f N, r) \cong \bmu_m \rtimes (\C_{3^f} \times \bmu_N)$ with $\ord(r) = 3$, $3 \nmid N$, $f \geq 2$, $\bmu_m$ is embedded as in (a) with $q_1 = r$ and $q_2 = r^2$, $\bmu_{N}$ is embedded as in (a) with $q_1 = q_2 = 1$, and a generator of $\C_{3^f}$ is mapped to
        \[
 \begin{pmatrix}
   0 & 1 & 0 \\ 
   0 & 0 & 1 \\
   \zeta_{3^{f-1}} & 0 & 0
 \end{pmatrix},
\]
where $\zeta_{3^{f-1}}$ is a primitive $3^{f-1}$-th root of unity.
     \end{enumerate}

    \item If $d = p^i$ for some $i \geq 0$, then $G \cong \bmu_n$ for some $n \geq 1$.
 \end{enumerate}
\end{Theorem}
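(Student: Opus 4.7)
The plan is to use Theorem \ref{thm: repsofGlr} to reduce the classification to finite abstract groups $G_{\mathrm{abs}}$ admitting a faithful fixed-point-free (= very small) complex representation of dimension $d$, subject to the constraint that every abelian subgroup is cyclic. Such groups have been classified exhaustively by Wolf \cite[Theorem 6.1.11]{Wolf} into six types I--VI: Type I consists precisely of the split metacyclic groups $\bmu(m,n,r)_{\mathrm{abs}}$, while each of Types II--VI contains a quaternion subgroup $Q_8$ (equivalently, has generalized quaternion Sylow $2$-subgroup).

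For Part (1), I would rule out Types II--VI under either hypothesis. When $p=2$, the $p$-Sylow of $G_{\mathrm{abs}}$ is abelian (and hence cyclic by Theorem \ref{thm: repsofGlr}(2)), so it cannot contain the non-abelian $Q_8$. When $d$ is odd, I argue that any fixed-point-free representation of $Q_8$ has even dimension: the one-dimensional irreducibles of $Q_8$ factor through $Q_8/\{\pm 1\}$, so the central involution $-1$ would act trivially on any such summand, contradicting fixed-point-freeness; consequently the representation must be a sum of copies of the unique two-dimensional faithful irreducible, and restriction to any putative $Q_8\subseteq G_{\mathrm{abs}}$ forces $d$ to be even. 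In either case $G\cong\bmu(m,n,r)$ is split metacyclic. The very-smallness condition of Definition \ref{def: metacyclic} is then the direct translation of $\lambda(\rho)=0$ on restriction to cyclic subgroups. Finally, the divisibility $\ord(r)\mid d$ uses that a very small representation has only faithful irreducible components (otherwise its kernel would contain a non-trivial cyclic subgroup with non-zero fixed subspace), together with the standard fact that the faithful irreducibles of $\bmu(m,n,r)$ all have dimension $e:=\ord(r)$, being monomial and induced from the cyclic subgroup of index $e$.

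Parts (2) and (3) then follow from a case analysis of $e$. For Part (3), with $d=p^i$, Part (1) applies (either because $p=2$ or because $p^i$ is odd), so $e\mid p^i$; combined with $\gcd(p,e)=1$ from Definition \ref{def: metacyclic}, this forces $e=1$, hence $r=1$, and the coprimality $\gcd(n(r-1),m)=1$ then collapses to $m=1$, yielding $G\cong\bmu_n$. For Part (2) with $d=3$, Part (1) gives $e\in\{1,3\}$. If $e=1$ we obtain the diagonal embedding (a), the coprimalities $\gcd(q_1,m)=\gcd(q_2,m)=1$ being forced by very smallness (otherwise some proper $\bmu_{d'}\subseteq\bmu_m$ would fix a coordinate axis). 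If $e=3$, then $p\neq 3$ and very smallness forces $9\mid n$, so we write $n=3^f N$ with $f\geq 2$ and $\gcd(N,3)=1$; the subgroup $\bmu_N\subseteq\bmu_n$ then has order coprime to $e$ and acts trivially on $\bmu_m$, giving $G\cong\bmu_m\rtimes(\C_{3^f}\times\bmu_N)$, with the \'etale identification $\bmu_{3^f}\cong\C_{3^f}$ supplied by $p\neq 3$. The explicit block form in (b) is then read off by inducing a faithful character of the index-$3$ cyclic subgroup $\bmu_m\times\bmu_{3^{f-1}}$ to $\bmu_m\rtimes\C_{3^f}$, with $\bmu_N$ entering as a scalar via a faithful $1$-dimensional character.

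The principal obstacle is the appeal to Wolf's classification, which is the deep structural ingredient; the remaining steps are the elementary $Q_8$-parity argument, the translation of $\lambda(\rho)=0$ into the combinatorial condition of Definition \ref{def: metacyclic}, and the explicit induced-representation computation yielding the matrix in case (2b).
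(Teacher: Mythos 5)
Your proposal is correct and follows the same overall strategy as the paper: reduce via Theorem \ref{thm: repsofGlr} (i.e.\ the specialization bijection ${\rm sp}$) to the classification of very small finite subgroups of $\GL_{d,\CC}(\CC)$ with all abelian subgroups cyclic, and then invoke Wolf. The one genuine difference is in how Part (1) is extracted from Wolf's book. The paper cites \cite[Theorem 7.2.18]{Wolf} wholesale for the statement that the only very small subgroups of $\GL_{d,\CC}(\CC)$ for $d$ odd, respectively with cyclic $2$-Sylow subgroup, are the split metacyclic ones with $\ord(r)\mid d$. You instead start from the six-type structure theorem \cite[Theorem 6.1.11]{Wolf} and rederive the needed consequences by hand: Types II--VI are excluded because they contain $Q_8$, which is incompatible with a cyclic $2$-Sylow (the $p=2$ case) and whose fixed-point-free representations are necessarily even-dimensional since its one-dimensional characters kill the central involution (the $d$ odd case); the divisibility $\ord(r)\mid d$ then comes from the fact that a very small representation has only faithful irreducible summands and that the faithful irreducibles of $\bmu(m,n,r)_{\mathrm{abs}}$ all have dimension $e=\ord(r)$. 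This buys a more self-contained and transparent argument at the cost of a few extra representation-theoretic verifications; both versions ultimately rest on Wolf's classification. Parts (2) and (3) are argued essentially identically to the paper (the paper defers the explicit matrix in case (2b) to \cite[Section 7.5]{Wolf}, whereas you sketch the induction from the index-$3$ abelian subgroup, which amounts to the same computation).
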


\begin{proof}
Let us first prove Claim (1). 
Since $G$ is a very small linearly reductive subgroup scheme of $\GL_{d,k}$, the associated abstract group $G_{\mathrm{abs}}$ is a very small subgroup of $\GL_{d,\CC}(\CC)$. 
First, assume that $d$ is odd. 
By \cite[Theorem 7.2.18]{Wolf}, the only very small subgroups of $\GL_{d,\CC}(\CC)$ 
for $d$ odd are the split metacyclic groups with $\ord(r) \mid d$.
Hence, $G$ is metacyclic and very small with $\ord(r) \mid d$. 
Next, assume that $p = 2$. 
Then, the $2$-Sylow subgroup of $G_{\mathrm{abs}}$ is cyclic. 
By \cite[Theorem 6.1.11, Theorem 7.2.18]{Wolf}, the only very small subgroups 
of $\GL_{d,\CC}(\CC)$ with a cyclic $2$-Sylow subgroup are the split metacyclic 
groups with $\ord(r) \mid d$.

In particular, in the situation of Claim (2), we know that $G \cong \bmu(m,n,r)$ 
is very small with $\ord(r) \mid 3$. 
If $\ord(r) = 1$, then $G$ is cyclic and the embedding $G \to \GL_{3,k}$ 
can be diagonalised as in Case (a). 
If $\ord(r) = 3$, then $3 \mid n$ and $(3,m) = 1$, 
so that $\bmu(m,n,r) = \bmu(m,3^fN,r)$ for some $f \geq 2$ and $3 \nmid N$. 
The embeddings of $G$ into $\GL_{3,k}$ given in Case (b) correspond to the embedding 
of $G_{\mathrm{abs}}$ into $\GL_{3,\CC}(\CC)$ given in \cite[Section 7.5]{Wolf}.

Finally, for Claim (3), note that $G \cong \bmu(m,n,r)$ with $\ord(r) \mid p^i$ by Claim (1).
Recall that $p \nmid \ord(r)$ by the definition of $\bmu(m,n,r)$. 
Hence, $\ord(r) = 1$, so that $(n(r-1),m) = 1$ implies that $m = 1$ and 
$\bmu(m,n,r) = \bmu(1,n,1) = \bmu_n$.
\end{proof}

Similarly, it is straightforward to determine which of the 
above representations factors through $\SL_{d,k}$, thereby 
giving a classification of finite, very small, and linearly reductive 
subgroup schemes of $\SL_{d,k}$. 
This is particularly interesting if $d = 3$, where it implies the following.

\begin{Corollary}
\label{cor: SL3}
Let $G$ be a very small, finite, and linearly reductive subgroup scheme of $\SL_{3,k}$ over an algebraically closed field $k$ of characteristic $p \geq 0$.
Then, $G$ is conjugate to $\bmu_m$ embedded as
    $$
\left\{
 \left(
 \begin{array}{ccc}
   a& 0 & 0\\ 0&a^{q_1} & 0 \\
   0 & 0 & a^{q_2}
 \end{array}\right)\,, \,a\in\bmu_{m}
 \right\} \,
$$
    for some $q_1,q_2 \geq 1$ with $(m,q_1) = (m,q_2) = 1$ and $q_1 + q_2 + 1 = 0$ mod $m$.
\end{Corollary}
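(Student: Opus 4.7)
My plan is to deduce this corollary directly from Theorem \ref{thm: verysmallgroupsinsmalldimension}(2), which classifies very small linearly reductive subgroup schemes of $\GL_{3,k}$ into two cases (a) (diagonal cyclic) and (b) (a specific metacyclic embedding). Since $\SL_{3,k} \subseteq \GL_{3,k}$, any such subgroup scheme in $\SL_{3,k}$ must, after conjugation in $\GL_{3,k}$, have one of these two shapes. The task is then to identify precisely which of these embeddings land in $\SL_{3,k}$.

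For case (a), with $G \cong \bmu_m$ acting as $a \mapsto \diag(a, a^{q_1}, a^{q_2})$, the determinant is $a^{1+q_1+q_2}$, and the condition that every element have determinant $1$ amounts to $m \mid 1 + q_1 + q_2$, which is exactly the congruence in the statement. A minor point to verify is that the conjugation witnessing the reduction to diagonal form in Theorem \ref{thm: verysmallgroupsinsmalldimension} can be taken in $\SL_{3,k}$ rather than $\GL_{3,k}$: this follows by rescaling the conjugating matrix by an appropriate cube root of its inverse determinant in $k^\times$, which is possible since $k$ is algebraically closed.

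For case (b), I will show that such an embedding is never contained in $\SL_{3,k}$, which rules this case out entirely. The key point is to compute the determinant of the matrix
\[
M \,=\, \begin{pmatrix} 0 & 1 & 0 \\ 0 & 0 & 1 \\ \zeta_{3^{f-1}} & 0 & 0 \end{pmatrix}
\]
representing a generator of the $\C_{3^f}$-factor. Cofactor expansion along the first column gives $\det(M) = \zeta_{3^{f-1}}$, which is a primitive $3^{f-1}$-th root of unity. Since case (b) requires $f \geq 2$, we have $\det(M) \neq 1$, so $M \notin \SL_{3,k}$ and the embedding cannot factor through $\SL_{3,k}$.

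The proof therefore consists of the above two determinant computations combined with the classification theorem, and there is no real obstacle — the content has been packaged into Theorem \ref{thm: verysmallgroupsinsmalldimension}. For completeness, one may also verify compatibility with the $\bmu_m$-part in case (b): since $\ord(r) = 3$ and $\gcd(r-1, m) = 1$ by Definition \ref{def: metacyclic}, the relation $r^3 \equiv 1 \pmod m$ forces $m \mid 1 + r + r^2$, so the determinant of $\diag(a, a^r, a^{r^2})$ is automatically $1$ on the $\bmu_m$-part; thus the obstruction to landing in $\SL_{3,k}$ really does come from the $\C_{3^f}$-generator alone.
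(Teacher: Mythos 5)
Your proposal is correct and matches the paper's intent exactly: the paper states that it is ``straightforward to determine which of the above representations factors through $\SL_{d,k}$'' and derives the corollary from Theorem \ref{thm: verysmallgroupsinsmalldimension}(2) in precisely this way, with case (a) contributing the congruence $m \mid 1+q_1+q_2$ and case (b) excluded because the $\C_{3^f}$-generator has determinant $\zeta_{3^{f-1}} \neq 1$ (as $f \geq 2$). The only quibble is the closing side remark: for $N>1$ the central $\bmu_N$-factor, embedded as scalars, is itself an obstruction to lying in $\SL_{3,k}$, so the obstruction does not come from the $\C_{3^f}$-generator ``alone''---but this does not affect the validity of the argument.
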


In the case of very small subgroup schemes
of $\GL_{d,k}$, we can strengthen
Theorem \ref{thm: linearlyreductivesubgroupschemeGL}.

\begin{Theorem}
Let $k$ be an algebraically closed field of
  characteristic $p\geq7$.
  Assume that $p \nmid d$. 
  Then, lifting induces a canonical and bijective  map
 \begin{eqnarray*}
 &&
  \left\{
  \begin{array}{l}
  \mbox{ finite, linearly reductive, and very small}\\
  \mbox{ subgroup schemes of }\GL_{d,k}
  \mbox{ up to conjugation} 
  \end{array} \right\} 
  \\
  &\to&
    \left\{
    \begin{array}{l}
  \mbox{ finite and very small}\\
  \mbox{ subgroups of }\GL_{d}(\CC)
  \mbox{ up to conjugation}
  \end{array}
  \right\}  \,.
  \end{eqnarray*}
  This bijection maps subgroup schemes of $\SL_{d,k}$ to subgroups of $\SL_d(\CC)$.
\end{Theorem}

\begin{proof}
By Theorem \ref{thm: linearlyreductivesubgroupschemeGL}, it suffices 
to show that every finite and very small subgroup 
$G \subseteq \GL_d(\CC)$ has a unique abelian $p$-Sylow subgroup. 
By \cite[Theorem 6.1.11 and Theorem 6.3.1]{Wolf}, $G$ is an extension 
of a group of order $2^a 3^b 5^c$, for certain $a,b,c \geq 0$, 
by a meta-cyclic group $H$. 
Since normal $p$-Sylow subgroups are characteristic subgroups and 
since we assumed $p \geq 7$, it follows that $G$ admits a 
unique abelian $p$-Sylow subgroup if and only if $H$ does. 
Hence, we may assume without loss of generality that $G$ is metacyclic.

Since $G$ is very small, we have 
$G \cong \bmu(m,n,r)(\CC) = \bmu_m(\CC) \rtimes \bmu_n(\CC) \cong \C_m \rtimes \C_n$ 
(see Definition \ref{def: metacyclic} and Remark \ref{rem: metacyclic})
with $m,n \geq 1$, $0 < r \leq m$, $(n(r-1),m) = 1$, $r^n = 1$ mod $m$,
and $e = {\rm ord}(r)$ is the multiplicative order of $r$ mod $m$. 

Now, let $H \subseteq G$ be a $p$-Sylow subgroup and assume that 
$H$ is non-trivial. 
If $p \mid m$, then $H \subseteq \C_m$ and hence, $H$ is cyclic 
and normal in $G$, as desired. 
If $p \mid n$, then we can write $n = n'q$ with $(n',p) = 1$ and 
$q$ a power of $p$ such that $G$ is an extension
$$
1 \to \C_m \rtimes \C_q \to G \to \C_{n'} \to 1.
$$
By \cite[Section 7.2]{Wolf}, $G$ admits a very small representation of
dimension $d$ if and only if $e \mid d$. 
Thus, our assumption that $p \nmid d$ implies that $p \nmid e$. 
In turn, this implies that $\C_m \rtimes \C_q$ is in fact a direct product $\C_m \times \C_q$. 
The group $H$ is a $p$-Sylow subgroup in $\C_m \times \C_q$, 
hence abelian and characteristic, hence normal in $G$, as desired.
\end{proof}

Finally, we can classify finite, very small, and abstract groups that have 
``everywhere good reduction".

\begin{Corollary} \label{cor: everywheregoodreductiongroup}
Let $G$ be a finite abstract group. Assume that, for every $p > 0$ there exists an algebraically closed field $k$ of characteristic $p$ and a finite and linearly reductive group scheme $G_p$ over $k$ such that $G_p$ admits a very small representation and such that $G$ is the abstract group associated to $G_p$. 
Then, $G$ is cyclic.
\end{Corollary}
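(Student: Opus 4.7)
The plan is to show that the hypothesis forces every Sylow subgroup of $G$ to be normal and cyclic, so that $G$ becomes a direct product of cyclic groups of pairwise coprime orders, hence cyclic.

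First, I would fix any single prime $p$ where the hypothesis applies and use Corollary \ref{cor: reptheoriesthesame} to transport the very small representation of $G_p$ to a very small complex representation of $G = (G_p)_{\mathrm{abs}}$. Theorem \ref{thm: repsofGlr}(2) then yields a global structural consequence on $G$: every abelian subgroup of $G$ is cyclic. In particular, for every prime $\ell$ the $\ell$-Sylow subgroups of $G$ are cyclic whenever they are abelian.

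Next I would vary the prime. For each prime $p$ dividing $|G|$, the hypothesis produces a linearly reductive $G_p$ with $(G_p)_{\mathrm{abs}} \cong G$, and Lemma \ref{lem: lrequivalence} tells us that the abstract group associated to a finite linearly reductive group scheme in characteristic $p$ must have a \emph{unique abelian $p$-Sylow subgroup}. Uniqueness of a Sylow subgroup is equivalent to its being normal, and abelianness combined with the previous paragraph forces cyclicity. Thus, for every prime $p \mid |G|$, the $p$-Sylow of $G$ is cyclic and normal.

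Finally, a finite group all of whose Sylow subgroups are normal is nilpotent and equals the internal direct product of its Sylow subgroups. Since each factor is cyclic and the factors have pairwise coprime orders, $G$ is cyclic. There is no real obstacle here beyond correctly assembling the existing tools; the only subtlety worth double-checking is that the condition in Lemma \ref{lem: lrequivalence} is genuinely \emph{uniqueness} (i.e.\ normality) of the $p$-Sylow, not merely its abelianness, as otherwise one would only recover that $G$ has abelian Sylow subgroups rather than the stronger nilpotency conclusion that upgrades the result to cyclicity.
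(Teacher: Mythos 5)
Your proposal is correct and follows essentially the same route as the paper: transport the very small representation to $\CC$ to conclude via Theorem \ref{thm: repsofGlr} that all abelian subgroups of $G$ are cyclic, observe that for each prime $p$ the hypothesis (through Lemma \ref{lem: lrequivalence}) forces a unique, hence normal, cyclic $p$-Sylow subgroup, and conclude that $G$ is the direct product of its cyclic Sylow subgroups and therefore cyclic. Your extra care in separating the roles of Theorem \ref{thm: repsofGlr}(2) and Lemma \ref{lem: lrequivalence} is just a more explicit rendering of what the paper's one-line proof leaves implicit.
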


\begin{proof}
By Theorem \ref{thm: repsofGlr}, $G$ admits a unique cyclic $p$-Sylow subgroup for every $p > 0$. In particular, $G$ is the direct product over its cyclic $p$-Sylow subgroups. Hence, $G$ itself is cyclic. 
\end{proof}

\subsection{Connection to differential geometry}
\label{subsec: differentialgeometry}
Let $\rho:G\to\GL_{d,\CC}(\CC)$ be a very small representation of a finite group.
After fixing a $G$-invariant inner product, we may assume that $\rho$ is unitary.
Then, $\rho(G)$ acts on the $(2d-1)$-dimensional (dimension over $\RR$)
unit sphere $\Sph^{2d-1}\subset\CC^d$ by unitary transformations.
Since $\rho$ is very small, $G$ acts without fixed points on $\Sph^{2d-1}$
and thus, the quotient $\Sph^{2d-1}/\rho(G)$
is a $(2d-1)$-dimensional differential manifold that
admits a metric of constant positive curvature.
In fact, the classification of complete $n$-dimensional
Riemannian manifolds of constant positive curvature
(the so-called \emph{spherical space form problem})
is equivalent to the classification
of finite and very small subgroups of ${\rm O}(n)$,
see \cite[Chapter 5]{Wolf}.

\section{Generalities on F-singularities}
In this section, we recall some results about F-injectivity,
F-regularity, and F-signature of singularities, 
as well as tautness in dimension two.

\subsection{F-regularity and related properties}
Let $k$ be an algebraically closed field of positive characteristic $p > 0$ and let
$(R,\mathfrak{m})$ be a local, complete, and Noetherian $k$-algebra of dimension $d\geq2$.
Recall the definition of tight closure from \cite[Section 1]{TightClosure}.

\begin{enumerate}
\item $R$ is \emph{F-injective} if the action of Frobenius on 
the local cohomology groups $H^i_{\idealm}(R)$ is injective for all $i$.
 \item $R$ is \emph{weakly F-regular}  $\iff$ all ideals of $R$ are tightly closed.
 \item $R$ is \emph{strongly F-regular} $\iff$ for all $c \in R$ that is not in a minimal prime there exists $e > 0$ such that the map $R \to R^{1/p^e}, 1 \mapsto c^{1/p^e}$ splits.
\item For each $e\geq1$, we define $a_{p^e}$
to be the maximal rank of a free summand of $R$, considered as a module over itself 
via the $e$-fold Frobenius.
Then, the \emph{F-signature} of $R$ is defined to be
$$
 s(R) \,:=\, \lim_{e\to\infty} \frac{a_{p^e}}{p^{ne}}
$$
and we refer to \cite{Tucker} for the existence of this limit.
\item The \emph{Hilbert--Kunz multiplicity} of $(R,\idealm)$ is defined
to be
$$
 e_{\rm HK}(R) \,:=\, e_{\rm HK}(\idealm,R) \,:=\,
 \lim_{e\to\infty}\,\frac{{\rm length}(R/\idealm^{[p^e]})}{p^{ed}}
$$
and we refer to \cite{Huneke} for the existence of this limit.
\end{enumerate}

\begin{Remark}
Strong F-regularity implies weak F-regularity by \cite[Theorem 3.1 (d)]{HochsterHunekeFregular}  
and they are conjectured to coincide in general.
We will see in Proposition \ref{prop: lrq} below 
that they are both satisfied for lrq singularities, which is why we will
simply write F-regular instead of strongly F-regular.
\end{Remark}

We recall the following implications between the above notions
$$
s(R) > 0 \,\Leftrightarrow\, R \text{ is F-regular}  \,\Rightarrow\, R \text{ is F-injective}.
$$

\subsection{F-regular surface singularities}
Next, we recall the following theorem of Hara \cite[Theorem (1.1)]{Hara} 
that gives a classification of F-regular singularities in dimension 2.
Here, the notion \emph{star-shaped of type $(a,b,c)$}
for a graph means that it is star shaped, 
that there are three components once the central vertex is removed, and that these
components span lattices of discriminant $a$, $b$, and $c$, respectively.

\begin{Theorem}\label{thm: Fregularclassification}
Let $(R,\mathfrak{m})$ be a local, complete, Noetherian $k$-algebra of dimension $2$ with $R/\mathfrak{m} = k$. Then, the following are equivalent:
\begin{enumerate}
\item $R$ is F-regular,
\item $R$ has rational singularities and the graph $\Gamma$ of the minimal resolution is one of the following:
\begin{enumerate}
\item $\Gamma$ is a chain.
\item $\Gamma$ is star shaped of type $(2,2,d)$, $d \geq 2$, and $p \neq 2$,
\item $\Gamma$ is star shaped of type $(2,3,3)$ or $(2,3,4)$ and $p \neq 2,3$,
\item $\Gamma$ is star shaped of type $(2,3,5)$ and $p \neq 2,3,5$.
\end{enumerate}
\end{enumerate}
In particular, if $R$ is F-regular, then $R$ is normal and klt, 
and the converse holds if $p \geq 7$.
\end{Theorem}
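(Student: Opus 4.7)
The plan is to prove the equivalence $(1)\Leftrightarrow(2)$ by passing through an intermediate description of these singularities as quotients $k[[u,v]]^G$ by finite linearly reductive subgroup schemes $G \subseteq \GL_{2,k}$, and then to deduce the klt statement from the explicit dual graphs.

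For $(1)\Rightarrow(2)$, the first step is to show that F-regular rings are rational. Strong F-regularity implies Cohen--Macaulayness and F-rationality by standard tight closure arguments, and F-rational Cohen--Macaulay local rings of dimension $2$ have rational singularities in the sense of Artin. Thus the exceptional locus of the minimal resolution $\widetilde X\to\Spec R$ is a tree of smooth rational curves with negative-definite intersection matrix. The harder step is to constrain the combinatorics of this tree. I would pass through log terminality: F-regularity implies klt on surfaces (Hara--Watanabe--Smith), and the classification of klt surface singularities (via Kawamata, or equivalently via the classification of their dual graphs) shows that the only possibilities are the chain and three-armed star-shaped graphs listed. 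The characteristic restrictions are forced by checking, graph by graph, which Brieskorn-type dihedral/tetrahedral/octahedral/icosahedral cover is linearly reductive over $k$, since otherwise F-purity (a consequence of F-regularity) already fails.

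For $(2)\Rightarrow(1)$, I would realize each singularity of the listed type as an invariant ring $k[[u,v]]^G$ for an explicit finite and very small linearly reductive subgroup scheme $G\subseteq\GL_{2,k}$ from Theorem~\ref{thm: smallingl2}: chain graphs come from cyclic subgroup schemes $\bmu_{n,q}$, whereas three-armed star-shaped graphs of types $(2,2,d),(2,3,3),(2,3,4),(2,3,5)$ come respectively from the binary dihedral, tetrahedral, octahedral, and icosahedral group schemes $\BD_n,\BT_{24},\BO_{48},\BI_{120}$ (possibly combined with a central $\bmu_m$-twist). Since $G$ is linearly reductive, the inclusion $R \hookrightarrow k[[u,v]]$ splits as $R$-modules, and the same holds after applying any Frobenius push-forward; the splitting criterion for strong F-regularity then yields that $R$ is F-regular. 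The characteristic restrictions on $p$ in each case are exactly those needed for the binary polyhedral group to be linearly reductive.

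The final statement then follows quickly: F-regular rings are normal (since summands of regular rings are normal and $R\hookrightarrow k[[u,v]]$ splits), and direct computation of discrepancies from the listed dual graphs via the adjunction formula on $\widetilde X$ shows they all exceed $-1$, hence $R$ is klt. For the converse when $p\geq 7$, I would observe that $p\geq 7$ avoids every exceptional prime appearing in the graph list, so every klt dual graph admissible over $\CC$ is also admissible over $k$; combined with Hara's comparison of klt and F-regular surface singularities in large characteristic, this gives the implication. The main obstacle is the forward direction: controlling tight closure on the resolution well enough to force the dual graph into one of the listed shapes, for which the deepest input is the interplay between log terminality and the characteristic constraints on linear reductivity of the cover groups.
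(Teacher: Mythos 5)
First, a point of context: the paper does not prove this statement at all --- it is quoted verbatim from Hara \cite[Theorem (1.1)]{Hara} as background, so there is no in-paper argument to compare against; the relevant comparison is with Hara's proof, which proceeds by direct computation of Frobenius actions on the minimal resolution (via the structure of rational singularities and the anti-canonical cycle), not via quotient presentations.

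Your sketch has two genuine gaps. In the direction $(1)\Rightarrow(2)$, you derive the characteristic restrictions by ``checking which Brieskorn-type cover is linearly reductive,'' but at that stage of the argument you do not know that $R$ is a quotient singularity, let alone a quotient by the binary polyhedral group scheme attached to its graph; within the logical architecture of this paper that identification (Theorem \ref{thm: Fregularklt}) is \emph{deduced from} the present theorem together with Tanaka's tautness result, so invoking it here is circular. The assertion that F-purity ``already fails'' otherwise is exactly the hard computational content of Hara's theorem and cannot be waved through: e.g.\ the $D_n$ and $E_8$ rational double points in characteristics $2,3,5$ are klt with graphs on your list, and ruling out F-regularity for them requires an actual Frobenius computation on the resolution, not an appeal to the non-reductivity of a covering group that has not been shown to exist. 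In the direction $(2)\Rightarrow(1)$, realizing each admissible graph by some lrq singularity and showing \emph{that} ring is F-regular does not finish the proof: you must also show that an arbitrary rational singularity with the given graph (in the allowed characteristic) is formally isomorphic to your model. That is a tautness statement, and the tautness theorem available (Theorem \ref{thm: Fregulartaut}) applies to singularities already known to be F-regular --- precisely what you are trying to prove. The klt-classification step and the splitting argument for invariant rings are fine, but these two missing inputs are where the real work of Hara's theorem lies.
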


Moreover, F-regular surface singularities are taut by work of Tanaka \cite[Theorem 1.3]{Tanaka}, 
that is, their formal isomorphism class is uniquely determined by the exceptional locus of 
their minimal resolution.

\begin{Theorem}\label{thm: Fregulartaut}
Let $(R,\mathfrak{m})$ be a local, complete, and Noetherian F-regular $k$-algebra 
of dimension $2$ with $R/\mathfrak{m} = k$. 
Then, $R$ is taut.
\end{Theorem}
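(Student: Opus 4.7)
The plan is to invoke Theorem \ref{thm: Fregularclassification} and then verify tautness graph by graph. Since F-regular surface singularities are rational, every exceptional component of the minimal resolution has genus zero, so tautness reduces to the claim that the weighted dual graph alone determines the formal isomorphism class.

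For a chain graph with self-intersections $-b_1,\ldots,-b_r$, the singularity is a two-dimensional cyclic quotient, hence toric. The Hirzebruch--Jung continued fraction $n/q = [b_1,\ldots,b_r]$ recovers the pair $(n,q)$ up to the involution $q\leftrightarrow q'$ with $qq'\equiv 1\pmod n$. Since a toric surface singularity is determined by its fan, and both choices of $q$ produce formally isomorphic invariant rings $k[[u,v]]^{\bmu_{n,q}}$, tautness holds in the chain case.

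For each of the star-shaped types $(2,2,d)$, $(2,3,3)$, $(2,3,4)$, $(2,3,5)$, I would first exhibit an explicit F-regular realization $k[[u,v]]^G$ for one of the binary dihedral, tetrahedral, octahedral, or icosahedral type linearly reductive group schemes of Theorem \ref{thm: smallingl2} (or of Theorem \ref{thm: hashimoto} in the Gorenstein subcase), noting that the characteristic restrictions on those classification theorems match precisely those of Theorem \ref{thm: Fregularclassification}. Tautness of this model quotient is then automatic: the conjugacy class of $G\subseteq\GL_2$ is pinned down by the dual graph up to the $qq'\equiv 1\pmod n$ symmetry of Theorem \ref{thm: smallingl2}(ii), and this symmetry yields formally isomorphic invariant rings.

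The main obstacle is the last step, namely ruling out the possibility of an ``exotic'' F-regular ring $R$ sharing the dual graph of a model lrq singularity without being isomorphic to it. The natural remedy is a Laufer-style vanishing $H^1(\widetilde X, \Theta_{\widetilde X}(-\log E))=0$ on the minimal resolution $\widetilde X\to\Spec R$, which forces the equisingular deformation functor to be trivial and hence collapses every singularity with the given graph onto the model. For the graphs that actually appear in Theorem \ref{thm: Fregularclassification}, this cohomology vanishing can be verified in characteristic $p$ under exactly the restrictions on $p$ already present, so the computation aligns case by case with the F-regularity hypothesis and closes the argument.
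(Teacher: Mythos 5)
The paper does not prove this statement at all: it is quoted verbatim from Tanaka \cite[Theorem 1.3]{Tanaka}, so there is no internal argument to compare against. Judged on its own terms, your outline assembles the right ingredients (Hara's classification of the admissible dual graphs from Theorem \ref{thm: Fregularclassification}, explicit lrq models realizing each graph), but it has a genuine gap at precisely the step you yourself flag as ``the main obstacle,'' and that step is the entire content of the theorem.

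Two concrete problems. In the chain case you assert that the singularity ``is a two-dimensional cyclic quotient, hence toric.'' You are not entitled to that: a priori you only know that $R$ is F-regular with chain dual graph, and the claim that such a ring must be a cyclic quotient is exactly tautness for chains, i.e.\ the thing to be proved. (Note also that the paper's later proof that F-regular surface singularities are lrq, Theorem \ref{thm: Fregularklt}, takes Theorem \ref{thm: Fregulartaut} as input, so appealing to it here would be circular.) In the star-shaped cases, triviality of the equisingular deformation functor via $H^1(\widetilde X,\Theta_{\widetilde X}(-\log E))=0$ does not by itself collapse every singularity with a given graph onto your model: tautness requires showing that the formal neighborhood of the exceptional divisor is determined by the weighted graph, which in Laufer's method is an inductive uniqueness statement about the infinitesimal neighborhoods $nE$, controlled by $H^1$ of tangent sheaves on $nE$ for all $n$, together with an argument that every candidate formal neighborhood arises from the plumbing model. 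You assert that the relevant vanishing ``can be verified in characteristic $p$ under exactly the restrictions on $p$ already present,'' but that verification --- carried out without the analytic and topological tools available over $\CC$ --- is the actual theorem and is what Tanaka's paper does. As written, the proposal is a plan for a proof rather than a proof.
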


\section{Local fundamental groups and class groups}

In this section, we recall a couple of results concerning the local \'etale fundamental group
and the class group of a singularity.

Let $k$ be an algebraically closed field of characteristic $p\geq0$ and let
$(R,\mathfrak{m})$ be a local, complete, and Noetherian $k$-algebra of dimension $d\geq2$.
We denote the closed point of $X:=\Spec R$ corresponding to $\mathfrak m$ by $x$ and 
set $U := X \setminus \{x\}$. 
We will simply write $x \in X$ in this situation.

Let $G$ be a finite group scheme over $k$. 
Recall that isomorphism classes of $G$-torsors over $X$ (resp. $U$) are in bijection with 
the flat cohomology $\Hfl{1}(X,G)$ (resp. $\Hfl{1}(U,G)$).
In general, this is a pointed set and if $G$ is abelian, then $\Hfl{1}(-,G)$ 
is an abelian group.
Next, the canonical inclusion $\imath:U\to X$ induces a pullback map
\begin{equation}
 \label{eq: imath}
   \imath^* \,:\, \Hfl{1}(X,G) \,\to\, \Hfl{1}(U,G)
\end{equation}
of pointed sets and of abelian groups if $G$ is abelian.
We will now describe the structure of the cokernel of $\imath^*$ 
in some cases.

\subsection{The local \'etale fundamental group}
\label{subsec: etale}
Let $G$ be a finite and \'etale group scheme over $k$. 
Since $k$ is assumed to be algebraically closed, $G$ is the constant group scheme associated 
to a finite group. 
By Hensel's lemma \cite[Proposition 5]{Nagata50} and since $k$ is algebraically closed, 
$X$ admits no non-trivial finite \'etale covers by \cite[Proposition 18.8.1]{EGA4}. 
In particular, $\Hfl{1}(X,G)$ is trivial.

By \cite{SGA1}, there exist \emph{Galois categories} classifying torsors
under \emph{all} finite and \'etale group schemes over $k$ over $X$ and $U$,
which leads to the \emph{\'etale fundamental groups} $\piet(X,x)$ and $\piet(U,u)$.
Here, $x\in X$ and $u\in U$ are base-points that have to be chosen
to begin with.
For example, if we choose the geometric generic points
$\overline{\eta}$ of $U$ and $X$ as base-points, then the
inclusion $\imath:U\to X$ induces a map
$$
 \imath_*\,:\,\piet(U,\overline{\eta}) \,\to\, \piet(X,\overline{\eta}),
$$
which is a continuous homomorphism of profinite groups.
Since there are no non-trivial torsors under finite and \'etale group schemes
over $X$, we have $\piet(X,\overline{\eta})=\{e\}$, that is, $X$ is algebraically simply connected.
The group 
$$
  \pietloc(X) \,:=\, \piet(U,\overline{\eta})
$$
is called the \emph{local (\'etale) fundamental group} of $X$. 
Then, the following is well-known and follows immediately from the definition of $\pietloc(X)$.

\begin{Proposition}\label{prop: torsfinet}
If $G$ is finite and \'etale, then there is a canonical 
bijection
     $$
     \Hfl{1}(U,G)/\Hfl{1}(X,G) \,=\, \Hfl{1}(U,G)
      \,\leftrightarrow\, 
      \Hom(\pietloc(X),G)/\sim,
     $$
where the right hand side denotes homomorphisms of profinite groups
modulo inner automorphisms of $G$.
\end{Proposition}

\begin{proof}
See \cite[Expos\'e V, Remarque 5.11]{SGA1} 
and \cite[Expos\'e XI, \S 5]{SGA1}.
\end{proof}

\subsection{The class group}
Now, let $G$ be a finite and abelian group scheme over $k$, such that $G^D$ is \'etale.
By Boutot \cite[III Lemme 4.2., III Corollaire 4.9.]{Boutot}, the class group of 
$X$ parametrises equivalence classes of $G$-torsors over $U$ in the following sense.

\begin{Proposition} \label{prop: torsfinabcl}
If $G$ is finite and abelian and $G^D$ is \'etale, then there is a canonical 
bijection
     $$
     \Hfl{1}(U,G)/\Hfl{1}(X,G)
      \,\leftrightarrow\, 
      \Hom(G^D,{\rm Cl}(X)),
     $$
where the right hand side denotes morphisms of abelian groups.
\end{Proposition}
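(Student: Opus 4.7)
The plan is to reduce to the case $G = \bmu_n$ and then to apply the Kummer sequence. Since $k$ is algebraically closed and $G^D$ is \'etale, $G^D$ is the constant group scheme associated to the finite abelian group $M := G^D(k)$; dually, $G \cong D(M)$ is diagonalizable. Choosing a decomposition $M \cong \bigoplus_{i} \ZZ/n_i\ZZ$ gives $G \cong \prod_i \bmu_{n_i}$. Since $\Hfl{1}(-, G)$ commutes with finite products of group schemes and $\Hom(-, \mathrm{Cl}(X))$ turns direct sums into products, both sides of the claimed bijection are naturally compatible with this decomposition, so it suffices to treat $G = \bmu_n$.

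For $G = \bmu_n$, I would apply the Kummer sequence
$$
1 \,\to\, \bmu_n \,\to\, \GG_m \,\xrightarrow{\ n\ }\, \GG_m \,\to\, 1,
$$
which is short exact in the fppf topology for every $n \geq 1$. On $X = \Spec R$, which is local, the Picard group vanishes, so the long exact sequence collapses to $\Hfl{1}(X, \bmu_n) = R^{\times}/(R^{\times})^n$. On $U$, the $(S_2)$-property at $x$ (available because $R$ is normal of dimension $\geq 2$, as is the implicit setting of this section) gives $H^0(U, \OO_U) = R$, hence $H^0(U, \GG_m) = R^{\times}$, and the long exact sequence reads
$$
0 \,\to\, R^{\times}/(R^{\times})^n \,\to\, \Hfl{1}(U, \bmu_n) \,\to\, \Pic(U)[n] \,\to\, 0.
$$

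By functoriality of the Kummer long exact sequence in the open immersion $\imath\colon U \hookrightarrow X$, the pullback map $\imath^{\ast}\colon \Hfl{1}(X,\bmu_n) \to \Hfl{1}(U,\bmu_n)$ is precisely the inclusion of $R^{\times}/(R^{\times})^n$ in the displayed sequence, so passing to the cokernel gives
$$
\Hfl{1}(U, \bmu_n)/\Hfl{1}(X, \bmu_n) \,\cong\, \Pic(U)[n].
$$
Since $R$ is normal and the complement of $U$ in $X$ has codimension at least $2$, we have $\Pic(U) = \mathrm{Cl}(X)$, and therefore
$$
\Pic(U)[n] \,\cong\, \Hom(\ZZ/n\ZZ, \mathrm{Cl}(X)) \,\cong\, \Hom(\bmu_n^D, \mathrm{Cl}(X)),
$$
which is the desired identification.

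The one step to be careful about is the identification of $\imath^{\ast}$ with the inclusion $R^{\times}/(R^{\times})^n \hookrightarrow \Hfl{1}(U,\bmu_n)$ furnished by the Kummer sequence on $U$: this is formal from naturality, but it is precisely the place where taking the \emph{quotient} by $\Hfl{1}(X, G)$ becomes essential. The virtue of the Kummer-sequence approach, as opposed to invoking Boutot's local Picard scheme directly, is that it applies verbatim to $G = \bmu_{p^n}$ in characteristic $p$, which is the most important case for lrq singularities.
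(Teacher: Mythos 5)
Your argument is correct in substance but follows a genuinely different route from the paper: the paper does not prove this proposition from scratch at all, it simply quotes Boutot's results on the local Picard scheme (\cite[III Lemme 4.2, III Corollaire 4.9]{Boutot}), whereas you give a self-contained derivation from the fppf Kummer sequence. The reduction to $G=\bmu_n$, the computation $\Hfl{1}(X,\bmu_n)=R^\times/(R^\times)^n$ from $\Pic(X)=0$, the identification $H^0(U,\GG_m)=R^\times$ via depth $\geq 2$, and the identification of the image of $\imath^*$ with the image of the connecting map $R^\times\to\Hfl{1}(U,\bmu_n)$ are all sound. As you note, this approach has the virtue of treating $\bmu_{p^n}$ in characteristic $p$ on exactly the same footing as the tame case, and it makes the role of the quotient by $\Hfl{1}(X,G)$ completely transparent.

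The one step that is not fully justified is the final identification $\Pic(U)={\rm Cl}(X)$. Normality of $R$ and the codimension bound give ${\rm Cl}(U)={\rm Cl}(X)$, but $\Pic(U)={\rm Cl}(U)$ requires $U$ to be locally factorial, and in the bare setting of this section ($R$ local, complete, Noetherian, normal, of dimension $\geq 2$) the singular locus of $X$ need not be concentrated at the closed point. When it is not, the two groups can genuinely differ: for $R=k[[x,y,z,w]]/(xy-z^2)$ with $p\neq 2$, the class ${\rm Cl}(X)\cong\ZZ/2\ZZ$ is generated by the Weil divisor $\{x=z=0\}$, which fails to be Cartier along the punctured singular line inside $U$, so the associated $\bmu_2$-cover is not a torsor over all of $U$ and your Kummer computation outputs $\Pic(U)[2]$ rather than ${\rm Cl}(X)[2]$. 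You should therefore add the hypothesis that $U$ is regular (or at least locally factorial). This costs nothing in the paper's applications: there $U$ always carries a torsor under a finite flat group scheme with regular total space, and regularity descends along faithfully flat morphisms, so $U$ is regular. With that one sentence added, your proof is complete.
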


\begin{Example}
If in Proposition \ref{prop: torsfinabcl}, we assume additionally that $G$ is \'etale, 
then $\Hfl{1}(U,G)/\Hfl{1}(X,G) \,=\, \Hfl{1}(U, G)$ by  Proposition \ref{prop: torsfinet}
and thus, $\Hom(G^D,{\rm Cl}(X))$ parametrises $G$-torsors over $U$. 
However, this is not true in general: for example, if $G = \bmu_{p^n}$, 
then $\Hfl{1}(X,G) = R^\times/(R^\times)^{p^n}$, so $\Hfl{1}(U,G)$ is usually much 
bigger than $\Hom(G^D,{\rm Cl}(X))$ in positive characteristics.
\end{Example}

\section{Quotient singularities and linearisation}
\label{sec: quotient singularities}

In this section, we introduce quotient singularities by
finite group schemes, that is, quotients by a finite group scheme $G$ with respect to actions on the spectrum of
formal a power series ring $k[[u_1,...,u_d]]$, 
such that the $G$-action
is free outside the closed point.
The main result is that such $G$-actions are linearisable
if and only if $G$ is linearly reductive, which leads
to the notion of a linearly reductive quotient singularity
(lrq singularity for short).

\subsection{Quotient singularities}
Recall that if $G$ is a finite group scheme over an algebraically closed field $k$,
if $Y=\Spec S$ is an affine scheme over $k$, and if we have an \emph{action}
$G\times_k Y\to Y$ over $k$, then the \emph{geometric quotient} $Y/G$ exists, and it
is isomorphic to $\Spec S^G$, where $S^G\subseteq S$ denotes the ring 
of invariants, see \cite[Theorem 12.1]{MumfordAV}.
The $G$-action on $Y$ is said to be \emph{free}, if the quotient morphism $\pi:Y\to Y/G$
is a $G$-torsor over $Y/G$. 
Recall that a ($k$-valued) \emph{fixed point} of the $G$-action is a point in $Y$, 
whose stabiliser is all of $G$.

\begin{Definition} \label{def: verysmallaction}
Let $G$ be a finite group scheme over $k$. 
A faithful action of $G$ on $k[[u_1,...,u_d]]$ with $d \geq 2$ is called \emph{very small} 
if it is free outside the closed point and the closed point is a fixed point of the action.
\end{Definition}

We will see in Proposition \ref{prop: lambda} that this notion is compatible with the notion of very small representations in the case where $G$ is linearly reductive.

\begin{Definition}
 \label{def: quotient singularity}
 A \emph{quotient singularity} $x\in X$ by a finite group scheme $G$ over $k$ is a pair $(X,x)$,
 where $X = \Spec~R$ is the spectrum of a local $k$-algebra of dimension $d \geq 2$ with closed point $x$ such that
 $$
   R \,\cong\, k[[u_1,...,u_d]]^G,
 $$
 where $G$ acts on $k[[u_1,...,u_d]]$ via a very small action.
\end{Definition}

\begin{Remark}
 If $k$ is the field $\CC$ of complex numbers, 
 then there exists a natural equivalence of categories between 
 finite group schemes over $\CC$, finite and linearly reductive
 group schemes over $\CC$, and finite groups,
 see Section \ref{subsec: generalitieslinred}.
 Actions of finite groups on $\Spec \CC[[u_1,...,u_d]]$
 can be linearised, see Section \ref{subsec: linearization} below.
 In positive characteristic, these equivalences and implications
 are no longer true and this leads to 
  different natural possibilities to extend
 the notion of a quotient singularity to positive characteristic.
 Definition \ref{def: quotient singularity} is fairly general,
 since we allow not necessarily linear actions of finite group schemes,
 but see also the discussion in \cite[Section 9.7]{RDP}.
\end{Remark}

\subsection{Linearisation}
\label{subsec: linearization}
If the action of a group scheme $G$ over $k$ on $k[[u_1,...,u_d]]$ factors over 
the obvious $\GL_d$-action via a homomorphism $\rho:G\to\GL_d$ of group schemes over $k$, 
then the $G$-action on $k[[u_1,...,u_d]]$ is called \emph{linear}.
If the $G$-action becomes linear after some change of coordinates,
it is called \emph{linearisable}.

Over the complex numbers, all finite group actions as in Definition
\ref{def: quotient singularity} are linearisable by a lemma of Cartan \cite[Lemma 1]{Cartan}.
Using the correspondence between $\bmu_p$-actions and multiplicative 
vector fields in characteristic $p>0$, linearisation of $\bmu_p$-actions
as in Definition \ref{def: quotient singularity} follows from a theorem
of Rudakov and Shafarevich \cite[Theorem 2]{RudSha}.
On the other hand, the action of a finite group as in Definition
\ref{def: quotient singularity}, where the characteristic $p$ divides
the order of the group cannot be linearised \cite[Proposition 2.1]{Peskin}.
The following result puts these (non-)linearisation results
into perspective.

\begin{Proposition}
   \label{prop: linearizable}
   In the situation of Definition \ref{def: quotient singularity},
   the $G$-action on $Y = \Spec k[[u_1,\hdots,u_d]]$ is linearisable 
   if and only if $G$ is linearly reductive.
\end{Proposition}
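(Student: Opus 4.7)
I split the argument into the two implications and then isolate the genuinely hard step.

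For the implication ``linearly reductive $\Rightarrow$ linearizable'', let $S=k[[u_1,\ldots,u_d]]$ and let $\mathfrak{m}\subseteq S$ be its maximal ideal. Because the closed point is $G$-fixed, the coaction $S\to S\otimes_k k[G]$ (finite-dimensional, since $G$ is finite) preserves $\mathfrak{m}$ and each $\mathfrak{m}^n$, so the cotangent space $V:=\mathfrak{m}/\mathfrak{m}^2$ inherits a $d$-dimensional $G$-representation. For each $n\geq 2$, the surjection of finite-dimensional $G$-modules $\mathfrak{m}/\mathfrak{m}^n\twoheadrightarrow V$ admits a $G$-equivariant section because $G$ is linearly reductive. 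Choosing these sections compatibly in $n$ (starting from a chosen lift of $V$ inside $\mathfrak{m}/\mathfrak{m}^2$ and successively extending it to a $G$-stable complement of $\mathfrak{m}^n/\mathfrak{m}^{n+1}$ in $\mathfrak{m}/\mathfrak{m}^{n+1}$ via semisimplicity) and passing to the inverse limit produces a $G$-equivariant $k$-linear lift $s\colon V\to\mathfrak{m}$. Picking a basis $v_1,\ldots,v_d$ of $V$, the elements $w_i:=s(v_i)\in\mathfrak{m}$ form a regular system of parameters by Nakayama for the complete local ring $S$; hence the $G$-equivariant $k$-algebra homomorphism $k[[T_1,\ldots,T_d]]\to S$, $T_i\mapsto w_i$ (with $G$ acting linearly on the source through $V$), is a surjection between regular local rings of the same dimension, thus an isomorphism. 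This linearizes the action.

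For the converse ``linearizable $\Rightarrow$ linearly reductive'', the freeness of the action outside the closed point forces the linearizing homomorphism $\rho$ to be a closed embedding $G\hookrightarrow \GL_d$. By Lemma~\ref{lem: lrcriterion} it suffices to rule out $\balpha_p\subseteq G$ and $\C_p\subseteq G$. A nontrivial embedding $\balpha_p\hookrightarrow\GL_d$ corresponds to a nonzero $p$-nilpotent matrix $N\in\mathfrak{gl}_d$ with $N^{p}=0$, and $\ker N\subseteq\mathbb{A}^d$ is then a positive-dimensional subspace pointwise fixed by $\balpha_p$, contradicting freeness outside the origin. Similarly, a nontrivial embedding $\C_p\hookrightarrow\GL_d$ sends a generator to a matrix $M\neq 1$ with $M^p=1$, and in characteristic $p$ the identity $(M-\mathrm{id})^p=M^p-\mathrm{id}=0$ shows $M-\mathrm{id}$ is a nonzero nilpotent, so $\ker(M-\mathrm{id})\neq 0$ again supplies fixed points outside the origin, a contradiction.

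The genuine technical obstacle is the construction of the $G$-equivariant lift $s\colon V\to\mathfrak{m}$ in the first direction: linear reductivity immediately yields splittings of $\mathfrak{m}/\mathfrak{m}^n\twoheadrightarrow V$ for each $n$, but one must arrange them compatibly in $n$ so that they glue under the inverse limit; this uses the local finiteness of the $G$-action on $S$ (a consequence of $|G|<\infty$) together with semisimplicity of finite-dimensional $G$-representations. The converse direction, by contrast, is essentially a one-line check in characteristic $p$ linear algebra once Lemma~\ref{lem: lrcriterion} is invoked.
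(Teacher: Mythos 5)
Your proof is correct and follows essentially the same route as the paper: the converse direction is identical in substance (reduce via Lemma \ref{lem: lrcriterion} to excluding $\balpha_p$ and $\C_p$, then observe that such unipotent subgroup schemes of $\GL_d$ fix a positive-dimensional linear subspace, contradicting freeness off the origin), with your explicit matrix computations replacing the paper's appeal to conjugacy into the strictly upper-triangular unipotent group. For the forward direction the paper simply cites the proof of Corollary 1.8 in Satriano's paper, whereas you write out the standard argument in full (equivariantly and compatibly splitting $\mathfrak{m}/\mathfrak{m}^{n}\twoheadrightarrow \mathfrak{m}/\mathfrak{m}^2$ using semisimplicity and passing to the limit); this is the same underlying mechanism, just made self-contained.
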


\begin{proof}
A proof of the fact that linearly reductive group scheme actions can be 
linearised can be found, for example in  \cite[proof of Corollary 1.8]{Satriano}.

For the converse, assume that $G$ is not linearly reductive. 
Seeking a contradiction, we assume that the $G$-action on $Y$ can be linearised, 
and thus, $G \subseteq \GL_d$ compatible with the action of the latter on $Y$.
Then, we have $\C_p \subseteq G \subseteq \GL_d$ or $\balpha_p \subseteq G \subseteq \GL_d$ 
by Lemma \ref{lem: lrcriterion}. 
Since both of these group schemes are unipotent, they are conjugate in $\GL_d$ 
to subgroup schemes of the group scheme of upper triangular matrices 
with ones on the diagonal. 
In particular, their fixed loci are not isolated, because they contain $V(u_2,\hdots,u_d)$, and therefore, the $G$-action on $Y$ 
is not free outside the closed point, a contradiction.
\end{proof}

\begin{Definition} \label{def: lrq}
  A \emph{linearly reductive quotient singularity}, or, \emph{lrq singularity} for short,
  is a quotient singularity by a finite and linearly reductive group scheme.
\end{Definition}

The following proposition bridges the gap between very small actions in the sense of 
Definition \ref{def: verysmallaction} and very small representations in the sense of 
Definition \ref{def: verysmallrepresentation} for finite and linearly 
reductive group schemes.

\begin{Proposition}
\label{prop: lambda}
Let $k$ be an algebraically closed field and let $\rho: G \to \GL_{d,k}$ be a representation of a finite and linearly reductive group scheme $G$ over $k$. Then, $\lambda(\rho)$ coincides with the dimension of the non-free locus of the induced $G$-action $\rho'$ on $\Spec k[[u_1,\hdots,u_d]]$.
\end{Proposition}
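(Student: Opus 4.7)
The plan is to compute the dimension of the non-free locus $Z \subseteq Y := \Spec k[[u_1,\ldots,u_d]]$ of $\rho'$ by reducing to subgroup schemes of $G$ of the form $\bmu_n$. Since $\rho$ is a linear representation, the action $\rho'$ extends to the ambient affine space, and I would identify $Y$ with the formal completion of $V := \mathbb{A}^d_k$ at the origin, with $G$ acting via $\rho$. A point $y \in Y$ lies in $Z$ precisely when its stabilizer group scheme $G_y$ is non-trivial, which happens if and only if $y \in Y^H$ for some non-trivial closed subgroup scheme $H \subseteq G$. Hence, set-theoretically,
\[
Z \,=\, \bigcup_{1 \neq H \subseteq G} Y^H.
\]

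The main step, which I expect to be the principal obstacle, is to replace the union over arbitrary non-trivial $H$ by a union over subgroup schemes of the form $\bmu_n$ with $n \geq 2$. Every closed subgroup scheme $H \subseteq G$ is again linearly reductive, either by Nagata's classification or directly from Lemma \ref{lem: lrcriterion}. If $H$ is non-trivial, I would split into two cases via the connected-\'etale sequence of $H$. If $H^\circ \neq 1$, then by Nagata's classification $H^\circ$ is a non-trivial diagonalizable group scheme $D(M)$; choosing a surjection $M \twoheadrightarrow \C_n$ with $n \geq 2$ produces an inclusion $\bmu_n \hookrightarrow H^\circ \subseteq H$. If $H^\circ = 1$, then $H$ is \'etale of order $>1$ prime to $p$, so Cauchy's theorem provides a cyclic subgroup $\underline{\C_\ell} \subseteq H$ of prime order $\ell \neq p$, and since $\ell \neq p$ we have $\underline{\C_\ell} \cong \bmu_\ell$. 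In either case, $Y^H \subseteq Y^{\bmu_n}$, since fixing $H$ forces fixing its subgroup scheme $\bmu_n$, so the union simplifies to
\[
Z \,=\, \bigcup_{1 \neq \bmu_n \subseteq G} Y^{\bmu_n}.
\]

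To conclude, I would fix a non-trivial $\bmu_n \subseteq G$ and use diagonalizability of $\bmu_n$ to change coordinates so that the $u_i$ are $\bmu_n$-eigenvectors with weights $q_i \in \ZZ/n\ZZ$. The fixed subscheme $Y^{\bmu_n}$ is then the coordinate subspace cut out by those $u_i$ with $q_i \neq 0$, and hence has dimension equal to the multiplicity of the trivial character in the $\bmu_n$-representation on $V$, namely $\dim V^{\bmu_n}$. Taking the maximum over all non-trivial $\bmu_n \subseteq G$ and recalling Definition \ref{def: lambda} yields
\[
\dim Z \,=\, \max_{1 \neq \bmu_n \subseteq G}\, \dim Y^{\bmu_n} \,=\, \max_{1 \neq \bmu_n \subseteq G}\, \dim V^{\bmu_n} \,=\, \lambda(\rho),
\]
as claimed.
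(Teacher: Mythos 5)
Your strategy coincides with the paper's: prove $\lambda(\rho)\leq\dim Z$ by noting that each $\Spec(\mathrm{Sym}(V^{\bmu_n}))^\wedge$ lies in the non-free locus $Z$, prove $\dim Z\leq\lambda(\rho)$ by finding, for each point of $Z$, a non-trivial $\bmu_n\subseteq G$ fixing it, and compute $\dim Y^{\bmu_n}=\dim V^{\bmu_n}$ by diagonalizing. Your reduction of an arbitrary non-trivial subgroup scheme $H\subseteq G$ \emph{defined over $k$} to some $\bmu_n$ (connected-\'etale sequence, Nagata's classification, Cauchy's theorem, and $\underline{\C_\ell}\cong\bmu_\ell$ for $\ell\neq p$) is correct, as is the computation of $\dim Y^{\bmu_n}$.

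The gap is in your first displayed equality, $Z=\bigcup_{1\neq H\subseteq G}Y^{H}$, where $H$ ranges over non-trivial subgroup schemes of $G$ over $k$. Every point $y$ of $\Spec k[[u_1,\hdots,u_d]]$ other than the closed point has residue field $\kappa(y)$ strictly larger than $k$, so its stabilizer is a subgroup scheme of $G_{\kappa(y)}$ over $\kappa(y)$; you cannot take $H=G_y$, and the inclusion ``$\subseteq$'' is exactly the non-trivial content of the proposition. This is the step at which the paper invokes linear reductivity: at the generic point $\eta$ of a component of $Z$ of maximal dimension, the non-trivial stabilizer $\Stab_\eta\subseteq G_\eta$ \emph{descends to $k$}, because subgroup schemes of a diagonalizable group scheme correspond to quotients of the character group and subgroup schemes of a constant group scheme are subgroups of the abstract group, so subgroup schemes of $G_{\overline{\kappa(\eta)}}$ are rigid and come from subgroup schemes of $G$. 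Without this, the equality fails: for $G=\balpha_p\times\balpha_p$ acting via derivations $D_1,D_2$, the stabilizer at a generic point is a ``moving'' copy of $\balpha_p$ of the form $\{(x,cx)\}$ with $c\in\kappa(\eta)\setminus k$, the non-free locus is the degeneracy locus of the matrix $(D_i(u_j))$ (compare the paper's Lemma \ref{lem: cyclic}) and is in general strictly larger than $\bigcup_{H/k}Y^{H}$. Since your argument uses linear reductivity only in the later step passing from $H$ to $\bmu_n$, as written it would apply to group schemes for which the claimed equality is false. To repair it, work at the generic point $\eta$ of a maximal-dimensional component of $Z$, base change to $\overline{\kappa(\eta)}$, run your case analysis there to find $\bmu_n\subseteq\Stab_\eta\otimes\overline{\kappa(\eta)}$, and then descend this $\bmu_n$ to a subgroup scheme of $G$ over $k$ by the rigidity just described; the rest of your argument then goes through.
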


\begin{proof}
Let $m$ be the dimension of the non-free locus of $\rho'$ and let $V$ 
be a $k$-vector space of dimension $d$ so that 
$({\rm Sym}(V))^\wedge =  k[[u_1,\hdots,u_d]]$. 
We have $\lambda(\rho) \leq m$, since 
$\Spec ({\rm Sym}(V^{\bmu_n}))^\wedge \subseteq \Spec ({\rm Sym}(V))^\wedge$ is contained 
in the non-free locus of $\rho'$ for every $\{\id\} \neq \bmu_n \subseteq G$.
Conversely, let $\eta$ be the generic point of an irreducible component of the non-free locus 
of $\rho'$ such that $\eta$ has height $d-m$. 
By our choice of $\eta$, the stabiliser ${\rm Stab}_{\eta}$ is non-trivial. 
Since $G_{\eta}$ is linearly reductive, the inclusion 
${\rm Stab}_{\eta} \subseteq G_{\eta}$ descends to $k$ and we find some 
$\bmu_n \subseteq G$ that fixes $\eta$. 
Thus, we have 
$\eta \subseteq \Spec ({\rm Sym}(V^{\bmu_n}))^\wedge \subseteq \Spec ({\rm Sym}(V))^\wedge$, 
so $m \leq \lambda(\rho)$, which is what we wanted to show.
\end{proof}

\begin{Corollary} \label{cor: verysmallactionvsverysmallrep}
Let $k$ be an algebraically closed field and let $G$ be a finite and linearly 
reductive group scheme over $k$. 
An action of $G$ on $\Spec~k[[u_1,\hdots,u_d]]$ is very small if and only 
if, after a change of coordinates, it coincides with an 
action induced by a very small linear representation of $G$.
\end{Corollary}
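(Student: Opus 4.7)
The plan is to deduce this corollary directly from Proposition \ref{prop: linearizable} and Proposition \ref{prop: lambda}, which together encode essentially all the content we need. I expect no significant obstacle; the argument is a short two-step bookkeeping exercise combining linearizability with the geometric interpretation of $\lambda$.

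For the ``only if'' direction, I would start with a very small $G$-action on $Y = \Spec k[[u_1,\dots,u_d]]$. Since $G$ is linearly reductive by hypothesis, Proposition \ref{prop: linearizable} provides a change of coordinates after which the action is induced by some linear representation $\rho: G \to \GL_{d,k}$. It then remains to see that $\rho$ is very small in the sense of Definition \ref{def: verysmallrepresentation}, i.e., that $\lambda(\rho) = 0$. But Proposition \ref{prop: lambda} identifies $\lambda(\rho)$ with the dimension of the non-free locus of the induced action, which by the very-smallness of the action is contained in the closed point and therefore has dimension $0$.

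For the ``if'' direction, I would suppose the action comes from a very small linear representation $\rho: G \to \GL_{d,k}$ via the standard $\GL_d$-action on $k[[u_1,\dots,u_d]]$. Any linear action fixes the origin, so the closed point is a fixed point of the action (and in particular lies in the non-free locus). By Proposition \ref{prop: lambda}, the hypothesis $\lambda(\rho) = 0$ implies that the non-free locus has dimension $0$, so it is exactly the closed point. Hence the action is free outside the closed point. Since $\rho$ is faithful (as noted in Remark \ref{rem: lambda}, because $\lambda(\rho) = 0 \neq d$), the induced action is also faithful, and we have verified all conditions of Definition \ref{def: verysmallaction}.

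The only mild care required is to ensure, in the ``if'' direction, that the closed point actually is fixed and that the action is faithful; both are immediate from the linearity of the action and the $\lambda$-condition. No other ingredients beyond the two cited propositions are needed.
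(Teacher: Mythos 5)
Your proposal is correct and follows exactly the route the paper intends: the corollary is stated without proof as an immediate consequence of Proposition \ref{prop: linearizable} (linearizability of very small actions of linearly reductive $G$) combined with Proposition \ref{prop: lambda} (identifying $\lambda(\rho)$ with the dimension of the non-free locus). Your added checks in the ``if'' direction (the closed point is fixed by a linear action, and faithfulness follows from $\lambda(\rho)=0\neq d$) are exactly the bookkeeping the paper leaves implicit.
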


\subsection{Very small actions of connected group schemes}
To understand torsors over the punctured spectrum of lrq singularities, we also need some information on very small actions of not necessarily linearly reductive group schemes.
It turns out that the class of connected group schemes admitting very small actions is rather restricted. The key observation is the following.

\begin{Lemma} \label{lem: cyclic}
If $G$ is a non-trivial, connected, and finite group scheme of height $1$ 
that admits a very small action, then ${\rm length}(G) = p$. 
\end{Lemma}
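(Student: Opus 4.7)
The plan is to convert the problem into a bound on the height of a determinantal ideal. Let $\mathfrak{g} := \mathrm{Lie}(G)$ and set $e := \dim_k \mathfrak{g}$. Since $G$ is connected of height $1$, the equivalence with restricted Lie algebras gives $\mathrm{length}(G) = p^e$ and identifies closed subgroup schemes of $G_{k(y)}$ with restricted Lie subalgebras of $\mathfrak{g} \otimes k(y)$. In particular, the stabilizer of a point $y \in Y := \Spec k[[u_1,\ldots,u_d]]$ is trivial if and only if its Lie algebra, namely the kernel of the map $\mathfrak{g} \otimes k(y) \to T_y Y$ coming from the action, is zero. The goal is to show $e = 1$.

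To make this concrete, I would fix a basis $\delta_1,\ldots,\delta_e$ of $\mathfrak{g}$ and form the $d \times e$ matrix $A := (\delta_j(u_i))_{1 \leq i \leq d,\, 1 \leq j \leq e}$ with entries in $R := k[[u_1,\ldots,u_d]]$. The closed point being a fixed point of $G$ forces every entry of $A$ to lie in $\mathfrak{m} := (u_1,\ldots,u_d)$. Freeness outside the closed point, translated through the previous paragraph, says that at every non-closed point $y$ the map $\mathfrak{g} \otimes k(y) \to T_y Y$ is injective; equivalently, some $e \times e$ minor of $A$ does not vanish at $y$. This forces $e \leq d$ and shows that the ideal $I_e(A) \subseteq R$ generated by the maximal minors of $A$ is $\mathfrak{m}$-primary, hence of height $d$.

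The final ingredient is the classical bound of Macaulay (or Eagon--Northcott) on heights of determinantal ideals: for a $d \times e$ matrix with $e \leq d$ over any Noetherian ring, the ideal of maximal minors has height at most $d - e + 1$. Combining this with $\mathrm{height}(I_e(A)) = d$ from the preceding paragraph yields $d \leq d - e + 1$, that is, $e \leq 1$. Since $G$ is assumed nontrivial we have $e \geq 1$, so $e = 1$ and $\mathrm{length}(G) = p$, as desired.

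The main technical point I anticipate is the identification of the stabilizer at $y$ with the Lie-algebra kernel for arbitrary height-$1$ group schemes $G$, not just for $\bmu_p$ or $\balpha_p$. The height-$1$ hypothesis is essential here, because it guarantees that every subgroup scheme of $G_{k(y)}$ is again connected of height $1$ and is therefore reconstructed from its restricted Lie subalgebra via the restricted enveloping algebra; without height $1$ the stabilizer could have higher-Frobenius or \'etale contributions that the Lie algebra would not see. Once this identification is in hand, the matrix description of the action together with the Eagon--Northcott-type bound forces $\dim \mathfrak{g} = 1$ immediately.
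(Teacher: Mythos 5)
Your proof is correct, and it runs on the same engine as the paper's: both arguments come down to the fact that the degeneracy locus of the $d\times(\text{number of derivations})$ matrix $(\delta_j(u_i))$ is too large to be contained in the closed point once there are at least two independent derivations. The difference lies in how non-freeness is extracted from rank degeneracy. The paper argues by contradiction with only \emph{two} of the derivations $D_1,D_2$: it bounds the dimension of the rank-$\le 1$ locus $C$ of the $d\times 2$ matrix from below by $1$ using the expected-codimension formula for degeneracy loci, and then shows directly that at points of $C\cap U$ the coaction map $(a,i_2)\colon \OO_U\otimes_k\OO_U\to T\otimes_k\OO_U$ fails to be surjective, so the quotient map is not a torsor there. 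You instead use the full $d\times e$ matrix ($e=\dim\mathrm{Lie}(G)$, so $\mathrm{length}(G)=p^e$ by the structure theory of height-one group schemes), identify freeness at $y$ with injectivity of $\mathfrak g\otimes k(y)\to T_yY$ via the restricted-Lie-algebra dictionary for stabilizers, and then apply the Eagon--Northcott height bound to the ideal of maximal minors. Your reduction is cleaner and more conceptual: it replaces the paper's somewhat ad hoc verification that rank $\le 1$ of the $2$-column submatrix obstructs surjectivity of the coaction by a one-line statement about the Lie algebra of the stabilizer, and it yields the sharper conclusion $e\le 1$ in one stroke rather than ruling out $e\ge 2$ pairwise. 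The two technical points you must supply --- that for a finite group scheme freeness is equivalent to triviality of all stabilizers at scheme-theoretic points, and that the stabilizer at $y$, being a connected closed subgroup scheme of $G_{k(y)}$, is trivial exactly when its Lie algebra $\ker(\mathfrak g\otimes k(y)\to T_yY)$ vanishes --- are both standard and correctly flagged in your write-up, so I see no gap.
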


\begin{proof}
Seeking a contradiction, assume that ${\rm length}(G) > p$. We write $G = \Spec~T$ with $T = k[t_1, \dots, t_l] / (t_1^p, \dots, t_l^p)$, where $l \geq 2$ and $S := k[[u_1, \dots, u_d]]$. 
Write the very small action as  $a\colon S\to T \otimes_k S$, $b\mapsto a(b)$ with
$$
a(b) \,\equiv\, 1 \otimes b + \sum_{h=1}^{l} t_h \otimes D_h(b)  \pmod{(t_1, \dots, t_l)^2}.
$$
Then, the $D_h$ are (not necessarily commuting) derivations on $S$.
Now let $C$ be the vanishing locus of the ideal $I_C \subseteq S$ generated by the $2 \times 2$ minors of $M$ (so that $\rank M \leq 1$ on $C$), where $M$ is the $d \times 2$ matrix 
\[
 M = 
 \begin{pmatrix}
  D_1(u_1) & D_2(u_1) \\
  \vdots   & \vdots \\
  D_1(u_d) & D_2(u_d)
 \end{pmatrix}.
\]
Note that all the $D_i(u_j)$ must vanish at $0$, since $0 \in  \widehat{\mathbb{A}}_k^d$ 
is a fixed point of the $G$-action, so $C$ is non-empty. 
Therefore, by the formula for the expected dimension of degeneracy loci 
(see, for example, \cite[Section 14]{Fulton}), 
we have $\dim(C) \geq d - (d-1)(2-1) = 1$. 
We claim that this implies that the action of $G$ on $U$ is not free, where $U$ 
is the complement of the origin.

The map $(a, i_2) : S \otimes_k S \to T \otimes_k S$ induces a map 
$(a, i_2)|_U : \OO_U \otimes_k \OO_U \to T \otimes_k \OO_U$.
Then, for every local section $\beta \in \OO_U \otimes \OO_U$ 
we have $c_1(\beta) D_2(u_j) - c_2(\beta) D_1(u_j) \in I_C$ for all $j = 1, \dots,d$, 
where $c_h(\beta)$ is the coefficient of $t_h$ in $(a, i_2)(\beta)$ and $I_C$ 
is the ideal defining $C$. 
Indeed, we can reduce to the case $\beta = b_1 \otimes b_2 = u_k \otimes 1$, and 
this case is clear.
Since $C$ has positive dimension, the ideal $I_C|_U$ is non-trivial. 
If the ideal $I_1 \subset \OO_U$ generated by the image of $D_1$ is equal to $\OO_U$, then it follows 
that $(a, i_2)|_U$ is not surjective. 
If $I_1 \subsetneq \OO_U$, then again it follows that $(a, i_2) |_U$ is not surjective.
Hence, the $G$-action on $U$ is not free, contradicting our assumption.
\end{proof}

\begin{Proposition} \label{prop: linearlyreductive or unipotent}
 Let $G$ be a finite group scheme admitting a very small action.
 Then the scheme underlying $G^{\circ}$ is isomorphic to $\Spec k[t]/(t^{p^n})$ 
 for some $n \geq 0$.
 In particular, if $n\geq1$, then
 \begin{enumerate}
     \item either $G^{\circ}$ is linearly reductive of height $n$, hence isomorphic to $\bmu_{p^n}$,
     \item or else $G^{\circ}$ is unipotent of height $n$, hence $G^{\circ}[F^{i+1}]/G^{\circ}[F^{i}] \cong \balpha_p$ for all $0 \leq i < n$.
 \end{enumerate}
\end{Proposition}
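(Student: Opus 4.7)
The plan is to reduce everything to Lemma \ref{lem: cyclic} and then invoke the standard structure theory of finite connected group schemes in characteristic $p$. If $G^\circ$ is trivial we take $n=0$, so assume $G^\circ$ is nontrivial. Consider the Frobenius kernel $G^\circ[F]$, which is a nontrivial connected height-$1$ subgroup scheme of $G$. First I would check that the restriction of the very small $G$-action on $\Spec k[[u_1,\ldots,u_d]]$ to $G^\circ[F]$ remains very small: it is still faithful, still fixes the closed point, and is still free away from the closed point (any point with nontrivial $G^\circ[F]$-stabilizer has nontrivial $G$-stabilizer). Lemma \ref{lem: cyclic} then forces ${\rm length}(G^\circ[F]) = p$.

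Since any height-$1$ connected group scheme $H$ over $k$ satisfies ${\rm length}(H) = p^{\dim {\rm Lie}(H)}$ and since ${\rm Lie}(G^\circ[F]) = {\rm Lie}(G^\circ)$, this yields $\dim {\rm Lie}(G^\circ) = 1$. A connected finite group scheme with one-dimensional Lie algebra is automatically commutative, since its formal spectrum is a one-dimensional formal group law and such are commutative by Lazard. Applying Nakayama's lemma to the augmentation ideal $\idealm$ of the coordinate ring $A$ of $G^\circ$, and using that $\idealm/\idealm^2 \cong {\rm Lie}(G^\circ)^\vee$ is one-dimensional, one sees that $A$ is generated over $k$ by a single element $t$, so $A$ is a quotient of $k[t]$. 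Comparing $k$-lengths with the order of $G^\circ$ gives $A \cong k[t]/(t^{p^n})$ where $n$ is the height of $G^\circ$.

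For the dichotomy, the group scheme $G^\circ[F]$ has length $p$ and height $1$, so it is isomorphic to either $\bmu_p$ or $\balpha_p$. If $G^\circ[F]\cong \bmu_p$, then $G^\circ$ contains no copy of $\balpha_p$ (any such copy would have to lie inside the height-$1$ subgroup scheme $G^\circ[F]$), and it contains no $\C_p$ because it is connected; Lemma \ref{lem: lrcriterion} then shows that $G^\circ$ is linearly reductive, and being connected and one-dimensional at the level of Lie algebras it must be $\bmu_{p^n}$ by Nagata's classification. If instead $G^\circ[F]\cong\balpha_p$, then in the canonical decomposition of the commutative group scheme $G^\circ$ into its multiplicative and unipotent parts, a nontrivial multiplicative part would produce a $\bmu_p \subseteq G^\circ[F]$, which does not exist. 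Hence $G^\circ$ is unipotent, and each successive quotient $G^\circ[F^{i+1}]/G^\circ[F^i]$ is a connected unipotent height-$1$ group scheme of length $p$, which is necessarily $\balpha_p$.

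The main obstacle, such as it is, is mostly bookkeeping: once Lemma \ref{lem: cyclic} has been used to force $\dim {\rm Lie}(G^\circ) = 1$, the remainder of the argument is an invocation of standard results about finite commutative group schemes in positive characteristic, for which references such as Demazure--Gabriel or SGA~3 would be cited. The only genuinely new input is verifying that the restriction of a very small $G$-action to $G^\circ[F]$ remains very small, which is essentially immediate from the definitions.
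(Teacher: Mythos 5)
Your overall strategy is the same as the paper's: apply Lemma \ref{lem: cyclic} to the Frobenius kernel $G^\circ[F]$ to force its length to be $p$, deduce that the coordinate ring of $G^\circ$ is monogenic and hence equal to $k[t]/(t^{p^n})$, and then split into cases according to whether $G^\circ[F]$ is $\bmu_p$ or $\balpha_p$. Your verification that the restricted action of $G^\circ[F]$ is still very small, and your Nakayama argument for monogenicity, are correct and in fact spell out steps the paper leaves implicit. The case $G^\circ[F]\cong\bmu_p$ is also handled exactly as in the paper, via Lemma \ref{lem: lrcriterion} and Nagata's classification.

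The gap is in the case $G^\circ[F]\cong\balpha_p$, where you assert that $G^\circ$ is commutative ``since its formal spectrum is a one-dimensional formal group law and such are commutative by Lazard.'' This does not work as stated: $G^\circ$ is a finite infinitesimal group scheme with coordinate ring $k[t]/(t^{p^n})$, not a formal group with coordinate ring $k[[t]]$, so its comultiplication is only a \emph{truncated} group law, and Lazard's commutativity theorem (which is a statement about honest one-dimensional formal group laws) does not apply directly; nor is it clear a priori that such a truncated law lifts to a formal group law. So the commutativity of $G^\circ$, which you need in order to invoke the multiplicative--unipotent decomposition, is left unjustified. The detour is also unnecessary: the paper instead cites the characterization of unipotent group schemes in \cite[Exp. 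XVII, Th\'eor\`eme 4.6.1 iv)]{SGA32}, by which a connected finite group scheme containing no nontrivial subgroup of multiplicative type (equivalently, no $\bmu_p$, and any such copy would lie in $G^\circ[F]\cong\balpha_p$, which is impossible) is unipotent. Replacing your commutativity argument by this citation repairs the proof; the remaining claim that each subquotient $G^{\circ}[F^{i+1}]/G^{\circ}[F^{i}]$ is a connected unipotent group scheme of length $p$, hence $\balpha_p$, is fine.
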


\begin{proof}
By Lemma \ref{lem: cyclic} and our definition of very small actions, 
the subgroup scheme $G^{\circ}[F] \subseteq G^\circ$ of height $1$ is of length 
$\leq p$.
From this, the first assertion follows. 
Note that the height of $G^{\circ} = \Spec k[t]/(t^{p^n})$ is $n$ and that, if 
$n \geq 1$, we have $G^{\circ}[F] \in \{\balpha_p,\bmu_p\}$, since these are the 
only connected group schemes of length $p$ over $k$.

If $G^{\circ}[F] \neq \balpha_p$, then $G^\circ$ is linearly reductive by 
Lemma \ref{lem: lrcriterion} and thus, $G^{\circ} \cong \bmu_{p^n}$ 
by Nagata's classification. 
If $G^{\circ}[F] \neq \bmu_p$, then $G^\circ$ is unipotent by 
\cite[Th\'eor\`eme 4.6.1. iv), Exp. XVII]{SGA32}. 
Since $G^\circ$ has height $n$, the subquotient $G^{\circ}[F^{i+1}]/G^{\circ}[F^{i}]$ 
is unipotent of length $p$ for every $i < n$, hence 
$G^{\circ}[F^{i+1}]/G^{\circ}[F^{i}] \cong \balpha_p$. 
\end{proof}

\begin{Remark}
Both cases occur: the two-dimensional rational double points
of type $A_{p^n-1}$ are lrq singularities by $G=G^\circ=\bmu_{p^n}$.
For examples with $G=G^\circ=\balpha_p$, we refer to Section
\ref{subsec: deformation non-lrq}, and examples with
$G=G^\circ=\bM_2$ (a non-split extension of $\balpha_p$ by $\balpha_p$)
can be found in \cite{RDP}.
\end{Remark}

The local Picard scheme and local fundamental group scheme of arbitrary quotient singularities 
can be very complicated and we refer to \cite{RDP} for examples.
However, in the case of linearisable actions, the situation becomes simpler, as we will see
in the next section.

\section{Properties of lrq singularities}
\label{sec: properties lrq}

In this section, we will compute some basic
invariants of lrq singularities and contrast them with properties of more general quotient singularities in Section \ref{subsec: CM} and Section \ref{subsec: more general}.

\subsection{Basic properties of lrq singularities}
Given a $d$-dimensional (very small) representation
$\rho:G\to \GL_{d,k}$ of a finite $k$-group scheme $G$, 
we obtain an induced $G$-action on the symmetric algebra ${\rm Sym}^*(k^d)$, 
and passing to spectra on $d$-dimensional affine space $\mathbb{A}^d$.
By Proposition \ref{prop: linearizable}, every lrq singularity is formally isomorphic 
to some $({\mathbb{A}^d/G})^\wedge$.

The following proposition is a collection of well-known results, 
or results that are easily deduced from known results.

\begin{Proposition} \label{prop: lrq}
 Let $k$ be an algebraically closed field of characteristic $p>0$,
 let $G$ be a finite and linearly reductive group scheme over $k$, and
 let $\rho:G\to{\rm GL}_{d,k}$ be a very small representation of $G$.
 Let 
 $$
  x \in X \,:=\, 0 \in ({\mathbb{A}_k^d/G})^\wedge
 $$
 be the associated lrq singularity.
 \begin{enumerate}
  \item \label{item: F-regular Q-Gorenstein} 
  The singularity is F-regular and $\QQ$-Gorenstein. 
  In particular, it is Cohen--Macaulay, normal and log terminal.
  \item \label{item: class group} The class group ${\rm Cl}(X)$ is finite and there exists an isomorphism of abelian groups
  $$
  {\rm Cl}(X) \,\cong\, (G^{{\rm ab}})^D,
  $$
  where $G^{{\rm ab}}$ is the abelianisation of $G$.
  In particular, the $p$-primary part of the class group ${\rm Cl}(X)$ is cyclic of order $p^m \leq |G^\circ|$.
  \item \label{item: F-signature} The F-signature $s(X)$ is finite and satisfies
  $$
   s(X) \,=\, \frac{1}{|G|}.
  $$
  \item \label{item: Hilbert-Kunz multiplicity} The Hilbert--Kunz multiplicity $e_{\rm HK}(X)$ satisfies
  $$
    e_{\rm HK}(X)\,=\, \frac{1}{|G|}\, {\rm length}(S/\idealm_R S),
  $$
  where $(R,\idealm_R)$ denotes the local ring of $x\in X$
  and $S=k[[u_1,...,u_d]]$ is the local ring of $(\mathbb{A}^d)^\wedge$.
  In particular, it is a rational number, whose denominator
  divides the length of $G$.
  \item \label{item: pietloc} The \'etale local fundamental group satisfies
  $$
  \pietloc(X) \,\cong\, G^\et.
  $$
 \end{enumerate}
\end{Proposition}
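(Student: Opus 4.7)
The plan is to fix a linearization $\rho : G \to \GL_{d,k}$ via Proposition \ref{prop: linearizable}, so that $R = S^G$ where $S = k[[u_1,\ldots,u_d]]$, and then derive each assertion from standard properties of invariants under linearly reductive group schemes.

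For (1), linear reductivity yields a Reynolds splitting of $R \hookrightarrow S$, exhibiting $R$ as a direct $R$-module summand of the regular (hence strongly F-regular) ring $S$; strong F-regularity is inherited by such direct summands. Cohen-Macaulayness then follows from the Hochster-Roberts theorem, and normality is automatic. For $\QQ$-Gorenstein, the canonical module $\omega_R$ corresponds, under the isomorphism established in (2), to the character $\det(\rho)^{-1}$, which is torsion in $\Hom(G,\GG_m)$ since $G$ is finite; log terminality follows from F-regularity.

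For (2), the very small hypothesis combined with linear reductivity makes $V := \Spec S \setminus \{0\} \to U := \Spec R \setminus \{x\}$ a $G$-torsor. Applying the Leray spectral sequence for $\GG_m$ along this torsor, together with $\Pic(V) = 0$ for $d \geq 2$ (Zariski-Nagata purity on the regular ring $S$, plus $S$ being a UFD) and $\GG_m(V) = k^\times$, one obtains
$$\Pic(U) \,\cong\, \Hfl{1}(G,\GG_m) \,=\, \Hom(G,\GG_m) \,=\, (G^{\rm ab})^D.$$
Normality of $R$ gives ${\rm Cl}(X) = \Pic(U)$. The cyclicity of the $p$-primary part follows because Theorem \ref{thm: repsofGlr} forces $G^\circ$ to be of the form $\bmu_{p^n}$ for some $n \geq 0$, so its Cartier dual is cyclic of order $|G^\circ|$.

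Claim (3) is a known computation for invariant rings under linearly reductive group scheme actions that are free in codimension one, in the spirit of Watanabe-Yoshida and its extension by Carvajal-Rojas-Schwede-Tucker. Claim (4) follows from the multiplicativity of Hilbert-Kunz multiplicity for module-finite extensions: since $R \subseteq S$ is generically Galois of degree $|G|$, we have
$$|G| \cdot e_{\rm HK}(R) \,=\, e_{\rm HK}(\idealm_R S, S) \,=\, \length(S/\idealm_R S),$$
using regularity of $S$ for the last equality. For (5), Zariski-Nagata purity applied to $S$ shows $\piet(V) = 1$. The torsor $V \to U$ factors as $V \to V/G^\circ \to U$, where the first map is purely infinitesimal (inducing an isomorphism on étale fundamental groups), and the second is étale Galois with group $G^{\et}$. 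Combining these gives $\pietloc(X) \cong G^{\et}$.

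The main obstacle is the flat-cohomology computation in (2), where one must carefully handle the infinitesimal part $G^\circ$ to justify that $\Hfl{1}(G,\GG_m)$ reduces to the character group $\Hom(G,\GG_m)$ and matches the class group of $R$ correctly.
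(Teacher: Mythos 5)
Your proposal is correct and follows essentially the same route as the paper: F-regularity via the Reynolds splitting realizing $R$ as a direct summand of the regular ring $S$; $\QQ$-Gorensteinness from finiteness of the class group (equivalently, torsion of the canonical class); the F-signature via Carvajal-Rojas--Schwede--Tucker; and the local fundamental group via the factorization $V \to V/G^\circ \to U$ into a purely inseparable morphism followed by an \'etale $G^{\et}$-Galois cover, with $\piet(V)$ killed by purity. For (2) and (4) you unwind the arguments that the paper delegates to citations (Benson for the class group, Watanabe--Yoshida/Huneke for Hilbert--Kunz), and your multiplicativity argument for (4) is a clean, correct replacement. One slip to fix in (2): $\GG_m(V) = \Gamma(V,\OO_V)^\times = S^\times = k^\times \times (1+\mathfrak{m}_S)$, not $k^\times$, so the descent sequence a priori gives $\Pic(U) \cong H^1(G, S^\times)$ and you still need to show this equals $\Hom(G,\GG_m)$; this is exactly the content of the cited argument of Benson (evaluation at the origin splits $k^\times \hookrightarrow S^\times$ $G$-equivariantly, and the contribution of $1+\mathfrak{m}_S$ vanishes, using linear reductivity to handle the Hochschild cohomology of $G^\circ$). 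You correctly flag this as the delicate point, but as written the identification $\GG_m(V)=k^\times$ is not true and the gap should be closed as above.
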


\begin{proof}
Let $S=k[[u_1,...,u_d]]$ and $R:=S^G\subseteq S$.
Since $G$ is linearly reductive and since the action on $S$ is linear, 
the inclusion $R\subseteq S$ is split, that is, $R$ is a direct summand, 
see also \cite[Section 6.5]{BrunsHerzog}.
Being a direct summand of an F-regular ring, $R$ is F-regular, 
see \cite[Theorem 3.1(e)]{HochsterHunekeFregular}. 
%\cite[Proposition 10.1.13]{BrunsHerzog}.

The assertion that ${\rm Cl}(X) \,\cong\, (G^{{\rm ab}})^D$ is shown in 
\cite[Theorem 3.9.2]{Benson}. 
In fact, it is stated there for finite groups rather than group schemes, but the 
proof also works in our case. 
By Proposition \ref{prop: linearlyreductive or unipotent}, we have 
$G^\circ \cong \bmu_{p^n}$ for some $n$ and since $G^\circ$ is normal in $G$, 
the group scheme $\C_{p^n} = \bmu_{p^n}^D$ surjects onto the $p$-primary part 
of ${\rm Cl}(X)$. 
In particular, the latter is cyclic of order $p^m \leq |G^\circ|$.

Since the class group is finite, the class of a canonical Weil divisor has finite order 
in ${\rm Cl}(X)$, that is, $X$ is $\QQ$-Gorenstein.

The assertion on the F-signature follows from \cite[Theorem B]{Carvajal-RojasSchwedeTucker} and \cite[Theorem C]{Carvajal-Rojas}.

Assertion (\ref{item: Hilbert-Kunz multiplicity}) in the case of dimension 2 and if $G$ is \'etale
is \cite[Theorem 5.4]{WYHK}. 
However, as explained in \cite[Example 18 in Section 3]{Huneke}, this formula also
holds if $G$ is finite and \'etale and using these arguments,
one can check that the formula also holds in our case.

Finally, consider Assertion (\ref{item: pietloc}). The \'etale fundamental group of $Y = \Spec k[[u_1,\hdots,u_d]]$ is trivial.
Since $G^\circ$ is connected, the quotient map $Y \to Y/G^\circ$
is purely inseparable, which implies that the local fundamental group
of $Y/G^\circ$ is trivial, see also \cite[Expos\'e IX, Th\'eor\`eme 4.10]{SGA1}.
Since $G^\et$ is \'etale and acts freely on the pointed space $Y/G^\circ-\{0\}$,
which is simply connected (since the dimension is $\geq 2$),
where $\{0\}$ denotes the closed point, the local fundamental group
of $Y/G$ is isomorphic to $G^\et$.
\end{proof}

\begin{Remark} \label{rmk: Hashimoto}
Note that, in the setting of Proposition \ref{prop: lrq}, 
we have $(G^{\rm ab})^D = \chi(G)$, where $\chi(G)$ is the group of characters of $G$.
By \cite[Lemma 3.13, Example 4.15]{HashimotoEquivariantI}, the
identification ${\rm Cl}(X) \cong \chi(G)$ extends to small actions of
smooth finite type group schemes $G$.
\end{Remark}

As an application of Proposition \ref{prop: linearlyreductive or unipotent} and
Proposition \ref{prop: lrq}, we deduce that lrq singularities are precisely the F-regular quotient singularities in positive characteristic.

\begin{Proposition} \label{prop: lrq=Fregular}
 Let $x\in X=\Spec R$ be a quotient singularity in characteristic $p \geq 0$. Then, the following are equivalent:
 \begin{enumerate}
     \item \label{item:lrq} $x \in X$ is an lrq singularity.
     \item \label{item:Finjective} Either $p = 0$ or $p > 0$, $p \nmid |\pietloc(X)|$, and the integral closure $\widetilde{x} \in \widetilde{X}$ of the universal \'etale cover of $X-x$ is F-injective.
     \item \label{item:Fregular} Either $p = 0$ or $p > 0$ and $X$ is F-regular.
 \end{enumerate}
\end{Proposition}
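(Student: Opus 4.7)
I would prove the cyclic implications $(1) \Rightarrow (3) \Rightarrow (2) \Rightarrow (1)$. In characteristic zero every finite group scheme is linearly reductive and the side conditions in (2) and (3) are vacuous, so all three conditions are trivially equivalent; throughout I therefore assume $p > 0$. The implication $(1) \Rightarrow (3)$ is already recorded in Proposition \ref{prop: lrq}(\ref{item: F-regular Q-Gorenstein}). For $(3) \Rightarrow (2)$ I would invoke the theorem of Carvajal-Rojas (building on work of Schwede-Tucker and Kumar-Mehta) that the local étale fundamental group of an F-regular singularity is finite of order prime to $p$ and that F-regularity is preserved under finite covers which are étale in codimension one; applied to the map $\widetilde X \to X$ this simultaneously yields $p \nmid |\pietloc(X)|$ and F-regularity, hence F-injectivity, of $\widetilde X$.

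\textbf{Proving $(2) \Rightarrow (1)$.} Write $X = Y/G$ with $Y = \Spec k[[u_1,\ldots,u_d]]$ and $G$ acting very smally in the sense of Definition \ref{def: verysmallaction}. The quotient $Y \to Y/G^\circ$ is purely inseparable, so $Y/G^\circ$ inherits the triviality of the local étale fundamental group of $Y$, and the induced map $Y/G^\circ \to X$ is a $G^\et$-torsor on the punctured base. Hence $\pietloc(X) \cong G^\et$ and the integral closure of the universal étale cover of $X - x$ equals $Y/G^\circ$. The hypothesis $p \nmid |\pietloc(X)|$ therefore forces $|G^\et|$ to be prime to $p$, so that $G^\et$ is linearly reductive. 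What remains is to show that $G^\circ$ is linearly reductive: by Proposition \ref{prop: linearlyreductive or unipotent} either $G^\circ \cong \bmu_{p^n}$ and we are done, or $G^\circ$ is unipotent of positive height, and in that case the Frobenius filtration furnishes a subgroup scheme $\balpha_p \subseteq G^\circ$ acting very smally on $Y$. The plan is then to contradict the F-injectivity of $\widetilde X = Y/G^\circ$.

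\textbf{Main obstacle.} The crux lies precisely in this last step: showing that the quotient of a power series ring by a very small action of an infinitesimal unipotent group scheme fails to be F-injective. I would first reduce inductively along the Frobenius filtration to the case $G^\circ = \balpha_p$, so that $\widetilde X = \Spec(\ker D)$ for a derivation $D$ of $k[[u_1,\ldots,u_d]]$ with $D^p = 0$ and an isolated zero at the origin, and then apply a Fedder-style criterion on the top local cohomology: the relation $D^p = 0$ combined with the isolated zero hypothesis should produce a Frobenius-annihilated class, violating F-injectivity. A variant would be to bypass this direct computation by invoking the characterization $s(R) > 0 \Leftrightarrow R$ is F-regular together with the vanishing of the F-signature under non-linearly-reductive actions in the style of Carvajal-Rojas. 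Either way, this is the step that truly requires the linear reductivity hypothesis and constitutes the genuine technical content of the proposition.
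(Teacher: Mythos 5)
Your skeleton matches the paper's proof almost exactly: the cyclic chain $(1)\Rightarrow(3)\Rightarrow(2)\Rightarrow(1)$, the reduction to $p>0$, the use of Proposition \ref{prop: lrq} for $(1)\Rightarrow(3)$, the Carvajal-Rojas--Schwede--Tucker results for $(3)\Rightarrow(2)$, the identification $\widetilde{X}\cong Y/G^\circ$ and $\pietloc(X)\cong G^\et$, and the appeal to Proposition \ref{prop: linearlyreductive or unipotent} to reduce to ruling out unipotent $G^\circ$. Up to that point the proposal is correct and is the paper's argument.

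The step you flag as the ``main obstacle'' is, however, where your plan has a genuine gap, and the paper resolves it differently. You propose to find a \emph{subgroup} $\balpha_p\subseteq G^\circ$ and to ``reduce inductively along the Frobenius filtration to the case $G^\circ=\balpha_p$,'' then run a Fedder-style computation on $\Spec(\Ker D)$. But the F-injectivity hypothesis in (2) is about $\widetilde{X}=Y/G^\circ$, not about the intermediate quotient $Y/G^\circ[F]$, and replacing $G^\circ$ by its height-one subgroup changes which ring is assumed F-injective; as stated, the induction does not preserve the hypothesis, and you give no argument transporting non-F-injectivity from $Y/\balpha_p$ up to $Y/G^\circ$. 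Your suggested bypass via the F-signature has the same mismatch: vanishing F-signature contradicts F-regularity, not F-injectivity, which is all that (2) provides. The paper's device is to use a \emph{quotient} instead of a subgroup: a connected unipotent $G^\circ$ admits a surjection $G^\circ\twoheadrightarrow\balpha_p$ with kernel $N$, and then $Y/N\to\widetilde{X}$ is an $\balpha_p$-torsor over $\widetilde{X}-\widetilde{x}$ that does not extend over $\widetilde{x}$. This torsor produces a nonzero class in $H^1(\widetilde{X}-\widetilde{x},\OO_{\widetilde{X}-\widetilde{x}})=H^2_{\{\widetilde{x}\}}(\widetilde{X},\OO_{\widetilde{X}})$ that is killed by Frobenius, contradicting F-injectivity directly --- no induction and no Fedder-type computation needed. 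If you replace your subgroup reduction by this quotient/torsor argument, the proof closes.
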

\begin{proof}
Since every quotient singularity is lrq if $p = 0$, we may assume that $p > 0$.

The implication (\ref{item:lrq}) $\Rightarrow $ (\ref{item:Fregular}) is 
Proposition \ref{prop: lrq} (\ref{item: F-regular Q-Gorenstein}).

Next, for (\ref{item:Fregular}) $\Rightarrow $ (\ref{item:Finjective}), note that 
$p \nmid |\pietloc(X)|$ by \cite[Theorem A]{Carvajal-RojasSchwedeTucker}. 
The F-signature of $\widetilde{X}$ is positive by 
\cite[Theorem B]{Carvajal-RojasSchwedeTucker}, so $\widetilde{X}$ is F-regular and, 
in particular, $\widetilde{X}$ is F-injective.

Finally, to see (\ref{item:Finjective}) $\Rightarrow $ (\ref{item:lrq}), 
let $G$ be a finite group scheme, let $S := k[[u_1,\hdots,u_d]]$, and assume that 
$X$ is a quotient singularity via a very small action of $G$ on $\Spec S$. 
Note that $\Spec S \to \Spec S^{G^\circ}$ is a homeomorphism in the \'etale
topology, so $\Spec S^{G^\circ} \cong \widetilde{X}$ and $G^{\et} \cong \pietloc(X)$. 
In particular, we have $p \nmid |G^\et|$, so $G^\et$ is linearly reductive. 
By Proposition \ref{prop: linearlyreductive or unipotent}, we know that $G^\circ$ 
is either linearly reductive or unipotent. 
If $G^\circ$ is unipotent, then it admits a quotient isomorphic to $\balpha_p$ 
and the associated $\balpha_p$-torsor over $\widetilde{X}-\widetilde{x}$ 
induces a non-trivial class in $H^1(\widetilde{X}-\widetilde{x},\OO_{\widetilde{X}-\widetilde{x}}) = H^2_{\{\widetilde{x}\}}(\widetilde{X},\OO_{\widetilde{X}})$,
which is killed by Frobenius. 
By the definition of F-injectivity, this is impossible, 
so $G^\circ$ is linearly reductive, hence so is $G = G^\circ \rtimes G^\et$. 
\end{proof}

\subsection{Relation to toric singularities}
Toric singularities that are $\mathbb{Q}$-factorial are examples of lrq singularities and we have the following relation 
between these classes of singularities.

\begin{Proposition} \label{prop: toric}
Let $x\in X$ be an isolated singularity of dimension $d$ over an algebraically closed
field $k$.
Then, the following are equivalent:
\begin{enumerate}
    \item $x\in X$ is toric and $\mathbb{Q}$-factorial,
    \item $x\in X$ is an lrq singularity by an abelian group scheme,
    \item $x\in X$ is an lrq singularity by $\bmu_n$ for some $n > 0$,
    \item $x\in X$ is an lrq singularity by $\bmu_n = {\rm Cl}(X)^D$, 
    where $\bmu_n$ is embedded into $\GL_d$ as $\zeta \mapsto {\rm diag}(\zeta^{q_1},\hdots,\zeta^{q_d})$ 
    with $1 = q_1 \leq \hdots \leq q_d < n$.
\end{enumerate}
Moreover, the $q_i$ in $(4)$ are uniquely determined by $X$ up to automorphisms of $\bmu_n$
and every linear action of $\bmu_n = {\rm Cl}(X)^D$ on $Y = \Spec k[[u_1,\hdots,u_d]]$ 
with quotient $X$ is conjugate in $\GL_d$ to an action as in $(4)$.
\end{Proposition}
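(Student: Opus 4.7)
The plan is to establish the cycle $(4) \Rightarrow (3) \Rightarrow (2) \Rightarrow (1)$ together with $(2) \Rightarrow (4)$. The implications $(4) \Rightarrow (3) \Rightarrow (2)$ hold by definition. For $(2) \Rightarrow (3)$, let $G$ be abelian, linearly reductive, and admit a very small representation. By Theorem~\ref{thm: repsofGlr}(2), every abelian subgroup of $G_{\mathrm{abs}}$ is cyclic, so $G_{\mathrm{abs}}$ itself is cyclic of some order $n$, and the equivalence of Lemma~\ref{lem: lrequivalence} yields $G \cong \bmu_n$.

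To prove $(2) \Rightarrow (4)$, we linearize via Proposition~\ref{prop: linearizable}. Since $\bmu_n$ is diagonalizable, the representation splits as a sum of characters, so in suitable coordinates the action reads $\zeta \cdot u_i = \zeta^{q_i} u_i$. For $m \mid n$ with $m > 1$, the fixed locus of $\bmu_m \subseteq \bmu_n$ is the coordinate subspace spanned by those $u_i$ with $m \mid q_i$; the action is therefore very small if and only if $(q_i,n)=1$ for every $i$. Proposition~\ref{prop: lrq}(2) supplies an isomorphism $\bmu_n \cong {\rm Cl}(X)^D$, and a permutation of the $u_i$ together with a suitable automorphism $\zeta \mapsto \zeta^r$ of $\bmu_n$ achieves the normalization $1 = q_1 \leq \cdots \leq q_d < n$. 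The implication $(2) \Rightarrow (1)$ is then immediate: the quotient $(\mathbb{A}^d/\bmu_n)^\wedge$ inherits the residual torus action of $\GG_m^d/\bmu_n \cong \GG_m^d$, so $X$ is toric.

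For $(1) \Rightarrow (2)$, we invoke the toric dictionary: write $X = U_\sigma = \Spec k[\sigma^\vee \cap M]$ for a strongly convex rational polyhedral cone $\sigma \subseteq N_{\RR}$ with $N \cong \ZZ^d$. The hypothesis that $X$ is an isolated singularity of dimension $d$ forces $\sigma$ to be simplicial of full dimension; letting $v_1,\dots,v_d$ be its primitive generators and $N' := \ZZ\langle v_1, \dots, v_d\rangle$, the cone is smooth in $N'_{\RR}$, so $U_\sigma^{N'} \cong \mathbb{A}^d$ and the natural morphism $U_\sigma^{N'} \to U_\sigma^N$ realizes $X$ as the quotient of $\mathbb{A}^d$ by the finite diagonalizable, hence abelian and linearly reductive, group scheme $D(N/N')$, with very-small action because of the isolated-singularity hypothesis.

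For the uniqueness claim, any two diagonal actions of $\bmu_n$ on $Y$ yielding the same quotient $X$ are conjugate in $\GL_d$ only by coordinate permutations (as $\bmu_n$ is abelian), so the weight tuple is determined up to permutation once the identification $\bmu_n \cong {\rm Cl}(X)^D$ is fixed; a different choice of this identification differs by an automorphism of $\bmu_n$, which multiplies all $q_i$ simultaneously by a common unit in $(\ZZ/n)^{\times}$. I expect the main technical step to be $(1) \Rightarrow (2)$: confirming that the isolated-singularity hypothesis forces $\sigma$ to be simplicial of maximal dimension, and that the resulting $D(N/N')$-action on $\mathbb{A}^d$ is free outside the origin, requires careful bookkeeping through the cone-lattice correspondence.
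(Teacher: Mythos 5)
Your treatment of the implications among (2), (3), (4) and of $(4)\Rightarrow(1)$ is correct and matches the paper's: the paper compresses $(2)\Rightarrow(3)\Rightarrow(4)$ into an appeal to Theorem \ref{thm: repsofGlr} and the representation theory of cyclic groups, which is exactly what you unwind (diagonalize, note that very small means $(q_i,n)=1$, normalize $q_1=1$ by an automorphism of $\bmu_n$). For $(1)\Rightarrow(2)$ you take the classical cone--lattice route ($U_\sigma$ as $\mathbb{A}^d$ modulo $D(N/N')$) where the paper uses the Cox construction together with the Chevalley--Shephard--Todd theorem of Satriano; for a simplicial full-dimensional cone these are the same construction. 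However, your claimed justification that ``isolated of dimension $d$ forces $\sigma$ to be simplicial of full dimension'' is false in dimension $\geq 3$: the cone over a unit square gives $xy=zw$, an isolated toric threefold singularity whose cone has four rays and whose local class group is $\ZZ$. This is exactly where the real content of $(1)\Rightarrow(2)$ sits --- the paper's own argument likewise needs the number of rays to equal $d$, so that ${\rm Cl}(X)$ is finite and the Cox space is $\mathbb{A}^d$ rather than $\mathbb{A}^{\#\sigma(1)}$ --- so you cannot wave it through as bookkeeping; you must either restrict to the simplicial ($\QQ$-factorial, equivalently finite class group) case or supply an actual argument.

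The second genuine gap is in the uniqueness statement. You assert that two diagonal $\bmu_n$-actions on $Y$ with the same quotient ``are conjugate in $\GL_d$ only by coordinate permutations (as $\bmu_n$ is abelian).'' That parenthetical only identifies the normalizer of the diagonal torus; it does not address the actual difficulty, namely why an abstract isomorphism of the two quotient rings is induced by \emph{any} conjugation upstairs at all, let alone a monomial one. This is precisely what the paper supplies via Demushkin's theorem: the two presentations produce two affine semigroups $M,M'\subseteq\NN^d$ with $(\Spec k[M])^\wedge\cong(\Spec k[M'])^\wedge$, Demushkin yields an isomorphism $M\cong M'$ of submonoids of $\NN^d$, and one checks that any such isomorphism is induced by a permutation of the generators of $\NN^d$. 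Without an input of this kind (or the machinery of Lemma \ref{lem: toricautomorphisms} and Theorem \ref{thm: lrqmain}), your uniqueness argument is circular, and the final clause of the statement --- that \emph{every} linear $\bmu_n$-action with quotient $X$ is conjugate to one as in (4) --- is left unaddressed for the same reason.
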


\begin{proof}
Clearly, we have $(4) \Rightarrow (3) \Rightarrow (2)$. 
Conversely, the implications $(2) \Rightarrow (3) \Rightarrow (4)$ follow from 
Theorem \ref{thm: repsofGlr} and the representation theory of cyclic groups 
over the complex numbers.

Let us prove $(4) \Rightarrow (1)$. Since $\bmu_n$ acts diagonally, the diagonal torus action descends to $X$,
showing that $X$ is toric. Moreover, $x \in X$ is $\mathbb{Q}$-factorial by Proposition \ref{prop: lrq} (\ref{item: F-regular Q-Gorenstein}).

As for $(1) \Rightarrow (2)$, assume that $X = (\Spec k[M])^\wedge$ 
for some affine semigroup $M$.
Since the claim is clear for smooth $X$, we may assume that $X$ is singular. 
Since an affine toric variety has trivial Picard group and $x$ is the only singular point of $\Spec k[M]$, the natural map 
${\rm Cl}(k[M]) \to {\rm Cl}(\mathcal{O}_{\Spec k[M],x})$ 
is injective. 
Moreover, by \cite[Tag 0CDY]{StacksProject}, the map ${\rm Cl}(\mathcal{O}_{\Spec k[M],x}) \to {\rm Cl}(X)$ 
is also injective. 
Thus, the assumption that $X$ is $\mathbb{Q}$-factorial guarantees that ${\rm Cl}(k[M])$ is finite.
Since $X$ is singular, the toric variety $\Spec k[M]$ has no torus factors, so the Cox construction
(see, for example, \cite[Section 3.1]{GeraschenkoSatriano}) realises $\Spec k[M]$ as a 
quotient of $\mathbb{A}^d$ by a diagonal action of the finite abelian group scheme 
$G = {\rm Cl}(k[M])^D$ 
%(see \cite[Corollary 5.9]{GeraschenkoSatriano} for the latter equality) 
with fixed locus of codimension at least $2$. 
In fact, by the Chevalley--Shephard--Todd Theorem for 
linearly reductive group schemes \cite{Satriano}, the $G$-action on $\mathbb{A}^d$ 
is very small, since $x \in X$ is an isolated singularity.
Hence, $X$ is an lrq singularity by the finite abelian group 
scheme $G$.

Finally, note that a realisation of $X$ as a $\bmu_n$-quotient as in (4) yields an 
identification $X \cong (\Spec k[M])^\wedge$, where the embedding of $\bmu_n$ into the 
diagonal torus $\GG_m^d$ defines the affine semigroup
$$
M \,:=\, \Ker\left(\ZZ^d = (\GG_m^d)^D(k) \to (\bmu_n)^D(k) = \C_n\right) \cap \NN^d.
$$
If $M' \subseteq \NN^d$ is another affine semigroup such that $X \cong (\Spec k[M'])^\wedge$ 
via the above construction, then there is an isomorphism $\varphi: M \cong M'$ of submonoids 
of $\NN^d$ by Demushkin's Theorem (see, for example, \cite[Theorem 4.6.1]{Wl}). 
It is elementary to check that every such $\varphi$ is induced by a permutation of 
the generators of $\NN^d$. 
Hence, the two embeddings $\bmu_n \to \GG_m^d$ corresponding to $M$ and $M'$ coincide 
up to permutation of the coordinates in $Y$ and up to automorphisms of $\bmu_n$. 
\end{proof}

\begin{Remark}
It is well-known that an affine toric variety $Y$ associated to a 
cone $\sigma$ is $\mathbb{Q}$-factorial if and only if $\sigma$ is simplicial \cite[Lemma 14-1-1.]{Matsuki}. 
A similar statement holds if $x \in X$ is the toric singularity 
obtained by completing an affine toric variety $Y$ with a single
isolated singularity at its singular point.

Indeed, let $\sigma$ be the cone corresponding to $Y$. 
If $\sigma$ is simplicial, then ${\rm Cl}(Y)$ is finite. 
As explained in the proof of Proposition \ref{prop: toric}, the Cox construction 
realises $X$ as an lrq singularity by ${\rm Cl}(Y)$, 
so ${\rm Cl}(X) = {\rm Cl}(Y)$ by Proposition \ref{prop: lrq}. 
In particular, $X$ is $\mathbb{Q}$-factorial. 
If $\sigma$ is not simplicial, then ${\rm Cl}(Y)$ is infinite. 
As in the proof of Proposition \ref{prop: toric}, the natural map 
${\rm Cl}(Y) \to {\rm Cl}(X)$ is injective, 
so ${\rm Cl}(X)$ is also infinite, that is, $X$ is not
$\mathbb{Q}$-factorial.
\end{Remark}

\begin{Remark}
\label{rem: HK}
 By Proposition \ref{prop: lrq}, the F-signature of an lrq singularity 
 by $\bmu_n$ is equal to $\frac{1}{n}$, regardless of the embedding
 $\bmu_n\to \GL_d$.
 On the other hand, the Hilbert--Kunz multiplicity
 depends on the embedding of $\bmu_n$ inside $\GL_d$, that is,
 the integers $1=q_1\leq \dots \leq q_d<n$, as the following examples
 in dimension $d=2$ show:
 \begin{enumerate}
      \item If $q_1=q_2=1$, then $x\in X$ is the cone over a rational normal curve of degree $n$ and $e_{HK}(X)=\frac{n+1}{2}$
    by \cite[Example 2.8]{WYHK}.
     \item If $q_1=1$ and $q_2=n-1$, then $x\in X$ is a rational double point of type $A_{n-1}$ and $e_{HK}(X)=2-\frac{1}{n}$ by
     \cite[Theorem 5.4]{WYHK}.
 \end{enumerate}
\end{Remark}

Combining Corollary \ref{cor: everywheregoodreductiongroup} and Proposition \ref{prop: toric}, we conclude that the only lrq singularities with ``lrq reduction" modulo every prime are the toric $\mathbb{Q}$-factorial singularities. More precisely:

\begin{Definition} \label{def: goodreduction} 
Let $X$ be an lrq singularity over an algebraically closed field $K$ of characteristic $0$. 
Let $p \in \ZZ$ be a prime. 
We say that \emph{$X$ has good reduction modulo $p$} if there exists a discrete valuation 
ring $R$ of mixed characteristic $(p,0)$, a finite and flat $R$-group scheme $\mathcal{G}$ 
such that all geometric fibres of $\mathcal{G}$ are linearly reductive, 
and an action of $\mathcal{G}$ on $\mathcal{Y} := \Spec R[[u_1,\hdots,u_d]]$ 
that is very small on every geometric fibre and such that the generic fibre of the quotient $\mathcal{X} := \mathcal{Y}/\mathcal{G}$ coincides with $X$ over a 
common field extension of ${\rm Frac}(R)$ and $K$. 
\end{Definition}

\begin{Proposition} \label{prop: goodreductionimpliestoric}
Let $X$ be an lrq singularity by a finite group $G$ over an algebraically closed 
field $K$ of characteristic $0$. 
Then, $X$ has good reduction modulo every prime $p \in \ZZ$ if and only if $X$ is 
toric and then, $X$ can be defined over $\ZZ$ (as an lrq singularity).
\end{Proposition}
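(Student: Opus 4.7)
The plan is to prove the two implications separately. The forward direction is essentially a repackaging of earlier results, while the backward direction requires constructing an explicit $\ZZ$-model together with a uniform verification of very-smallness across all characteristics.

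\textbf{Forward direction.} Assume $X$ has good reduction modulo every prime $p > 0$. Fix $p > 0$ and let $(R, \mathcal{G}, \mathcal{Y})$ be good reduction data as in Definition \ref{def: goodreduction}. The special fiber $\mathcal{G}_{\bar k}$ is a finite and linearly reductive group scheme in characteristic $p$, and its action on $\Spec \bar k[[u_1,\ldots,u_d]]$ is very small, hence arises from a very small representation by Corollary \ref{cor: verysmallactionvsverysmallrep}. Comparing generic fibers via the uniqueness of the group scheme of an lrq singularity (Theorem \ref{thm: lrqmain}) and Proposition \ref{prop: geometricgenericfiberisassociated} identifies $G$ with the abstract group associated to $\mathcal{G}_{\bar k}$. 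Thus $G$ carries a linearly reductive model admitting a very small representation in every positive characteristic, and Corollary \ref{cor: everywheregoodreductiongroup} forces $G$ to be cyclic. Proposition \ref{prop: toric} then yields that $X$ is toric.

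\textbf{Backward direction.} Assume $X$ is toric. By Proposition \ref{prop: toric} there exist integers $n \geq 1$ and $1 = q_1 \leq \cdots \leq q_d < n$ with $\gcd(q_i, n) = 1$ for every $i$, realizing $X \cong (\mathbb{A}_K^d/\bmu_n)^\wedge$ with $\bmu_n$ acting diagonally via the weights $(q_1,\ldots,q_d)$. Since this data is purely arithmetic, I would set $\mathcal{X}_\ZZ := \Spec \ZZ[u_1,\ldots,u_d]^{\bmu_{n,\ZZ}}$ with the corresponding diagonal action, obtaining the required $\ZZ$-model of $X$ as an lrq singularity after completion along the zero section. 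To produce good reduction modulo a prime $p$, I would take $R := W(\bar{\mathbb{F}}_p)$, $\mathcal{G} := \bmu_{n,R}$, and $\mathcal{Y} := \Spec R[[u_1,\ldots,u_d]]$ with the diagonal action via $(q_1,\ldots,q_d)$; then $\mathcal{G}$ is finite and flat with diagonalizable, hence linearly reductive, geometric fibers, and the generic fiber of $\mathcal{Y}/\mathcal{G}$ recovers $X$ over a common extension of the base fields since both are cut out by the same invariant ring.

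The hard part is checking very-smallness of the action on every geometric fiber, particularly in characteristics $p$ dividing $n$, where $\bmu_n$ becomes non-\'etale and one might worry about scheme-theoretic stabilizers. The key simplification is Proposition \ref{prop: lambda}: it suffices to verify $V^{\bmu_m} = 0$ for every subgroup scheme $\{\id\} \neq \bmu_m \subseteq \bmu_n$, where $V$ denotes the representation space. But $V^{\bmu_m}$ is spanned precisely by those $u_i$ with $m \mid q_i$, and the coprimality condition $\gcd(q_i, n) = 1$ rules this out for every $m > 1$ dividing $n$. This numerical check is uniform across all characteristics, and completes the verification.
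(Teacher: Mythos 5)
Your proof is correct and follows essentially the same route as the paper: the forward direction combines Proposition \ref{prop: geometricgenericfiberisassociated}, Corollary \ref{cor: everywheregoodreductiongroup}, and Proposition \ref{prop: toric} exactly as in the text, and the backward direction descends the diagonal $\bmu_n$-action to $\ZZ$. Your explicit verification that very-smallness is the uniform numerical condition $\gcd(q_i,n)=1$, valid in every characteristic including $p\mid n$, is a welcome elaboration of the paper's one-line remark that the very small diagonal action descends to $\ZZ$.
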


\begin{proof}
Let $G$ be the finite group by which $X$ is a quotient singularity.

First, assume that $X$ has good reduction modulo every prime $p$ and choose a model as 
in Definition \ref{def: goodreduction}. 
Then, the geometric special fibre $G_p$ of $\mathcal{G}$ is a linearly reductive group 
scheme over an algebraically closed field of characteristic $p$ that 
admits a very small action. 
Moreover, by Proposition \ref{prop: geometricgenericfiberisassociated}, 
we have $(G_p)_{\mathrm{abs}} \cong G$. 
By Corollary \ref{cor: everywheregoodreductiongroup}, this implies that $G$ is cyclic, 
hence $X$ is toric by Proposition \ref{prop: toric}.

Conversely, assume $X$ is toric. 
By Proposition \ref{prop: toric}, we can write $X$ as a quotient of 
$K[[u_1,\hdots,u_d]]$ by a diagonal action of $\bmu_n$ with weights $(q_1,\hdots,q_d)$. 
Now, both $\bmu_n$ and this very small diagonal action descend to $\ZZ$. 
This shows that $X$ can be defined over $\ZZ$ and has good reduction modulo every prime. 
\end{proof} 

\begin{Remark}
If, in the setting of Definition \ref{def: goodreduction}, 
we assume $\dim(X) \geq 3$, then $X$ has good reduction modulo $p$ if and only 
if we find a DVR $R$ of mixed characteristic $(p,0)$ and a 
flat model $\mathcal{X}$ of $X$ over $R$ whose geometric special fibre is an lrq singularity. 
This will follow from Corollary \ref{cor: canonicallift}.
\end{Remark}

\subsection{Cohen--Macaulayness of quotient singularities}
\label{subsec: CM}
We have seen in Proposition \ref{prop: lrq}, that lrq singularities, being direct 
summands of regular local rings, are Cohen--Macaulay 
(see also \cite[Proposition 12]{HochsterEagon}). 
In this section, we will show that this fails for many, but not all, 
quotient singularities that are not lrq singularities. 
The case of wild quotient singularities by abstract groups is due to 
Fogarty \cite{Fogarty}.

\begin{Lemma}
\label{lem: Cohen--Macaulay}
 Let $x\in X$ be a quotient singularity by a finite group scheme $G$ 
 in dimension $d\geq 2$. If $G$ admits a quotient isomorphic to $\balpha_p$ or $\C_p$ 
 (for example, if $G$ is unipotent), then $\depth(X) = 2$.
 In particular, if additionally $d\geq3$, then $X$ is not Cohen--Macaulay.
\end{Lemma}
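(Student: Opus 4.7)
Set $S := k[[u_1,\ldots,u_d]]$, $R := S^G$, $U := \Spec S \setminus \{0\}$, and $U' := \Spec R \setminus \{x\}$; the very-smallness of the $G$-action amounts to the assertion that $\pi\colon U \to U'$ is an fppf $G$-torsor. Since $R$ is the ring of invariants of a normal domain it is normal, so $\depth(R) \geq 2$ is automatic, and the case $d=2$ is then vacuous because $\depth(R) \leq \dim(R) = 2$. Assume $d\geq 3$ from now on. Via the standard local cohomology identification $H^{i+1}_{\mathfrak{m}_R}(R) \cong H^i(U',\OO_{U'})$ for $i\geq 1$, the statement to prove becomes $H^1(U',\OO_{U'}) \neq 0$.

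The plan is a two-step reduction into group-scheme cohomology. First, the Leray spectral sequence for the fppf $G$-torsor $\pi$,
\[
E_2^{p,q} \,=\, H^p(G, H^q(U,\OO_U)) \;\Longrightarrow\; H^{p+q}(U',\OO_{U'}),
\]
combined with $H^0(U,\OO_U)=S$ and $H^q(U,\OO_U)=0$ for $1\leq q\leq d-2$ (this is where I use $d\geq 3$, to kill the $q=1$ column) collapses to give $H^1(U',\OO_{U'}) = E_2^{1,0} = H^1(G,S)$. Second, pick the hypothesized surjection $G \twoheadrightarrow H$ with $H\in\{\balpha_p,\C_p\}$, set $N := \ker(G\to H)$, and feed the extension $1\to N\to G\to H\to 1$ into the $5$-term exact sequence of the Hochschild--Serre spectral sequence with coefficients in $S$. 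This produces an injection $H^1(H,S^N)\hookrightarrow H^1(G,S)$, reducing everything to exhibiting a nonzero class in $H^1(H,T)$, where $T:=S^N$.

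The final step is the explicit construction of such a class. Since $G$ fixes the closed point of $\Spec S$, the induced $H$-action on $T$ preserves $\mathfrak{m}_T$ and descends to the trivial action on $k = T/\mathfrak{m}_T$ (the field $k$ has no nontrivial $k$-automorphisms and no nontrivial $k$-linear derivations). For $H=\C_p$ with generator $\sigma$, the constant assignment $\sigma\mapsto 1 \in k\hookrightarrow T$ is a homomorphism $\C_p\to\GG_a$, hence a $1$-cocycle; any putative coboundary relation $\sigma t - t = 1$ would reduce modulo $\mathfrak{m}_T$ to $0=1$, so the cocycle is non-trivial. For $H=\balpha_p$, encoded by a $p$-nilpotent derivation $D$ on $T$, one has $H^1(\balpha_p,T)=\ker(D^{p-1})/\mathrm{im}(D)$; the element $1$ lies in $\ker D \subseteq \ker D^{p-1}$, and any equation $D(t)=1$ would descend to $\bar D(\bar t)=1$ on $k$, contradicting $\bar D=0$. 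The only delicate point of the overall argument is making sure the Leray spectral sequence is available in the fppf topology (needed because $G$ may be non-reduced, e.g.\ when $\balpha_p$ appears); granting this formalism, all three reductions and the final cocycle construction are short and elementary.
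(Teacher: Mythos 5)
Your proof is correct, but it takes a genuinely different route from the paper's. The paper argues by a case split: for a quotient $G\twoheadrightarrow \C_p$ it invokes Fogarty's theorem on the depth of invariants of cyclic groups applied to $(Y/N)/\C_p$, and for a quotient $G\twoheadrightarrow\balpha_p$ it observes that $Y/N\to X$ restricts to an $\balpha_p$-torsor over $X\setminus\{x\}$ that does not extend over $x$, whence a nonzero Frobenius-killed class in $H^1(X\setminus\{x\},\OO)=H^2_{\{x\}}(X,\OO_X)$. You instead give a uniform argument: the descent spectral sequence for the finite flat torsor $U\to U'$ (which is indeed available --- it is the \v{C}ech-to-derived-functor spectral sequence for the fppf cover combined with the torsor identification $U\times_{U'}U\cong G\times U$, so your one flagged ``delicate point'' is fine), the vanishing $H^q(U,\OO_U)=0$ for $1\le q\le d-2$, and the five-term Lyndon--Hochschild--Serre sequence for $1\to N\to G\to H\to 1$, reducing everything to the explicit nonvanishing of the class of $1\in k\subseteq S^N$ in $H^1(H,S^N)$ for $H\in\{\C_p,\balpha_p\}$; your cocycle/coboundary checks there (trace of $1$ vanishes in characteristic $p$; $D(t)=1$ and $\sigma t-t=1$ are both obstructed modulo the maximal ideal because the closed point is fixed) are correct. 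What each approach buys: the paper's is shorter given the cited inputs and makes the geometric source of the obstruction (a non-extending torsor) visible; yours is self-contained, treats both cases on the same footing, recovers the Fogarty-type statement rather than citing it, and pinpoints the obstruction class as the inflation of $1\in H^1(H,k)$. Note that your spectral-sequence collapse genuinely needs $d\ge 3$, but as you observe the case $d=2$ is vacuous since normality already forces $\depth=2$ there.
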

\begin{proof}
Since $X$ is normal, we have $\depth(X) \geq 2$ and thus, 
it remains to show $\depth(X) \leq 2$.
First, assume that $G$ admits a normal subgroup scheme $N$ such that $G/N \cong \C_p$. 
By \cite[Proposition 4(a)]{Fogarty} and using that $Y/N$ is normal, 
we have $\depth(X) = \depth((Y/N)/\C_p) \leq 2$.
Next, suppose that $G$ admits a normal subgroup scheme $N$ such that $G/N \cong \balpha_p$. 
Then $Y/N \to X$ is an $\balpha_p$-torsor over $X-x$ that does not extend to $X$.
As in the proof of Proposition \ref{prop: lrq=Fregular}, this shows that 
$H^2_{\{x\}}(X,\OO_{X})$ is non-trivial, which implies $\depth(X) \leq 2$.

To finish the proof, it suffices to recall that if $G$ is unipotent, then it admits a decomposition series with quotients isomorphic to $\C_p$ or $\balpha_p$, 
see \cite[Exp. XVII, Th\'eor\`eme 3.5]{SGA32}.
\end{proof}

\begin{Example} \label{ex: Cohen--Macaulay} 
There exists an example of a $3$-dimensional quotient singularity by a finite
and non linearly reductive group 
scheme that is Cohen--Macaulay. 

Suppose $p = 2$ and let $H = \langle h \rangle$ be 
a cyclic group of odd order $N \geq 5$. 
Consider the action on $A = k[[u_1, u_2, u_3]]$ of $\balpha_2$ 
corresponding to the derivation $D(u_i) = u_i^2$ and the
action of $H$ on $A$ given by $h(u_i) = \zeta u_i$, 
where $\zeta$ is a primitive $N$-th root of unity.
This induces an action of the semidirect product $G = \balpha_2 \rtimes H$
on $A$.
We note that $G$ is neither linearly reductive nor unipotent and that it does not admit
a quotient isomorphic to $\balpha_2$ or $\C_2$.
We claim that $A$ is Cohen--Macaulay.
\end{Example}
\begin{proof}
Since $A$ is 3-dimensional, it suffices to show $\depth(A^G)\geq3$.
First, we observe that $B := A^D$ is generated by $1,y_1,y_2,y_3,z$ as a module 
over $A^{(p)} = k[[x_1^2, x_2^2, x_3^2]]$, 
where 
$$
 y_i \,:=\, u_{i+1}^2 u_{i+2} + u_{i+1} u_{i+2}^2
 \mbox{\quad and \quad}
 z := u_1 u_2 u_3 (u_1 + u_2 + u_3)
$$
(consider the indices modulo $3$), 
subject to the single relation $\sum_i u_i^2 y_i = 0$, 
that is, 
$$
 B \,\cong\, \Coker\left(A^{(p)} \,\xrightarrow{(u_1^2, u_2^2, u_3^2, 0, 0)}\, (A^{(p)})^{\oplus 5}\right).
$$
Then we have $H^2_{\idealm}(B) \cong \Ker(H^3_{\idealm}(A^{(p)}) \to H^3_{\idealm}(A^{(p)})^{\oplus 5}) \cong H^3_{\idealm}(A^{(p)})[\idealm_{A^{(p)}}]$, which is $1$-dimensional over $k$. 
With respect to the regular sequence $u_1^2,u_2^2,u_3^2$ of $A^{(p)}$, this 
cohomology group is generated by the class $[u_1^{-2} u_2^{-2} u_3^{-2}]$.
Since $h$ acts on this space by $\zeta^{-6} \neq 1$, we have 
$H^2_{\idealm}(B^H) = H^2_{\idealm}(B)^H = 0$, 
hence the depth of $A^G = B^H$ is $\geq 3$.
\end{proof}

\subsection{More general quotients by linearly reductive group schemes}
\label{subsec: more general}
Let $G\subset\GL_d$ be a finite and linearly reductive group scheme over 
the algebraically closed field $k$ of characteristic $p\geq0$.
In Definition \ref{def: lrq}, we considered quotient singularities
$x \in X:= 0 \in (\Aff^d/G)^\wedge$ such that the $G$-action is very small. 
We have seen in Corollary \ref{cor: verysmallactionvsverysmallrep} that 
these correspond to $G$-representations with $\lambda$-invariant $0$.

Now, one could also consider $G$-actions with larger fixed locus, 
(or, equivalently, $G$-representations with non-trivial $\lambda$-invariant)
and refer to the singularities considered in Definition \ref{def: lrq}
and in this article as \emph{isolated lrq singularities}.
In this case, not all conclusions of Proposition \ref{prop: lrq}
may hold for $x\in X$. 
In fact, $x\in X$ might even be a smooth point, in particular, 
$s(X)=1$ and $\pietloc(X)=\{e\}$,
even if $G^{\et}$ is non-trivial.

To put this into perspective, we recall that a subgroup scheme of $G$ acts 
via \emph{pseudo-reflections} if its fixed locus has codimension $1$
in $\Aff^d$ - here, we follow Satriano \cite[Definition 1.2]{Satriano} 
since this definition also works in the case where $G$ 
is not necessarily \'etale.
In particular, a pseudo-reflection fixes a hyperplane in $\Aff^d$.
Using the representation $\rho$ associated to the linear $G$-action on $\Aff^d$, 
Proposition \ref{prop: lambda} shows that $G$ contains a subgroup scheme 
that acts via pseudo-reflections if and only if $\lambda(\rho) = d-1$, 
see also Remark \ref{rem: lambda}.

Pseudo-reflections generate a normal subgroup scheme
$N\unlhd G$.
By the theorem of Chevalley--Satriano--Shephard--Todd \cite{Satriano}, 
the quotient $\Aff^d/G$ is smooth if and only
if $G$ is generated by pseudo-reflections, that is, if and only if $N=G$.
In particular, after replacing $G$ by $G/N$ and $\Aff^d$ by $\Aff^d/N\cong\Aff^d$,
we may assume that $G$ acts without pseudo-reflections and that
the $G$-action is free outside a closed subset of codimension $\geq2$.

In particular, if $d=2$, then every singularity $x \in X = 0 \in (\Aff^2/G)^\wedge$
arising in this more liberal sense is an lrq singularity in the sense of 
Definition \ref{def: lrq}. For higher $d$, the singularities that arise in this more general sense form a strictly bigger class than the lrq singularities we study in this article. However, these singularities are all still strongly F-regular, so the interested reader will find many interesting properties of these singularities in \cite{Carvajal-Rojas}.

\section{Uniqueness of the quotient presentation} \label{sec: uniqueness}

If $x\in X$ is an lrq singularity with respect to the 
finite and linearly reductive group scheme $G$, 
then Proposition \ref{prop: lrq} shows that the length of $G$,
the maximal \'etale quotient $G^\et$ of $G$, and the character group
$\Hom(G,\GG_m)$ are invariants of $x\in X$.
This suggests that an lrq singularity determines the 
linearly reductive group scheme $G$ together
with a linear action on a smooth $k$-algebra.
This is indeed the case by the following uniqueness result, which is the main
result of this section.

\begin{Theorem} \label{thm: lrqmain}
 Let $x \in X = 0 \in ({\mathbb{A}^d/G_1})^\wedge$ be a $d$-dimensional lrq singularity 
 with notations 
 and assumptions as in Proposition \ref{prop: lrq}. 
 Let $G_2$ be a finite $k$-group scheme such that $x \in X$ is a quotient singularity by $G_2$
 in the sense of Definition \ref{def: quotient singularity}.
 Then,
 \begin{enumerate}
  \item $G_2$ is isomorphic to $G_1$ as finite $k$-group scheme,
  \item the $G_2$-action on $\widehat{\mathbb{A}}^d$ is linearisable, and
  \item with respect to linearisations, $G_1$ and $G_2$ are conjugate as subgroup schemes of $\GL_d$, 
 \end{enumerate}
\end{Theorem}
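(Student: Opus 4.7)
The plan is to prove the three claims in the order (2), (1), (3), preceded by the crucial preliminary observation that $G_2$ is automatically linearly reductive. For this preliminary step, since $X$ is an lrq singularity via the presentation by $G_1$, Proposition \ref{prop: lrq}(1) shows $X$ is F-regular; then Proposition \ref{prop: lrq=Fregular} applies to the quotient presentation of $X$ by $G_2$ and forces $G_2$ to be linearly reductive. With $G_2$ linearly reductive, Proposition \ref{prop: linearizable} immediately yields Claim (2): the $G_2$-action on $S := k[[u_1,\dots,u_d]]$ is linearizable, so we may assume $G_2 \subseteq \GL_d$ acts via a very small representation $\rho_2$.

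For Claim (1), set $U := X \setminus \{x\}$ and $\widetilde U := \Spec S \setminus \{0\}$, and let $\pi : \widetilde U \to U$ denote the restriction of the quotient map. By assumption, $\pi$ is simultaneously a $G_1$-torsor and a $G_2$-torsor, so the torsor property gives $\widetilde U \times_U \widetilde U \cong \widetilde U \times_k G_i$ as $\widetilde U$-schemes for both $i = 1,2$, hence $\widetilde U \times_k G_1 \cong \widetilde U \times_k G_2$ over $\widetilde U$. Pulling back to any closed point of $\widetilde U$ and using that this isomorphism is compatible with the torsor-induced group structures on both sides yields $G_1 \cong G_2$ as finite $k$-group schemes.

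For Claim (3), fix an identification $G := G_1 \cong G_2$ and regard $\rho_1, \rho_2 : G \to \GL_d$ as two very small $d$-dimensional representations sharing the invariant ring $R \subset S$. The isomorphism produced in the previous step is canonical only up to an inner automorphism of $G$; after adjusting by such an automorphism, the two torsor structures on $\pi$ differ by an $R$-algebra automorphism $\phi \in \Aut_R(S)$ intertwining $\rho_1$ and $\rho_2$, i.e., $\rho_2(g) = \phi \circ \rho_1(g) \circ \phi^{-1}$ for all $g \in G$. The main obstacle is that $\phi$ is a priori only an automorphism of the formal power series ring $S$, not an element of $\GL_d$. To extract a genuine $\GL_d$-conjugator, I would invoke the uniqueness-of-linearization principle underlying Proposition \ref{prop: linearizable}: since $\phi$ intertwines two linear actions of the linearly reductive $G$ on $S$, a standard averaging argument applied to the coboundary measuring the failure of $\phi$ to be linear modifies $\phi$ to a linear automorphism $\phi' \in \GL_d$ with the same intertwining property, yielding the desired conjugacy of $\rho_1$ and $\rho_2$ inside $\GL_d$.
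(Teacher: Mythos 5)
Your preliminary step (deducing linear reductivity of $G_2$ from F-regularity of $X$ via the argument of Proposition \ref{prop: lrq=Fregular}) and your proof of Claim (2) via Proposition \ref{prop: linearizable} are exactly the paper's opening moves. The rest of the argument has a genuine gap. For Claim (1) you assert that the quotient map $\pi:\widetilde{U}\to U$ ``is simultaneously a $G_1$-torsor and a $G_2$-torsor.'' But the hypothesis only provides two quotient presentations, i.e.\ two a priori different morphisms $f_1,f_2:\Spec S\setminus\{0\}\to U$; nothing identifies them as the same map, nor even as isomorphic $U$-schemes. When $G^\circ$ is non-trivial this identification genuinely fails: infinitesimal torsors over $U$ are not unique over $U$, and the paper explicitly warns (in the Remark after Lemma \ref{lem: toricautomorphisms}) that two strictly local $G$-torsors over $X$ need not be isomorphic over $X$, only $G$-equivariantly isomorphic as $k$-schemes, i.e.\ over a possibly non-trivial automorphism of $X$. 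So the isomorphism $\widetilde{U}\times_k G_1\cong\widetilde{U}\times_k G_2$ over $\widetilde{U}$ that you want to pull back is not available. The paper instead recovers $G$ from intrinsic invariants of $X$: $G_i^{\et}\cong\pietloc(X)$ by Proposition \ref{prop: lrq}, then $G_i^\circ$ from $\Hom(G_i^\circ,\GG_m)\cong{\rm Cl}(\widehat{\mathbb{A}}^d/G_i^\circ)$ computed on the (genuinely unique) universal \'etale cover of $X-x$, and finally the semidirect-product structure from the $\pietloc(X)$-action on that class group.

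The same issue undermines your Claim (3): the intertwining automorphism $\phi$ whose existence you take as given is precisely what must be constructed, and doing so is the technical heart of the paper's proof, namely Lemma \ref{lem: toricautomorphisms}. There one uses $G^\circ\cong\bmu_{p^n}$ to decompose $(f_h)_*\mathcal{O}_{\widehat{\mathbb{A}}^d}$ into class-group-indexed eigenspaces $I_i$, compares the two multiplication maps via units $c_i\in\mathcal{O}_X^\times$ satisfying $c_{i+j}\equiv c_ic_j$ modulo $p^n$-th powers, and assembles a $G$-equivariant isomorphism monomial by monomial using $p^n$-th roots of these units. Once such a $\phi$ exists, your final linearization step is actually unnecessary in the form stated: since both actions are linear and $\phi$ preserves the maximal ideal, its induced map on $\mathfrak{m}/\mathfrak{m}^2$ already lies in $\GL_d$ and conjugates $\rho_1$ to $\rho_2$, with no averaging required. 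But no averaging argument will produce $\phi$ in the first place; that existence statement is the content you have assumed rather than proved.
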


\begin{proof}
For the proof, we set $S := k[[u_1,\hdots,u_d]]$ with 
$\widehat{\mathbb{A}}^d := \Spec S$ and assume that $G_2$ acts 
on $\widehat{\mathbb{A}}^d$, such that the action of $G_2$ is very small and
with quotient $\widehat{\mathbb{A}}^d/G_2$ isomorphic to $X$.

Since $X$ is an lrq singularity, we know from Proposition \ref{prop: lrq} (\ref{item: F-regular Q-Gorenstein}) that $X$ is F-regular. 
Then, the same argument as in the proof of Proposition \ref{prop: lrq=Fregular} 
shows that $G_2$ is linearly reductive.
In particular, $G_1^\circ$ and $G_2^\circ$
both are diagonalisable group schemes.

To prove the theorem, it suffices to prove Claim (3) and to do this, 
we may assume that $G_2$ acts linearly on $\widehat{\mathbb{A}}^d$.
By Proposition \ref{prop: lrq} (\ref{item: pietloc}) we have 
$
 G_1^{\et} \,\cong\, \pietloc(X) \,\cong\, G_2^{\et}.
$
Thus, $\widehat{\mathbb{A}}^d/G_1^{\circ}$ and $\widehat{\mathbb{A}}^d/G_2^{\circ}$ 
both induce the universal \'etale cover of $X-x$, 
from which we obtain a $\pietloc(X)$-equivariant isomorphism 
$\phi \colon \widehat{\mathbb{A}}^d/G_1^{\circ} \cong \widehat{\mathbb{A}}^d/G_2^{\circ}$.
By Proposition \ref{prop: lrq} (\ref{item: class group}), we have 
$$
 \Hom(G_1^{\circ}, \GG_m) \,=\, {\rm Cl}(\widehat{\mathbb{A}}^d/G_1^{\circ}) \,\cong\, 
 {\rm Cl}(\widehat{\mathbb{A}}^d/G_2^{\circ}) \,=\, \Hom(G_2^\circ, \GG_m),
$$
which implies $ G_1^{\circ} \cong G_2^{\circ}$. 
Considering the action of $\pietloc(X)$ on the class groups, 
it follows that the two actions $G_i^{\et} \to \Aut(G_i^{\circ})$ coincide.
But these actions determine the semidirect product structure
$G_i\cong G_i^\circ\rtimes G_i^\et$, which implies that 
$G_1 \cong G_2$ as $k$-group schemes.
Now, we apply Lemma \ref{lem: toricautomorphisms} and Claim (3) follows.
\end{proof}

It remains to establish the following technical result, which is slightly stronger than 
what we need, since we do not assume that $G^\et$ acts with isolated fixed locus.

\begin{Lemma} \label{lem: toricautomorphisms}
Let $G$ be a finite and linearly reductive group scheme over $k$, 
let $\widehat{\mathbb{A}}^d := \Spec k[[u_1,\hdots,u_d]]$, and assume that we have two actions 
$\rho_h$, $h = 1,2$, of $G$ on $\widehat{\mathbb{A}}^d$ with quotient morphisms $f_h: \widehat{\mathbb{A}}^d \to \widehat{\mathbb{A}}^d/\rho_h(G^\circ)$ 
such that the $\widehat{\mathbb{A}}^d/\rho_h(G^\circ)$ are lrq singularities by $G^\circ$. 

If there is a $G^\et$-equivariant
isomorphism $\widehat{\mathbb{A}}^d/\rho_1(G^\circ) \cong \widehat{\mathbb{A}}^d/\rho_2(G^\circ)$, 
then there is a commutative diagram of $G$-equivariant 
morphisms
$$
\xymatrix{
\widehat{\mathbb{A}}^d \ar[r]^{\psi} \ar[d]^{f_1} & \widehat{\mathbb{A}}^d \ar[d]^{f_2} \\
\widehat{\mathbb{A}}^d/\rho_1(G^\circ) \ar[r]^{\varphi} & \widehat{\mathbb{A}}^d/\rho_2(G^\circ),
}
$$
where $\psi$ and $\varphi$ are isomorphisms. 
In particular, the actions $\rho_1$ and $\rho_2$ are conjugate via $\psi$.
\end{Lemma}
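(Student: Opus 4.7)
The strategy is to first reduce to the case $Y_1 = Y_2 =: Y$ and $\varphi = \id_Y$, so that the goal becomes producing a $G$-equivariant self-isomorphism $\psi$ of $\widehat{\mathbb{A}}^d$ lifting $\id_Y$. I would then split the argument into aligning the two $G^\circ$-actions and subsequently propagating $G^\et$-equivariance.

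For the $G^\circ$-part, observe that since each $Y_h$ is an lrq singularity by the connected, hence diagonalizable, group scheme $G^\circ$, Proposition \ref{prop: toric} forces $Y$ to be toric with cyclic class group; thus $G^\circ \cong \bmu_{p^n}$. By Proposition \ref{prop: linearizable}, each $\rho_h|_{G^\circ}$ is linearizable, and being a diagonalizable representation it can be simultaneously diagonalized. The two diagonal actions have weight tuples which, by the uniqueness clause of Proposition \ref{prop: toric}, coincide as multisets up to a coordinate permutation and an automorphism of $\bmu_{p^n}$. Interpreting each $f_h$ as a $G^\circ$-torsor over the punctured $Y$, Proposition \ref{prop: torsfinabcl} classifies the torsor class in $\Hom((G^\circ)^D,\Cl(Y))$; since the total space is smooth, each class is an isomorphism. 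Hence the two torsors differ by precomposition with an element of $\Aut(G^\circ)$, and I can choose a candidate $\psi_0 \colon \widehat{\mathbb{A}}^d \to \widehat{\mathbb{A}}^d$ that identifies the two diagonalizations and intertwines $\rho_1|_{G^\circ}$ with $\rho_2|_{G^\circ}$ exactly.

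For the $G^\et$-part, I would exploit the $G^\et$-equivariance of $\varphi$ as follows. The $G^\et$-action on $\Cl(Y_h) = (G^\circ)^D$ induced by $\rho_h$ is, via Proposition \ref{prop: lrq}(\ref{item: class group}) and the proof of Theorem \ref{thm: lrqmain}, the dual of the conjugation action of $G^\et$ on $G^\circ$ inside $G = G^\circ \rtimes G^\et$. Hence $G^\et$-equivariance of $\varphi$ forces the chosen automorphism of $G^\circ$ in Step 1 to commute with the $G^\et$-action, so the identification $\rho_1|_{G^\circ} \cong \rho_2|_{G^\circ}$ is $G^\et$-equivariant. Any two $G^\circ$-equivariant lifts of $\id_Y$ differ by an element of the $G^\circ$-equivariant automorphism group of the torsor $\widehat{\mathbb{A}}^d \to Y$, which is a torus $T$ (the Cartier dual of the weight lattice). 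Consequently, the obstruction to adjusting $\psi_0$ to a $G^\et$-equivariant (and hence $G$-equivariant) $\psi$ lies in the nonabelian cocycle set $H^1(G^\et, T)$ with the induced $G^\et$-action, which vanishes because $|G^\et|$ is prime to $p$ and $T$ is a smooth algebraic torus (so $\Hom$ and $\Ext$ groups into $T$ from a linearly reductive étale group of order prime to the residue characteristic are trivial by a standard averaging argument).

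The main obstacle I expect is bookkeeping the interplay between the automorphism of $G^\circ$ produced in the toric-uniqueness step and the $G^\et$-action required in the second step: naively twisting by an element of $\Aut(G^\circ)$ could spoil $G^\et$-equivariance, and the content of the argument is that the $G^\et$-equivariance assumption on $\varphi$ is exactly strong enough to force this twist to lie in the $G^\et$-centralizer of $G^\circ$. Once that compatibility is in place, the cocycle vanishing argument to promote a $G^\circ$-equivariant lift to a $G$-equivariant lift is standard.
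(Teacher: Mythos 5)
Your reduction steps (forcing $G^\circ\cong\bmu_{p^n}$ via Proposition \ref{prop: toric}, linearizing and diagonalizing, matching the weight tuples by toric uniqueness) agree with the paper's setup, but there is a genuine gap at the pivot of your argument: the inference from ``the two torsors have the same class in $\Hom((G^\circ)^D,{\rm Cl}(Y))$'' to ``the two torsors differ by precomposition with an element of $\Aut(G^\circ)$.'' Proposition \ref{prop: torsfinabcl} classifies $G^\circ$-torsors over the punctured spectrum only \emph{modulo} torsors over $X$ itself, and $\Hfl{1}(X,\bmu_{p^n})=R^\times/(R^\times)^{p^n}$ is huge in characteristic $p$ (this is exactly the point of the Example following Proposition \ref{prop: torsfinabcl}). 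So two covers with the same class in $\Hom((G^\circ)^D,{\rm Cl}(X))$ need not be isomorphic over $X$: their algebra structures on $\bigoplus_i I_i$ differ by a family of units. The Remark after the lemma warns about precisely this --- strictly local $G$-torsors over $X$ are in general \emph{not} isomorphic over $X$, only after composing with a nontrivial automorphism $\varphi$ of the base --- so your stated goal of lifting $\id_Y$ is in general unachievable, and producing the correct $\varphi$ is the entire technical content of the lemma. The paper does this by showing the discrepancy is a family $(c_i)_{i\in{\rm Cl}(X)}$ of units with $c_{i+j}\equiv c_ic_j \pmod{(\OO_X^\times)^{p^n}}$, integrating it to an honest monoid homomorphism $\delta\colon\NN^d\to\OO_X^\times$ on the free generators $e_j$, and rescaling each monomial $\nu\in I_i$ by $(\delta(\nu)c_i^{-1})^{1/p^n}$; the resulting degree-zero piece $\psi_0$ is the required $\varphi$. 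Your proposal has no substitute for this step.

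A secondary inaccuracy: the group of $G^\circ$-equivariant automorphisms of $\widehat{\mathbb{A}}^d$ over $Y$ is not a torus. Such an automorphism preserves each rank-one reflexive eigenmodule $S_\chi$, hence acts on it by a unit $t_\chi\in R^\times$ with $\chi\mapsto t_\chi$ a homomorphism, so the group is $\Hom(\ZZ/p^n\ZZ,R^\times)=\bmu_{p^n}(R)$, which is trivial because $R$ is reduced of characteristic $p$. This actually helps you --- a $G^\circ$-equivariant lift of a fixed $G^\et$-equivariant base automorphism is unique, hence automatically $G^\et$-equivariant, with no cohomological input needed --- whereas the vanishing you invoke is false as stated (e.g.\ $H^1(\C_n,k^\times)=\mu_n(k)\neq 0$ for the trivial action of a cyclic group of order prime to $p$). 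But this observation does not repair the first gap, which is where the difficulty of the lemma actually lives; the paper instead verifies $G^\et$-equivariance of its explicit $\psi$ directly, using $c_{g(i)}=g(c_i)$.
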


\begin{proof}
First, since $\widehat{\mathbb{A}}^d/\rho_h(G^\circ)$ is an lrq singularity by $G^\circ$, 
we have $G^0 = \bmu_{p^n}$ for some $n \geq 0$ by Proposition \ref{prop: toric}. 
Next, we note that we can conjugate the $\rho_h$ by automorphisms of $\widehat{\mathbb{A}}^d$ without 
changing the assertion. 
Thus, we may assume that $\rho_1$ is a linear action, that $\rho_1(G^\circ)$ acts diagonally, 
that $X := \widehat{\mathbb{A}}^d/\rho_1(G^\circ) = \widehat{\mathbb{A}}^d/\rho_2(G^\circ) = \Spec k[[M]]$ for an affine semigroup $M$, 
that the two $G$-actions on $X$ coincide, and that $f_1$ is induced by an inclusion 
of monoids $M \subseteq N = \mathbb{N}^d$ with cokernel $N/M \cong {\rm Cl}(X) = (G^\circ)^D$. 
In particular, we can write $(f_1)_* \mathcal{O}_{\widehat{\mathbb{A}}^d} = k[[N]] = \bigoplus_{i \in N/M} I_i$, 
where the $I_i$ are the eigenspaces for the action of $G^\circ$ on $k[[u_1,\hdots,u_d]]$.
Note that we have $g(I_i) = I_{g(i)}$ for all $g \in G^{\et}$, since $G^\et$ 
normalises $G^\circ$.

Since $f_2: \widehat{\mathbb{A}}^d \to X$ is an integral $G^\circ$-torsor over the smooth locus of $X$ and 
$\{I_i\}_{i \in N/M}$ is a full set of representatives for ${\rm Cl}(X)$, 
by Proposition \ref{prop: toric} we have an isomorphism $(f_2)_* \mathcal{O}_{\widehat{\mathbb{A}}^d} \cong \bigoplus_{i \in N/M} I_i$ of $\mathcal{O}_X$-modules with $G$-action 
(but not necessarily of $\mathcal{O}_X$-algebras). 
Denote by $\chi_{h,i_1,i_2}: I_{i_1} \otimes I_{i_2} \to I_{i_1+i_2}$ the morphisms induced by multiplication on $(f_h)_* \mathcal{O}_{\widehat{\mathbb{A}}^d}$. To prove the lemma, we have to construct $k$-linear automorphisms $\psi_i: I_{i} \to I_i$ such that
\begin{enumerate}
    \item $\chi_{2,i_1,i_2} \circ (\psi_{i_1} \otimes \psi_{i_2}) = \psi_{i_1+i_2} \circ \chi_{1,i_1,i_2}$ for all $i_1,i_2 \in {\rm Cl}(X)$, and
    \item $g \circ \psi_i = \psi_{g(i)} \circ g$ for all $g \in G^\et$.
\end{enumerate}
\noindent The first condition ensures that the morphism 
$\psi := \bigoplus_{i \in {\rm Cl}(X)} \psi_i: (f_1)_* \mathcal{O}_{\widehat{\mathbb{A}}^d} \to (f_2)_* \mathcal{O}_{\widehat{\mathbb{A}}^d}$ 
is an isomorphism of $k$-algebras and the second condition ensures that $\psi$ is $G$-equivariant 
(as $\psi$ is compatible with the $G^\circ$-action by construction). 
The sought automorphism of $X$ is then induced by $\varphi := \psi_0$.

Consider the maps $\phi_{h,i} \colon I_i^{\otimes p^n} \to \mathcal{O}_X$ induced by
$\chi_{h,i,i}$. 
Then, since the cokernels of the $\phi_{h,i}$'s are supported on $\{ \idealm \}$, there exist units 
$c_i \in \mathcal{O}_X^{\times}$ with $\phi_{2,i} = c_i \phi_{1,i}$.
This implies that we have 
$\chi_{2,i_1,i_2} = (c_{i_1} c_{i_2} c_{i_1 + i_2}^{-1})^{1/p^n} \chi_{1,i_1,i_2}$.
In particular, we have $c_{i+j} \equiv c_i c_j \pmod{(\mathcal{O}_X^{\times})^{p^n}}$.
Since the $f_h$ are $G^\et$-equivariant, the family $(c_i)$ is $G^\et$-equivariant,
that is, $c_{g(i)} = g(c_i)$ for $g \in G^\et$.

Let $e_1, \dots, e_d$ be the canonical basis of $N = \NN^d$.
Define a homomorphism of monoids $\delta \colon N \to \OO_X^{\times}$ by $\delta(e_j) = c_{[e_j]}$,
where $[e_j] \in N/M$ denotes the class of $e_j \in N$.
By construction, we have $\delta(\nu) \equiv c_{[\nu]} \pmod{(\OO_X^{\times})^{p^n}}$ for all $\nu \in N$.

Now, observe that every element of $I_i$, when considered as an element of $k[[N]]$ via $f_1$, 
is a (possibly infinite) sum of monomials in $I_i$, because $G^\circ$ acts diagonally. 
Hence, we can define $k$-linear morphisms $\psi_i: I_i \to I_i$ by mapping a monomial 
$\nu_i \in I_i \subseteq k[[N]]$ to $(\delta(\nu_i) c_{i}^{-1})^{1/p^n} \nu_i$ and 
extending linearly. 
We claim that $\psi := \bigoplus_{i \in {\rm Cl}(X)} \psi_i$ satisfies properties (1) and (2) above. 
Both properties can be checked on monomials $\nu_{i_j} \in I_{i_j}$. 

For (1), we compute
\begin{eqnarray*}
&\chi_{2,i_1,i_2} \circ (\psi_{i_1} \otimes \psi_{i_2})(\nu_{i_1} \otimes \nu_{i_2})  \\
=& (c_{i_1} c_{i_2} c_{i_1 + i_2}^{-1})^{1/p^n} \chi_{1,i_1,i_2} ((\delta(\nu_{i_1}) c_{i_1}^{-1})^{1/p^n} \nu_{i_1} \otimes (\delta(\nu_{i_2}) c_{i_2}^{-1})^{1/p^n}) \nu_{i_2})  \\
=& (\delta(\nu_{i_1} + \nu_{i_2})c_{i_1 + i_2}^{-1})^{1/p^n} \chi_{1,i_1,i_2}(\nu_{i_1} \otimes \nu_{i_2}) \\
=& \psi_{i_1 + i_2} \circ \chi_{1,i_1,i_2}(\nu_{i_1} \otimes \nu_{i_2}).
\end{eqnarray*}
As for (2), we use that for any monomial $\lambda$ appearing in $g(\nu)$ we have $\delta(\lambda) = g(\delta(\nu))$ (it suffices to check the case where $\nu \in \{e_1, \dots, e_d\}$, which is clear). Setting $\delta(g(\nu)) := \delta(\lambda)$ and using $g(c_i) = c_{g(i)}$, we obtain
\begin{eqnarray*}g \circ \psi_i (\nu_i) = g ((\delta(\nu_i)c_i^{-1})^{1/p^n} \nu_i) 
= (\delta(g(\nu_i)) c_{g(i)}^{-1})^{1/p^n} g(\nu_i) 
= \psi_i (g(\nu_i)).
\end{eqnarray*}
Therefore, (1) and (2) hold for $\psi$ and thus, $\psi$ and $\varphi:=\psi_0$
induce the stated commutative diagram.
\end{proof}

\begin{Remark}
If $G$ is connected, that is, $G=G^\circ$, and using the terminology of local torsors 
as in \cite{RDP}, Lemma \ref{lem: toricautomorphisms} 
can be rephrased as follows: 
Let $X$ be an lrq singularity by a finite, connected, and linearly reductive group scheme $G$. 
Then, the pullback action of $\Aut(X)$ on the set of strictly local $G$-torsors 
over $X$ is transitive.

However, we warn the reader that this does \emph{not} imply that any two strictly 
local $G$-torsors over $X$ are isomorphic over $X$, 
but merely that they are $G$-equivariantly isomorphic as $k$-schemes.
\end{Remark}

\begin{Corollary} \label{cor: lrq<->verysmall}
For every algebraically closed field $k$, there is a bijection between
\begin{enumerate}
    \item the set of isomorphism classes of (Gorenstein) lrq singularities of dimension $d$, and
    \item the set of conjugacy classes of finite and very small
    % and linearly reductive 
    subgroup schemes of $\GL_{d,k}$ (resp. $\SL_{d,k}$).
\end{enumerate}  
\end{Corollary}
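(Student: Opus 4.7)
The plan is to construct the bijection explicitly in both directions and then check well-definedness using the uniqueness result Theorem \ref{thm: lrqmain}.

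First I would define the forward map from (2) to (1). Given a finite, very small, linearly reductive subgroup scheme $G\subseteq\GL_{d,k}$, the natural linear action on $\widehat{\mathbb{A}}^d = \Spec k[[u_1,\dots,u_d]]$ produces the lrq singularity $x\in X := 0\in(\mathbb{A}^d/G)^\wedge$, and this construction is clearly invariant under replacing $G$ by a $\GL_{d,k}$-conjugate subgroup scheme. To see surjectivity, I would invoke Definition \ref{def: lrq} together with Proposition \ref{prop: linearizable} and Corollary \ref{cor: verysmallactionvsverysmallrep}: any lrq singularity of dimension $d$ is isomorphic to the quotient by a linear very small representation of a finite linearly reductive group scheme, and after fixing coordinates this representation realizes the group as a very small subgroup scheme of $\GL_{d,k}$.

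The key step is injectivity, which is precisely the content of Theorem \ref{thm: lrqmain}: if two very small subgroup schemes $G_1,G_2\subseteq\GL_{d,k}$ produce formally isomorphic lrq singularities, then they are conjugate in $\GL_d$. This yields a well-defined inverse map on isomorphism classes, and hence the bijection for the $\GL_d$ case. I do not expect a serious obstacle here beyond citing Theorem \ref{thm: lrqmain}.

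For the Gorenstein/$\SL_d$ variant, I would add a final step verifying that $(\mathbb{A}^d/G)^\wedge$ is Gorenstein if and only if $G$ is contained in $\SL_{d,k}$. Indeed, for a very small linear action of a finite linearly reductive group scheme $G$ on $S = k[[u_1,\dots,u_d]]$, the canonical module $\omega_S$ is the free $S$-module generated by $du_1\wedge\cdots\wedge du_d$ on which $G$ acts via the character $\chi_{\det}:G\to\GG_m$; since taking $G$-invariants commutes with the formation of the canonical module (as $G$ is linearly reductive and the action is free in codimension one, so that $\omega_R = (\omega_S)^G$ by \cite[Section 6.5]{BrunsHerzog} style arguments), the ring $R=S^G$ is Gorenstein exactly when $\chi_{\det}$ is trivial, i.e.\ when $G\subseteq\SL_{d,k}$. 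Combined with the bijection already established, restricting to subgroup schemes of $\SL_{d,k}$ (which is stable under $\GL_{d,k}$-conjugation on the level of subgroup schemes of $\SL_{d,k}$ taken up to $\SL_d$-conjugation by the uniqueness in Theorem \ref{thm: lrqmain}) produces the bijection with Gorenstein lrq singularities. The main subtlety I would watch for is whether two $\SL_d$-representations that are $\GL_d$-conjugate are also $\SL_d$-conjugate; this follows from Theorem \ref{thm: lrqmain} applied inside $\SL_d$ (or directly, since any $\GL_d$-conjugation can be rescaled by a diagonal $\GG_m$-factor to lie in $\SL_d$ as it preserves the determinant character).
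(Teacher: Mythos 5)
Your proposal is correct and follows essentially the same route as the paper: the $\GL_{d,k}$ case is exactly Theorem \ref{thm: lrqmain} (together with Proposition \ref{prop: linearizable} and Corollary \ref{cor: verysmallactionvsverysmallrep} for surjectivity), and the Gorenstein refinement comes from observing that $G$ acts on $du_1\wedge\cdots\wedge du_d$ via $\chi_{\det}$, so $X$ is Gorenstein iff $G\subseteq\SL_{d,k}$. Your extra remarks (the identification $\omega_R=(\omega_S)^G$ and the rescaling argument showing $\GL_d$-conjugacy of subgroup schemes of $\SL_d$ implies $\SL_d$-conjugacy over an algebraically closed field) are correct elaborations of points the paper leaves implicit.
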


\begin{proof}
The statement for $\GL_{d,k}$ follows from Theorem \ref{thm: lrqmain}. 
Thus, it suffices to note that $x \in X = 0 \in ({\mathbb{A}^d/G})^\wedge$ is Gorenstein 
if and only if $G$ preserves the $d$-form $\omega = du_1 \wedge \hdots \wedge du_d$. 
Since $G$ acts on $\langle \omega \rangle$ through the determinant 
$G \to \GL_{d,k} \to \GG_m$, 
we see that $X$ is Gorenstein if and only if $G \subseteq \SL_{d,k}$.
\end{proof}

We will use Corollary \ref{cor: lrq<->verysmall} to give an explicit description of 
all lrq singularities in dimension $d = 2$ in Section \ref{sec: Fregular} 
and show that, in this case, the lrq singularities are precisely 
the F-regular singularities. 

\begin{Remark}
Let $x \in X = 0 \in (\mathbb{A}^d/G)^\wedge$ be a quotient singularity over an algebraically closed field $k$. If $ k =\mathbb{C}$, then the group $G$ can be recovered as the local \'etale fundamental group $G = \pietloc(X)$ and the action of $G$ on $(\mathbb{A}^d)^\wedge$ can be linearised and two actions of $G$ yield the same quotient $X$ if and only if the actions are conjugate in $\GL_{d,\mathbb{C}}$ (see \cite{Prill}).

This cannot be true in positive characteristic, since $G$ is not necessarily \'etale, and one has to take into account infinitesimal torsors over $X \setminus \{x\}$. However, we remark that in \cite{EsnaultViehweg}, Esnault and Viehweg used Nori's fundamental group scheme to define an analogue $\piNloc(X)$ of $\pietloc(X)$ that also takes into account torsors under finite group schemes over $X \setminus \{x\}$. However, at least if $G$ is not linearly reductive, the obvious guess that $G = \piNloc(X)$ holds is not true in positive characteristic, even if $X$ is a rational double point and $G$ is \'etale. For a more in-depth discussion of $\piNloc(X)$ for surface singularities, including counterexamples to $G = \piNloc(X)$ and geometric reasons for the failure of this equality, we refer the reader to \cite{RDP}. 
\end{Remark}

Thus, Theorem \ref{thm: lrqmain} begs for the following question:
\begin{Question}
Let $x \in X = 0 \in (\mathbb{A}^d/G)^\wedge$ be an lrq singularity. Does $\piNloc(X) = G$ hold?
\end{Question}

\section{Rigidity of lrq singularities in dimension $d \geq 4$} 
\label{sec: rigidity}

In \cite{Schlessinger}, Schlessinger 
proved infinitesimal rigidity of isolated quotient singularities by finite groups 
of order prime to the characteristic and in dimension $\geq3$.
We will show in this section that his proof can be adjusted to 
lrq singularities in dimension $d\geq4$
and we will deal with $d=3$ in Section \ref{sec: dimension 3}.

\begin{Convention} \label{def: rigid2}
We say that an lrq singularity $X$ is \emph{rigid} (resp. \emph{infinitesimally rigid}) 
if all deformations of $X$ over equicharacteristic complete DVRs $(R,m)$
(resp. equicharacteristic Artinian local rings) are trivial modulo $m^n$ for every $n > 0$ (resp. are trivial).

Here, a deformation of an lrq singularity $X$ over a DVR $R$ is a flat 
family $\mathcal{X} \to \Spec R$ together with an identification of the special 
fibre of $\mathcal{X}$ with $X$ and such that the non-smooth locus of 
$\mathcal{X} \to \Spec R$ is proper over $\Spec R$.
\end{Convention}

Indeed, we already know from Proposition \ref{prop: liftinggroupscheme} 
that lrq singularities are \emph{not} arithmetically rigid 
(that is, rigid in mixed characteristic).

\subsection{Infinitesimal rigidity in dimension $\mathbf{\geq4}$}

\begin{Proposition}\label{prop: rigidity}
Let $x \in X = 0 \in ({\mathbb{A}^d/G})^\wedge$ be a $d$-dimensional lrq singularity. 
If $d \geq 4$, then $X$ is infinitesimally rigid.
\end{Proposition}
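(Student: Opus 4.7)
The plan is to adapt Schlessinger's classical rigidity argument \cite{Schlessinger} to the linearly reductive setting. By Proposition \ref{prop: lrq}, $X$ is normal and Cohen-Macaulay of dimension $d \geq 4$, so first-order deformations of $X$ are computed by the standard formula from deformation theory
$$
T^1_X \,\cong\, H^1(U, T_U),
$$
where $U := X \setminus \{x\}$ is the smooth locus and $T_U$ is its tangent sheaf. It therefore suffices to prove that $H^1(U, T_U) = 0$.

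By Proposition \ref{prop: linearizable} I may assume that $G$ acts linearly on $Y := \Spec k[[u_1,\ldots,u_d]]$, and I set $V := Y \setminus \{0\}$ and let $\pi\colon V \to U$ be the quotient morphism. Since the action is very small, $\pi$ is a $G$-torsor. The identity $(\pi_* \pi^* T_U)^G = T_U$ (descent for a torsor), combined with the finiteness of $\pi$ (no higher direct images) and the linear reductivity of $G$ (exactness of $(-)^G$), then yields
$$
H^i(U, T_U) \,\cong\, H^i(V, \pi^* T_U)^G.
$$
The torsor $\pi$ is flat, and the standard identification $\Omega_{V/U} \cong \omega_G \otimes_k \mathcal{O}_V$ (with $\omega_G$ the coLie algebra of $G$) provides on $V$ the tangent short exact sequence
$$
0 \,\to\, T_{V/U} \,\to\, T_V \,\to\, \pi^* T_U \,\to\, 0,
$$
where $T_{V/U} \cong {\rm Lie}(G^\circ) \otimes_k \mathcal{O}_V$. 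Passing to the associated long exact sequence in cohomology and taking $G$-invariants reduces the desired vanishing to the two statements $H^1(V, T_V) = 0$ and $H^2(V, T_{V/U}) = 0$.

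Since $V$ is open in the smooth scheme $Y$, both $T_V$ and $T_{V/U}$ are free $\mathcal{O}_V$-modules, so these vanishings reduce to computing $H^i(V, \mathcal{O}_V)$. Local cohomology of the regular ring $S := k[[u_1, \ldots, u_d]]$ at its maximal ideal gives $H^i(V, \mathcal{O}_V) \cong H^{i+1}_{\{0\}}(S)$ for $i \geq 1$, and $\depth S = d$ forces these groups to vanish for $i + 1 < d$. Hence $H^1(V, \mathcal{O}_V) = 0$ as soon as $d \geq 3$ and $H^2(V, \mathcal{O}_V) = 0$ as soon as $d \geq 4$, so both required vanishings hold under the hypothesis $d \geq 4$.

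The only genuinely new ingredient beyond Schlessinger's original proof, and the main point at which care is needed, is the identification $T_{V/U} \cong {\rm Lie}(G^\circ) \otimes_k \mathcal{O}_V$ for the finite flat torsor $\pi$. This Lie algebra contribution is trivial when $G$ is \'etale, recovering Schlessinger's classical threshold $d \geq 3$; but when $G^\circ \neq 1$ it produces a potentially nontrivial group $H^2(V, T_{V/U}) = {\rm Lie}(G^\circ) \otimes_k H^2(V, \mathcal{O}_V)$ precisely in dimension $d = 3$. This is exactly the gap that prevents the argument from working in dimension $3$ and motivates the separate analysis carried out in Section \ref{sec: dimension 3}.
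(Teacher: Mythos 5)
Your overall strategy is the right one and matches the paper's (reduce to $h^1(U,T_U)=0$ \`a la Schlessinger, pass to the cover $V\to U$, use linear reductivity to take invariants exactly, and use depth of $S$ to kill the cohomology of free sheaves on $V$). However, there is a genuine error at the central step: the sequence
$$
0 \,\to\, T_{V/U} \,\to\, T_V \,\to\, \pi^* T_U \,\to\, 0
$$
is \emph{not} exact on the right when $G^\circ$ is nontrivial. For a torsor under an infinitesimal group scheme the quotient map is purely inseparable, and dualizing the right-exact sequence $\pi^*\Omega_U \to \Omega_V \to \Omega_{V/U} \to 0$ only gives left-exactness of $0 \to T_{V/U} \to T_V \to \pi^*T_U$; the last map can fail badly to be surjective. (Already for the free $\bmu_p$-action on $\mathbb{G}_m = \Spec k[u,u^{-1}]$ with quotient $\Spec k[u^{p},u^{-p}]$, the map $\pi^*\Omega_U \to \Omega_V$ sends $d(u^p)$ to $pu^{p-1}du = 0$, so $T_V \to \pi^*T_U$ is the zero map.) The correct statement, which the paper takes from Ekedahl, is the four-term exact sequence
$$
0 \,\to\, T_{V/U} \,\to\, T_V \,\to\, \pi^* T_U \,\to\, T_{V/U}^{p^n} \,\to\, 0,
$$
where the cokernel is a Frobenius twist of $T_{V/U}$ (and the paper first reduces to $G=G^\circ=\bmu_{p^n}$ to invoke this).

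The good news is that your argument survives the correction with no change to the numerics: the extra term $T_{V/U}^{p^n}$ is again a free $\mathcal{O}_V$-module, so after splitting the four-term sequence into two short exact sequences and taking $G$-invariants, the vanishing of $H^1(U,T_U)$ follows from $H^1(V,\mathcal{O}_V)=0$ (needs $d\geq 3$) and $H^2(V,\mathcal{O}_V)=0$ (needs $d\geq 4$), exactly as you computed. Your closing remark about the source of the $d=3$ failure is also only approximately right: with the correct sequence, the potential nonvanishing of $H^1(U,T_U)$ in dimension $3$ is governed by $\Ker\bigl(H^2(V,T_{V/U}) \to H^2(V,T_V)\bigr)^G$ together with the cokernel of $H^0(V,\pi^*T_U)\to H^0(V,T_{V/U}^{p^n})$, not by $H^2(V,T_{V/U})$ alone; this is precisely the analysis the paper carries out in Section \ref{sec: dimension 3}.
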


\begin{proof}
We follow the proof given in Schlessinger \cite{Schlessinger} 
for the case where $G$ is a finite group of order prime to the characteristic. 
Setting $S = k[[u_1,\hdots,u_d]]$ and $R = S^G$ with punctured spectra $V$ and $U$, respectively, 
Schlessinger's article shows that it suffices to prove that $h^1(U,T_U) = 0$.

Since $T_{V/G^\circ}^{G^\et} = T_U$ by \cite[Section 2, Proposition]{Schlessinger}, 
we may assume that $G$ is infinitesimal, that is, $G = G^\circ$. 
Moreover, since the $G$-action on $\mathbb{A}^d$ has an isolated fixed locus, 
we may assume that $G^\circ = \bmu_{p^n}$ for some $n \geq 0$ 
by Proposition \ref{prop: linearlyreductive or unipotent}.

We let $f:V\to U$ be the quotient map and denote by $T_V$ and $T_U$ their
tangent sheaves.
By \cite[Corollary 3.4]{Ekedahl} (see also \cite[Lemma 2.11]{Matsumotoalpha}), 
there is an exact sequence
$$
 0 \,\to\, T_{V/U} \,\to\, T_V \,\to\, f^* T_U \,\to\, T_{V/U}^{p^n} \,\to\, 0
$$
of $G$-equivariant sheaves on $V$, where $T_{V/U}$ is locally free 
and where $T_{V/U}^{p^n}$ is the pullback of $T_{V/U}$ along the 
$n$-th iterate of the absolute Frobenius.
More precisely, by loc.cit., ${\rm log}_p \deg(f)$ is equal to 
the height of the purely inseparable morphism $f$ times the rank
of $T_{V/U}$, which implies that $T_{V/U}$ is of rank 1.

Since $\Spec S$ is smooth over $k$, the reflexive hull of $T_{V/U}$ on $\Spec S$ is invertible, 
hence $T_{V/U}$ is free. 
Since $G$ is linearly reductive, applying $f_*$ and taking $G$-invariants is exact.
Thus, we obtain an exact sequence 
\begin{equation} \label{eq: invariants}
0 \,\to\, (f_* T_{V/U})^G \,\to\, (f_* T_V)^G \,\to\, T_U \,\to\, (f_* T_{V/U}^{p^n})^G \,\to\, 0. 
\end{equation}

Using once more that $G$ is linearly reductive, we know that $(f_* T_{V/U})^G$, $(f_* T_{V/U}^{p^n})^G$, and $(f_* T_V)^G$ 
are direct summands of $f_* T_{V/U}$, $f_* T_{V/U}^{p^n}$, and $f_* T_V$, respectively. 
Since $S$ is regular and the latter three sheaves are pushforwards of free sheaves on $V$, 
we deduce that 
$$
  h^i\left(U,(f_* T_{V/U})^G\right) \,=\, 
  h^i\left(U,(f_* T_{V/U}^{p^n})^G\right) \,=\,  
  h^i\left(U,(f_* T_V)^G\right) \,=\, 0
$$
for $0 < i < \dim(V) - 1$.
We assumed $\dim(V) = \dim(X) = d \geq 4$ and plugging this into the exact sequence \eqref{eq: invariants}, 
we obtain the desired result.
\end{proof}

\begin{Remark}
 As one of the referees pointed out to us, 
 this proposition was already known to the experts:
 Proposition \ref{prop: rigidity} 
 follows directly from the proof of
 \cite[Theorem 4.9]{SatrianoDeRham}
 and we will now  give the details of because 
 some of the arguments
 are somewhat difficult to trace.
\end{Remark}

\begin{proof}
(A proof of Proposition \ref{prop: rigidity} using tame stacks.)
The singularity $x \in X = 0 \in ({\mathbb{A}^d/G})^\wedge$
lifts to $W_2(k)$ - in fact, we can even lift it as an
lrq singularity over $W(k)$, see 
Proposition \ref{prop: liftinggroupscheme}.

Since $d\geq4$, it follows from the proof of
\cite[Theorem 4.9]{SatrianoDeRham} that
every deformation of $X=({\mathbb{A}^d/G})^\wedge$ 
is induced by a deformation of the stack
$\mathcal{X}:=[\mathbb{A}^d/G]$.
More precisely, 
the proof shows that $\Ext^i(L_{\mathcal{X}/X},\calO_X)=0$
if $i<d-2$ and in this proof, this boils down 
to a local statement, that is,
the properness assumption of \cite[Theorem 4.9]{SatrianoDeRham} 
is not needed for this.
In particular, if $d\geq4$, 
this implies that every deformation of $X$ is
induced by a deformation of $\mathcal{X}$.

The assertion now follows from the fact that
$\mathcal{X}$ is infinitesimally rigid.
Again, this is known to the experts
and follows, for example, from combining
\cite[Proposition 5.2]{Satriano} and
\cite[Lemma 5.3]{LiedtkeSatriano}.
\end{proof}

\subsection{Deformations of non-lrq quotient singularities}
\label{subsec: deformation non-lrq}
To complete the picture, let us note that rigidity fails for quotient singularities by group schemes that 
are not linearly reductive in dimension $d \geq 3$. 

Let $B:=\Spec \FF_p[[t]]$ with closed point $0$ and generic point $\eta$
and let ${\cal G}\to B$ be a finite and flat group scheme of length $p$
given by Oort--Tate parameters $a=t^{p-1}$ and $b=0$ as described in 
\cite[Theorem 2]{TateOort}.
Thus, the fibre of $\cal G$ over $0$ (resp. $\eta$) is $\balpha_p$ (resp. $\C_p$).
In \cite[Part II, Chapter 7]{ItoMiyanishi}, this group scheme is called
the \emph{unified $p$-group scheme}. 
Explicitly, we have
$$
{\cal G} \,=\, \Spec \FF_p[[t]][X]/(X^p - t^{p-1}X)
$$
with co-multiplication induced by the ambient $\mathbb{G}_{a,B} = \Spec \FF_p[[t]][X]$.

In particular, $\mathcal{G}$ admits a natural embedding $\iota$ into $\mathbb{G}_{a,B}$. 
Consider any faithful action $\rho$ of $\mathbb{G}_a$ on $\mathbb{P}^1$ that fixes $\infty$. 
Now, the action of $\mathcal{G}$ on $(\mathbb{P}^1_B)^d$ given via
$$
\mathcal{G} \,\overset{\iota}{\to}\, \mathbb{G}_{a,B} 
\,\overset{\Delta}{\to}\, (\mathbb{G}_{a,B})^d 
\,\overset{\rho^d_B}{\to}\, ({\rm PGL}_{2,B})^d
$$
fixes $\infty_B^d$ and is free on a punctured neighbourhood. 
Assuming that formation of quotients commutes with restriction to the special fiber 
in this setting (as pointed out to us by Posva, this is not automatic in general) 
and passing to the completion at $\{\infty\}^d$, we obtain a family of 
quotient singularities whose generic (resp. special) fibre is a quotient singularity by 
$\C_p$ (resp. $\balpha_p$) and with local \'etale fundamental group $\C_p$ (resp. trivial). 
 
In the following, we verify that taking quotients and fibers commutes in this setting for 
some small values of $p$ and $d$: 

\begin{Example}[$p = 2, d = 3$] \label{ex: nonrigidexample}
Consider the co-action
$$\begin{array}{ccccc}
\rho^{\sharp} &: & S = \mathbb{F}_2[[t,u_1,u_2,u_3]] & \to & 
S \otimes_{\mathbb{F}_2[[t]]} \mathbb{F}_2[[t]][X]/(X^2 + tX) \\
&& u_i & \mapsto & \frac{u_i}{1 + u_i X }.
\end{array}$$
For $s \in S$, we let ${\rm tr}(s) = \frac{1}{t}(s + \rho^{\sharp}(s)(t))$, 
where we consider $\rho^{\sharp}(s)$ as a polynomial in $X$.
Define $S^{\rm tr} \subseteq S$ to be the $\mathbb{F}_2[[t]]$-subalgebra 
$$
S^{\rm tr} \,=\, \mathbb{F}_2[[t,{\rm tr}(u_1),{\rm tr}(u_2),{\rm tr}(u_3),
{\rm tr}(u_1u_2),{\rm tr}(u_1u_3),{\rm tr}(u_2u_3),{\rm tr}(u_1u_2u_3)]].
$$
Then, we have inclusions $S^{\rm tr} \subseteq S^{\mathcal{G}} \subseteq S$. 
Denote by $(-)_0$ the image of a subring of $S$ in the reduction $S_0$ of $S$ modulo $t$. 
Then, we obtain a chain of subrings
$$
S^{\rm tr}_0 \,\subseteq\, S^{\mathcal{G}}_0 \,\subseteq\, (S_0)^{\mathcal{G}_0} \,\subseteq\, S_0
$$
and we compute
$$
S^{\rm tr}_0 \,=\,
\mathbb{F}_2[[t,u_1^2,u_2^2,u_3^2,u_1u_2(u_1+u_2),u_1u_3(u_1+u_3),u_2u_3(u_2+u_3),u_1u_2u_3(u_1+u_2+u_3)]].
$$
One checks that this is a normal ring and that the extension $S_0^{\rm tr} \subseteq S_0$ 
has degree $2$, hence the birational inclusions 
$S_0^{\rm tr} \subseteq S_0^{\mathcal{G}} \subseteq (S_0)^{\mathcal{G}_0}$ 
are all equalities, which, in particular, implies the desired equality 
$S_0^{\mathcal{G}} = (S_0)^{\mathcal{G}_0}$.
\end{Example}

\begin{Remark}[$p = 2, d = 2$]
For the co-action
$$\begin{array}{ccccc}
\rho^{\sharp} &: & S = \mathbb{F}_2[[t,u_1,u_2]] & \to & S \otimes_{\mathbb{F}_2[[t]]} \mathbb{F}_2[[t]][X]/(X^2 + tX) \\
&& u_i & \mapsto &\frac{u_i}{1 + u_i X },
\end{array}$$
and with $\mathrm{tr}(s)$ as in the previous example,
we can directly compute the invariant ring as
$$
S^{\mathcal{G}} \,=\, \mathbb{F}_2[[t,{\rm tr}(u_1),{\rm tr}(u_2),{\rm tr}(u_1u_2)]] 
\,\cong\, \mathbb{F}_2[[t,x,y,z]]/(z^2 + txyz + x^2y + xy^2),
$$
which yields a family of $\cal G$-quotients
  with generic (resp. special) fibre a rational double point singularity 
  of type $D_4^1$ (resp. $D_4^0$).
\end{Remark}

\section{Deformation theory of lrq threefolds}
\label{sec: dimension 3}
\label{subsec: nonrigidity}

In this section, we establish a necessary and sufficient criterion for
a 3-dimensional lrq singularity to be infinitesimally rigid.
Moreover, we also compute the miniversal deformation spaces of 3-dimensional
lrq singularities.

Recall that we have classified very small subgroup schemes of $\GL_3$
in Theorem \ref{thm: verysmallgroupsinsmalldimension} \eqref{item: very small 3-dim}. 
To simplify the statements of this section, we will introduce the following notation 
for the associated lrq singularities.

\begin{Definition}
Let $x \in X = 0 \in ({\mathbb{A}^3/G})^\wedge$ be a $3$-dimensional lrq singularity by a linearly reductive
group scheme $G$.
\begin{enumerate}
    \item If $G \cong \bmu_n$ and $G$ acts as in Theorem \ref{thm: verysmallgroupsinsmalldimension} \eqref{item: cyclic 3-dim}, we say that
    $X$ is of type $\frac{1}{n}(1,q_1,q_2)$.
    \item If $G \cong \bmu(m,3^fN,r)$ and $G$ acts as in Theorem \ref{thm: verysmallgroupsinsmalldimension} \eqref{item: metacyclic 3-dim}, we say that $X$ is of type $\frac{1}{m}(1,r,r^2) \rtimes_{\zeta_{3^{f-1}}} \bmu_{3^fN}$. 
\end{enumerate}
\end{Definition}

\subsection{The infinitesimal rigidity criterion}
To state this criterion, we introduce
two linear representations associated to 
a $d$-dimensional lrq singularity $x \in X$:
let $G$ be the linearly reductive group scheme associated to $x\in X$
and let $\rho:G\to\GL_d$ be a very small linear action on $\mathbb{A}^d$ such 
that $x \in X = 0 \in ({\mathbb{A}^d/G})^\wedge$.
Then, 
\begin{enumerate}
\item $G$ acts via conjugation on $G$ and $G^\circ$, which gives rise to the
 \emph{adjoint representation} $\chi_{{\rm ad}}$ of $G$ on the Lie algebras 
 ${\rm Lie}(G)={\rm Lie}(G^\circ)$.
 We note that this representation depends on $G$ only.
 \begin{enumerate}
     \item If $G$ is \'etale, then ${\rm Lie}(G)$ is zero.
     \item If $G$ is not \'etale, then $G^\circ=\bmu_{p^n}$ for some $n\geq1$
 by Proposition \ref{prop: linearlyreductive or unipotent} and thus, $\chi_{{\rm ad}}$ is a one-dimensional
 representation. In particular, $\chi_{{\rm ad}}$ can be regarded as a homomorphism from $G$ to $\Aut({\rm Lie}(G)) = \GG_m$.
 \end{enumerate}
 
 \item By composing the action with the determinant, we obtain the one-dimensional representation
 $\chi_{{\rm det}}:=\det\circ\rho:G\to\GL_d\to\GG_m$,
 which depends on $G$ as well as on its action $\rho$ on $\mathbb{A}^d$.
\end{enumerate}
We note that by Theorem \ref{thm: lrqmain}, $G$ is unique up to isomorphism
and $\rho$ is unique up to conjugation, that is, the two representations
$\chi_{{\rm ad}}$ and $\chi_{{\rm det}}$
depend only on the lrq singularity $x \in X$ and not on any choices.
We refer to Example \ref{ex: lrq in dimension 3} 
for explicit computations of these representations.

\begin{Theorem} 
 \label{thm: non-rigid 3foldclassification}
Let $x \in X = 0 \in ({\mathbb{A}^3/G})^\wedge$ be a $3$-dimensional lrq singularity by a linearly reductive
group scheme $G$.
Then, $X$ is infinitesimally rigid if and only if
\begin{enumerate}
  \item $G^\circ$ is trivial or
  \item $G^\circ$ is non-trivial and $\chi_{{\rm ad}}\neq\chi_{{\rm det}}$.
\end{enumerate}
\end{Theorem}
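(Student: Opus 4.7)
The plan is to reduce infinitesimal rigidity to the vanishing of $H^1(U, T_U)$, where $U = X \setminus \{x\}$, via Schlessinger's formalism (applicable since $X$ is Cohen--Macaulay of dimension three by Proposition \ref{prop: lrq}), and then to compute this cohomology along the lines of Proposition \ref{prop: rigidity}. The essential new feature compared to $d \geq 4$ is that $H^2(V, \mathcal{O}_V) = H^3_\mathfrak{m}(S)$ is now nonzero, which makes the computation sensitive to the characters of the $G$-action.

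In the \'etale case, $G^\circ = 1$ forces $T_{V/U} = T_{V/U}^{p^n} = 0$, and Ekedahl's exact sequence collapses to $T_V \cong f^*T_U$. Then $T_U$ is a $G$-direct summand of the free sheaf $f_*T_V$, so
\[
H^1(U, T_U) \,\hookrightarrow\, H^1(V, T_V) \,=\, H^1(V, \mathcal{O}_V)^{3} \,=\, 0,
\]
yielding infinitesimal rigidity in case (1).

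If $G^\circ \neq 1$, then $G^\circ = \bmu_{p^n}$ by Proposition \ref{prop: linearlyreductive or unipotent}, and very smallness forces each weight $a_i$ of the $\bmu_{p^n}$-action on $u_i$ to satisfy $a_i \not\equiv 0 \pmod p$ (else $\bmu_p \subseteq G^\circ$ would fix a coordinate axis). The line bundle $T_{V/U}$, generated by the fundamental vector field $D = \sum_i a_i u_i \partial_{u_i}$, is $G$-equivariantly isomorphic to $\mathcal{O}_V \otimes_k {\rm Lie}(G^\circ)$ and hence carries the character $\chi_{\rm ad}$, while $T_{V/U}^{p^n}$ carries its Frobenius-twisted counterpart. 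Taking $G$-invariants of $f_*$ of Ekedahl's sequence (exact by linear reductivity) and splitting it into
\[
0 \to (f_*T_{V/U})^G \to (f_*T_V)^G \to \mathcal{K} \to 0, \quad 0 \to \mathcal{K} \to T_U \to (f_*T_{V/U}^{p^n})^G \to 0,
\]
I then use $H^1(V, \mathcal{O}_V) = 0$ to kill the $H^1$ of the three outer summands and obtain
\[
H^1(U, T_U) \,\cong\, \coker\bigl(H^0((f_*T_{V/U}^{p^n})^G) \to H^1(\mathcal{K})\bigr),
\]
where $H^1(\mathcal{K}) = \ker\bigl(H^2((f_*T_{V/U})^G) \to H^2((f_*T_V)^G)\bigr)$ is identified, on $V$, with the common kernel of multiplication by the three elements $a_i u_i$ on the $\chi_{\rm ad}^{-1}$-isotypic piece of $H^3_\mathfrak{m}(S)$.

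Because each $a_i$ is a unit in $k$, a direct calculation on the monomial basis $\{u_1^{-i_1} u_2^{-i_2} u_3^{-i_3} : i_j \geq 1\}$ of $H^3_\mathfrak{m}(S)$ shows this common kernel reduces to the socle spanned by $u_1^{-1} u_2^{-1} u_3^{-1}$, which by Grothendieck local duality (or direct inspection, using that $\omega_S$ has character $\chi_{\rm det}$) carries character $\chi_{\rm det}^{-1}$. Hence $H^1(\mathcal{K})$ is nonzero iff $\chi_{\rm det} = \chi_{\rm ad}$, in which case it is one-dimensional. The $\GG_m$-scaling on $V$ (which commutes with the linear $G$-action) equips all sheaves with a weight decomposition preserved by the connecting maps; since $H^0((f_*T_{V/U}^{p^n})^G)$ lives in nonnegative weight while the socle of $H^3_\mathfrak{m}(S)$ lives in weight $-3$, the connecting map necessarily vanishes, giving $H^1(U, T_U) \neq 0$ exactly when $\chi_{\rm det} = \chi_{\rm ad}$. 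The main obstacle will be the careful bookkeeping of the $G$-equivariant structure of Ekedahl's sequence, especially the Frobenius-twisted character on $T_{V/U}^{p^n}$ and the weight shifts on $\mathcal{K}$ and $T_U$, since these refined structures are what cleanly isolate the socle contribution in cohomology.
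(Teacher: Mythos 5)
Your proposal is correct and follows essentially the same route as the paper: both reduce to computing $H^1(U,T_U)$ via Ekedahl's exact sequence, identify $\ker\bigl(H^2(V,T_{V/U})\to H^2(V,T_V)\bigr)$ with the one-dimensional socle spanned by $D\otimes(u_1u_2u_3)^{-1}$, on which $G$ acts through $\chi_{{\rm ad}}\cdot\chi_{{\rm det}}^{-1}$, and conclude. The only point of divergence is how the connecting map from $H^0(V,T_{V/U}^{p^n})$ is shown to vanish: the paper exhibits an explicit derivation $D_0$ with $D_0(u^e)=((\textstyle\sum q_ie_i)/p^n)u^e$ whose image generates $H^0(V,T_{V/U}^{p^n})$, whereas you use a $\GG_m$-weight comparison (nonnegative weights against weight $-3$); both arguments are valid.
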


We note that the first part is precisely Schlessinger's result \cite{Schlessinger}
that a 3-dimensional isolated quotient singularity by a group of order prime 
to the characteristic is infinitesimally rigid.

\begin{proof}
By Nagata's classification, we have $G=G^\circ\rtimes G^\et$, where $G^\et$ is of order
prime to $p$.
In particular, if $G^\circ$ is trivial, then $X$ is infinitesimally
rigid by Schlessinger's theorem \cite{Schlessinger}.
We may now assume that $G^\circ$ is non-trivial.

We keep the notations of the proof of Proposition \ref{prop: rigidity},
and let $Y = \Spec S$ and let $y \in Y$ be the closed point.
Then, we have an identification 
\begin{align*}
  H^2(V, -) &\cong H^0(Y, -) \otimes H^2(V, \OO_V) \\
 &\cong H^0(Y, -) \otimes H^3_y(Y, \OO_Y) \\
 &\cong  H^0(Y, -) \otimes \frac{S[(u_1 u_2 u_3)^{-1}]}{S[(u_2 u_3)^{-1}] + S[(u_1 u_3)^{-1}] + S[(u_1 u_2)^{-1}]}
\end{align*}
of free sheaves on $Y$,
where $u_1, u_2, u_3$ are coordinates of $Y$.
Next, let $D$ be a $p$-closed generator of $T_{V/U}$.
Then, we have a surjection
\begin{equation}\label{eq: generatorofH1}
 \Ker \left( H^2(V, T_{V/U}) \to H^2(V, T_V) \right) \,\to\, H^1(V, f^* T_U),
\end{equation}
where the left-hand side is one-dimensional with basis $D \otimes (u_1 u_2 u_3)^{-1}$ 
under the above identification. 
The surjection in \eqref{eq: generatorofH1} is in fact an isomorphism and its 
injectivity follows from the surjectivity of 
$H^0(V, f^* T_U) \to H^0(V, T_{V/U}^{p^n})$, which we will prove now. 
We may assume the action of $G^0 = \bmu_{p^n}$ is given by $\diag(q_1,q_2,q_3)$, $q_i \in \ZZ$,
and let $D_0 \in H^0(U, T_U)$ be the element characterised by the property that
$D_0(u^e) = ((\sum q_i e_i)/p^n)u^e$.
Then the image of $D_0$ is a generator.

Now, observe that $\GL_3$ acts on the one-dimensional subspace 
$$
  \langle (u_1 u_2 u_3)^{-1} \rangle \,\cong\, H^2(V,\OO_V)[\idealm] \,\subseteq\, H^2(V,\OO_V)
$$
via the inverse of the determinant: $\GL_3 \overset{\det}{\to} \GG_m \overset{{-1}}\to \GG_m$. 
Indeed, it suffices to observe that the $\GL_3$-action preserves this subspace and that 
diagonal matrices act via $\det^{-1}$. 

Therefore, $D \otimes (u_1 u_2 u_3)^{-1}$ is $G$-invariant if and only if the action of 
$G$ on 
$H^0(k,G^\circ) \cong \langle D \rangle \subseteq H^0(Y,T_Y)$
coincides with $\det$.
Identifying $H^0(k,G^\circ)$ with ${\rm Lie}(G)={\rm Lie}(G^\circ)$,
this implies that $D \otimes (u_1 u_2 u_3)^{-1}$ is $G$-invariant if and only if
$\chi_{{\rm ad}}=\chi_{{\rm det}}$.
In particular, $D \otimes (u_1 u_2 u_3)^{-1}$ descends to a cohomology class 
in $H^1(U,T_U)$ if and only if $\chi_{{\rm ad}}=\chi_{{\rm det}}$.

Putting all these observations together, we see that the cohomology group
$H^1(U,T_U) = H^1(V,f^*T_U)^G$ 
is zero (resp. one-dimensional) if and only if 
$\chi_{{\rm ad}}\neq\chi_{{\rm det}}$ (resp. $\chi_{{\rm ad}}=\chi_{{\rm det}}$).
In particular, $X$ is infinitesimally rigid if and only if 
$\chi_{{\rm ad}}\neq\chi_{{\rm det}}$. 
\end{proof}

\begin{Remark}
 We note that the proof of Theorem \ref{thm: non-rigid 3foldclassification} shows more 
 generally that, for $d \geq 3$, the vector space $H^{d-2}(U,T_U)$ is non-trivial 
 if and only if $\chi_{{\rm ad}} = \chi_{\det}$, in which case it is one-dimensional. 
\end{Remark}

\begin{Example}
\label{ex: lrq in dimension 3}
    Assume that $X$ is an lrq singularity in dimension $3$ by a group scheme $G$ with non-trivial $G^\circ$.
    
    \begin{enumerate}
        \item  If $X$ is of type $\frac{1}{n}(1,q_1,q_2)$, then
         $\chi_{{\rm ad}}$ is trivial. 
         The character $\chi_{\det}: \bmu_n \to \GG_m$ 
         is induced by the $(1+q_1+q_2)$-th power map on $\bmu_n$, 
         hence it is trivial if and only if $n \mid (1 + q_1 + q_2)$.
        \item  If $X$ is of type 
        $\frac{1}{m}(1,r,r^2) \rtimes_{\zeta_{3^{f-1}}} \bmu_{3^fN}$, then
        \begin{eqnarray*}
        \chi_{{\rm ad}}|_{\bmu_m}(a) = \chi_{\det} |_{\bmu_m}(a) &=& 1, \\
        \chi_{{\rm ad}}|_{\bmu_N}(a) &=& 1, \\
        \chi_{\det} |_{\bmu_N}(a) &=& a^3, \\
        \chi_{{\rm ad}}|_{\C_{3^f}}(1) &=& \bar{r}, \\
        \chi_{\det}|_{\C_{3^f}}(1) &=& \zeta_{3^{f-1}}.
        \end{eqnarray*} 
       Here, $\bar{r}$ is the image of $r$ under 
       $(\ZZ/m\ZZ)^\times \to (\ZZ/p\ZZ)^\times \cong \bmu_{p-1} \subseteq \GG_m$.
    \end{enumerate}
\end{Example}

Applying Theorem \ref{thm: non-rigid 3foldclassification} to the above example, 
we obtain the following classification of non-rigid $3$-dimensional lrq singularities.

\begin{Corollary} \label{cor: non-rigid 3foldclassification}
Let $x \in X$ be a 3-dimensional lrq singularity.
Then, $X$ is not infinitesimally rigid if and only if one of the following conditions hold.
\begin{enumerate}
  \item $X$ is of type $\frac{1}{n}(1,q_1,q_2)$ with $p \mid n$ and $n \mid (1+q_1+q_2)$.
  \item $X$ is of type $\frac{1}{m}(1,r,r^2) \rtimes_{\zeta_{3}} \bmu_{9}$ with $p \mid m$ 
  and such that $\bar{r} = \zeta_3$. 
  In particular, $p \equiv 1 \pmod{3}$ in this case.
\end{enumerate}
\end{Corollary}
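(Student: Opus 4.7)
My plan is to combine Theorem \ref{thm: non-rigid 3foldclassification} with the classification of 3-dimensional very small linearly reductive subgroup schemes of $\GL_3$ from Theorem \ref{thm: verysmallgroupsinsmalldimension} \eqref{item: very small 3-dim} and the explicit character computations in Example \ref{ex: lrq in dimension 3}. By the rigidity criterion, $X$ fails to be infinitesimally rigid if and only if the associated group scheme $G$ has non-trivial $G^\circ$ and $\chi_{\mathrm{ad}} = \chi_{\det}$ as characters of $G$. I simply run through the two families in the classification and translate both conditions into arithmetic conditions on the parameters.

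For the cyclic type $\frac{1}{n}(1,q_1,q_2)$ with $G \cong \bmu_n$, I would observe that $G^\circ$ is non-trivial precisely when $p \mid n$, that $\chi_{\mathrm{ad}}$ is trivial because $G$ is abelian, and that Example \ref{ex: lrq in dimension 3}\,\textup{(1)} identifies $\chi_{\det}$ with the $(1+q_1+q_2)$-th power map on $\bmu_n$; hence $\chi_{\mathrm{ad}} = \chi_{\det}$ if and only if $n \mid (1+q_1+q_2)$. This gives case \textup{(1)} of the corollary.

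For the metacyclic type $\frac{1}{m}(1,r,r^2) \rtimes_{\zeta_{3^{f-1}}} \bmu_{3^fN}$, the strategy is to use that a character of a semidirect product of abelian groups is determined by its restrictions to each of the factors $\bmu_m$, $\bmu_N$, $\C_{3^f}$, so $\chi_{\mathrm{ad}} = \chi_{\det}$ splits into three separate checks. By Example \ref{ex: lrq in dimension 3}\,\textup{(2)}, equality on $\bmu_m$ is automatic; equality on $\bmu_N$ demands $a^3 = 1$ for every $a \in \bmu_N$, which combined with $3 \nmid N$ and $p \neq 3$ (if $p \mid N$, then $\bmu_p \subseteq \bmu_N$ would already violate $a^3 = 1$) forces $N = 1$; and equality on $\C_{3^f}$ is the single relation $\bar{r} = \zeta_{3^{f-1}}$.

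The one step that deserves care is extracting $f = 2$ and $p \equiv 1 \pmod 3$ from $\bar{r} = \zeta_{3^{f-1}}$, together with the condition $p \mid m$. Since $N = 1$ and $\C_{3^f}$ is \'etale (as $p \neq 3$), the only way $G^\circ$ can be non-trivial is $p \mid m$. Since $r$ has exact order $3$ in $(\ZZ/m)^\times$ and $p \mid m$, the reduction $\bar{r} \in (\ZZ/p)^\times$ has order dividing $3$; order $1$ would force $\zeta_{3^{f-1}} = 1$, contradicting $f \geq 2$, so $\bar{r}$ has order exactly $3$, which in turn forces $\zeta_{3^{f-1}}$ to have order $3$, i.e.\ $f = 2$ and $3^f N = 9$. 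Finally, the existence of an order-$3$ element in $(\ZZ/p)^\times \cong \bmu_{p-1}$ is equivalent to $3 \mid (p-1)$, which is the ``in particular'' clause. Altogether this recovers case \textup{(2)}. The hardest part is really just the bookkeeping in the metacyclic case; conceptually everything is already in place once Theorem \ref{thm: non-rigid 3foldclassification} and Example \ref{ex: lrq in dimension 3} are invoked.
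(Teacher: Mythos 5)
Your proposal is correct and follows exactly the route the paper intends: the paper states this corollary as an immediate consequence of Theorem \ref{thm: non-rigid 3foldclassification} applied to the character computations in Example \ref{ex: lrq in dimension 3}, and your case analysis (triviality of $\chi_{\det}$ on $\bmu_n$ in the cyclic case; forcing $N=1$, $f=2$, and $\bar r=\zeta_3$ of order $3$ in the metacyclic case, whence $p\equiv 1\pmod 3$) is precisely the bookkeeping the authors leave implicit. No gaps.
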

 
Quite surprisingly, this corollary 
shows that 3-dimensional lrq singularities in characteristic $2$ 
are infinitesimally rigid.

\begin{Corollary} \label{cor: non-rigid 3fold p=0,2}
Let $X$ be a 3-dimensional lrq singularity. 
If $p \in \{0,2\}$, then $X$ is infinitesimally rigid.
\end{Corollary}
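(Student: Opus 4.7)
The plan is to derive this immediately from Corollary \ref{cor: non-rigid 3foldclassification} by showing that neither of the two listed non-rigid configurations can occur when $p \in \{0,2\}$. No new geometric input is required; the argument is purely a parity/congruence check.

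First I would treat $p = 0$. Both cases (1) and (2) of Corollary \ref{cor: non-rigid 3foldclassification} require a prime $p > 0$ dividing $n$ (respectively $m$), so in characteristic zero neither can be satisfied, and hence $X$ is infinitesimally rigid. This is of course also a restatement of Schlessinger's rigidity theorem in the equicharacteristic-zero case.

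Next I would treat $p = 2$. Case (2) of Corollary \ref{cor: non-rigid 3foldclassification} is ruled out instantly, since it requires $p \equiv 1 \pmod{3}$, which fails for $p = 2$. For Case (1), suppose $X$ is of type $\tfrac{1}{n}(1,q_1,q_2)$ with $2 \mid n$ and $n \mid (1 + q_1 + q_2)$. By Theorem \ref{thm: verysmallgroupsinsmalldimension} \eqref{item: very small 3-dim}\eqref{item: cyclic 3-dim}, very smallness of the $\bmu_n$-action forces $(n,q_1) = (n,q_2) = 1$. Since $n$ is even, both $q_1$ and $q_2$ must therefore be odd, so $1 + q_1 + q_2$ is odd, contradicting divisibility by the even number $n$. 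Hence Case (1) cannot occur either.

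The main (and essentially only) obstacle is a small bookkeeping point: one must invoke the very-small condition to conclude that the weights $q_1,q_2$ are coprime to $n$, without which the parity argument would collapse. Once that is noted, there is nothing further to prove.
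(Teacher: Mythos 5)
Your proposal is correct and follows essentially the same route as the paper: reduce to Corollary \ref{cor: non-rigid 3foldclassification}, discard Case (2) via the condition $p \equiv 1 \pmod{3}$, and kill Case (1) by the parity argument that $(n,q_i)=1$ with $2 \mid n$ forces $q_1,q_2$ odd, so $n \nmid (1+q_1+q_2)$. The paper states the coprimality point only implicitly ("impossible since $2 \nmid q_i$"), so your explicit appeal to the very-smallness condition is a welcome clarification rather than a divergence.
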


\begin{proof}
The case $p = 0$ is a special case of Schlessinger's result. 
If $p = 2$ and $X$ is not infintesimally rigid, then
Corollary \ref{cor: non-rigid 3foldclassification} shows that 
$X$ must be of type $\frac{1}{n}(1,q_1,q_2)$ with $2 \mid n$ and 
$n \mid (1 + q_1 + q_2)$,
which is impossible since $2 \nmid q_i$.
\end{proof}

\begin{Remark}
Since $G$ acts on $\omega := du_1 \wedge \hdots \wedge du_d$ via $\chi_{\det}$, 
the lrq singularity $X$ is Gorenstein if and only if $\chi_{\det}$ is trivial.
Hence, we obtain the following table for 3-dimensional lrq singularities
$(\mathbb{A}^3/G)^\wedge$,
where IR stands for infinitesimally rigid, Gor for Gorenstein,
and \checkmark\ and - denote respectively the existence and non-existence of 
such a singularity.

\begin{center}
\begin{tabular}{c|c||cccc}
    $p$ & $G^{\circ}$ & IR,\,Gor & IR,\,not Gor & not IR,\, Gor & not IR,\, not Gor \\
    \hline
    any & $1$ & \checkmark & \checkmark & - & - \\
    $2$ & $\neq 1$ & - & \checkmark & - & - \\
    $3,5 \pmod{6}$ & $\neq 1$ & - & \checkmark & \checkmark & - \\
    $1 \pmod{6}$   & $\neq 1$ & - & \checkmark & \checkmark & \checkmark \\
\end{tabular}
\end{center}
\end{Remark}

\subsection{Deformation spaces}
By Corollary \ref{cor: non-rigid 3foldclassification}, we know that there exist lrq singularities in dimension $3$ that are not infinitesimally rigid.
Being isolated singularities, their deformation functors have a pro-representable hull,
the \emph{miniversal deformation space}, which
is represented by a local complete algebra over the Witt ring,
see \cite[Proposition 3.10]{SchlessingerArtinrings}.
This begs for the question for the deformation spaces in the cases where
infinitesimal rigidity fails.

\begin{Theorem} \label{thm: deformation space}
Let $x \in X = 0 \in ({\mathbb{A}^3/G})^\wedge$ be a $3$-dimensional lrq singularity by a linearly 
reductive group scheme $G$.
Assume $\length{G^\circ} = p^n$.
If $X$ is not infinitesimally rigid,
then the miniversal deformation space $\Def_X$ is given by
$$
  \Def_X \,\cong\, \Def_{{\mathbb{A}^3/G^\circ}} 
   \,\cong\, \Spec W(k)[\varepsilon]/(\varepsilon^2, p^n \varepsilon),
$$
where $W(k)$ denotes the Witt ring.
\end{Theorem}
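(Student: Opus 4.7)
The plan is to reduce to the infinitesimal case $G = G^\circ = \bmu_{p^n}$, identify the equicharacteristic tangent space via the previous theorem, and construct an explicit miniversal family over $B := \Spec W(k)[\varepsilon]/(\varepsilon^2, p^n\varepsilon)$ by combining the canonical lift with a realization of the non-trivial infinitesimal direction.

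For the reduction, write $G = G^\circ \rtimes G^\et$ with $|G^\et|$ invertible on every Artin local $W(k)$-algebra. Then $G^\et$-invariants is an exact functor on equivariant modules, and a standard descent argument (as in \cite{Schlessinger}) identifies deformations of $X = (\mathbb{A}^3/G^\circ)/G^\et$ with $G^\et$-equivariant deformations of $\mathbb{A}^3/G^\circ$, yielding the first isomorphism $\Def_X \cong \Def_{\mathbb{A}^3/G^\circ}$. By Proposition \ref{prop: linearlyreductive or unipotent}, $G^\circ \cong \bmu_{p^n}$, so the problem reduces to computing $\Def_Y$ for $Y := \mathbb{A}^3/\bmu_{p^n}$. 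From the proof of Theorem \ref{thm: non-rigid 3foldclassification}, the equicharacteristic tangent space is $H^1(U, T_U) \cong k$, spanned by the $G$-invariant class obtained from $D \otimes (u_1 u_2 u_3)^{-1}$ via the surjection $H^2(V, T_{V/U}) \twoheadrightarrow H^1(V, f^* T_U)$.

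Next I would construct a candidate family over $B$. The canonical lift from Propositions \ref{prop: liftinggroupscheme} and \ref{prop: liftingrepresentation} supplies the trivial family $\mathcal{Y}^{\mathrm{can}} := \mathbb{A}^3_{W(k)}/\bmu_{p^n,W(k)}$, providing the $W(k)$-direction. For the $\varepsilon$-direction, I would realize the spanning class of $H^1(U,T_U)$ as a specific first-order deformation of $Y$ and then extend it to an honest flat family over $B$, using that the twist of the associated $\bmu_{p^n,B}$-torsor on the punctured neighborhood is classified by a $p^n$-torsion class in $\Hfl{1}(B, \bmu_{p^n,B})$. The relation $p^n\varepsilon = 0$ is then dictated by this $p^n$-torsion, while $\varepsilon^2 = 0$ reflects the nilpotency of the tangent direction in $H^2(V, T_{V/U})$. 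This produces a flat deformation $\mathcal{Y} \to B$ of $Y$ whose Kodaira--Spencer class reduces mod $p$ to the generator of $H^1(U, T_U)$.

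The induced morphism of functors $B \to \Def_Y$ then has an isomorphism on tangent spaces by construction, so the hard part will be to prove it is an isomorphism of functors, that is, that no additional deformations or obstructions arise beyond those captured by $B$. This amounts to a careful analysis of the obstruction space via the $G$-invariant part of the exact sequence
\begin{equation*}
0 \to T_{V/U} \to T_V \to f^* T_U \to T_{V/U}^{p^n} \to 0
\end{equation*}
from the proof of Proposition \ref{prop: rigidity}: the key point is to show that the only obstructions to extending the tangent direction $\varepsilon$ live in the $G$-invariant part of $H^2(V, T_V)$ and that they reflect exactly the relations $\varepsilon^2 = 0$ (in equicharacteristic) and $p^n\varepsilon = 0$ (in mixed characteristic), via a Frobenius and weight analysis of the graded pieces of $S = k[[u_1, u_2, u_3]]$. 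Combined with the construction above and formal smoothness modulo these two relations, this pins the miniversal base down to be exactly $B$.
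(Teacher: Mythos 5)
Your overall architecture matches the paper's: reduce to $G=G^\circ=\bmu_{p^n}$ via exactness of $G^\et$-invariants and depth $\geq 3$, build an explicit family over $B=\Spec W(k)[\varepsilon]/(\varepsilon^2,p^n\varepsilon)$, and then argue that the hull cannot be larger. But the two steps that carry all the content are deferred rather than done, and the heuristics you offer in their place do not survive scrutiny. First, the construction of the family: the paper writes down a concrete alternating Čech $1$-cochain $f_{i+1,i+2}=\frac{q_iu_i}{u_1u_2u_3}\frac{\partial}{\partial u_i}$ on the standard affine cover of the punctured lift $\widetilde{U}$ and computes $f_{12}+f_{23}+f_{31}=p^n\frac{D_0}{u_1u_2u_3}$; this identity is exactly why the gluing by $1+\varepsilon f_{ij}$ defines a flat family over $B$ and simultaneously produces the generator of $H^1(U,T_U)$ mod $p$. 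Your proposed mechanism --- that the $\varepsilon$-direction is a twist of the $\bmu_{p^n}$-torsor classified by a $p^n$-torsion class in $\Hfl{1}(B,\bmu_{p^n,B})$ --- is not right: flat cohomology of the \emph{base} classifies torsors over $B$, not deformations of the quotient, and the nontrivial first-order deformation here is a regluing of the affine charts of $U$ that does not arise from twisting $V\to U$.

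Second, and more seriously, the miniversality step is the actual theorem and you only state what it "amounts to." The paper's argument is: writing the hull as $S/I'$ with $\mm I\subset I'\subset I$, the cocycle identity for a putative extension to $S/I'$ reads
$$
\id \,=\, \beta_{12}\beta_{23}\beta_{31}\,=\,\id+\varepsilon p^n\frac{D_0}{u_1u_2u_3}+\varepsilon^2\frac{D_1}{(u_1u_2u_3)^2}+\sum\theta_{ij},
\qquad D_1=q_1q_2\Bigl(u_2\tfrac{\partial}{\partial u_2}-u_1\tfrac{\partial}{\partial u_1}\Bigr),
$$
so the obstruction is $\varepsilon p^n\frac{D_0}{u_1u_2u_3}+\varepsilon^2\frac{D_1}{(u_1u_2u_3)^2}\in H^2(U,T_U)\otimes I/I'$, and one must prove that the two classes $\frac{D_0}{u_1u_2u_3}$ and $\frac{D_1}{(u_1u_2u_3)^2}$ are $k$-linearly independent in $H^2(U,T_U)$ --- the first is detected in $H^2(V,T_{V/U}^{p^n})^G$ and the second in $\Coker(H^2(V,T_{V/U})^G\to H^2(V,T_V)^G)$ because $D_1\notin{\rm Lie}(G)$, using the four-term sequence from Proposition \ref{prop: rigidity}. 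Only this independence forces $p^n\varepsilon=\varepsilon^2=0$ in $I/I'$ and hence $I'=I$. Your remark that ``$\varepsilon^2=0$ reflects the nilpotency of the tangent direction in $H^2(V,T_{V/U})$'' misplaces where that obstruction lives and, without computing the second-order term $D_1$ of the cocycle identity and proving the independence, the claim that the base is exactly $B$ (rather than, say, $W(k)[\varepsilon]/(\varepsilon^2)$ or $W(k)[\varepsilon]/(\varepsilon^2,p^m\varepsilon)$ for some $m<n$) is not established.
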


In particular, the deformation space of $X$ in equicharacteristic $p$ is 
isomorphic to the spectrum of
$k[\varepsilon]/(\varepsilon^2)$, which is of length $2$.
Before giving the proof, we
combine the results of this section to obtain the following analog
of Schlessinger's theorem for lrq singularities.

\begin{Corollary}
  \label{cor: rigidity}
  Let $x \in X = 0 \in ({\mathbb{A}^d/G})^\wedge$ be a $d$-dimensional lrq singularity 
  by a linearly reductive group scheme $G$. If $d \geq 3$, then $X$ is rigid 
  (but not necessarily infinitesimally rigid if $d=3$).
\end{Corollary}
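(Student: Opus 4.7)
The plan is to combine Proposition \ref{prop: rigidity}, Theorem \ref{thm: non-rigid 3foldclassification}, and Theorem \ref{thm: deformation space}, adding one short direct argument to cover the only subtle case.

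If $d \geq 4$, or if $d = 3$ and $X$ is infinitesimally rigid, then by Proposition \ref{prop: rigidity} (resp.\ by the ``if'' direction of Theorem \ref{thm: non-rigid 3foldclassification}) every deformation of $X$ over an equicharacteristic Artinian local ring is trivial. In particular, for any deformation $\mathcal{X} \to \Spec R$ over an equicharacteristic complete DVR $(R,\idealm)$ and any $n \geq 1$, the truncation $\mathcal{X} \otimes_R R/\idealm^n$ is such an Artinian deformation and hence trivial; this is exactly rigidity in the sense of Convention \ref{def: rigid2}.

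The remaining case is $d = 3$ with $X$ not infinitesimally rigid. By Theorem \ref{thm: deformation space} the miniversal deformation space is
\[
 \Def_X \,\cong\, \Spec W(k)[\varepsilon]/(\varepsilon^2,\, p^n\varepsilon)
\]
for some $n \geq 1$, and crucially this is an \emph{Artinian} ring. Given a deformation $\mathcal{X} \to \Spec R$ over an equicharacteristic complete DVR $(R,\idealm)$ with residue field $k$, and given $N \geq 1$, my plan is to establish triviality of $\mathcal{X}_N := \mathcal{X} \otimes_R R/\idealm^N$ by working not with $\mathcal{X}_N$ directly but with the larger truncation $\mathcal{X}_{2N-1}$. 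By miniversality there exists a $W(k)$-algebra homomorphism $\psi \colon W(k)[\varepsilon]/(\varepsilon^2, p^n\varepsilon) \to R/\idealm^{2N-1}$ such that $\mathcal{X}_{2N-1}$ is the pullback of the universal family via $\psi$; since $R$ is equicharacteristic $p$, $\psi$ kills $p$ and so factors through $k[\varepsilon]/(\varepsilon^2)$, which means it is determined by $a := \psi(\varepsilon) \in \idealm/\idealm^{2N-1}$.

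The key observation is that the relation $a^2 = 0$ in $R/\idealm^{2N-1}$, together with $R \cong k[[t]]$ being a DVR, forces $2 v(a) \geq 2N-1$, and hence $v(a) \geq N$, so that $a$ reduces to $0$ in $R/\idealm^N$. The composite $W(k)[\varepsilon]/(\varepsilon^2, p^n\varepsilon) \to R/\idealm^{2N-1} \to R/\idealm^N$ thus factors through $W(k) \to k \to R/\idealm^N$, and since pullback commutes with base change, $\mathcal{X}_N$ is the pullback of the universal family along this composite, which is the trivial deformation $X \otimes_k R/\idealm^N$. The main obstacle being sidestepped here is the non-uniqueness of the classifying map $\psi$: the hull is only miniversal and a priori not pro-representing, but passing to the larger truncation $R/\idealm^{2N-1}$ forces the tangent direction $a$ to be deeply nilpotent and so to vanish on further truncation to $R/\idealm^N$, regardless of how $\psi$ was chosen.
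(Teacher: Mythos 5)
The proposal is correct and takes essentially the same route as the paper, whose proof of this corollary consists only of citing Proposition \ref{prop: rigidity} for $d\geq 4$ and Theorem \ref{thm: non-rigid 3foldclassification} together with Theorem \ref{thm: deformation space} for $d=3$. Your argument passing to the truncation $R/\idealm^{2N-1}$ — so that the square-zero relation on $\psi(\varepsilon)$ forces valuation at least $N$ regardless of the non-uniqueness of the classifying map — correctly supplies the deduction, left implicit in the paper, that the Artinian hull $\Spec W(k)[\varepsilon]/(\varepsilon^2,p^n\varepsilon)$ yields triviality of every truncation $\mathcal{X}\otimes_R R/\idealm^N$.
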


\begin{proof}
If $d\geq4$, then this is Proposition \ref{prop: rigidity}.
If $d=3$, then this is Theorem \ref{thm: non-rigid 3foldclassification}
and Theorem \ref{thm: deformation space}.
\end{proof}

As another corollary to Theorem \ref{thm: deformation space}, we obtain that for an lrq singularity of dimension $d \geq 3$, there is a \emph{canonical lift} of $X$ to characteristic $0$:

\begin{Corollary}
\label{cor: canonicallift}
  Let $x \in X = 0 \in ({\mathbb{A}^d/G})^\wedge$ be a $d$-dimensional lrq singularity 
  by a linearly reductive group scheme $G$ over an algebraically closed field $k$ of characteristic $p > 0$. 
  If $d \geq 3$, then the following hold:
  \begin{enumerate}
      \item We have $(\Def_{X})_{{\rm red}} \cong \Spec W(k)$.
      \item The restriction of a miniversal deformation of $X$ to $(\Def_{X})_{{\rm red}}$ is isomorphic to 
      the quotient $\mathcal{X}$ of $\hat{\Aff}^d_{W(k)}$ by the natural linear action of the canonical deformation $\tilde{G}$ of $G$ (Definition \ref{def: canonical deformation}).
  \end{enumerate}
\end{Corollary}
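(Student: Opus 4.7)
The plan is to combine Proposition \ref{prop: liftinggroupscheme} and Proposition \ref{prop: liftingrepresentation} with the rigidity results in Proposition \ref{prop: rigidity} and Theorem \ref{thm: deformation space}. First I would construct the candidate canonical lift. By Proposition \ref{prop: liftingrepresentation}, the very small representation $\rho \colon G \to \GL_{d,k}$ lifts uniquely to a representation of the canonical deformation $\widetilde{G}$ of $G$ over $W(k)$, giving a linear action of $\widetilde{G}$ on $\hat{\Aff}^d_{W(k)}$. Since $\widetilde{G}$ is fiberwise linearly reductive, the quotient $\mathcal{X} := \hat{\Aff}^d_{W(k)}/\widetilde{G}$ is a flat deformation of $X$ over $W(k)$, yielding a morphism $\pi \colon \Spec W(k) \to \Def_X$.

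Next I would determine $\Def_X$ (and hence $(\Def_X)_{\rm red}$) by separating two cases. If $X$ is infinitesimally rigid, which by Proposition \ref{prop: rigidity} is automatic for $d \geq 4$ and in dimension $3$ occurs precisely when $\chi_{\rm ad} \neq \chi_{\rm det}$, then the equicharacteristic tangent space $\Def_X(k[\varepsilon])$ vanishes. By Schlessinger's theorem \cite{SchlessingerArtinrings}, the pro-representing ring $R$ of $\Def_X$ then satisfies $\mathfrak{m}_R = pR$, so $R$ is a quotient of $W(k)$; the existence of $\pi$ forces this quotient to equal $W(k)$ itself, giving $\Def_X \cong \Spec W(k)$. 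In the remaining case, $X$ is not infinitesimally rigid, so $d = 3$ and Theorem \ref{thm: deformation space} gives $\Def_X \cong \Spec W(k)[\varepsilon]/(\varepsilon^2, p^n \varepsilon)$, whose reduction is $\Spec W(k)$. Either way, Part (1) follows.

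For Part (2), the reducedness of $\Spec W(k)$ ensures that $\pi$ factors uniquely through $(\Def_X)_{\rm red}$, and under the identification from Part (1) the resulting morphism is a $W(k)$-algebra endomorphism of $W(k)$, hence the identity. Therefore the closed immersion $(\Def_X)_{\rm red} \hookrightarrow \Def_X$ is realized by $\pi$, so pulling back a miniversal family along this immersion recovers $\mathcal{X}$.

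The main obstacle I expect is the infinitesimally rigid case of Part (1): Schlessinger alone only identifies $R$ as a quotient of $W(k)$, a priori of the form $W(k)/p^m$ for some $m \in \{1, 2, \ldots, \infty\}$, so ruling out the Artinian quotients genuinely requires the mixed-characteristic lift produced by Proposition \ref{prop: liftinggroupscheme} and Proposition \ref{prop: liftingrepresentation}.
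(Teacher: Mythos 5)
Your proposal is correct and follows essentially the same route as the paper: construct the mixed-characteristic lift as the quotient of $\hat{\Aff}^d_{W(k)}$ by the unique lift of the linear $G$-action (Proposition \ref{prop: liftingrepresentation}), then combine it with Proposition \ref{prop: rigidity} and Theorem \ref{thm: deformation space} to see that the reduced hull is a reduced quotient of $W(k)$ admitting a section over $W(k)$, hence is $\Spec W(k)$, and conclude Part (2) by factoring the classifying map through the reduction. Your case split (rigid versus non-rigid) and the identification of the induced endomorphism of $W(k)$ with the identity are just slightly more explicit versions of the steps the paper leaves implicit.
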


\begin{proof}
By Proposition \ref{prop: liftingrepresentation}, there is a unique linear $\tilde{G}$-action 
on $\hat{\Aff}^d_{W(k)}$ lifting the linear action of $G$ on $\hat{\Aff}^d$. 
The quotient $\mathcal{X}$ of $\hat{\Aff}^d_{W(k)}$ by this $\tilde{G}$-action is a deformation of 
$X$ over $W(k)$, so $X$ lifts to $W(k)$.

Now Claim (1) follows from Proposition \ref{prop: rigidity} and Theorem \ref{thm: deformation space}. 
Indeed, these results show that the special fibre of $(\Def_{X})_{{\rm red}} $ has length 
$1$ over $k$. 
Since $X$ lifts to characteristic $0$, every irreducible component of $(\Def_{X})_{{\rm red}} $ dominates $W(k)$, 
which shows $(\Def_{X})_{{\rm red}} \cong \Spec W(k)$.

As for Claim (2), 
the deformation $\mathcal{X}$ over $W(k)$ is induced by a map $\Spec~W(k) \to \Def_X$,
which factors through $(\Def_{X})_{{\rm red}} \cong \Spec W(k)$ since $W(k)$ is reduced.
\end{proof}

\begin{Remark}
 The reader should compare this result with the fact that a lift of $G$ over the
 Witt ring $W(k)$ is, in general, \emph{not} unique,  
 see Example \ref{example: explicitnontrivialdeformation}.
 More precisely, any two lifts $\widetilde{G}_i$, $i=1,2$
 of $G$ over $K:={\rm Frac}(W(k))$ become isomorphic over $\overline{K}$
 (Proposition \ref{prop: liftinggroupscheme}).
 An embedding $G\to\GL_{d,k}$ lifts uniquely to an embedding
 $\widetilde{G}_i\to\GL_{d,K}$ 
 (Proposition \ref{prop: liftingrepresentation})
 and thus, we obtain two quotient singularities
 $\Aff_K^d/\widetilde{G}_i$ over $K$, which 
 become isomorphic over $\overline{K}$.
 It is therefore surprising that Corollary \ref{cor: canonicallift}
 ensures that these two quotient singularities are already isomorphic
 over $K$.
 In particular, the uniqueness result of Theorem \ref{thm: lrqmain}
 does \emph{not} hold over fields that are not algebraically closed.
 In fact, Example \ref{example: explicitnontrivialdeformation} shows that
 it does not hold over $K$, which is of characteristic zero and perfect.
\end{Remark}

\begin{proof}[Proof of Theorem \ref{thm: deformation space}]
By Corollary \ref{cor: non-rigid 3fold p=0,2}, we may assume that $p\geq3$.
Next, we note that $H^i(U,T_U) = H^i(V/G^\circ,T_{V/G^\circ})^{G^\et}$ for $i = 1,2$ follows 
from \cite[Section 2, Proposition]{Schlessinger}. 
The natural maps $\Def_X\to\Def_U$ and $\Def_{V/G^\circ}\to\Def_{\mathbb{A}^3/G^\circ}$
are isomorphisms, because the total spaces $X$ and $\mathbb{A}^3/G^\circ$ 
are of depth $\geq3$, see Proposition \ref{prop: lrq}.
In particular, the map in the middle of 
$$
 \Def_X \,=\, \Def_U \,=\, (\Def_{V/G^\circ})^{G^\et} \,\to\, \Def_{V/G^\circ} 
 \,=\, \Def_{\mathbb{A}^3/G^\circ}
$$
is an isomorphism, as it is formally smooth and since it induces 
an isomorphism on tangent spaces. 
Hence, we may assume that $X$ is of type $\frac{1}{p^n}(q_1,q_2,q_3)$ with $p^n \mid (q_1 + q_2 + q_3)$ 
and we have to show that $\Def_X \cong \Spec W(k)[\varepsilon]/(\varepsilon^2, p^n \varepsilon)$. 
Here, we use a symmetric parametrisation $q_1,q_2,q_3$ instead of $1,q_1,q_2$.

Next, note that we already know that $X$ admits a lift $\widetilde{X}$ to $W(k)$. 
Indeed, we can set $\widetilde{G} = \bmu_{p^n,W(k)}$ and consider the quotient of $W(k)[[u_1,u_2,u_3]]$ 
by the diagonal action of $\widetilde{G}$ with weights $(q_1,q_2,q_3)$. 
Describing $X$ and $\widetilde{X}$ as spectra of invariant rings $R$ and $\widetilde{R}$, we obtain
\begin{eqnarray*}
R &=& k[[u^e \mid q \cdot e = 0 \pmod{p^n}]], \text{ and } \\
\widetilde{R} &=& W(k)[[u^e \mid q \cdot e = 0 \pmod{p^n}]],
\end{eqnarray*}
where $e$ and $q$ are multi-indices. 
We will use the open affine cover of $\widetilde{U} := \Spec \widetilde{R} - V(u_1,u_2,u_3)$ 
given by the $\widetilde{U}_i := \Spec \widetilde{R}_i$, 
where $\widetilde{R}_i := \widetilde{R}[(u_i^{p^n})^{-1}]$. 
Since $\Def_X=\Def_U$, it suffices to study deformations of $U$.

First, we construct a deformation $\mathcal{U}_{\mathrm{univ}}$ of $U$ over
$W(k)[\varepsilon]/(\varepsilon^2,p^n\varepsilon)$, which will turn out to be a miniversal deformation of $U$. 
For this, consider indices modulo $3$ and consider the alternating $1$-cochain in $C^1(\tilde{U}_i, T)$ 
defined by
$$
f_{i+1,i+2} := \frac{q_i u_i}{u_1u_2u_3} \frac{\partial}{\partial{u_i}} \in T_{\tilde{R}_{i+1,i+2}/W(k)}.
$$
One easily checks that
$$
f_{12} + f_{23} + f_{31} = p^n \frac{D_0}{u_1u_2u_3} \in T_{\tilde{R}_{123}/W(k)},
$$
where $D_0$ is defined by the property $D_0(u^e) = ((q \cdot e)/p^n)u^e$. 
In particular, we can use the $1 + \varepsilon f_{ij}$ to glue the 
$\widetilde{U}_i \otimes W(k)[\varepsilon]/(\varepsilon^2,p^n \varepsilon)$ and obtain a 
deformation $\mathcal{U}_{\mathrm{univ}}$ of $U$ over $W(k)[\varepsilon](\varepsilon^2,p^n\varepsilon)$. 
By the proof of Theorem \ref{thm: non-rigid 3foldclassification}, the induced $1$-cocycle $(f_{ij} \pmod{p})$ 
generates $H^1(U,T_U)$, so $\mathcal{U}_{\mathrm{univ}}$ induces the universal deformation of 
$U$ over $k[\varepsilon]/(\varepsilon^2)$. 

Now, we show that $\cal{U}_{\mathrm{univ}}$ is indeed a miniversal deformation of $U$. 
Write $S = W(k)[[\varepsilon]] $ and $I =(\varepsilon^2, p^n \varepsilon) \subset W(k)[[\varepsilon]]$. 
We know by \cite[Proposition 3.10]{SchlessingerArtinrings} that there is a miniversal deformation space, 
which is of the form $S/I'$ for some $I'$. 
There is a $W(k)$-homomorphism $S/I' \to S/I$ that induces a $k$-isomorphism 
$k[\varepsilon]/(\varepsilon^2) \to k[\varepsilon]/(\varepsilon^2)$. 
By replacing $\varepsilon$, we may assume that $S/I' \to S/I$ is a $S$-homomorphism and hence, $I' \subset I$. 
To show that it is an isomorphism, we have to show that $I/I' = 0$ or, equivalently by Nakayama's lemma, 
that $I' + \idealm I = I$.
Therefore, it suffices to show the following:

\bigskip
\noindent \textbf{Claim 1:} Suppose $I' \subseteq S$ is an ideal satisfying $\idealm I \subset I' \subset I$ 
and such that there exists a deformation $\cal{U}' \to \Spec S/I'$ of $\cal{U}_{\mathrm{univ}}$. 
Then, $I' = I$. 
\bigskip

\textit{Proof of Claim 1:} 
Let $\mathcal{U}'_{i} \subset \mathcal{U}'$ be the open subschemes corresponding to the underlying 
spaces of $U_{i} \subset U$.
For simplicity, we write $A' = S/I'$.
By the assumption that $\mathcal{U}'$ extends $\mathcal{U}_{\mathrm{univ}}$, we can choose 
isomorphisms $\alpha_i \colon \mathcal{U}_i \to \tilde{U}_i \otimes A'$, such that the cocycle 
defined by $\beta_{ij} := \alpha_i \alpha_j^{-1} \in \Aut(\tilde{U}_{ij} \otimes A')$
satisfies 
$
\beta_{ij} \equiv 1 + \varepsilon f_{ij} \pmod{I}
$.

Thus, we can write $\beta_{ij} = \exp(\varepsilon f_{ij}) + \theta_{ij}$, with 
$\theta_{ij} \in \Der(\tilde{R}_{ij} \otimes A',  I (\tilde{R}_{ij} \otimes A')) \cong \Der(R_{ij},R_{ij}) \otimes I/I'$.
A straightforward calculation shows that
\begin{equation} \label{eq: obstruction}
\id \,=\, \beta_{12} \beta_{23} \beta_{31} \,=\, \id + \varepsilon  p^n \frac{D_0}{u_1 u_2 u_3}+ \varepsilon^2 \frac{D_1}{(u_1u_2u_3)^2} + \theta_{12} + \theta_{23} + \theta_{31},
\end{equation}
where 
$$
D_1 \,=\, q_1q_2 \left(u_2\frac{\partial}{\partial u_2} - u_1\frac{\partial}{\partial u_1}\right).
$$
Hence, the obstruction to extending $\mathcal{U}_{\mathrm{univ}}$ to $A'$ is 
$$
  \varepsilon p^n \frac{D_0}{u_1 u_2 u_3} + \varepsilon^2 \frac{D_1}{(u_1u_2u_3)^2} \,\in\, H^2(U,T_U) \otimes I/I'.
$$
Thus, to show that $I' = I$, it suffices to show that $\frac{D_0}{u_1 u_2 u_3}$ and $\frac{D_1}{(u_1u_2u_3)^2}$ 
are $k$-linearly independent, because then Equation \eqref{eq: obstruction} shows that
$p^n\varepsilon = \varepsilon^2 = 0 \in I/I'$, hence $I = I'$, which is precisely Claim 1. 
Therefore, let us finish the proof by showing the following:

\bigskip
\noindent \textbf{Claim 2:} $\frac{D_0}{u_1 u_2 u_3}$ and $\frac{D_1}{(u_1u_2u_3)^2}$ are $k$-linearly independent in $H^2(U,T_U)$. 
\bigskip

\textit{Proof of Claim 2:} 
To see this, recall that the image of $D_0$ under the map $H^0(U,T_U) \to H^0(V,T_{V/U}^{p^n})^G$ is a generator, 
so that $\frac{D_0}{u_1 u_2 u_3}$ is non-trivial in $H^2(V,T_{V/U}^{p^n})^G$  
under the identification given in the proof of Theorem \ref{thm: non-rigid 3foldclassification}. 
Next, $D_1$ is a $G$-invariant derivation on $k[[u_1,u_2,u_3]]$ that is not contained in 
${\rm Lie}(G) \subseteq H^0(Y,T_Y)$. 
Hence, $\frac{D_1}{(u_1u_2u_3)^2}$ is a non-trivial element in 
${\rm Coker}(H^2(V,T_{V/U})^G \to H^2(V,T_V)^G)$. 
Therefore, the exact sequence
\[
H^2(V,T_{V/U})^G \,\to\, H^2(V,T_V)^G \,\to\, H^2(U,T_U) \,\to\, H^2(V,T_{V/U}^{p^n})^G \,\to\, 0
\]
from the proof of Proposition \ref{prop: rigidity} shows Claim 2 and thus, finishes the proof. 
\end{proof}

\subsection{Concerning arithmetic rigidity}
\label{subsec: arithmetic rigidity}
For an lrq singularity in positive characteristic,
there exists a canonical lift over the Witt ring.
In particular, lrq singularities
are \emph{not} arithmetically rigid, that is, rigid in mixed characteristic.
On the other hand, the canonical
lift together with Theorem \ref{thm: deformation space} 
and Corollary \ref{cor: canonicallift}
could be viewed as a sort of (infinitesimal) 
rigidity result in mixed characteristic.

\section{F-regular surfaces are lrq surfaces} 
\label{sec: Fregular}

Over the complex numbers and in dimension two, it is classical
that the klt (resp. canonical) singularities are precisely
the quotient singularities by finite subgroups
of $\GL_2$ (resp. $\SL_2$).
In this section, we extend this result to positive characteristic
and show that F-regular (resp. F-regular and Gorenstein) surface singularities
are precisely the quotient singularities by finite and linearly
reductive subgroup schemes of $\GL_2$ (resp. $\SL_2)$.

\subsection{F-regular Gorenstein surface singularities}
We recall that the F-regular Gorenstein surface singularities are precisely 
the F-regular rational double points. 
All of them have been independently realised as lrq singularities 
by Hashimoto and Liedtke--Satriano.

\begin{Theorem}
 \label{thm: LS}
 The following rational double points over an algebraically closed field $k$ of 
 characteristic $p\geq0$
  $$\begin{array}{c|cccl}
           & \mbox{group scheme}  & \mbox{length} & \mbox{\'etale} & \mbox{characteristic}\\
       \hline
      A_n & \bmu_{n+1}  & n+1 &\mbox{if }p\nmid(n+1) &  \mbox{all $p$} \\
      D_n & \BD_{n-2}    & 4(n-2) & \mbox{if }p\nmid(n-2) & p\neq 2 \\
      E_6 & \BT_{24}      & 24   & \mbox{yes} & p\neq 2,3\\
      E_7 & \BO_{48}     & 48   & \mbox{yes} & p\neq 2,3 \\
      E_8 & \BI_{120}     & 120 & \mbox{yes} & p\neq 2,3,5
  \end{array}$$
  are quotient singularities by the indicated linearly reductive subgroup schemes 
  of $\SL_2$ in the indicated characteristics.
\end{Theorem}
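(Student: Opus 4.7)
The proof splits naturally into two parts: the cyclic case $A_n$, which is a direct invariant-theoretic calculation valid in every characteristic, and the binary polyhedral cases $D_n, E_6, E_7, E_8$, which reduce to the classical Klein--Du Val computations via canonical lifting.

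For $A_n$, the embedding $\bmu_{n+1} \hookrightarrow \SL_{2,k}$ is diagonal, $\zeta \mapsto \diag(\zeta, \zeta^{-1})$. Since $\bmu_{n+1}$ is diagonalizable and hence linearly reductive, one computes directly that the invariant ring of $k[[u,v]]$ is generated by the monomials $x := u^{n+1}$, $y := v^{n+1}$, and $z := uv$, subject to the single relation $xy = z^{n+1}$. This is a formal $A_n$-singularity in any characteristic, irrespective of whether $p$ divides $n+1$.

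For the remaining types, let $G \in \{\BD_{n-2}, \BT_{24}, \BO_{48}, \BI_{120}\}$ equipped with its embedding $\rho: G \hookrightarrow \SL_{2,k}$ from Theorem \ref{thm: hashimoto}. By Proposition \ref{prop: liftinggroupscheme} and Proposition \ref{prop: liftingrepresentation}, both $G$ and $\rho$ admit canonical lifts $\widetilde{G}$ and $\widetilde{\rho}$ to the Witt ring $W(k)$; the geometric generic fiber of $\widetilde{G}$ is the associated abstract binary polyhedral group acting on $\SL_{2,\overline{K}}$ in the classical way, where $K=\Frac W(k)$. Setting $\widetilde{S} := W(k)[[u,v]]$ with the induced $\widetilde{G}$-action, linear reductivity of $\widetilde{G}$ implies that the formation of invariants commutes with base change along $W(k) \to k$ and along $W(k) \to \overline{K}$. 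The geometric generic fiber of $\widetilde{S}^{\widetilde{G}}$ is therefore the classical Klein quotient, which is known to be a Du Val singularity of the indicated type $D_n$, $E_6$, $E_7$, or $E_8$, defined by the familiar Klein equation.

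The main technical step is then to identify the special fiber $\widetilde{S}^{\widetilde{G}} \otimes_{W(k)} k$ as an RDP of the expected type in characteristic $p$, and this is where the characteristic hypotheses enter: the Klein equations have integral models over $W(k)$ whose reductions modulo $p$ remain defining equations for tame Du Val singularities precisely in the listed characteristics (for instance, the $D_n$ equation $x^2 + y^2 z + z^{n-1}$ ceases to define a rational double point when $p=2$). Concretely, I would realize the classical Klein generators as explicit power series in $\widetilde{S}^{\widetilde{G}}$ and verify that their reductions modulo $p$ still generate the special fiber with the same relation. Alternatively, one can appeal to rigidity of tame Du Val singularities within equicharacteristic deformations: since $\widetilde{S}^{\widetilde{G}} \to \Spec W(k)$ is flat and the tame cases admit simultaneous resolution, the special fiber inherits the resolution graph of the generic fiber. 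This matching of special fibers is expected to be the main obstacle, and it is precisely the content carried out in \cite{Hashimoto} and \cite{LiedtkeSatriano}, whose results we invoke.
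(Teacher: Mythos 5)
Your proposal is correct in outline, but it is worth noting that the paper does not actually prove this theorem: its entire proof is the citation ``See \cite[Corollary 3.10]{Hashimoto} or \cite[Proposition 4.2]{LiedtkeSatriano}'', and Hashimoto's argument there is a direct, case-by-case computation of the invariant rings in positive characteristic. Your route is genuinely different in spirit: the explicit $A_n$ computation ($x=u^{n+1}$, $y=v^{n+1}$, $z=uv$, $xy=z^{n+1}$) is complete and valid in every characteristic, and for the polyhedral cases you leverage the paper's own lifting machinery (Propositions \ref{prop: liftinggroupscheme} and \ref{prop: liftingrepresentation}) together with the fact that for a finite flat linearly reductive group scheme the Reynolds operator makes $\widetilde{S}^{\widetilde{G}}$ a direct summand of $\widetilde{S}$, so formation of invariants commutes with base change along $W(k)\to k$ (one should be slightly careful with $W(k)\to\overline{K}$, since $W(k)[[u,v]]\otimes\overline{K}$ is not $\overline{K}[[u,v]]$, but this is routine). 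What your approach buys is a conceptual reduction to Klein's classical computation; what it leaves open is exactly the step you flag — identifying the special fiber — and there you fall back on the same two references the paper cites, so in the end your proof is no less (and no more) self-contained than the paper's. If you wanted to close that gap without explicit equations, the paper's own toolkit suffices: by Proposition \ref{prop: lrq} the special fiber is F-regular, Gorenstein (hence an F-regular rational double point), with class group $(G^{\rm ab})^D$ and F-signature $1/|G|$; combined with tautness (Theorem \ref{thm: Fregulartaut}) and the short list of F-regular RDP resolution graphs (Theorem \ref{thm: Fregularclassification}), these invariants pin down the type, and Tanaka's tautness result is independent of Theorem \ref{thm: LS}, so no circularity arises. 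One small inaccuracy: the characteristic restrictions in the table are imposed already by the existence of the subgroup scheme in Theorem \ref{thm: hashimoto} (the relevant Sylow subgroups of the binary polyhedral groups fail to be cyclic and normal in the excluded characteristics), not primarily by the behavior of the Klein equations modulo $p$, although your observation about the $D_n$ equation in characteristic $2$ is consistent with this.
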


\begin{proof}
See \cite[Corollary 3.10]{Hashimoto} or 
\cite[Proposition 4.2]{LiedtkeSatriano}.
\end{proof}

Combining Theorem \ref{thm: LS} with the classification lists of Artin \cite{ArtinRDP} and 
Hara \cite{Hara} and the classification of finite, very small, and linearly reductive subgroup 
schemes of $\SL_2$ (see Theorem \ref{thm: hashimoto}), we have the following characteristic $p$ 
analog of Klein's theorem \cite{Klein}.

\begin{Theorem}\label{thm: FregularRDP}
  For a Gorenstein surface singularity $x\in X$ over an algebraically closed field $k$ of 
  characteristic $p > 0$, the following are equivalent
  \begin{enumerate}
   \item $x\in X$ is an lrq singularity,
   \item $x\in X$ is an lrq singularity by a subgroup scheme of $\SL_{2,k}$,
   \item $x\in X$ is F-regular.
  \end{enumerate}
  Moreover, if $p\geq7$, then these are equivalent to
  \begin{enumerate}
   \setcounter{enumi}{3}
   \item $x\in X$ is a rational double point.
  \end{enumerate}
\end{Theorem}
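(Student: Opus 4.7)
The plan is to prove the chain of equivalences $(1) \Leftrightarrow (2) \Leftrightarrow (3)$ in general, and then $(3) \Leftrightarrow (4)$ under the hypothesis $p \geq 7$.

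The equivalence $(1) \Leftrightarrow (2)$ will follow directly from Corollary \ref{cor: lrq<->verysmall}, which identifies Gorenstein lrq singularities of dimension $d$ with lrq singularities by subgroup schemes of $\SL_{d,k}$ (the Gorenstein condition is equivalent to $G$ preserving the top form $du_1 \wedge du_2$, that is, $\chi_{\det}$ being trivial). The implication $(2) \Rightarrow (3)$ is immediate from Proposition \ref{prop: lrq}(1), which records that every lrq singularity is F-regular.

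The substantive direction is $(3) \Rightarrow (2)$. Given $X$ F-regular and Gorenstein, I would first note that F-regular surface singularities are normal rational (by Theorem \ref{thm: Fregularclassification}) and that a two-dimensional rational Gorenstein singularity is a rational double point, so $X$ is a rational double point. Next, I would apply Hara's classification (Theorem \ref{thm: Fregularclassification}) together with the Gorenstein condition, which forces every exceptional curve in the minimal resolution to be a smooth rational $(-2)$-curve, to pin down the resolution graph: the chain case yields an $A_n$-singularity (any $p$); star-shaped of type $(2,2,d)$ yields $D_{d+2}$ with $p \neq 2$; types $(2,3,3)$ and $(2,3,4)$ yield $E_6$ and $E_7$ with $p \neq 2, 3$; and type $(2,3,5)$ yields $E_8$ with $p \neq 2, 3, 5$. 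These are exactly the rational double points realized as $\SL_{2,k}$-lrq singularities in Theorem \ref{thm: LS}, in the matching characteristics. Writing $Y$ for such a realization, $X$ and $Y$ are F-regular surface singularities sharing the same resolution graph, so Tanaka's tautness result (Theorem \ref{thm: Fregulartaut}) yields a formal isomorphism $X \cong Y$, completing $(3) \Rightarrow (2)$.

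For $p \geq 7$, the equivalence $(3) \Leftrightarrow (4)$ is straightforward from Hara's list: when $p \geq 7$ no characteristic restriction in Theorem \ref{thm: Fregularclassification}(2) is violated, so every rational double point is F-regular, and conversely F-regular Gorenstein surface singularities are rational double points by the argument above. The main obstacle is the case-by-case comparison in $(3) \Rightarrow (2)$: one has to verify that Hara's list of F-regular rational singularities, intersected with the Gorenstein locus, matches exactly the list of rational double points realized as lrq singularities by subgroup schemes of $\SL_{2,k}$ in Theorem \ref{thm: LS}, with the characteristic restrictions agreeing on both sides. This is a bookkeeping step rather than a deep argument, but it is the bridge that lets tautness deliver the formal identification.
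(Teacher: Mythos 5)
Your proposal is correct and follows essentially the same route as the paper: $(1)\Leftrightarrow(2)$ via the Gorenstein/$\SL_2$ dichotomy, $(2)\Rightarrow(3)$ from Proposition \ref{prop: lrq}, and $(3)\Rightarrow(2)$ by reducing to rational double points and matching Hara's list (Theorem \ref{thm: Fregularclassification}) against Theorem \ref{thm: LS}. Your explicit appeal to Tanaka's tautness (Theorem \ref{thm: Fregulartaut}) to identify $X$ with the quotient model is a careful way to handle the non-taut RDP forms in small characteristic, a step the paper leaves implicit by invoking Artin's classification lists in the discussion preceding the theorem.
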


\begin{proof}
First, note that the equivalence $(3) \Leftrightarrow (4)$ in characteristic $p \geq 7$ 
follows from Theorem \ref{thm: Fregularclassification}, so that we can focus 
on the equivalence of the first three properties.

Clearly, we have $(2) \Rightarrow (1)$. 
To prove $(1) \Rightarrow (2)$, note first that the singularity is an lrq singularity 
by a subgroup scheme of $\GL_2$ by Proposition \ref{prop: linearizable}. 
Now, it suffices to note that a linearly reductive subgroup scheme of $\GL_2$ that 
is not contained in $\SL_2$ acts non-trivially on a regular $2$-form, 
so the quotient will not be Gorenstein.

The implication $(2) \Rightarrow (3)$ is contained in Proposition \ref{prop: lrq}. 
Conversely, for $(3) \Rightarrow (2)$ we use that F-regular surface singularities 
are rational by Theorem \ref{thm: Fregularclassification}. 
Since $X$ was assumed to be Gorenstein, this implies that $X$ is a rational double point. 
Thus, the claim follows by noting that the rational double points in 
Theorem \ref{thm: LS} are precisely the F-regular rational double points.
\end{proof}

\begin{Remark}
If $p=0$, then the equivalence $(1)\Leftrightarrow(4)$ is classical, 
see, for example \cite{Brieskorn} or \cite{Durfee}.
\end{Remark}

\begin{Remark}
It would be interesting to have a proof of these equivalences without using 
classification lists.
\end{Remark}

\subsection{F-regular surface singularities}

Next, we study also non-Gorenstein surface singularities and consider 
quotients by finite and linearly reductive subgroup schemes of $\GL_2$.

\begin{Theorem}\label{thm: Fregularklt}
  For a surface singularity $x\in X$ over an algebraically closed field $k$ of characteristic $p > 0$, 
  the following are equivalent
  \begin{enumerate}
   \item $x\in X$ is an lrq singularity,
   \item $x\in X$ is an lrq singularity by a subgroup scheme of $\GL_{2,k}$,
   \item $x\in X$ is F-regular.
  \end{enumerate}
  Moreover, if $p\geq7$, then these are equivalent to
  \begin{enumerate}
   \setcounter{enumi}{3}
   \item $x\in X$ is a normal klt singularity,
  \end{enumerate}
\end{Theorem}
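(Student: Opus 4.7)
The plan is to establish (2) $\Rightarrow$ (1) $\Rightarrow$ (2) $\Rightarrow$ (3), then tackle the only nontrivial direction (3) $\Rightarrow$ (2), and finally handle the ``moreover" statement using Hara's classification directly.

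First, (2) $\Rightarrow$ (1) is immediate by definition. For (1) $\Rightarrow$ (2), I would invoke Proposition \ref{prop: linearizable}: any very small action of a finite linearly reductive group scheme $G$ on $\Spec k[[u_1,u_2]]$ can be linearized, yielding an embedding $G \hookrightarrow \GL_{2,k}$ compatible with the action. The implication (2) $\Rightarrow$ (3) is Proposition \ref{prop: lrq}(1), since lrq singularities are F-regular. The ``moreover" statement is a direct consequence of Hara's classification (Theorem \ref{thm: Fregularclassification}): for $p \geq 7$, F-regular surface singularities coincide with normal klt singularities.

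The heart of the argument is (3) $\Rightarrow$ (2). Here I would exploit the tautness of F-regular surface singularities (Theorem \ref{thm: Fregulartaut}), which reduces the problem to matching dual graphs: it suffices to exhibit, for each dual graph $\Gamma$ appearing in Hara's list in Theorem \ref{thm: Fregularclassification}(2), a very small linearly reductive subgroup scheme $G \subseteq \GL_{2,k}$ whose associated lrq singularity has minimal resolution with dual graph $\Gamma$. For this, I would traverse the classification in Theorem \ref{thm: smallingl2} and compute minimal resolutions case by case. The cyclic group schemes $\bmu_{n,q}$ of case (1) produce the Hirzebruch--Jung chains, matching all cases where $\Gamma$ is a chain; the binary dihedral family $(\bmu_{2m,1}, \bmu_{2m,1}; \BD_n, \BD_n)$ and its variant in (2a)--(2b) produce the star-shaped graphs of type $(2,2,d)$ (requiring $p \neq 2$, matching Hara's restriction); and the binary tetrahedral, octahedral, and icosahedral families (3a), (3b), (4), (5) produce the star-shaped graphs of types $(2,3,3)$, $(2,3,4)$, and $(2,3,5)$ respectively, under precisely the characteristic restrictions listed in Theorem \ref{thm: smallingl2}. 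In each case, the computation of the dual graph reduces to a toric/combinatorial calculation via the explicit embedding into $\GL_{2,k}$ and, for the non-cyclic cases, by first passing to the abelian cover by $(H_1, N_1; H_2, N_2)$ and then identifying the induced action on the resolution.

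The main obstacle is this case-by-case matching: while it is unsurprising that the lists agree (they have the same shape as in characteristic zero, where the analogous statement is classical by work of Brieskorn \cite{Brieskorn}), carrying out the resolution computation uniformly in positive characteristic requires some care, particularly because the various characteristic restrictions in Theorem \ref{thm: smallingl2} must be checked to be exactly those of Hara's Theorem \ref{thm: Fregularclassification}. The cleanest way to organize this is to observe that the restriction on $p$ in Theorem \ref{thm: smallingl2} comes from requiring a unique cyclic $p$-Sylow subgroup of $G_{\mathrm{abs}}$, and by Theorem \ref{thm: repsofGlr} the representation theory of $G$ agrees with that of $G_{\mathrm{abs}} \subseteq \GL_{2,\CC}(\CC)$; so the resolution of the lrq singularity in characteristic $p$ is formally identical to the resolution of the corresponding complex quotient singularity, whose dual graphs are the classical ones by Brieskorn. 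Finally, to confirm surjectivity onto Hara's list, it remains only to verify that the characteristic conditions match: the star-graph of type $(2,2,d)$ requires $p \neq 2$ (matches the dihedral case), $(2,3,3)$ and $(2,3,4)$ require $p \neq 2, 3$ (matches the tetrahedral and octahedral cases), and $(2,3,5)$ requires $p \neq 2, 3, 5$ (matches the icosahedral case). This completes the verification and hence the theorem.
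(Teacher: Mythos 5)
Your overall architecture coincides with the paper's: (2)$\Rightarrow$(1) by definition, (1)$\Rightarrow$(2) by Proposition \ref{prop: linearizable}, (2)$\Rightarrow$(3) by Proposition \ref{prop: lrq}, the $p\geq7$ equivalence with klt by Theorem \ref{thm: Fregularclassification}, and (3)$\Rightarrow$(2) by combining tautness (Theorem \ref{thm: Fregulartaut}) with a verification that the group schemes of Theorem \ref{thm: smallingl2} realize every dual graph on Hara's list. The one place where you diverge, and where you should be careful, is the justification that the dual resolution graph of $(\mathbb{A}^2/G)^\wedge$ in characteristic $p$ agrees with that of the corresponding complex quotient by $G_{\mathrm{abs}}$. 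Your ``cleanest way to organize this'' deduces it from the identification of representation theories via Theorem \ref{thm: repsofGlr}, but that identification by itself does not control the birational geometry of the quotient; agreement of characters and invariants does not formally yield the minimal resolution. The paper instead argues geometrically: blow up the origin, let $K$ be the kernel of the induced $G$-action on the exceptional $\PP^1$, pass to the smooth quotient $Y/K$ using the Chevalley--Shephard--Todd theorem for linearly reductive group schemes, and observe that all stabilizers of the residual action on the image of the exceptional curve are of the form $\bmu_n$ because they act linearizably on a $\PP^1$; this reduces the whole computation to the toric case, where the Hirzebruch--Jung calculus is characteristic-free, and then Brieskorn's tables apply verbatim. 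Your alternative suggestion of passing to the abelian cover given by the fiber-product description $(H_1,N_1;H_2,N_2)$ is a workable substitute, but as written it is only a gesture; either it or the paper's blow-up argument needs to be carried out to close the step. The matching of characteristic restrictions between Theorem \ref{thm: smallingl2} and Theorem \ref{thm: Fregularclassification}, which you check explicitly, is correct and is exactly what the paper relies on.
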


\begin{proof}
First, note that the equivalence $(3) \Leftrightarrow (4)$ in characteristic $p\geq7$ 
follows from Theorem \ref{thm: Fregularclassification}, so that we can focus on the 
equivalence of the first three properties.

The implication $(2) \Rightarrow (1)$ is clear and the converse follows 
from Proposition \ref{prop: linearizable}.

Note that $(2) \Rightarrow (3)$ follows from Proposition \ref{prop: lrq}, so that
we can focus on the implication $(3) \Rightarrow (2)$. 
We thus have to show that every F-regular surface singularity is an lrq singularity. 
It suffices to show that the lrq singularities by the group schemes that occur 
in Theorem \ref{thm: smallingl2} exhaust all F-regular surface singularities. 
Since F-regular surface singularities are taut by Theorem \ref{thm: Fregulartaut}, 
it suffices to show that all dual graphs allowed by Theorem \ref{thm: Fregularclassification} 
occur as dual graphs of the resolution of $X = (\mathbb{A}^2/G)^\wedge$ 
for some $G$ in the list of Theorem \ref{thm: smallingl2}.

The resolution graph $\Gamma$ of $X$ is the same as the resolution of the 
corresponding quotient singularity by $G_{\mathrm{abs}}$ over the complex numbers.
This is well-known if $G = \bmu_{n,q}$ (and thus, $X$ is toric). 
In the general case, one studies the induced action of $G$ on the blow-up $Y$ of 
the origin in $\mathbb{A}^2$. 
Let $E \cong \PP^1$ be the exceptional divisor of this blow-up and let $K$ (resp. $H$) 
be the kernel (resp. image) of the induced homomorphism $G \to \Aut_E$. 
By the Chevalley--Shephard--Todd theorem \cite{Satriano}, $Y' := Y/K$ 
is smooth and the image $E'$ of $E$ in $Y'$ is still isomorphic to $\PP^1$. 
The action of $H$ on $Y'$ preserves $E'$ and acts faithfully on it. 
Since $H$ is linearly reductive and $E'$ is $H$-stable, the action of every stabiliser 
can be linearised on $E'$, hence every such stabiliser is isomorphic to some $\bmu_n$. 
This reduces the calculation of $\Gamma$ to the toric case. 
We refer the reader to \cite[Satz 2.11]{Brieskorn} for a list and the description of $\Gamma$,
which carries over without change to positive characteristic. 
Comparing this list with the list of resolution graphs of F-regular surface 
singularities given in Theorem \ref{thm: Fregularclassification}, 
one easily checks that all dual graphs are realised. 
As explained in the previous paragraph, this finishes the proof.
\end{proof}

\begin{Remark}
If $p=0$, then the equivalence $(1)\Leftrightarrow(4)$ is classical, see 
\cite[Theorem 9.6]{Kawamata}.
\end{Remark}

\begin{Remark}
In \cite[Satz 2.10]{Brieskorn}, Brieskorn showed that 2-dimensional quotient singularities 
over the complex numbers are taut.
Since $2$-dimensional lrq singularities in characteristic $p>0$
are F-regular by Proposition \ref{prop: lrq}, it follows from Tanaka's result 
in Theorem \ref{thm: Fregulartaut} that they are taut, which generalises 
Brieskorn's theorem to positive characteristic.
\end{Remark}

\begin{Remark}
In characteristic $2$, Theorem \ref{thm: Fregularklt} together with the list in 
Theorem \ref{thm: smallingl2} and Proposition \ref{prop: toric} as well as the fact that normal toric surface singularities are rational \cite[Theorem 11.4.2]{Cox}, hence $\mathbb{Q}$-factorial, shows that 
F-regular surface singularities are the same as toric surface singularities. 
This emphasises the fact that there is a huge difference between the 
notions of klt and F-regular singularities in small characteristics.
\end{Remark}

\begin{Remark}
The proof of Theorem \ref{thm: Fregularklt} shows that the dual resolution graph of 
an lrq surface singularity $X$ uniquely determines the very small subgroup 
scheme $G \subseteq \GL_{2,k}$ by which $X$ is a quotient singularity. 
We note that this observation, together with the rigidity of lrq singularities 
in dimension $d \geq 3$ that we proved in Proposition \ref{prop: rigidity} 
and Theorem \ref{thm: deformation space} can be used to give another proof of 
Theorem \ref{thm: lrqmain} (3) by reducing the problem to characteristic $0$, 
where the classical topological proof applies.
\end{Remark}

\subsection{Wall's conjecture for lrq singularities}
Let $x\in X$ be a two-dimensional singularity over the complex numbers 
that is isomorphic to the quotient $(\mathbb{A}^d/G)^\wedge$, 
where $G$ is a reductive algebraic group 
acting linearly on the affine space $\mathbb{A}^d$.
Then, C.T.C. Wall \cite{Wall} conjectured that
$x\in X$ is a quotient singularity in the sense of this article,
that is, isomorphic to $(\mathbb{A}^2/\Gamma)^\wedge$, where
$\Gamma$ is a finite group acting linearly on $\mathbb{A}^2$.
This conjecture was proven by Gurjar \cite{Gurjar}
and Kumar \cite{Kumar}.
Using Theorem \ref{thm: Fregularklt}, we obtain the following
analog for lrq singularities.

\begin{Theorem}
\label{thm: Wall}
Let $x\in X$ be a two-dimensional singularity over an algebraically
closed field $k$ of characteristic $p>0$.
Assume that $x\in X$ is isomorphic to $(\mathbb{A}^d/G)^\wedge$,
where $G$ is an affine group scheme acting linearly on $\mathbb{A}^d$
and where $G$ is an extension
$$
  1 \,\to\, G_1 \,\to\, G \,\to\, G_2 \,\to\, 1
$$
where $G_2$ is a linearly reductive group scheme 
(not necessarily finite) and where 
$G_2$ is connected reductive algebraic, such that
the action of $G_2$ on $\mathbb{A}^d$ is good in the sense of
\cite{HashimotoGoodfiltrations}.
Then, $x\in X$ is an lrq singularity.
\end{Theorem}

\begin{proof}
By \cite[Corollary 2]{HashimotoGoodfiltrations}, $x \in X$ is $F$-regular. Thus, by Theorem \ref{thm: Fregularklt}, $x \in X$ is an lrq singularity.
\end{proof}

\section{Riemenschneider's conjecture for lrq singularities}
\label{sec: Riemenschneider}
Over the complex numbers, Riemenschneider \cite{Riemenschneider} conjectured that every 
deformation of an isolated quotient singularity is again an isolated quotient singularity. 
While this follows from Schlessinger's rigidity result \cite{Schlessinger} if the dimension is at least $3$, 
the dimension $2$ case is more subtle and was later settled by Esnault and Viehweg in \cite{EsnaultViehweg2}. 
Moreover, Koll\'ar and Shepherd-Barron \cite{KSB} showed that cyclic quotient 
singularities deform to cyclic quotient singularities.

\subsection{The conjecture}
First, we propose a version of Riemenschneider's conjecture for lrq singularities
in positive and mixed characteristic.
We also propose a conjectural version of the result of Koll\'ar and Shepherd-Barron
for cyclic lrq singularities in positive and mixed characteristic.
Here, we call an lrq singularity \emph{cyclic} if it satisfies any of the
equivalent conditions of Proposition \ref{prop: toric}. Recall that by Theorem \ref{thm: lrqmain}, an lrq singularity $x \in X$ uniquely determines the linearly reductive group scheme $G$ by which $X$ is a quotient singularity. Thus, it makes sense to say that $G$ is \emph{associated to} $x \in X$.

\begin{Conjecture}
 \label{conj: Riemenschneider}
  Let $B$ be the spectrum of a DVR with closed, generic, and geometric generic points
  $0$, $\eta$, and $\overline{\eta}$, respectively.
  Let $\mathcal{X}\to B$ be a flat morphism with special and geometric 
  generic fibre $\mathcal{X}_0$ and $\mathcal{X}_{\overline{\eta}}$, respectively. Assume that the non-smooth locus of $\mathcal{X} \to B$ is proper over $B$.
  \begin{enumerate}
    \item \label{item: conj: lrq} If $\mathcal{X}_0$ is an lrq singularity, then $\mathcal{X}_{\overline{\eta}}$ contains at worst 
       lrq singularities.
     \item \label{item: conj: length} Let $G_0$ be the group scheme associated to $\mathcal{X}_0$ and let $G_\eta$ be the group scheme associated to an lrq singularity on $\mathcal{X}_\eta$. Then, we have $|G_0| \geq |G_\eta|$, where $|-|$ denotes the length of a group scheme.
    \item \label{item: conj: cyclic} If $\mathcal{X}_0$ is a cyclic lrq singularity, then $\mathcal{X}_{\overline{\eta}}$
       contains at worst cyclic lrq singularities.
  \end{enumerate}
  
\end{Conjecture}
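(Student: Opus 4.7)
The plan is to attack the conjecture separately in dimension $\geq 3$ and in dimension $2$, using the deformation-theoretic machinery already established in the paper together with semi-continuity results for F-singularities. In dimension $\geq 3$, Corollary \ref{cor: rigidity} shows that every lrq singularity is rigid, and Corollary \ref{cor: canonicallift} identifies $(\Def_{\mathcal{X}_0})_{{\rm red}}$ with $\Spec W(k)$, the universal reduced deformation being the quotient of $\hat{\mathbb{A}}^d_{W(k)}$ by the canonical lift of the $G$-action. Hence any $\mathcal{X} \to B$ as in the conjecture is, up to a faithfully flat base change of $B$, pulled back from this canonical lift, and so $\mathcal{X}_{\bar\eta}$ is itself the quotient of a smooth germ by the generic fiber of the lifted linearly reductive action --- thus again lrq with the same group scheme (giving $|G_0| = |G_\eta|$, and $G_0$ cyclic forcing $G_\eta$ cyclic). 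All three parts follow in dimension $\geq 3$.

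In dimension $2$, rigidity genuinely fails, and the key input becomes Theorem \ref{thm: Fregularklt}: an isolated surface singularity is lrq if and only if it is F-regular. Under a $\mathbb{Q}$-Gorenstein assumption, I would deduce Part (1) from openness of strong F-regularity in $\mathbb{Q}$-Gorenstein families over $B$ in equal characteristic $p$: every singular point of $\mathcal{X}_{\bar\eta}$ specializes into the non-smooth locus of $\mathcal{X}\to B$ (which is proper over $B$), so F-regularity of $\mathcal{X}_0$ propagates to F-regularity, hence lrq-ness, at each singular point of $\mathcal{X}_{\bar\eta}$. For Part (2) I would invoke lower semi-continuity of the F-signature: for each singular point $x_\eta$ of $\mathcal{X}_{\bar\eta}$ specializing to the singular point of $\mathcal{X}_0$, Proposition \ref{prop: lrq}(\ref{item: F-signature}) gives $1/|G_\eta| = s(\mathcal{X}_{\bar\eta}, x_\eta) \geq s(\mathcal{X}_0, 0) = 1/|G_0|$ and hence $|G_0| \geq |G_\eta|$. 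The Gorenstein case of Part (1) does not require the $\mathbb{Q}$-Gorenstein hypothesis, since Gorensteinness is preserved in flat families, and an F-regular Gorenstein deformation is lrq with group scheme in $\SL_2$ by Theorem \ref{thm: FregularRDP}.

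For Part (3), Proposition \ref{prop: toric} reduces the claim to showing that a deformation of a $\bmu_n$-quotient isolated surface singularity is again a $\bmu_{n'}$-quotient. In equal characteristic $0$ this is the theorem of Koll\'ar and Shepherd-Barron; my plan in positive characteristic is to combine Part (1) --- applicable because cyclic lrq singularities are $\mathbb{Q}$-Gorenstein --- with a cyclicity test on the associated group scheme $G_\eta$ of each generic-fiber singularity. The $p$-primary part of ${\rm Cl}$ is automatically cyclic for any lrq singularity by Proposition \ref{prop: lrq}(\ref{item: class group}), while the prime-to-$p$ part corresponds to $\pietloc$ via Propositions \ref{prop: torsfinet} and \ref{prop: torsfinabcl}; specialization of local \'etale fundamental groups then compares $\pietloc(\mathcal{X}_{\bar\eta}, x_\eta)$ with $\pietloc(\mathcal{X}_0, 0)$, and cyclicity of the latter forces cyclicity of the former, hence of $G_\eta$.

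The hardest step will be removing the $\mathbb{Q}$-Gorenstein hypothesis from Part (1) in dimension $2$ --- which, as remarked in the introduction, has since been accomplished by Sato--Takagi --- and establishing Part (2) in mixed characteristic, where F-signature arguments do not directly apply. Example \ref{ex: nonrigidexample} shows that in the wider setting of quotient singularities the group scheme can genuinely jump (e.g.\ $\balpha_p$ to $\C_p$); in the lrq setting one might similarly have $G_0 = \bmu_p$ deform to $G_\eta = \C_p$, where the lengths match but the group schemes differ. Controlling this in mixed characteristic would likely require an integral or $p$-adic refinement of the F-signature, or a direct specialization argument for the local fundamental group scheme, to secure the length inequality.
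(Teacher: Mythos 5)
The statement you are addressing is a \emph{Conjecture}: the paper itself establishes it only in pieces (dimension $\geq 3$ in full; Part (1) under a $\QQ$-Gorenstein hypothesis on the total space; Part (2) in three special cases; Parts (2) and (3), hence the whole conjecture, when $\mathcal{X}_0$ is Gorenstein), and your outline reproduces much of that evidence along the same lines -- rigidity plus the canonical lift in dimension $\geq 3$, openness of F-regularity in equicharacteristic-$p$ $\QQ$-Gorenstein families for Part (1), and an F-signature comparison for Part (2). Two concrete omissions relative to the paper's arguments: for Part (1) in dimension $2$ your openness argument only covers equal characteristic $p$; the mixed-characteristic case (residue characteristic $p$, generic characteristic $0$) is handled in the paper by passing through BCM-regularity and inversion of adjunction, a genuinely different input that ``openness of F-regularity'' does not supply. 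For Part (2), the inequality $s(\mathcal{X}_{\bar\eta},x_\eta)\geq s(\mathcal{X}_0,0)$ is not literally a semicontinuity statement -- the two F-signatures live on different fibers over different fields -- and the paper derives it from an inversion-of-adjunction result for F-signature through the total space, again only under the $\QQ$-Gorenstein hypothesis; you should state that hypothesis explicitly for Part (2) as well.

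The genuine gap is in Part (3). Your proposed cyclicity test -- cyclic $p$-primary part of ${\rm Cl}$ together with cyclic $\pietloc$ -- does not detect cyclicity of $G$: for a nontrivial semidirect product $G=G^\circ\rtimes G^{\et}=\bmu_{p^n}\rtimes\C_m$, the quotient $G^{\et}$ is cyclic and the $p$-part of $(G^{\rm ab})^D$ is cyclic (it can even vanish when the conjugation action is nontrivial), yet $G$ is not cyclic. Moreover, there is no specialization map for local fundamental groups of isolated singularities in a flat family that you can simply invoke: Example \ref{ex: deformation RDP1} already exhibits $\pietloc$ jumping from $\C_{p+1}$ on the special fiber to the trivial group on the generic fiber, so any comparison of local fundamental groups across the family must be argued, not assumed. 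The paper's actual evidence for Part (3) (Proposition \ref{prop: riemenschneider cyclic}) is restricted to the Gorenstein case and proceeds entirely differently: via a simultaneous resolution one embeds the root lattice of a singularity of $\mathcal{X}_{\bar\eta}$ into the lattice of $\mathcal{X}_0$ and uses that every root sublattice of $A_n$ is again of type $A_k$. Outside the Gorenstein case Part (3) remains open, as does Part (2) in mixed characteristic, so no completion of your outline along these lines can yield a full proof of the statement.
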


Here, \emph{containing at worst (cyclic) lrq singularities} means that the non-smooth locus of
$\mathcal{X}_{\overline{\eta}}$ if non-empty is a finite
set of (cyclic) lrq 
singularities.

\begin{Remark}
 In view of Proposition \ref{prop: toric}, Part (\ref{item: conj: cyclic}) 
 of Conjecture \ref{conj: Riemenschneider} is equivalent to the following statement: 
 if $\mathcal{X}_0$ is an isolated toric $\mathbb{Q}$-factorial singularity, then $\mathcal{X}_{\overline{\eta}}$ 
 contains at worst isolated toric $\mathbb{Q}$-factorial singularities.
\end{Remark}

\begin{Remark} \label{rem: char0}
 We note that Conjecture \ref{conj: Riemenschneider} \eqref{item: conj: lrq} 
 and \eqref{item: conj: cyclic} are true if the residue characteristic of $B$ 
 is zero as indicated above:
 Part (\ref{item: conj: lrq}) is \cite[Theorem 2.5]{EsnaultViehweg2} 
 and Part (\ref{item: conj: cyclic}) is \cite[Corollary 7.15]{KSB}.
 We do not know whether Conjecture \ref{conj: Riemenschneider} \eqref{item: conj: length} 
 is known in characteristic 0 in the stated generality, but we will see in 
 Proposition \ref{prop: riemenschneider length} that it holds if $\mathcal{X}_0$ 
 is Gorenstein or if it is a cyclic quotient singularities in characteristic 0. 
\end{Remark}

\begin{Proposition}
  \label{prop: conjecture dimension 3}
  Conjecture \ref{conj: Riemenschneider} is true in dimension $d\geq3$.
\end{Proposition}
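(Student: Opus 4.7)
The plan is to identify the formal completion of $\mathcal{X}$ at the singular point with the canonical lift supplied by Corollary \ref{cor: canonicallift}, so that all three assertions of the conjecture can be read off from that explicit family.

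Let $x \in \mathcal{X}_0$ denote the lrq singularity and let $G_0$ be the associated finite and linearly reductive group scheme. Writing $Z \subseteq \mathcal{X}$ for the non-smooth locus of $\mathcal{X} \to B$, the hypothesis that $Z$ is proper over $B$ together with $Z \cap \mathcal{X}_0 = \{x\}$ and upper-semicontinuity of fiber dimensions forces $Z_{\overline{\eta}}$ to be a finite set of closed points, every one of which specializes to $x$. Consequently every singular point of $\mathcal{X}_{\overline{\eta}}$ lies in the formal germ $\hat{\mathcal{X}} := \Spf \widehat{\mathcal{O}}_{\mathcal{X},x}$, which is a flat formal deformation of $\hat{\mathcal{X}}_0 := \Spf \widehat{\mathcal{O}}_{\mathcal{X}_0, x}$ over $\hat{B}$.

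Such a deformation corresponds to a morphism $\Spec \hat{B} \to \Def_{\hat{\mathcal{X}}_0}$. By Proposition \ref{prop: rigidity} (for $d \geq 4$) and Theorem \ref{thm: deformation space} (for $d=3$), one has $(\Def_{\hat{\mathcal{X}}_0})_{\rm red} \cong \Spec W(k)$. Since $\hat{B}$ is the spectrum of a DVR, hence reduced, the classifying morphism factors through $\Spec W(k)$, and Corollary \ref{cor: canonicallift} then identifies the pullback of the miniversal family to this reduced subscheme with the canonical lift $\hat{\mathbb{A}}_{W(k)}^d / \widetilde{G}_0$, where $\widetilde{G}_0$ is the canonical lift of $G_0$ acting via the unique lift of the linearized $G_0$-representation provided by Proposition \ref{prop: liftingrepresentation}. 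Therefore $\hat{\mathcal{X}}$ is the base change of this canonical lift to $\hat{B}$.

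With this identification in hand, the three assertions follow by inspection. The specialization map on representations preserves $\lambda$-invariants (Proposition \ref{prop: liftingrepresentation}), so by Proposition \ref{prop: lambda} the $\widetilde{G}_0$-action is very small on every geometric fiber. Hence the geometric generic fiber of $\hat{\mathcal{X}}$ carries a single singular point, which is an lrq singularity by the geometric generic fiber $\widetilde{G}_{0,\overline{\eta}}$, proving (1). Flatness of $\widetilde{G}_0$ over $W(k)$ gives $|\widetilde{G}_{0,\overline{\eta}}| = |G_0|$, yielding even equality in (2). Finally, if $G_0 = \bmu_n$ is cyclic, then $\widetilde{G}_0 = \bmu_{n, W(k)}$ has cyclic geometric generic fiber, so Proposition \ref{prop: toric} gives (3). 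The main technical point is the identification of $\hat{\mathcal{X}}$ with the canonical lift; since $(\Def_{\hat{\mathcal{X}}_0})_{\rm red} \cong \Spec W(k)$ has been established in the previous sections and the universal family over this reduced subscheme is explicitly described in Corollary \ref{cor: canonicallift}, the rest of the argument is essentially bookkeeping.
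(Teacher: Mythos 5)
Your proposal is correct and follows essentially the same route as the paper, which proves the proposition by citing Schlessinger's rigidity in equal characteristic zero, Corollary \ref{cor: rigidity} in equal characteristic $p$, and Corollary \ref{cor: canonicallift} in mixed characteristic; your unified argument via factoring the classifying map through $(\Def_{\hat{\mathcal{X}}_0})_{\mathrm{red}}$ packages exactly these ingredients. The only cosmetic point is that when $B$ has equal characteristic zero the identification $(\Def)_{\mathrm{red}}\cong\Spec W(k)$ should be replaced by Schlessinger's rigidity statement directly, since Theorem \ref{thm: deformation space} and Corollary \ref{cor: canonicallift} are stated for positive residue characteristic.
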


\begin{proof}
Let $p_0$ (resp. $p_\eta$) be the residue characteristic of the special
(resp. generic) point of $B$.
First, we treat the equicharacteristic case:
if $p_0=p_\eta=0$, then the assertion follows from Schlessinger's rigidity
theorem \cite{Schlessinger}.
Moreover, if $p_0=p_\eta>0$, then it follows 
from Corollary \ref{cor: rigidity}.
Second, we treat the mixed characteristic case, that is,
$p_0>0$ and $p_\eta=0$. Here, the assertion follows from Corollary \ref{cor: canonicallift}. 
\end{proof}

Thus, we may now focus on dimension two in Conjecture \ref{conj: Riemenschneider}.
In this case, we can rephrase Parts \eqref{item: conj: lrq}
and \eqref{item: conj: length} of
Conjecture \ref{conj: Riemenschneider}  in more well-known terms.

\begin{Lemma} \label{lem: modifiedRiemenschneider}
In the setting of Conjecture \ref{conj: Riemenschneider}, assume that $dim(\mathcal{X}_0) = 2$ and that the residue characteristic of $B$ is $p > 0$. 
Then, 
\begin{itemize}
    \item Part (\ref{item: conj: lrq}) of Conjecture \ref{conj: Riemenschneider}  is equivalent to the following:
\begin{enumerate}
\item[(\ref{item: conj: lrq})'] Assume ${\rm char}(B) = p$. If $\mathcal{X}_0$ is F-regular, then $\mathcal{X}_{\overline{\eta}}$ contains at worst F-regular singularities.
\item[(\ref{item: conj: lrq})''] Assume ${\rm char}(B) = 0$. If $\mathcal{X}_0$ is F-regular, then $\mathcal{X}_{\overline{\eta}}$ contains at worst klt singularities.
\end{enumerate}
\item Part (\ref{item: conj: length}) of Conjecture \ref{conj: Riemenschneider}
is equivalent to the following:
\begin{enumerate}
\item[(\ref{item: conj: length})'] Assume ${\rm char}(B) = p$. Then, $s_0 \leq s_\eta$.
\item[(\ref{item: conj: length})''] Assume ${\rm char}(B) = 0$. Then, $s_0 \leq |\pietloc_{,\eta}|^{-1}$.
\end{enumerate}
Here,  $s_0$ and $s_\eta$ denote the F-signatures of $\mathcal{X}_0$ and an lrq singularity on $\mathcal{X}_{\bar\eta}$, respectively, 
and $\pietloc_{,\eta}$ denotes the local fundamental group of an lrq singularity on $\mathcal{X}_{\bar\eta}$.
\end{itemize}
\end{Lemma}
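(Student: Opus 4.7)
The plan is to recognize that Lemma \ref{lem: modifiedRiemenschneider} is a dictionary rewriting the group-scheme language of Conjecture \ref{conj: Riemenschneider} in terms of intrinsic invariants of the singularities. The translation rests on two already-established pillars: Theorem \ref{thm: Fregularklt}, which identifies lrq surface singularities with F-regular ones in characteristic $p > 0$ and (via the classical Brieskorn--Kawamata picture) with klt --- equivalently, finite quotient --- singularities in characteristic $0$; and Proposition \ref{prop: lrq}, which expresses the F-signature of an lrq singularity as $1/|G|$ and its local \'etale fundamental group as $G^{\et}$.

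For the equivalence of (\ref{item: conj: lrq}) with its two reformulations, the argument is pointwise. Since $\mathcal{X}_0$ has residue characteristic $p > 0$ and is two-dimensional, Theorem \ref{thm: Fregularklt} gives ``lrq $\Leftrightarrow$ F-regular'' for $\mathcal{X}_0$. For the geometric generic fiber: in equicharacteristic $p$, the same theorem applies at each singular point of $\mathcal{X}_{\bar\eta}$, yielding (\ref{item: conj: lrq})$'$; in mixed characteristic $\mathcal{X}_{\bar\eta}$ lives in characteristic $0$, where every finite linearly reductive group scheme is \'etale, so lrq singularities coincide with classical finite quotient singularities, and in dimension $2$ these are precisely the klt singularities, yielding (\ref{item: conj: lrq})$''$. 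One should note explicitly that ``at worst $\ldots$'' is compatible with smooth points, which are vacuously F-regular, klt, and lrq (by the trivial group).

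For Part (\ref{item: conj: length}), the argument is a direct computation with Proposition \ref{prop: lrq} (\ref{item: F-signature}). The formula $s_0 = 1/|G_0|$ converts $|G_0| \geq |G_\eta|$ into $s_0 \leq 1/|G_\eta|$. In equicharacteristic $p$, applying the same formula to $G_\eta$ produces $s_\eta = 1/|G_\eta|$, and hence (\ref{item: conj: length})$'$. In mixed characteristic, $G_\eta$ is defined in characteristic $0$ and is therefore \'etale, so $|G_\eta| = |G_\eta^{\et}|$, and Proposition \ref{prop: lrq} (\ref{item: pietloc}) identifies this with $|\pietloc_{,\eta}|$, producing (\ref{item: conj: length})$''$.

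None of the individual steps is deep; the lemma is essentially a bookkeeping exercise. The only subtlety I anticipate is ensuring that the group scheme $G_\eta$ associated to a singular point of $\mathcal{X}_{\bar\eta}$ is unambiguously defined in the sense of Theorem \ref{thm: lrqmain}. This is where the standing hypothesis that the non-smooth locus of $\mathcal{X} \to B$ is proper over $B$ enters: it forces each singularity of $\mathcal{X}_{\bar\eta}$ to be isolated and therefore to admit a well-defined associated linearly reductive group scheme to which the invariants of Proposition \ref{prop: lrq} can be attached.
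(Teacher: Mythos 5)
Your proposal is correct and follows essentially the same route as the paper: Part (1) is reduced to Theorem \ref{thm: Fregularklt} (plus the classical characteristic-zero equivalence of quotient and klt surface singularities noted in the remark after it), and Part (2) is the translation $s = 1/|G|$ from Proposition \ref{prop: lrq}, with $G_\eta$ \'etale and equal to $\pietloc_{,\eta}$ in the mixed-characteristic case. The extra remarks you add (smooth points being vacuously lrq/F-regular/klt, and well-definedness of $G_\eta$ via Theorem \ref{thm: lrqmain}) are sound but not part of the paper's two-line argument.
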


\begin{proof}
Equivalence of (\ref{item: conj: lrq}) with (\ref{item: conj: lrq})' and (\ref{item: conj: lrq})'' follows immediately from Theorem \ref{thm: Fregularklt}.

For the second part, note that by Proposition \ref{prop: lrq} the inverse F-signature $s_0^{-1}$ (resp. $s_\eta^{-1}$) coincides with the length of the group scheme by which $X_0$ (resp. $X_{\bar\eta}$) is a quotient singularity. This shows that \eqref{item: conj: length} is equivalent to \eqref{item: conj: length}'. In characteristic $0$, the group scheme by which $X_{\bar\eta}$ is a quotient singularity is $\pietloc_{,\eta}$, so we have \eqref{item: conj: length} $\iff$ \eqref{item: conj: length}''.
\end{proof}

\subsection{Examples}
First, we note that Conjecture \ref{conj: Riemenschneider} 
fails in every positive characteristic if we do not 
allow quotient singularities by group schemes, as the following 
two-dimensional examples shows:
the special fibre is a cyclic lrq singularity by an \'etale group
scheme and the generic fibre is a cyclic lrq singularity 
by an infinitesimal group scheme.

\begin{Example}
  \label{ex: deformation RDP1}
  Consider the family of rational double points
  $$
     z^{p+1} \,+\, tz^p \,+\, xy
   $$
  over $k[[t]]$. 
  Its special fibre $\mathcal{X}_0$ is a singularity of type $A_p$, 
  while its generic fibre $\mathcal{X}_\eta$ is of type $A_{p-1}$. 
  Thus, $\mathcal{X}_0$ (resp. $\mathcal{X}_\eta$) is a cyclic quotient singularity by 
  $\bmu_{p+1}$ (resp. $\bmu_p$).
  The local fundamental group of $\mathcal{X}_0$ (resp. $\mathcal{X}_\eta$)
  is $\C_{p+1}$ (resp. trivial).
  Thus, $\mathcal{X}_0$ is a quotient singularity by a finite group,
  while $\mathcal{X}_\eta$, being simply connected and non-smooth, 
  is not.

  One can also give an example where $\mathcal{X}_0$ is a quotient singularity 
  by a connected group scheme and $\mathcal{X}_\eta$ is a quotient singularity by a finite group.
\end{Example}

The following example shows that Conjecture \ref{conj: Riemenschneider} 
also fails for deformations of quotient singularities by 
finite group schemes that are not linearly reductive.

\begin{Example}
  In characteristic $p=2$,
  consider the family of rational double points
  $$
    z^2 \,+\, x^3 + y^5 \,+\, txy^3z
  $$
  over $k[[t]]$. 
  Its special fibre $\mathcal{X}_0$ is a singularity of type $E_8^0$, 
  while its generic fibre $\mathcal{X}_\eta$ is of type $E_8^1$. 
  We will see in \cite{RDP}
  that $\mathcal{X}_0$ is a quotient singularity by $\balpha_2$,
  while $\mathcal{X}_\eta$ is not a quotient singularity.
  Similarly, there exists a deformation of $E_8^2$, which is a quotient singularity by 
  $\C_2$, to $E_8^3$, which is not a quotient singularity.
\end{Example}

\subsection{Evidence}
In the final section of this article, we collect further evidence for Conjecture \ref{conj: Riemenschneider}. 
In particular, we will prove that all parts of Conjecture \ref{conj: Riemenschneider} hold if $\mathcal{X}_0$ is Gorenstein.
We start with Part (\ref{item: conj: lrq}).

\begin{Proposition}
 \label{prop: riemenschneider}
 Part (\ref{item: conj: lrq}) of Conjecture \ref{conj: Riemenschneider} is true if
 the total space $\mathcal{X}$ is $\QQ$-Gorenstein.
\end{Proposition}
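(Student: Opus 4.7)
The plan is to reduce the statement to the two-dimensional case via Proposition \ref{prop: conjecture dimension 3} and then invoke standard results on the deformation of F-regular and klt singularities in $\QQ$-Gorenstein families. So we may assume $\dim \mathcal{X}_0 = 2$, and by Remark \ref{rem: char0} Part (1) of the conjecture is known in equal characteristic $0$. We may therefore further assume the residue field of $B$ has characteristic $p > 0$. By Theorem \ref{thm: Fregularklt}, the central fiber $\mathcal{X}_0$ is then F-regular, and in particular klt.

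Next I would apply a properness trick to reduce to a local statement along $\mathcal{X}_0$. Since smooth points of $\mathcal{X} \to B$ are regular points of $\mathcal{X}$, they are both F-regular and klt. Therefore, the locus in $\mathcal{X}$ where $\mathcal{X}$ is not F-regular (respectively not klt) is contained in the non-smooth locus of $\mathcal{X} \to B$, which is proper over $B$ by assumption, and hence its image in $B$ is closed. If this image does not contain the closed point $0$, then it is empty, since the only closed subsets of the spectrum of a DVR are $\emptyset$, $\{0\}$, and $B$. It therefore suffices to show that $\mathcal{X}$ itself is F-regular (in the equicharacteristic $p$ case) or klt (in the mixed characteristic case) at every point of $\mathcal{X}_0$.

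This is precisely where the $\QQ$-Gorenstein hypothesis enters. In equicharacteristic $p$, since $\mathcal{X}_0$ is a Cartier divisor on the $\QQ$-Gorenstein scheme $\mathcal{X}$ and is F-regular, the deformation theorem for strong F-regularity in $\QQ$-Gorenstein families (essentially due to Aberbach-Enescu) yields that $\mathcal{X}$ is F-regular along $\mathcal{X}_0$, hence globally F-regular by the properness argument; localization and geometric base change then give that $\mathcal{X}_{\bar\eta}$ is F-regular, so its singularities are lrq by Theorem \ref{thm: Fregularklt}. In mixed characteristic, inversion of adjunction for the pair $(\mathcal{X}, \mathcal{X}_0)$ instead gives that $\mathcal{X}$ is klt along $\mathcal{X}_0$; arguing as before, $\mathcal{X}_{\bar\eta}$ has only klt surface singularities in characteristic $0$, hence only lrq singularities. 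The main difficulty is the equicharacteristic $p$ case in small characteristics, where F-regularity is strictly stronger than klt, so inversion of adjunction alone does not suffice and one genuinely needs the deformation theorem for strong F-regularity in $\QQ$-Gorenstein families; removing the $\QQ$-Gorenstein hypothesis altogether is a substantially harder problem and is the subsequent achievement of Sato-Takagi (Remark \ref{rem: SatoTakagi}).
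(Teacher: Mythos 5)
Your overall architecture is the same as the paper's: reduce to $d=2$ via Proposition \ref{prop: conjecture dimension 3}, dispose of residue characteristic $0$ via Remark \ref{rem: char0}, and then split according to whether the generic point of $B$ has characteristic $p$ or $0$. Your equicharacteristic-$p$ argument is essentially the paper's: $\mathcal{X}_0$ is F-regular by Theorem \ref{thm: Fregularklt} (resp.\ Proposition \ref{prop: lrq}), the $\QQ$-Gorenstein deformation theorem for weak F-regularity (the paper cites \cite{AKM}, i.e.\ Aberbach--Katzman--MacCrimmon, rather than Aberbach--Enescu) gives F-regularity of $\mathcal{X}$ along $\mathcal{X}_0$, and openness of the F-regular locus together with properness of the non-smooth locus over $B$ pushes this to $\mathcal{X}_{\bar\eta}$. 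That part is fine.

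The gap is in the mixed characteristic case. You write that ``inversion of adjunction for the pair $(\mathcal{X},\mathcal{X}_0)$ gives that $\mathcal{X}$ is klt along $\mathcal{X}_0$,'' but classical (characteristic-$0$) inversion of adjunction is not available here: $\mathcal{X}$ is an arithmetic threefold and $\mathcal{X}_0$ lives in characteristic $p$, where for small $p$ kltness of $\mathcal{X}_0$ is strictly weaker than F-regularity and would not be an adequate input in any case. The step you are eliding is precisely the hard one, and the paper has to route it through the perfectoid big Cohen--Macaulay machinery of Ma--Schwede: F-regularity of $\mathcal{X}_0$ implies BCM-regularity, inversion of adjunction for \emph{BCM-regularity} of the $\QQ$-Gorenstein pair $(\mathcal{X},\mathcal{X}_0)$ (\cite{MaSchwede+}) gives that $\mathcal{X}$ is BCM-regular, and then BCM-regular implies klt (\cite{MaSchwede}), which itself requires resolution of singularities for arithmetic threefolds (Cossart--Piltant) to even run. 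So the strategy is right, but as written the mixed-characteristic step invokes a theorem that does not exist in the form you state it; you need to name the BCM-regularity version and feed it F-regularity, not kltness, of the special fiber.
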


\begin{proof}
Since the conjecture is known in dimension $d\geq3$ by 
Proposition \ref{prop: conjecture dimension 3}, we may assume $d=2$. 
Furthermore, by Remark \ref{rem: char0}, we may assume that the residue characteristic $p_0$ 
of the special point of $B$ is positive. 
In particular, by Proposition \ref{prop: lrq}, the special fibre $\mathcal{X}_0$ is F-regular.
Let $p_\eta$ be the residue characteristic of the generic point of $B$.

First, assume that $p_\eta = p$. 
Then, the claim follows from Theorem \ref{thm: Fregularklt} and the equivalence of (\ref{item: conj: lrq}) with (\ref{item: conj: lrq})' 
in Lemma \ref{lem: modifiedRiemenschneider}, since F-regularity is open in 
$\QQ$-Gorenstein rings \cite{AKM}.

Next, assume that $p_\eta = 0$. 
We will use the notion of \emph{(pure) BCM-regularity} of \cite{MaSchwede} and \cite{MaSchwede+}. 
Since $\mathcal{X}_0$ is F-regular, it is BCM-regular by \cite[Corollary 6.8]{MaSchwede+}. 
Since $\mathcal{X}$ is $\QQ$-Gorenstein, it follows from inversion of 
adjunction \cite[Corollary 3.3]{MaSchwede+} that the pair $(\mathcal{X},\mathcal{X}_0)$ 
is purely BCM-regular and so, $\mathcal{X}$ is BCM-regular. 
By \cite[Corollary 6.22]{MaSchwede}, this implies that $\mathcal{X}$ is klt
(note that resolution of singularities for the threefold $\mathcal{X}$ exists
by \cite{CoPi}).
Since the definition of being klt is local (see, for example \cite[Section 2.16]{Kollar}), 
this implies that $\mathcal{X}_{\bar\eta}$ is klt. 
By Lemma \ref{lem: modifiedRiemenschneider} (\ref{item: conj: lrq})'', this proves the assertion.
\end{proof}

In particular, Part (\ref{item: conj: lrq}) of Conjecture \ref{conj: Riemenschneider} 
holds if $\mathcal{X}_0$ is Gorenstein, because then $\mathcal{X}$ will be Gorenstein
as well and the proposition applies.

\begin{Remark}
\label{rem: SatoTakagi}
After finishing the first version of this article, we were informed by Sato and 
Takagi that they are able to prove Proposition \ref{prop: riemenschneider} 
without the $\QQ$-Gorenstein assumption, thereby proving Conjecture 
\ref{conj: Riemenschneider} \eqref{item: conj: lrq} in full generality. 
Their proof can be found in \cite[Corollary 4.8]{SatoTakagi}.
\end{Remark}

 In the proof of Proposition \ref{prop: riemenschneider}, we used that the F-regular locus is open in $\QQ$-Gorenstein families. This openness is false in general as shown by examples of Singh \cite{SinghFregularity}.
Let us recall Singh's example using our terminology and check that it does
\emph{not} give a counter-example to 
Conjecture \ref{conj: Riemenschneider} \eqref{item: conj: length}.

\begin{Example}
 Consider the deformation $\mathcal{X}\to\Spec \overline{\FF}_p[[t]]$ from 
 \cite[Theorem 1.1]{SinghFregularity}.

 \begin{enumerate}
\item The generic fibre $\mathcal{X}_\eta$ has a rational double point
 of type $A_1$, which is an lrq singularity with respect to the group scheme $\bmu_2$.
 \item The special fibre $\mathcal{X}_0$ has a singularity of 
 $D_{4n+1,2n}$ (in Riemenschneider's notation \cite{Riemenschneider2}),
 which is of index $2n+1$ and which is 
 an lrq singularity with respect to  
 $(\bmu_{2(2n+1),1},\bmu_{2(2n+1),1};\BD_n,\BD_n)$, that is,
 case (2a) of Theorem \ref{thm: smallingl2}.
 \end{enumerate}
 The total space $\mathcal{X}$ is not F-regular and 
 not $\QQ$-Gorenstein.
 On the other hand, this example does satisfy
 Conjecture \ref{conj: Riemenschneider}.
\end{Example}

Next, we give several special cases for which 
Part \eqref{item: conj: length} holds.

\begin{Proposition}
\label{prop: riemenschneider length}
 Part (\ref{item: conj: length}) of Conjecture \ref{conj: Riemenschneider} is true if
 \begin{enumerate}
     \item $B$ has equal characteristic $0$ and $\mathcal{X}_0$ is cyclic, or
     \item $B$ has equal characteristic $p$ and $\mathcal{X}$ is $\QQ$-Gorenstein, or
     \item $\mathcal{X}_0$ is Gorenstein.
 \end{enumerate}
\end{Proposition}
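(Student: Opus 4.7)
The plan is to reduce to $d = 2$ using Proposition \ref{prop: conjecture dimension 3} and then, in each case, interpret the length of the associated group scheme as a semi-continuous invariant of the singularity. By Proposition \ref{prop: lrq}, for a positive characteristic lrq surface singularity $X$ one has $|G| = s(X)^{-1}$, while in characteristic zero $|G| = |\pietloc(X)|$ (since $G = G^{\rm et}$ there). Properness of the non-smooth locus over $B$ ensures that $x_0$ is a specialization of $x_\eta$ inside $\mathcal{X}$, which is the generization we will exploit.

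For case (2), the $\QQ$-Gorenstein hypothesis combined with the F-regularity of $\mathcal{X}_0$ forces every singularity of $\mathcal{X}_{\overline{\eta}}$ to be lrq, via Proposition \ref{prop: riemenschneider}. Both $\mathcal{X}_0$ and $\mathcal{X}_{\overline{\eta}, x_\eta}$ are then strongly F-regular, so $|G_0| \geq |G_\eta|$ is equivalent to $s(\mathcal{X}_0) \leq s(\mathcal{X}_{\overline{\eta}, x_\eta})$. Since F-signature is lower semi-continuous on the spectrum of an F-finite Noetherian ring by Polstra's theorem, and since the F-signature at a closed point of a fibre coincides with that at the same point of the total space in $\QQ$-Gorenstein families of F-regular rings, applying this to the generization $\mathcal{O}_{\mathcal{X}, x_\eta} \rightsquigarrow \mathcal{O}_{\mathcal{X}, x_0}$ yields the required bound.

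For case (3), lrq singularities are Cohen--Macaulay by Proposition \ref{prop: lrq}, so openness of the Gorenstein locus in Cohen--Macaulay families shows that $\mathcal{X}$ is Gorenstein (hence $\QQ$-Gorenstein) in a neighborhood of $x_0$. If $B$ has equal characteristic $p > 0$, case (3) therefore reduces to case (2). If $B$ has equal characteristic zero, $\mathcal{X}_0$ is a rational double point (Theorem \ref{thm: FregularRDP}), and the same openness argument, combined with rationality being open in flat Gorenstein families, forces the singularities of $\mathcal{X}_{\overline{\eta}}$ to be rational double points as well. The inequality $|G_0| \geq |G_\eta|$ is then a case-by-case verification against the list of group lengths in Theorem \ref{thm: LS} combined with the classical adjacency diagram for ADE surface singularities, which records that specialization $X_\eta \rightsquigarrow X_0$ among ADE singularities never decreases the associated group order.

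For case (1), the theorem of Koll\'ar--Shepherd-Barron cited in Remark \ref{rem: char0} already tells us that every singularity of $\mathcal{X}_{\overline{\eta}}$ is again cyclic, so $G_\eta$ is cyclic. In characteristic zero, $|G|$ equals the order of $\pietloc(X)$, and $|G_0| \geq |G_\eta|$ follows from the explicit Hirzebruch--Jung continued fraction description of deformations of cyclic quotient surface singularities in \cite{KSB}, which makes the group length monotone under generization. The main obstacle will be extracting this length-monotonicity cleanly from loc.~cit.~(and, to a lesser extent, isolating the precise form of F-signature semi-continuity needed in case (2)); once these tools are in hand, the three assertions follow as outlined.
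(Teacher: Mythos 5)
Your overall strategy coincides with the paper's: reduce to $d=2$ via Proposition \ref{prop: conjecture dimension 3}, translate $|G|$ into the F-signature (resp.\ the order of $\pietloc$) via Lemma \ref{lem: modifiedRiemenschneider}, use semicontinuity of the F-signature for (2), continued fractions for (1), and a reduction to rational double points for (3). However, there are genuine gaps. The most serious one is in case (3): this is the only part of the proposition with no hypothesis on the characteristic of $B$, so it must cover the mixed characteristic case, and your argument only treats equal characteristic $p$ (reduction to (2)) and equal characteristic $0$ (adjacency of ADE diagrams). The paper handles all residue characteristics uniformly: after a finite base change one takes a minimal simultaneous resolution $\mathcal{Y}\to\mathcal{X}\to B$ of the family of rational double points, and specialization of Picard groups gives an embedding of root lattices $\Lambda(\Gamma_\eta)\subseteq\Lambda(\Gamma_0)$; the inequality $|G_\eta|\le|G_0|$ is then read off from the classification of root sublattices of ADE lattices. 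Your ``adjacency diagram'' argument is the characteristic-zero shadow of this, but as written it does not apply when $0$ and $\eta$ have different residue characteristics.

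Two smaller points. In case (2) you assert that the F-signature of the fibre at $x_0$ \emph{coincides} with that of the total space at $x_0$ in $\QQ$-Gorenstein families; this equality is not justified and is stronger than what is available. What the paper uses (and what suffices) is the inequality $s(\mathcal{X})\ge s(\mathcal{X}_0)$ from Taylor's inversion of adjunction for F-signature \cite[Corollary 1.2]{Taylor}, combined with lower semicontinuity on $\Spec$ of the total space; you should replace the claimed equality by this citation. In case (1) you correctly identify the continued-fraction mechanism but explicitly defer the monotonicity step, which is the actual content: one needs that the generic fibre is of type $\bmu_{n',q'}$ with $n'/q'=[a_1';\dots;a_k']$ and $a_i'\le a_i$ (the paper cites \cite[Proposition 3.6]{Brohme} for this), and then an induction on the continuants $A_i=a_iA_{i-1}-A_{i-2}$, proving simultaneously $A_i'\le A_i$ and $A_i'-A_{i-1}'\le A_i-A_{i-1}$, to conclude $n'\le n$. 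Without these two ingredients case (1) remains a sketch.
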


\begin{proof}
By Proposition \ref{prop: conjecture dimension 3}, we may assume that $d = 2$.

Let us first prove (1). 
Since $\mathcal{X}_0$ is cyclic, it is a quotient by $\bmu_n$ acting as
$\bmu_{n,q}$ for some $n,q \geq 1$ with $(n,q) = 1$ (see Theorem \ref{thm: smallingl2}). 
Let $[a_1;a_2,\hdots,a_k]$ be the Hirzebruch--Jung continued fraction for $n/q$ and let $A_i$ 
be the $i$-th continuant of $[a_1;a_2,\hdots,a_k]$, that is, 
the numerator of the continued fraction $[a_1;a_2,\hdots,a_i]$ and set $A_{0} = 1$
and $A_{-1} = 0$. 
Then, the $A_i$ satisfy the recursion $A_i = a_iA_{i-1} - A_{i-2}$. 

By \cite[Proposition 3.6]{Brohme}, the geometric generic fibre $\mathcal{X}_{\bar\eta}$ is a cyclic 
quotient singularity by some $\bmu_{n'}$ acting as $\bmu_{n',q'}$ such that $n'/q'$ admits a 
representation as a continued fraction $[a_1';a_2',\hdots,a_k']$ such that $a_i' \leq a_i$ for all $i$. 
Now, a straightforward proof via induction shows that this implies $A_i' \leq A_i$ and 
$A_i' - A_{i-1}' \leq A_i - A_{i-1}$ for all $i$, where the $A_i'$ are the continuants 
of $[a_1';a_2',\hdots,a_k']$. 
In particular, we have $n' \leq n$, so Conjecture \ref{conj: Riemenschneider} 
(\ref{item: conj: length}) holds in this case.

Next, let us prove (2). 
Since $\mathcal{X}$ is $\QQ$-Gorenstein, it follows from \cite[Corollary 1.2]{Taylor} that the F-signature of $\mathcal{X}$ is at least as big as the F-signature of $\mathcal{X}_0$. 
Since the F-signature is lower semicontinuous on $\mathcal{X}$, 
the same is true for $\mathcal{X}_{\bar\eta}$. 
Hence, the claim follows from the equivalence of (2) with (2)' in 
Lemma \ref{lem: modifiedRiemenschneider}.

Finally, let us prove (3). 
By assumption, $\mathcal{X}_0$ is a rational double point. 
It is well-known that $\mathcal{X}_{\bar\eta}$ is also a rational double point in this case. 
After possibly replacing $B$ by a finite extension, we may assume that $\mathcal{X}\to B$ 
admits a minimal simultaneous resolution of singularities $\mathcal{Y}\to\mathcal{X}\to B$.
Let $\Gamma_\eta$ (resp. $\Gamma_0$) be the dual resolution graph associated 
to a singularity on $\mathcal{Y}_\eta$ (resp. to $\mathcal{Y}_0$).
Let $\Lambda(\Gamma_\eta)$ and $\Lambda(\Gamma_0)$ be the lattices
associated to $\Gamma_\eta$ and $\Gamma_0$, respectively.
Moreover, let $G_\eta$ and $G_0$ be the finite and 
linearly reductive group schemes associated to lrq rational singularities
of type $\Gamma_\eta$ and $\Gamma_0$ (see Theorem \ref{thm: LS} for the 
group schemes and their lengths).
Specialisation induces an injection of lattices
$\Lambda(\Gamma_\eta) \subseteq \Pic(\mathcal{Y}_\eta)\to \Pic(\mathcal{Y}_0) = \Lambda(\Gamma_0)$.
Using \cite[Exercise 4.6.2, Theorem 4.6.12.]{Martinet}, which tells us which 
$\Lambda(\Gamma_\eta)$'s can occur for a given $\Lambda(\Gamma_0)$,
it is straightforward to check that $|G_\eta|\leq|G_0|$.
\end{proof}

We note that the assumption on the index of $\mathcal{X}_0$ in the proposition
is satisfied if it is an lrq singularity with respect to an \'etale group scheme.

\begin{Remark} 
 In dimension $d=2$, Conjecture \ref{conj: Riemenschneider} \eqref{item: conj: length} is related to the following
 more general fact:
 let  $\mathcal{X}\to B$ is a family of rational double point singularities.
 Let $\Gamma_\eta$ and $\Gamma_0$ be the dual resolution graphs (disjoint union of Dynkin diagrams)
 associated to the minimal resolutions of singularities of 
 $\mathcal{X}_\eta$ and $\mathcal{X}_0$, respectively.
 If the residue characteristic of $B$ is zero or positive and large enough 
 depending on $\Gamma_0$ (more precisely, if it is ``good'' in the sense of Slodowy), 
 then $\Gamma_\eta$ is a subgraph of $\Gamma_0$, see \cite[Section 8.10]{Slodowy}. 
 From this, we get another proof of Proposition \ref{prop: riemenschneider length} for good
 residue characteristics.
\end{Remark}

Finally, we note that Part \eqref{item: conj: cyclic} holds in the Gorenstein case.

\begin{Proposition}
 \label{prop: riemenschneider cyclic}
 Part (\ref{item: conj: cyclic}) of Conjecture \ref{conj: Riemenschneider} is true if 
 $\mathcal{X}_0$ is Gorenstein.
\end{Proposition}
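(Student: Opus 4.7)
My plan is to reduce to the surface case and then exploit the explicit form of the miniversal deformation of an $A_n$-singularity, which works uniformly in all characteristics. By Proposition \ref{prop: conjecture dimension 3}, I may assume $d=2$. A cyclic lrq surface singularity is, by Proposition \ref{prop: toric}, a $\bmu_n$-quotient, and for the quotient to be Gorenstein the embedding $\bmu_n \hookrightarrow \GL_{2,k}$ must factor through $\SL_{2,k}$; by Theorem \ref{thm: hashimoto}(1) such an embedding is, up to conjugation, $\zeta \mapsto \diag(\zeta,\zeta^{-1})$. Hence $\mathcal{X}_0$ is a rational double point of type $A_{n-1}$ with equation $xy + z^n = 0$ for some $n \geq 2$ (the case $n=1$ being smooth, hence vacuous).

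Since $\mathcal{X}_0$ is Gorenstein and $\mathcal{X}\to B$ is flat with hypersurface fibers in a neighborhood of the singular point, the total space $\mathcal{X}$ is Gorenstein, hence $\QQ$-Gorenstein; moreover, the geometric generic fiber $\mathcal{X}_{\overline{\eta}}$ inherits Gorensteinness from $\mathcal{X}$. Proposition \ref{prop: riemenschneider} then shows that $\mathcal{X}_{\overline{\eta}}$ contains at worst lrq singularities, and being Gorenstein lrq singularities these are F-regular rational double points by Theorem \ref{thm: FregularRDP}. It now suffices to prove that each of them is of type $A$, since $A$-type rational double points are cyclic lrq singularities by Theorem \ref{thm: LS}.

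To this end, I will use that the miniversal deformation of $A_{n-1}$ is represented, in all characteristics, by the hypersurface
\[
 xy \,+\, z^n \,+\, t_1 z^{n-2} \,+\, \dots \,+\, t_{n-1} \,=\, 0
\]
over the formally smooth base $\Spec R[[t_1,\dots,t_{n-1}]]$, where $R=k$ in the equicharacteristic case and $R=W(k)$ in mixed characteristic. A routine Jacobian computation shows that the singular locus of any geometric fiber of this family consists of points $(0,0,z_0)$ with $z_0$ a multiple root of $f(z) := z^n + t_1 z^{n-2} + \dots + t_{n-1}$, and that each such point is a singularity of type $A_{m-1}$, where $m$ is the multiplicity of $z_0$ as a root of $f$. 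Completing $\mathcal{X}\to B$ along the unique singular point of $\mathcal{X}_0$ and invoking miniversality produces a morphism from $\Spec \widehat{\calO}_{B,0}$ into this deformation space that recovers the formal completion of $\mathcal{X}\to B$ by pullback; combined with the properness of the non-smooth locus of $\mathcal{X}\to B$, which ensures that every singularity of $\mathcal{X}_{\overline{\eta}}$ specializes to the singular point of $\mathcal{X}_0$, this forces every singular point of $\mathcal{X}_{\overline{\eta}}$ to be of type $A$. The main conceptual point is the characteristic-uniform description of $\Def_{\mathcal{X}_0}$ as a hypersurface, which exhibits its neighboring singularities explicitly as $A$-types and thereby bypasses the lattice-theoretic arguments used in the proof of Proposition \ref{prop: riemenschneider length}(3).
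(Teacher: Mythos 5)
Your proof is correct in outline but takes a genuinely different route from the paper. The paper deduces the cyclic case from the machinery already set up for Proposition \ref{prop: riemenschneider length}(3): after a finite base change one has a minimal simultaneous resolution, specialization embeds the resolution lattice of a singularity of $\mathcal{X}_{\bar\eta}$ into $\Lambda(\Gamma_0)$, and since $\mathcal{X}_0$ cyclic Gorenstein means $\Gamma_0$ is of type $A$, one concludes from the fact that every root sublattice of $A_n$ is again of type $A_k$. You instead identify $\mathcal{X}_0$ with the hypersurface $xy+z^n=0$ and read off the neighboring singularities from an explicit miniversal deformation; this is more elementary and self-contained (no simultaneous resolution, no lattice theory, and it even reproves the length inequality of Part (\ref{item: conj: length}) in this case, since the multiplicities of the roots of $g$ sum to at most $n$), at the cost of being special to type $A$, whereas the paper's lattice argument is the same one that handles all Gorenstein special fibers in Proposition \ref{prop: riemenschneider length}(3). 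Your preliminary reduction via Proposition \ref{prop: riemenschneider} and Theorem \ref{thm: FregularRDP} is actually redundant once you have the miniversal family in hand.

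One genuine error to fix: your formula for the miniversal deformation is wrong when $p \mid n$. The Tjurina algebra of $xy+z^n$ is $k[[x,y,z]]/(xy+z^n,\,y,\,x,\,nz^{n-1})$, which is $k[z]/(z^{n-1})$ only if $p \nmid n$; if $p \mid n$ it is $k[z]/(z^n)$, of dimension $n$, and the monomial $z^{n-1}$ gives a first-order deformation that your family does not realize, so the family you wrote down is not versal. The correct miniversal family is $xy + z^n + t_1 z^{n-1} + \cdots + t_n = 0$ in that case (one cannot complete the $n$-th power to kill the $z^{n-1}$ term). This does not damage the argument: every fiber is still of the form $xy+g(z)=0$ with $g$ monic of degree $n$, its singular points are the multiple roots of $g$, and at a root of multiplicity $m$ one has $xy+(z-z_0)^m u(z)=0$ with $u$ a unit, which after absorbing $u^{-1}$ into $x$ is $A_{m-1}$ in every characteristic. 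With that correction, and with the standard (excellence-based) comparison between the formal pullback of the miniversal family and the actual family $\mathcal{X}\to B$ along the finite non-smooth locus, your argument is complete.
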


\begin{proof}
In the proof of Proposition \ref{prop: riemenschneider length}, 
we have seen that every singularity on $\mathcal{X}_{\bar\eta}$ 
is a rational double point, whose associated lattice $\Lambda(\Gamma_\eta')$ 
embeds into the lattice $\Lambda(\Gamma_0)$ associated to $\mathcal{X}_0$. 
If $\mathcal{X}_0$ is cyclic, then $\Lambda(\Gamma_0)$ is of type $A_n$. 
Since every root sublattice of $A_n$ is of type $A_k$ for some $k \leq n$ 
(see, for example, \cite[Exercise 4.6.2]{Martinet}), the claim follows.
\end{proof}

Combining the previous propositions, we conclude the following.

\begin{Corollary}
 \label{cor: riemenschneider gorenstein}
 Conjecture \ref{conj: Riemenschneider} is true if $\mathcal{X}_0$ is Gorenstein.
\end{Corollary}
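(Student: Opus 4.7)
The plan is to deduce the corollary by assembling the three ingredients developed in the previous propositions, after first disposing of the higher-dimensional case. By Proposition~\ref{prop: conjecture dimension 3}, the full Conjecture~\ref{conj: Riemenschneider} is known in dimension $d\geq 3$, so I may restrict attention to the case $d=2$. The overall strategy is then to treat Parts (1), (2), (3) separately, showing that each is already covered by a Gorenstein-specialized result in the preceding section.

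For Part (1), I need to promote the Gorenstein hypothesis on $\mathcal{X}_0$ to a $\QQ$-Gorenstein (in fact Gorenstein) statement for the total space $\mathcal{X}$ near the singular locus, so that Proposition~\ref{prop: riemenschneider} applies. The key observation is that the Gorenstein property lifts from the special fiber of a flat family: since $\mathcal{X}\to B$ is flat with Gorenstein special fiber, $\mathcal{X}_0$ is Cohen--Macaulay, hence so is $\mathcal{X}$ in a neighborhood of $\mathcal{X}_0$, and the dualizing complex of $\mathcal{X}$ restricts to that of $\mathcal{X}_0$ (up to a shift). An invertible restriction of the relative dualizing sheaf therefore extends to an invertible sheaf on a neighborhood of $\mathcal{X}_0$, making $\mathcal{X}$ Gorenstein there. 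In particular $\mathcal{X}$ is $\QQ$-Gorenstein, and Proposition~\ref{prop: riemenschneider} yields that $\mathcal{X}_{\overline{\eta}}$ has at worst lrq singularities. (Note that, by Remark~\ref{rem: SatoTakagi}, this appeal to the $\QQ$-Gorenstein hypothesis is no longer strictly necessary, but it is the cleanest route staying within this article.)

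For Part (2), I simply invoke Proposition~\ref{prop: riemenschneider length}(3), whose hypothesis is precisely that $\mathcal{X}_0$ is Gorenstein; it gives $|G_0|\geq|G_\eta|$ directly via the lattice-embedding argument on dual resolution graphs of rational double points. For Part (3), I invoke Proposition~\ref{prop: riemenschneider cyclic}, which combines the Gorenstein reduction to rational double points with the fact that every root sublattice of $A_n$ is of type $A_k$; this shows that cyclic Gorenstein lrq singularities deform only to cyclic lrq singularities.

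The main obstacle, and really the only nontrivial step, is the promotion used in Part (1): one must check carefully that the Gorenstein property of $\mathcal{X}_0$ guarantees Gorensteinness of $\mathcal{X}$ in a neighborhood, so that Proposition~\ref{prop: riemenschneider} genuinely applies. Once this is in hand, Parts (2) and (3) are citations of the preceding propositions, and combined with the dimension $\geq 3$ case handled by Proposition~\ref{prop: conjecture dimension 3}, the corollary follows.
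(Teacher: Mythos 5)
Your proposal is correct and follows essentially the same route as the paper: the paper's proof is simply ``combining the previous propositions,'' namely the remark after Proposition~\ref{prop: riemenschneider} that Gorensteinness of $\mathcal{X}_0$ forces $\mathcal{X}$ to be Gorenstein (hence $\QQ$-Gorenstein) so that Part~(1) follows, together with Proposition~\ref{prop: riemenschneider length}(3) for Part~(2) and Proposition~\ref{prop: riemenschneider cyclic} for Part~(3). Your extra care in justifying the lift of the Gorenstein property from the special fiber to the total space (flatness over the regular base $B$ plus the behaviour of dualizing complexes) only fills in a step the paper asserts without proof.
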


\bibliographystyle{alpha}
\bibliography{Bibliography}

\newcommand{\etalchar}[1]{$^{#1}$}
\begin{thebibliography}{MST{\etalchar{+}}22}

\bibitem[AKM98]{AKM}
I.~Aberbach, M.~Katzman, and B.~MacCrimmon.
\newblock Weak {F}-regularity deforms in {${\bf Q}$}-{G}orenstein rings.
\newblock {\em J. Algebra}, 204(1):281--285, 1998.

\bibitem[AOV08]{AOV}
D.~Abramovich, M.~Olsson, and A.~Vistoli.
\newblock Tame stacks in positive characteristic.
\newblock {\em Ann. Inst. Fourier (Grenoble)}, 58(4):1057--1091, 2008.

\bibitem[Art77]{ArtinRDP}
M.~Artin.
\newblock Coverings of the rational double points in characteristic {$p$}.
\newblock In {\em Complex analysis and algebraic geometry}, pages 11--22.
  Iwanami-Shoten, 1977.

\bibitem[Ben93]{Benson}
D.~J. Benson.
\newblock {\em Polynomial invariants of finite groups}, volume 190 of {\em
  London Mathematical Society Lecture Note Series}.
\newblock Cambridge University Press, Cambridge, 1993.

\bibitem[BH93]{BrunsHerzog}
W.~Bruns and J.~Herzog.
\newblock {\em Cohen-{M}acaulay rings}, volume~39 of {\em Cambridge Studies in
  Advanced Mathematics}.
\newblock Cambridge University Press, Cambridge, 1993.

\bibitem[Bou78]{Boutot}
J.-F. Boutot.
\newblock {\em Sch\'{e}ma de {P}icard local}, volume 632 of {\em Lecture Notes
  in Mathematics}.
\newblock Springer, Berlin, 1978.

\bibitem[Bri68]{Brieskorn}
E.~Brieskorn.
\newblock Rationale {S}ingularit\"{a}ten komplexer {F}l\"{a}chen.
\newblock {\em Invent. Math.}, 4:336--358, 1967/68.

\bibitem[Bro02]{Brohme}
S.~Brohme.
\newblock {\em {Monodromie\"uberlagerung der versellen Deformation zyklischer
  Quotientensingularit\"aten}}.
\newblock PhD thesis, Universit\"at Hamburg, 2002.

\bibitem[Car54]{Cartan}
H.~Cartan.
\newblock Quotient d'une vari\'et\'e analytique par un groupe discret
  d'automorphismes.
\newblock {\em S\'eminaire Henri Cartan}, 6, 1953-1954.
\newblock talk:12.

\bibitem[CE99]{CE}
H.~Cartan and S.~Eilenberg.
\newblock {\em Homological algebra}.
\newblock Princeton Landmarks in Mathematics. Princeton University Press,
  Princeton, NJ, 1999.
\newblock With an appendix by David A. Buchsbaum, Reprint of the 1956 original.

\bibitem[Chw65]{Chwe}
B.~Chwe.
\newblock On the commutativity of restricted {L}ie algebra.
\newblock {\em Proc. Amer. Math. Soc.}, 16:547, 1965.

\bibitem[CLS11]{Cox}
D.A. Cox, J.B. Little, and H.K. Schenck.
\newblock {\em Toric Varieties}.
\newblock Graduate studies in mathematics. American Mathematical Soc., 2011.

\bibitem[CP19]{CoPi}
V.~Cossart and O.~Piltant.
\newblock Resolution of singularities of arithmetical threefolds.
\newblock {\em J. Algebra}, 529:268--535, 2019.

\bibitem[CR22]{Carvajal-Rojas}
J.~Carvajal-Rojas.
\newblock Finite torsors over strongly {$F$}-regular singularities.
\newblock {\em \'{E}pijournal G\'{e}om. Alg\'{e}brique}, 6:Aert. 1--30, 2022.

\bibitem[CRST18]{Carvajal-RojasSchwedeTucker}
J.~Carvajal-Rojas, K.~Schwede, and K.~Tucker.
\newblock Fundamental groups of {$F$}-regular singularities via
  {$F$}-signature.
\newblock {\em Ann. Sci. \'{E}c. Norm. Sup\'{e}r. (4)}, 51(4):993--1016, 2018.

\bibitem[Dur79]{Durfee}
A.~H. Durfee.
\newblock Fifteen characterizations of rational double points and simple
  critical points.
\newblock {\em Enseign. Math. (2)}, 25(1-2):131--163, 1979.

\bibitem[DV34]{DuVal}
P.~Du~Val.
\newblock {On isolated singularities of surfaces which do not affect the
  conditions of adjunction (Part I.)}.
\newblock {\em Mathematical Proceedings of the Cambridge Philosophical
  Society}, 30(4):453--459, 1934.

\bibitem[Eke87]{Ekedahl}
T.~Ekedahl.
\newblock Foliations and inseparable morphisms.
\newblock In {\em Algebraic geometry, {B}owdoin, 1985 ({B}runswick, {M}aine,
  1985)}, volume~46 of {\em Proc. Sympos. Pure Math.}, pages 139--149. Amer.
  Math. Soc., Providence, RI, 1987.

\bibitem[EV85]{EsnaultViehweg2}
H.~Esnault and E.~Viehweg.
\newblock Two-dimensional quotient singularities deform to quotient
  singularities.
\newblock {\em Math. Ann.}, 271(3):439--449, 1985.

\bibitem[EV10]{EsnaultViehweg}
H.~Esnault and E.~Viehweg.
\newblock Surface singularities dominated by smooth varieties.
\newblock {\em J. Reine Angew. Math.}, 649:1--9, 2010.

\bibitem[Fog81]{Fogarty}
J.~Fogarty.
\newblock On the depth of local rings of invariants of cyclic groups.
\newblock {\em Proc. Amer. Math. Soc.}, 83(3):448--452, 1981.

\bibitem[FT61]{FeitThompson}
W.~Feit and J.~G. Thompson.
\newblock Groups which have a faithful representation of degree less than
  {$(p-1/2)$}.
\newblock {\em Pacific J. Math.}, 11:1257--1262, 1961.

\bibitem[Ful98]{Fulton}
W.~Fulton.
\newblock {\em Intersection theory}, volume~2 of {\em Ergebnisse der Mathematik
  und ihrer Grenzgebiete. 3. Folge. A Series of Modern Surveys in Mathematics
  [Results in Mathematics and Related Areas. 3rd Series. A Series of Modern
  Surveys in Mathematics]}.
\newblock Springer-Verlag, Berlin, second edition, 1998.

\bibitem[G{\etalchar{+}}67]{EGA4}
A.~Grothendieck et~al.
\newblock \'{E}l\'{e}ments de g\'{e}om\'{e}trie alg\'{e}brique. {IV}. \'{E}tude
  locale des sch\'{e}mas et des morphismes de sch\'{e}mas {IV}.
\newblock {\em Inst. Hautes \'{E}tudes Sci. Publ. Math.}, 32:361, 1967.

\bibitem[G{\etalchar{+}}70]{SGA32}
A.~Grothendieck et~al.
\newblock {\em {Sch\'emas en groupes II: Groupes de Type Multiplicatif, et
  Structure des Sch\'emas en Groupes G\'en\'eraux (SGA 3)}}, volume 152 of {\em
  Lecture Notes in Mathematics}.
\newblock Springer-Verlag, 1970.

\bibitem[G{\etalchar{+}}71]{SGA1}
A.~Grothendieck et~al.
\newblock {\em {Rev\^etements \'etales et groupe fondamental (SGA 1)}}, volume
  224 of {\em Lecture notes in mathematics}.
\newblock Springer-Verlag, 1971.

\bibitem[G{\etalchar{+}}73]{SGA72}
A.~Grothendieck et~al.
\newblock {\em {Groupes de monodromie en g\'eom\'etrie alg\'ebrique (SGA 7 vol.
  2)}}, volume 340 of {\em Lecture Notes in Mathematics}.
\newblock Springer-Verlag, 1973.

\bibitem[GS15]{GeraschenkoSatriano}
A.~Geraschenko and M.~Satriano.
\newblock Torus quotients as global quotients by finite groups.
\newblock {\em J. Lond. Math. Soc. (2)}, 92(3):736--759, 2015.

\bibitem[Gur91]{Gurjar}
R.~V. Gurjar.
\newblock On a conjecture of {C}. {T}. {C}. {W}all.
\newblock {\em J. Math. Kyoto Univ.}, 31(4):1121--1124, 1991.

\bibitem[Har98]{Hara}
N.~Hara.
\newblock Classification of two-dimensional {$F$}-regular and {$F$}-pure
  singularities.
\newblock {\em Adv. Math.}, 133(1):33--53, 1998.

\bibitem[Has01]{HashimotoGoodfiltrations}
M.~Hashimoto.
\newblock Good filtrations of symmetric algebras and strong {F}-regularity of
  invariant subrings.
\newblock {\em Math. Z.}, 236(3):605--623, 2001.

\bibitem[Has15]{Hashimoto}
M.~Hashimoto.
\newblock Classification of the linearly reductive finite subgroup schemes of
  {$SL_2$}.
\newblock {\em Acta Math. Vietnam.}, 40(3):527--534, 2015.

\bibitem[Has16]{HashimotoEquivariantI}
M.~Hashimoto.
\newblock Equivariant class group. {I}. {F}inite generation of the {P}icard and
  the class groups of an invariant subring.
\newblock {\em J. Algebra}, 459:76--108, 2016.

\bibitem[HE71]{HochsterEagon}
M.~Hochster and J.~A. Eagon.
\newblock Cohen-{M}acaulay rings, invariant theory, and the generic perfection
  of determinantal loci.
\newblock {\em Amer. J. Math.}, 93:1020--1058, 1971.

\bibitem[HH88]{TightClosure}
M.~Hochster and C.~Huneke.
\newblock Tightly closed ideals.
\newblock {\em Bull. Amer. Math. Soc. (N.S.)}, 18(1):45--48, 1988.

\bibitem[HH89]{HochsterHunekeFregular}
M.~Hochster and C.~Huneke.
\newblock Tight closure and strong {$F$}-regularity.
\newblock Number~38, pages 119--133. 1989.
\newblock Colloque en l'honneur de Pierre Samuel (Orsay, 1987).

\bibitem[Hun13]{Huneke}
C.~Huneke.
\newblock Hilbert-{K}unz multiplicity and the {F}-signature.
\newblock In {\em Commutative algebra}, pages 485--525. Springer, New York,
  2013.

\bibitem[Kaw88]{Kawamata}
Y.~Kawamata.
\newblock Crepant blowing-up of {$3$}-dimensional canonical singularities and
  its application to degenerations of surfaces.
\newblock {\em Ann. of Math. (2)}, 127(1):93--163, 1988.

\bibitem[Kle93]{Klein}
F.~Klein.
\newblock {\em Vorlesungen \"{u}ber das {I}kosaeder und die {A}ufl\"{o}sung der
  {G}leichungen vom f\"{u}nften {G}rade}.
\newblock Birkh\"{a}user Verlag, Basel; B. G. Teubner, Stuttgart, 1993.
\newblock Reprint of the 1884 original, Edited, with an introduction and
  commentary by Peter Slodowy.

\bibitem[Kol13]{Kollar}
J.~Koll\'{a}r.
\newblock {\em Singularities of the minimal model program}, volume 200 of {\em
  Cambridge Tracts in Mathematics}.
\newblock Cambridge University Press, Cambridge, 2013.
\newblock With a collaboration of S\'{a}ndor Kov\'{a}cs.

\bibitem[KSB88]{KSB}
J.~Koll\'{a}r and N.~I. Shepherd-Barron.
\newblock Threefolds and deformations of surface singularities.
\newblock {\em Invent. Math.}, 91(2):299--338, 1988.

\bibitem[Kum93]{Kumar}
S.~Kumar.
\newblock Finiteness of local fundamental groups for quotients of affine
  varieties under reductive groups.
\newblock {\em Comment. Math. Helv.}, 68(2):209--215, 1993.

\bibitem[LMM25]{RDP}
C.~Liedtke, G.~Martin, and Y.~Matsumoto.
\newblock {Torsors over the Rational Double Points in Characteristic p}.
\newblock {\em this volume}, 2025.

\bibitem[LS14]{LiedtkeSatriano}
C.~Liedtke and M.~Satriano.
\newblock On the birational nature of lifting.
\newblock {\em Adv. Math.}, 254:118--137, 2014.

\bibitem[Mar03]{Martinet}
J.~Martinet.
\newblock {\em Perfect lattices in {E}uclidean spaces}, volume 327 of {\em
  Grundlehren der mathematischen Wissenschaften [Fundamental Principles of
  Mathematical Sciences]}.
\newblock Springer-Verlag, Berlin, 2003.

\bibitem[Mar09]{Margaux}
B.~Margaux.
\newblock Vanishing of {H}ochschild cohomology for affine group schemes and
  rigidity of homomorphisms between algebraic groups.
\newblock {\em Doc. Math.}, 14:653--672, 2009.

\bibitem[Mat02]{Matsuki}
K.~Matsuki.
\newblock {\em Introduction to the {M}ori program}.
\newblock Universitext. Springer-Verlag, New York, 2002.

\bibitem[Mat23]{Matsumotoalpha}
Y.~Matsumoto.
\newblock {$\mu_p$- and $\alpha_p$-actions on K3 surfaces in characteristic
  $p$}.
\newblock {\em J. Algebraic Geom.}, 32(2):271--322, 2023.

\bibitem[MI21]{ItoMiyanishi}
M.~Miyanishi and H.~Ito.
\newblock {\em Algebraic surfaces in positive characteristics---purely
  inseparable phenomena in curves and surfaces}.
\newblock World Scientific Publishing Co. Pte. Ltd., Hackensack, NJ, 2021.

\bibitem[Mil57]{Milnor}
J.~Milnor.
\newblock Groups which act on {$S^n$} without fixed points.
\newblock {\em Amer. J. Math.}, 79:623--630, 1957.

\bibitem[MS21]{MaSchwede}
L.~Ma and K.~Schwede.
\newblock Singularities in mixed characteristic via perfectoid big
  {C}ohen-{M}acaulay algebras.
\newblock {\em Duke Math. J.}, 170(13):2815--2890, 2021.

\bibitem[MST{\etalchar{+}}22]{MaSchwede+}
Linquan Ma, Karl Schwede, Kevin Tucker, Joe Waldron, and Jakub Witaszek.
\newblock An analogue of adjoint ideals and {PLT} singularities in mixed
  characteristic.
\newblock {\em J. Algebraic Geom.}, 31(3):497--559, 2022.

\bibitem[Mum08]{MumfordAV}
D.~Mumford.
\newblock {\em Abelian varieties}, volume~5 of {\em Tata Institute of
  Fundamental Research Studies in Mathematics}.
\newblock Published for the Tata Institute of Fundamental Research, Bombay; by
  Hindustan Book Agency, New Delhi, 2008.
\newblock With appendices by C. P. Ramanujam and Yuri Manin, Corrected reprint
  of the second (1974) edition.

\bibitem[Nag50]{Nagata50}
M.~Nagata.
\newblock On the structure of complete local rings.
\newblock {\em Nagoya Math. J.}, 1:63--70, 1950.

\bibitem[Nag62]{Nagata61}
M.~Nagata.
\newblock Complete reducibility of rational representations of a matric group.
\newblock {\em J. Math. Kyoto Univ.}, 1:87--99, 1961/62.

\bibitem[Pes83]{Peskin}
B.~R. Peskin.
\newblock Quotient-singularities and wild {$p$}-cyclic actions.
\newblock {\em J. Algebra}, 81(1):72--99, 1983.

\bibitem[Pri67]{Prill}
D.~Prill.
\newblock Local classification of quotients of complex manifolds by
  discontinuous groups.
\newblock {\em Duke Math. J.}, 34:375--386, 1967.

\bibitem[Rie74]{Riemenschneider}
O.~Riemenschneider.
\newblock Deformationen von {Q}uotientensingularit\"{a}ten (nach zyklischen
  {G}ruppen).
\newblock {\em Math. Ann.}, 209:211--248, 1974.

\bibitem[Rie76]{Riemenschneider2}
O.~Riemenschneider.
\newblock Dihedral singularities: invariants, equations and infinitesimal
  deformations.
\newblock {\em Bull. Amer. Math. Soc.}, 82(5):745--747, 1976.

\bibitem[Rv76]{RudSha}
A.~N. Rudakov and I.~R. \v{S}afarevi\v{c}.
\newblock Inseparable morphisms of algebraic surfaces.
\newblock {\em Izv. Akad. Nauk SSSR Ser. Mat.}, 40(6):1269--1307, 1439, 1976.

\bibitem[Sat12a]{Satriano}
M.~Satriano.
\newblock The {C}hevalley-{S}hephard-{T}odd theorem for finite linearly
  reductive group schemes.
\newblock {\em Algebra Number Theory}, 6(1):1--26, 2012.

\bibitem[Sat12b]{SatrianoDeRham}
M.~Satriano.
\newblock De {R}ham theory for tame stacks and schemes with linearly reductive
  singularities.
\newblock {\em Ann. Inst. Fourier (Grenoble)}, 62(6):2013--2051 (2013), 2012.

\bibitem[Sch68]{SchlessingerArtinrings}
M.~Schlessinger.
\newblock Functors of {A}rtin rings.
\newblock {\em Trans. Amer. Math. Soc.}, 130:208--222, 1968.

\bibitem[Sch71]{Schlessinger}
M.~Schlessinger.
\newblock Rigidity of quotient singularities.
\newblock {\em Invent. Math.}, 14:17--26, 1971.

\bibitem[Ser68]{SerreLinRepGroupScheme}
J.-P. Serre.
\newblock Groupes de {G}rothendieck des sch\'{e}mas en groupes r\'{e}ductifs
  d\'{e}ploy\'{e}s.
\newblock {\em Inst. Hautes \'{E}tudes Sci. Publ. Math.}, 34:37--52, 1968.

\bibitem[Ser78]{SerreLinRep}
J.-P. Serre.
\newblock {\em Repr\'{e}sentations lin\'{e}aires des groupes finis}.
\newblock Hermann, Paris, revised edition, 1978.

\bibitem[Sin99]{SinghFregularity}
A.~K. Singh.
\newblock {$F$}-regularity does not deform.
\newblock {\em Amer. J. Math.}, 121(4):919--929, 1999.

\bibitem[Slo80]{Slodowy}
P.~Slodowy.
\newblock {\em Simple singularities and simple algebraic groups}, volume 815 of
  {\em Lecture Notes in Mathematics}.
\newblock Springer, Berlin, 1980.

\bibitem[ST25]{SatoTakagi}
K.~Sato and S.~Takagi.
\newblock Arithmetic and geometric deformations of {$F$}-pure and {$F$}-regular
  singularities.
\newblock {\em Amer. J. Math.}, 147(2):561--596, 2025.

\bibitem[{Sta}22]{StacksProject}
The {Stacks Project Authors}.
\newblock \textit{Stacks Project}.
\newblock \url{https://stacks.math.columbia.edu}, 2022.

\bibitem[Tan15]{Tanaka}
Y.~Tanaka.
\newblock {On tautness of two-dimensional F-regular and F-pure rational
  singularities}.
\newblock {\em arXiv:1502.07236v1}, 2015.

\bibitem[Tay20]{Taylor}
G.~Taylor.
\newblock {Inversion of adjunction for F-signature}.
\newblock {\em arXiv:1909.10436v3}, 2020.

\bibitem[TO70]{TateOort}
J.~Tate and F.~Oort.
\newblock Group schemes of prime order.
\newblock {\em Ann. Sci. \'{E}cole Norm. Sup. (4)}, 3:1--21, 1970.

\bibitem[Tuc12]{Tucker}
K.~Tucker.
\newblock {$F$}-signature exists.
\newblock {\em Invent. Math.}, 190(3):743--765, 2012.

\bibitem[Wal87]{Wall}
C.~T.~C. Wall.
\newblock Functions on quotient singularities.
\newblock {\em Philos. Trans. Roy. Soc. London Ser. A}, 324(1576):1--45, 1987.

\bibitem[Wo03]{Wl}
J.~W\l~odarczyk.
\newblock Toroidal varieties and the weak factorization theorem.
\newblock {\em Invent. Math.}, 154(2):223--331, 2003.

\bibitem[Wol11]{Wolf}
J.~A. Wolf.
\newblock {\em Spaces of constant curvature}.
\newblock AMS Chelsea Publishing, Providence, RI, sixth edition, 2011.

\bibitem[WY00]{WYHK}
K.~Watanabe and K.~Yoshida.
\newblock Hilbert-{K}unz multiplicity and an inequality between multiplicity
  and colength.
\newblock {\em J. Algebra}, 230(1):295--317, 2000.

\bibitem[Zas35]{Zassenhaus}
H.~Zassenhaus.
\newblock \"{U}ber endliche {F}astk\"{o}rper.
\newblock {\em Abh. Math. Sem. Univ. Hamburg}, 11(1):187--220, 1935.

\end{thebibliography}

\end{document}